\newcolumntype{C}[1]{>{\centering\let\newline\\\arraybackslash\hspace{0pt}}m{#1}}
\newcommand{\btk}{\begin{tikzcd}}
\newcommand{\etk}{\end{tikzcd}}
\newcommand{\Hom}{\mathrm{Hom}}
\newcommand{\@bbify}[1]{
  \ifcsname b#1\endcsname
  \message{WARNING: Overwriting b#1 with blackboard letter!}
  \fi
  \expandafter\edef\csname b#1\endcsname
  {\noexpand\ensuremath{\noexpand\mathbb #1}\noexpand\xspace}}
\newcommand{\@calify}[1]{
  \ifcsname c#1\endcsname
  \message{WARNING: Overwriting c#1 with calligraphic letter!}
  \fi 
  \expandafter\edef\csname c#1\endcsname
  {\noexpand\ensuremath{\noexpand\mathcal #1}\noexpand\xspace}}
\newcommand{\@bfify}[1]{
  \ifcsname bf#1\endcsname
  \message{WARNING: Overwriting c#1 with bold letter!}
  \fi
  \expandafter\edef\csname bf#1\endcsname
  {\noexpand\ensuremath{\noexpand\mathbf #1}\noexpand\xspace}}
\newcounter{@letter}\stepcounter{@letter}
\loop\@bbify{\Alph{@letter}}\@calify{\Alph{@letter}}\@bfify{\Alph{@letter}}
\newenvironment{tz}{\begin{center}\begin{tikzpicture}}{\end{tikzpicture}\end{center}}
\tikzstyle{d}=[double distance=.3ex]
\tikzstyle{w}=[preaction={draw=white,-,line width=5pt}]
\newcounter{diagram}
\renewcommand{\thediagram}{\thetheorem}
\newenvironment{diagram}{
\setcounter{diagram}{\value{theorem}}
\refstepcounter{theorem}\refstepcounter{diagram}
\begin{center}
\normalfont{(\thediagram)}\hfill\begin{tikzpicture}[baseline=(current bounding box.center)]}
    {\end{tikzpicture}\hfill\text{ }\end{center}}
\tikzset{%
node distance=1.5cm, la/.style={scale=0.8}, lasmall/.style={scale=0.75}, over/.style={auto=false,fill=white,inner sep=1.5pt, minimum size=0, outer sep=0},
    symbol/.style={%
        draw=none,
        every to/.append style={%
            edge node={node [sloped, allow upside down, auto=false]{$#1$}}},
            
    }, pro/.style={postaction={decorate,decoration={
        markings,
        mark=at position .5 with {\node at (0,0) {$\bullet$};}
      }},
      inner sep=.9ex,
      },
      prosmall/.style={postaction={decorate,decoration={
        markings,
        mark=at position .5 with {\node at (0,0) {$\scriptstyle \bullet$};}
      }},
      inner sep=.9ex,
      },
  n/.style={double equal sign distance, -implies}, t/.style={double distance=2.5pt, -implies, postaction={draw,-}},
}
\newcommand{\pushout}[1]{\node at ($({#1})-(10pt,-10pt)$) {$\ulcorner$};}
\newcommand{\arrowdot}{
\ensuremath{\begin{tikzpicture}
\node (A) at (0,-.4) {};
\node (B) at (.4,-.4) {};
\draw[->, line width=.1ex] (0,-.6) -- (.4,-.6);
\node[shape=circle, fill=black, scale=0.35] (A) at  (.17,-.6) {};
\end{tikzpicture}
}}
\newcommand{\vcong}{\rotatebox{270}{$\cong$}}
\tikzset{%
node distance=1.5cm, la/.style={scale=0.8}, rr/.style={xshift=1.5cm},
space/.style={xshift=.5cm},
    symbol/.style={%
        draw=none,
        every to/.append style={%
            edge node={node [sloped, allow upside down, auto=false]{$#1$}}},
            
    }
}
\newcommand{\sqboundary}[4]{({#3} \; ^{{#1}}_{\substack{{#2}}} \; {#4})}
\newcommand{\colim}{\mathrm{colim}}
\newcommand{\an}{\ensuremath{\mathrm{An}}}
\newcommand{\cof}{\ensuremath{\mathrm{cof}}}
\newcommand{\nfib}{\ensuremath{\mathrm{NFib}}}
\newcommand{\inj}{\mathrm{inj}}
\newcommand{\Path}{\mathrm{Path}}
\newcommand{\id}{\mathrm{id}}
\newcommand{\Sq}{\mathrm{Sq}}
\newcommand{\Wf}{\mathcal{W}_f}
\newcommand{\dblcat}{\mathrm{DblCat}}
\newcommand{\twocat}{2\mathrm{Cat}}
\newcommand{\twogpd}{2\mathrm{Gpd}}
\newcommand{\cat}{\mathrm{Cat}}
\newcommand{\graph}{\mathrm{Graph}}
\newcommand{\dbldersch}{\mathrm{DblDerSch}}
\newcommand{\Eadj}{E_\mathrm{adj}}
\newlist{rome}{enumerate}{7}
\setlist[rome]{label=(\roman*)}
\newtheorem{theorem}{Theorem}[section]
\newtheorem{cor}[theorem]{Corollary}
\newtheorem{prop}[theorem]{Proposition}
\newtheorem{lem}[theorem]{Lemma}
\declaretheorem[name=Theorem,numbered=yes]{theoremA}
\theoremstyle{definition}
\newtheorem{defn}[theorem]{Definition}
\newtheorem{ex}[theorem]{Example}
\newtheorem{notation}[theorem]{Notation}
\theoremstyle{remark}
\newtheorem{rem}[theorem]{Remark}
\crefname{theorem}{Theorem}{Theorems}
\crefname{cor}{Corollary}{Corollaries}
\crefname{prop}{Proposition}{Propositions}
\crefname{lem}{Lemma}{Lemmas}
\crefname{defn}{Definition}{Definitions}
\crefname{terminology}{Terminology}{Terminologies}
\crefname{ex}{Example}{Examples}
\crefname{notation}{Notation}{Notations}
\crefname{descr}{Description}{Descriptions}
\crefname{constr}{Construction}{Constructions}
\crefname{rem}{Remark}{Remarks}
\title{Double categorical equivalences}
\author[L.\ Moser]{Lyne Moser}
\address{Fakultät für Mathematik, Universität Regensburg, 93040 Regensburg, Germany}
\email{lyne.moser@ur.de}
\author[M.\ Sarazola]{Maru Sarazola}
\address{School of Mathematics, University of Minnesota, Minneapolis MN, 55415, USA}
\email{maru@umn.edu}
\author[P.\ Verdugo]{Paula Verdugo}
\address{Max-Planck-Institut für Mathematik, 53111 Bonn, Germany}
\email{verdugo@mpim-bonn.mpg.de}
\begin{document}

\begin{abstract}
We present an efficient and user-friendly method for constructing any cofibrantly generated model structure on the category of double categories whose trivial fibrations are the ``canonical'' ones: the double functors which are surjective on objects, full on both horizontal and vertical morphisms, and fully faithful on squares. We show that all of these model structures are left proper and that they are localizations of the gregarious model structure introduced by Campbell. As a notable consequence, this identifies the gregarious weak equivalences as the ``canonical'' equivalences of double categories, an elusive notion thus far. Moreover, the nature of our method gives an explicit description of the fibrant objects in terms of lifting conditions. We use this to recover several known model structures, as well as construct several new examples whose homotopy theories encode a range of $2$-dimensional structures.  
\end{abstract}

\maketitle

\setcounter{tocdepth}{1}
\tableofcontents

\section{Introduction}

Double categories are 2-dimensional structures that allow us to encode situations where we have a collection of objects and two types of morphisms between them. These types of morphisms, which we visualize as ``horizontal'' and ``vertical'', might interact with each other---for instance, between $R$-modules we have monomorphisms and epimorphisms which can be composed---or they might be of a different nature---for example, between rings we have ring homomorphisms and bimodules, or between manifolds we have cobordisms and diffeomorphisms. To encode the interplay between these two types of morphisms, double categories have a collection of 2-dimensional morphisms called ``squares''.

Since their introduction by Ehresmann \cite{ehresmann}, double categories have been the focus of much work in the category theory community; see for instance \cite{brown2,grandispare1,dawson,dawsonpare,holonomy,grandispare2,fiore,kock, shulman1,shulman,garner,grandispare3,christinav,Verity}, among several others. Interestingly, in the last decade or so, double categories also started to feature prominently in other fields, a trend that has much accelerated in the past few years. This includes, among other things, a study of the different homotopy theories they can encode \cite{FP,FPP,MSV,whi}, their connection to (weak) factorization systems \cite{bourkeWFS,stepan}, their use as models for higher categories \cite{paolipronk,jaco,lyneleo} and as new frameworks for algebraic $K$-theory \cite{CZCGW,SSCGW,CKMZ:squares,segalsquares}, as a way to give rise to a notion of homology for sets \cite{ACGWhomology}, and in connection to various settings like operads \cite{pisani}, categorical databases \cite{brandonallconcepts}, type theory \cite{nimaunivalent}, and logic \cite{patterson}.

Given the usefulness of this framework, one might be surprised to find that \emph{there is no agreed-upon notion of what an equivalence of double categories should be}. This is in stark contrast to the case of $\cat$, where we have the \emph{equivalences of categories}: functors which are surjective on objects up to isomorphism, and fully faithful on morphisms. There is also a canonical notion in the category $\twocat$ of (strict) $2$-categories and $2$-functors given by the \emph{biequivalences}: $2$-functors which are surjective on objects up to equivalence, full on morphisms up to $2$-isomorphism, and fully faithful on $2$-morphisms.

\subsection*{Searching for a notion of equivalence} The subtlety with the double categorical setting lies in the fact that if we try to emulate the definitions of ($2$-)categorical equivalences, we are forced to make a choice as to what type of surjectivity we require on objects, since now there are two types of morphisms to consider. One can readily imagine several alternatives: should we ask for essential surjectivity with respect to horizontal morphisms, or vertical? Perhaps we should ask for both; in that case, should we require any special compatibility between the two types of morphisms? And so on.

Depending on the examples or the perspective we have in mind, we can reach different plausible answers to these questions. For instance, one can treat double categories as categories internal to $\cat$ and exploit the categorical equivalences, or as the algebras of a certain 2-monad over the 2-category $\cat(\graph)$; these perspectives lead to different notions of equivalences which are explored in \cite{FPP}. The vast number of meaningful options for what an equivalence of double categories could be is thrilling from a homotopical perspective, but a bit disconcerting from a categorical standpoint, where the ethos is that there should exist a canonical notion of ``sameness'' which is agnostic to any particular point of view. The first goal of this paper is to champion one such notion.

In order to tackle this question, we find it useful to work in a homotopical setting, instead of a purely categorical one. Ever since Quillen's seminal work on homotopical algebra \cite{Quillen}, and even more so in the last two decades, we have come to expect that any reasonable notion of equivalence in a category will form the class of weak equivalences of a model structure. This is indeed the case for the equivalences of categories and the biequivalences of 2-categories, which are captured as the weak equivalences in corresponding model structures \cite{rezk,Lack2Cat,LackBicat}. Hence, we expect the same to be true of any good notion of equivalence of double categories.  

A benefit of working in this setting is that the ambiguity we find with respect to horizontal and vertical morphisms goes away when we try to determine the class of trivial fibrations. In analogy with the trivial fibrations in $\cat$ and $\twocat$, these should be double functors which are surjective on objects, full on both horizontal and vertical morphisms, and fully faithful on squares; we call these \emph{canonical trivial fibrations}. Note that there are no choices to be made here, as surjectivity on objects holds strictly, i.e., up to equality. We then posit that these should be the trivial fibrations that correspond to a ``canonical'' notion of equivalence, and set out to understand all possible model structures on the category $\dblcat$ that share this class of trivial fibrations.

In doing so, we find the \emph{minimal class of equivalences} that is compatible with the canonical trivial fibrations: the \emph{gregarious double equivalences} introduced by Campbell \cite{Camp}. These are the weak equivalences of a model structure, which we prove to be left proper in \cref{gregleftproper}. More precisely, we show that any class of weak equivalences on $\dblcat$ that is part of a model structure whose trivial fibrations are the canonical ones, must contain the gregarious double equivalences. We see this as evidence that this is the least biased notion of double categorical equivalence one can formulate, and should therefore be considered the canonical equivalences. Formally, our result is as follows.

\begin{theoremA}[\cref{thm:localization}] \label{newthmA}
Every model structure on $\dblcat$ whose trivial fibrations are the canonical ones is a localization of the gregarious model structure. If it is further cofibrantly generated, it is in fact a Bousfield localization of the gregarious model structure.
\end{theoremA}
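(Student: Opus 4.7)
The plan is to use the standard principle that a model structure is determined by any two of its three classes, thereby reducing everything to an inclusion of weak equivalences.

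First, since the cofibrations are characterized as the maps with the left lifting property against the trivial fibrations, any model structure $\mathcal{M}$ on $\dblcat$ whose trivial fibrations are the canonical ones automatically shares its cofibrations with the gregarious model structure. Thus, to show that $\mathcal{M}$ is a localization, it suffices to prove the inclusion $\mathcal{W}_{\mathrm{greg}} \subseteq \mathcal{W}_{\mathcal{M}}$. Given a gregarious weak equivalence $f$, I would factor it in $\mathcal{M}$ as $f = p \circ i$ with $i$ an $\mathcal{M}$-cofibration and $p$ an $\mathcal{M}$-trivial fibration. Since $p$ is then a canonical trivial fibration, it is also a gregarious weak equivalence, so $2$-out-of-$3$ in the gregarious model structure forces $i$ to be a gregarious weak equivalence, and being a cofibration (in both senses), a gregarious trivial cofibration. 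Applying $2$-out-of-$3$ once more in $\mathcal{M}$, the problem reduces to showing that every gregarious trivial cofibration is an $\mathcal{M}$-weak equivalence.

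This remaining inclusion is where I expect the main obstacle to lie. Since the class of $\mathcal{M}$-trivial cofibrations is closed under cobase change, transfinite composition, and retract, it suffices to check the inclusion on a generating set $J_{\mathrm{greg}}$ of gregarious trivial cofibrations supplied by Campbell's combinatorial presentation; and since each such map is already a cofibration, we only need to verify that it is an $\mathcal{M}$-weak equivalence. I would do so by case-by-case analysis of each generator, exhibiting it as, for example, a strong deformation retract with respect to a canonical cylinder object, so that the conclusion follows merely from the identification of $\mathcal{M}$'s trivial fibrations with the canonical ones (and hence from the fact that $\mathcal{M}$'s fibrant objects inherit the expected path-object structure).

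Once this inclusion is established, $\mathcal{M}$ has the same cofibrations as and at least as many weak equivalences as the gregarious model structure, which is the content of being a localization. For the strengthened claim, assume further that $\mathcal{M}$ is cofibrantly generated. The gregarious model structure is combinatorial (by Campbell) and left proper by \cref{gregleftproper}, so a Smith-style recognition theorem for left Bousfield localizations applies: $\mathcal{M}$ must coincide with the left Bousfield localization of the gregarious model structure at any set of maps whose local objects are precisely the $\mathcal{M}$-fibrant ones, a canonical choice being a (cofibrantly-replaced) set of generating $\mathcal{M}$-trivial cofibrations.
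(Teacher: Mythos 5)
Your overall skeleton is sound and in fact runs parallel to the paper's argument: since the cofibrations are forced, everything reduces to showing that the generating gregarious trivial cofibrations $\cJ_0$ of \cref{notn:J0} are weak equivalences in the given model structure $\cM$, and the Bousfield statement then follows from combinatoriality plus left properness of $\dblcat_\mathrm{greg}$ (\cref{gregleftproper}) via a recognition result of the type of \cref{prop:locvsBousfieldloc}. However, there is a genuine gap exactly at the step you yourself flag as the main obstacle: you never actually verify that the generators are $\cM$-weak equivalences, and the route you sketch (exhibiting each generator as a strong deformation retract with respect to a ``canonical cylinder object'') is both unexecuted and heavier than necessary---it would require building, for each generator, a cylinder whose projection is known to be an $\cM$-weak equivalence, producing explicit homotopies, and then invoking the saturation of weak equivalences to conclude; your parenthetical appeal to path objects on $\cM$-fibrant objects plays no role and suggests the mechanism is not pinned down. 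The missing idea is much simpler and is what the paper uses: each map $J$ in $\cJ_0$ admits a retraction, namely the projections $\Sq\Eadj\to\mathbbm{1}$, $\bH\Sigma I\to\bH\mathbbm{2}$, $\bV\Sigma I\to\bV\mathbbm{2}$, and these retractions are canonical trivial fibrations (\cref{J0isspecific}), hence $\cM$-trivial fibrations, hence $\cM$-weak equivalences; applying $2$-out-of-$3$ to $P\circ J=\id$ immediately makes each $J$ an $\cM$-trivial cofibration, and saturation then gives all gregarious trivial cofibrations, as you argue.

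Two smaller points. First, ``same cofibrations and at least as many weak equivalences'' is not literally the definition of localization used in the paper: one must also check that the derived counit of the identity adjunction is a weak equivalence in $\cM$. This is quick---a cofibrant replacement in $\dblcat_\mathrm{greg}$ can be chosen to be a canonical trivial fibration, which is an $\cM$-weak equivalence---but it should be said. Second, for the Bousfield part, the statement you invoke (``$\cM$ must coincide with the localization at any set whose local objects are the $\cM$-fibrant ones'') is essentially \cref{prop:locvsBousfieldloc}, and its proof uses the first part of the theorem (the identity Quillen pair) to compare derived homs; it is not an off-the-shelf Smith-style fact independent of what you have already established, so you should make that dependence explicit.
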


\subsection*{A recipe for constructing model structures}

The gregarious model structure of Campbell is not the only model structure in the literature with this class of trivial fibrations. In \cite{whi}, the authors construct another such model structure on $\dblcat$, that is homotopically compatible with the horizontal inclusion $\bH\colon \twocat\to \dblcat$. Its fibrant objects are the weakly horizontally invariant double categories, which makes it possible to define a nerve functor from double categories to double $(\infty,1)$-categories which is homotopical \cite{lyne}. In her doctoral thesis \cite{MSequipments}, the third-named author constructs a model structure whose fibrant objects are the equipments, which is then used to prove a result on equivalence invariance of formal category theory.

Each of these model structures is carefully constructed in an independent, ad-hoc manner. In hindsight, \cref{newthmA} tells us that these are all instances of Bousfield localizations of the gregarious model structure, giving us a unifying way to understand them as part of the same framework. However, the information encoded in such a localization can prove difficult to describe. Moreover, as is often the case in practice, these homotopy theories arise from a strong grasp of their fibrant objects instead of a complete class of weak equivalences, which gives another reason to look for a different method to obtain them. The second main goal in this paper is to provide efficient, detailed, and user-friendly methods  to construct all possible cofibrantly generated model structures on $\dblcat$ with the canonical trivial fibrations, allowing the user to provide a desired class of fibrant objects.
    
 The key to achieve this is to use the tools introduced by Guetta and the authors in \cite{fibrantly_induced} for constructing model structures. 
 As a first step, we use these methods to provide an alternative proof of the existence of the gregarious model structure due to Campbell \cite{Camp} in \cref{subsec:gregarious}. With this in hand, the general procedure for constructing cofibrantly generated model structures on $\dblcat$ (described in further detail in \cref{subsec:recipe}) goes as follows:

   \begin{enumerate}[leftmargin=0.8cm]
       \item Choose any subset $\cJ$ of canonical cofibrations\footnote{The cofibrations of all of these model structures are determined, since we fixed the trivial fibrations; descriptions can be found in \cref{gen_cofib}.} with canonically cofibrant domain which contains a set of generating trivial cofibrations of the gregarious model structure (see \cref{notn:J0} for such a set).
         \begin{itemize}[leftmargin=0.8cm]
             \item \textit{In practice, how do you choose $\cJ$?} By keeping in mind that the double functors in $\cJ$ will be trivial cofibrations, and they will determine the fibrant objects; that is, a double category $\bA$ will be fibrant if and only if $\bA\to *$ has the right lifting property with respect to the double functors in $\cJ$.
         \end{itemize}
         \item Let $(\an,\nfib)$ denote the weak factorization system of \emph{anodyne extensions} and \emph{naive fibrations} cofibrantly generated by $\cJ$, and check that any of the following equivalent conditions holds:
           \begin{rome}
        \item Every double functor $J\in \an$ between naive fibrant objects is a gregarious double equivalence.
        \item Every gregarious fibration between naive fibrant objects is in $\nfib$. 
        \item For every naive fibrant object $\bA$, the double functor $\cP(\bA)\to \bA\times \bA$ from a path object is in $\nfib$.
    \end{rome}
   \end{enumerate}
   
   \begin{theoremA}[\cref{thm:genmodel}]\label{newthmB}
        For any set $\cJ$ as above, there is a cofibrantly generated model structure on $\dblcat$ whose trivial fibrations are the canonical ones, and whose fibrant objects are determined by having the right lifting property with respect to the double functors in $\cJ$. Moreover, weak equivalences and fibrations between fibrant objects are the gregarious ones.
\end{theoremA}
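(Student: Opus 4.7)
The plan is to apply the general criterion for constructing cofibrantly generated model structures from \cite{fibrantly_induced}, using the gregarious model structure of \cref{subsec:gregarious} as the starting point. In that framework, one begins with an existing model structure, enlarges its generating set of trivial cofibrations (here from a gregarious set to $\cJ$), and verifies a compatibility condition---precisely the condition assumed in the statement---to obtain a new model structure with the same cofibrations but (in general) fewer fibrations. Concretely, the generating cofibrations remain those of $(\Cof,\trfib)$, so that the trivial fibrations of our candidate model structure coincide with the canonical ones, and the new generating trivial cofibrations are the set $\cJ$. Since $\dblcat$ is locally presentable and each element of $\cJ$ has canonically cofibrant domain, the small object argument produces the weak factorization system $(\an,\nfib)$. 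The containment of a gregarious generating set inside $\cJ$ then gives $\nfib \subseteq \fib_{\mathrm{greg}}$, which will be the mechanism by which ``between fibrant objects'' statements are matched to the gregarious ones.

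I would then define the weak equivalences $\cW$ of the candidate model structure by naive fibrant replacement: $f\in\cW$ if and only if some (equivalently, any) naive fibrant replacement of $f$ is a gregarious double equivalence between naive fibrant objects. The characterization of fibrant objects reads off immediately from the definition of $\nfib$. The core of the argument is then the verification of the two identities $\an = \Cof \cap \cW$ and $\trfib = \nfib \cap \cW$, from which the factorization and lifting axioms follow by combining the weak factorization systems $(\Cof,\trfib)$ and $(\an,\nfib)$. The forward inclusion $\an \subseteq \cW$ is condition (i) applied after fibrant replacement; the reverse uses that a canonical cofibration in $\cW$, after fibrant replacement, becomes a gregarious trivial cofibration between naive fibrant objects, and hence lies in $\an$ by the containment of generating sets. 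The identity $\trfib = \nfib \cap \cW$ is dual, using (ii); and the claim that fibrations and weak equivalences between fibrant objects coincide with the gregarious ones is then immediate.

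The main obstacle is ensuring that the class $\cW$ defined via naive fibrant replacement is well-behaved---that is, that it satisfies 2-out-of-3 and closure under retracts, and that ``any'' and ``some'' fibrant replacement agree. This is precisely the role played by the equivalent conditions (i)--(iii), with (iii) being the most directly useful: a path object for the gregarious model structure, evaluated at a naive fibrant object, lies in $\nfib$, so the standard argument turning a generating-level compatibility into a homotopy-level one can be run inside the subcategory of naive fibrant objects. Establishing the equivalence of (i)--(iii) itself is a short separate argument combining factorization in the gregarious model structure, the lifting property, and the universal property of path objects. Once this is in place, left properness and cofibrant generation are inherited formally from the data, and \cref{thm:localization} then locates the resulting model structure as a Bousfield localization of the gregarious one.
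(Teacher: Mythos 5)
There is a genuine gap at the core of your plan. You propose to establish the identities $\an = \Cof\cap\cW$ and $\trfib = \nfib\cap\cW$ and then assemble the model structure directly from the two weak factorization systems $(\Cof,\trfib)$ and $(\an,\nfib)$; in other words, you treat $\cJ$ as a set of \emph{generating trivial cofibrations} and $\nfib$ as the class of fibrations. But in this framework $\cJ$ is only a \emph{pseudo-generating} set (a set of generating anodyne extensions in the sense of \cref{def:anodyne}): the conclusion one can hope for, and the one \cref{thm:genmodel} actually asserts, is that fibrations and weak equivalences agree with the naive/gregarious ones \emph{between fibrant objects}. In general $\an\subsetneq\Cof\cap\cW$ and $\nfib\supsetneq\{\text{fibrations}\}$, and your argument for the reverse inclusion does not close the gap: given a canonical cofibration $J$ in $\cW$, knowing that a naive fibrant replacement $J'$ of $J$ is a trivial cofibration in the gregarious model structure (hence in $\an_0\subseteq\an$) says nothing about $J$ itself being in $\an$, since $J$ is not a retract of $J'$ and membership in the saturated class $\an$ does not descend along fibrant replacement. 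Because the lifting and factorization axioms for (trivial cofibration, fibration) cannot be read off from $(\an,\nfib)$, the paper instead invokes the full strength of \cref{thm:GMSV}, whose proof relies on the accessibility and $2$-out-of-$6$ property of the class of gregarious double equivalences (\cref{2of6accessible}, obtained by exhibiting them as the preimage of pointwise biequivalences under $(\bfR,\bfH,\bfV,\bfH\llbracket\bV\mathbbm{2},-\rrbracket)$), rather than on an identification of $\an$ with the trivial cofibrations. That the gregarious case has $(\an_0,\nfib_0)$ equal to the (trivial cofibration, fibration) system is special to all objects being fibrant there (\cref{rem:an0istrivcof}); it is not available after localization.

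Two further points. First, the hypothesis $\inj(\cI)\subseteq\cW$ (condition (1) of \cref{thm:GMSV}) is not ``dual'' or formal: since naive fibrant replacements are taken with respect to the new system $(\an,\nfib)$, one must show that a canonical trivial fibration admits a naive fibrant replacement which is again a gregarious equivalence; the paper does this by pushing out along an anodyne extension and using stability of canonical trivial fibrations under pushout along canonical cofibrations together with \cref{lem:pushfibobj} (see \cref{trivfibincW}). Your sketch omits this. Second, the role of the equivalent conditions of \cref{equivalentcond4} is to supply condition (4) of \cref{thm:GMSV} (anodyne extensions between naive fibrant objects are gregarious equivalences, equivalently the path-object condition), not to guarantee well-definedness, $2$-out-of-$3$, or retract-closure of $\cW$; those come from the properties of $\Wf$ established in \cref{2of6accessible} via the general theorem.
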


In practice, this description makes it significantly easier to understand the fibrant objects than the one in \cref{newthmA}, since they are characterized via lifting conditions instead of a local condition---especially when $\cJ$ is finite. 

 Moreover, we prove that this procedure is exhaustive.

\begin{theoremA}[\cref{thm:cleverJ}] Any cofibrantly generated model structure on $\dblcat$ with the canonical trivial fibrations can be constructed using the recipe above.
\end{theoremA}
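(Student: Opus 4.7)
The plan is to extract from a given cofibrantly generated model structure $\cM$ on $\dblcat$ with canonical trivial fibrations a set $\cJ$ meeting the requirements of the recipe, and then show that the model structure produced by the recipe applied to $\cJ$ is $\cM$ itself. The crucial input is \cref{thm:localization}, which guarantees that $\cM$ is a Bousfield localization of the gregarious model structure; I will exploit two consequences repeatedly: every generating trivial cofibration of the gregarious model structure is a trivial cofibration in $\cM$, and between $\cM$-fibrant objects the weak equivalences of $\cM$ agree with the gregarious double equivalences. Note also that, since trivial fibrations determine cofibrations by orthogonality, the cofibrations of $\cM$ coincide with the canonical cofibrations.

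For the construction of $\cJ$, I would pick a generating set $\cJ_{\cM}$ for the trivial cofibrations of $\cM$, arranged so that its domains are canonically cofibrant, and set $\cJ := \cJ_{\cM} \cup \cJ_0$, where $\cJ_0$ is the set of generating trivial cofibrations of the gregarious model structure from \cref{notn:J0}. Then $\cJ$ consists of canonical cofibrations with canonically cofibrant domains and contains $\cJ_0$. The principal technical hurdle here is ensuring the cofibrant-domain condition on $\cJ_{\cM}$, which requires a standard cofibrant-replacement maneuver in the theory of cofibrantly generated model structures that does not alter the generated class.

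Next I would verify condition (i) of the recipe. Each element of $\cJ$ is a trivial cofibration in $\cM$: those in $\cJ_{\cM}$ by construction, and those in $\cJ_0$ by the Bousfield localization property. Since the trivial cofibrations of $\cM$ are closed under pushout, transfinite composition, and retraction, every anodyne extension generated by $\cJ$ is a trivial cofibration, and in particular a weak equivalence, in $\cM$. Moreover, the naive fibrant objects coincide with the $\cM$-fibrant objects, because $\cJ$ contains $\cJ_{\cM}$ (giving one inclusion) and every element of $\cJ$ is already a trivial cofibration in $\cM$ (giving the reverse). Therefore any anodyne extension between naive fibrant objects is a weak equivalence of $\cM$ between $\cM$-fibrant objects, hence a gregarious double equivalence.

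Finally, I would invoke \cref{thm:genmodel} to obtain the recipe's model structure $\cM_{\cJ}$ and conclude $\cM = \cM_{\cJ}$. Both structures have the canonical trivial fibrations, so their cofibrations coincide. The inclusion $\cJ_{\cM} \subseteq \cJ$ shows every trivial cofibration of $\cM$ is a trivial cofibration of $\cM_{\cJ}$, while the fact that $\cJ$ consists of trivial cofibrations of $\cM$ gives the reverse inclusion. Hence the two model structures share their cofibrations and trivial cofibrations, and therefore agree.
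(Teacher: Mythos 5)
Your overall strategy is the same as the paper's (reduce to \cref{thm:localization} and feed a suitable set $\cJ$ back into \cref{thm:genmodel}), but two steps do not hold as written. First, you need a set $\cJ_{\cM}$ of generating trivial cofibrations of $\cM$ with canonically cofibrant domains, and you assert that this follows from a ``standard cofibrant-replacement maneuver \dots that does not alter the generated class.'' The maneuver actually available here (\cref{rem:cofdomain}) replaces $J$ by $J^\mathrm{cof}$, and this preserves the induced \emph{localization}, not the generated weak factorization system: a map having the right lifting property against all the $J^\mathrm{cof}$ need not lift against the original $J$, so the modified set need not generate the trivial cofibrations of $\cM$ (the existence of such a cofibrant-domain generating set is exactly the nontrivial ``tractability'' issue, and it is not established here). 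Your identification of the naive fibrant objects with the $\cM$-fibrant objects, your verification of condition (i) of \cref{equivalentcond4}, and your forward inclusion of trivial cofibrations all rest on this unproved claim. The paper sidesteps it by taking $\cJ$ to be a set of generating anodyne extensions in the sense of \cref{def:anodyne} and using \cref{prop:locvsBousfieldloc} together with \cref{rem:cofdomain}, which only requires invariance of the Bousfield localization, not of the generated class.

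Second, the concluding identification $\cM=\cM_{\cJ}$ is gappy: you deduce that every trivial cofibration of $\cM_{\cJ}$ is a trivial cofibration of $\cM$ from the fact that $\cJ$ consists of $\cM$-trivial cofibrations, but that fact only gives $\cof(\cJ)\subseteq$ (trivial cofibrations of $\cM$). In the fibrantly-induced setting the trivial cofibrations of the model structure produced by \cref{thm:genmodel} are in general strictly larger than the anodyne extensions $\cof(\cJ)$ (compare \cref{rem:an0istrivcof}, where the coincidence is recorded only because all objects are fibrant in $\dblcat_\mathrm{greg}$), so the reverse inclusion does not follow from what you cite. A repair is available with what you have already established: both model structures have the canonical trivial fibrations and (granting the first point) the same fibrant objects, so they coincide by the criterion of [Joyal, Prop.\ E.1.10] invoked in the proof of \cref{prop:locvsBousfieldloc}; alternatively, one can compare both with left Bousfield localizations of $\dblcat_\mathrm{greg}$ via \cref{thm:MSislocwrtJ}, which is essentially how the paper concludes.
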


In \cref{thm:gray}, we give minimal conditions that ensure the model structures are monoidal with respect to the Gray tensor product for double categories, introduced by B\"ohm \cite{Bohm}. 

\subsection*{Applications}

 The rest of the paper is dedicated to the construction of a plethora of examples using this recipe, whose homotopy theories encode a range of relevant $2$-dimensional structures; those without citations are new to this paper. 
 \begin{itemize}[leftmargin=0.8cm]
     \item  $\dblcat_\mathrm{greg}$: the gregarious model structure \cite{Camp}; this is the ``canonical'' model structure for double categories, and is initial among model structures with the canonical trivial fibrations, in the sense that every other such model structure is a localization of it.
     \item $\dblcat_\mathrm{whi}$: the model structure for weakly horizontally invariant double categories \cite{whi}; this is compatible with the first-named author’s double $(\infty, 1)$-categorical nerve construction \cite{LyneNerve}. 
     \item $\dblcat_\mathrm{tr}$: the model structure for transposable double categories; this models the homotopy theory of 2-categories. This model structure was originally proposed in \cite{fibrantly_induced}, but there is a mistake in the proof, which we circumvent here.
     \item $\dblcat_\mathrm{h,eqp}$: the model structure for ``horizontal equipments'' \cite{MSequipments}; this provides a framework for formal category theory.
     \item $\dblcat_\mathrm{tr,ladj}$ (resp.\ $\dblcat_\mathrm{tr,adj}$), whose homotopy theory models $2$-categories whose morphisms all have left (resp.\ both left and right) adjoints.
     \item $\dblcat_\mathrm{tr,gpd}$: the model structure for transposable double groupoids; this models the homotopy theory of $2$-groupoids.\footnote{A $2$-groupoid is a $2$-category in which all morphisms are equivalences and all $2$-morphisms are invertible.}
      \item $\dblcat_{\emptyset\mathrm{/ctr}}$: the model structure for either empty or contractible double categories; this models homotopy $(-1)$-types.
     \item $\dblcat_\mathrm{ctr}$: the model structure for contractible double categories; this models homotopy $(-2)$-types, and is terminal among model structures with the canonical trivial fibrations, in the sense that it is a localization of every other such model structure.
 \end{itemize}

When appropriate, the model structures we listed also admit a transposed version, where the role of the horizontal and vertical morphisms are reversed. As expected, many of these interact with the corresponding model structures on $2$-categories (or $2$-groupoids) via Quillen adjunctions. We summarize their interactions in the non-commutative diagrams below, where red arrows denote right Quillen embeddings, blue arrows are left Quillen embeddings, green arrows are both left and right Quillen equivalences, and brown arrows are right Quillen equivalences.

    \begin{tz}
    \node[](1) {$\dblcat_\mathrm{greg}$}; 
    \node[below of=1,xshift=-3cm](2) {$\dblcat_\mathrm{whi}$}; 
    \node[below of=1,xshift=3cm](2') {$\dblcat_\mathrm{wvi}$}; 
    \node[below of=1](0) {$\twocat$}; 
    \node[below of=0](0') {$\dblcat_\mathrm{tr}$}; 
    \node[below of=0',yshift=-1.5cm](0'') {$\dblcat_\mathrm{tr,adj}$};
    \node[above of=0''](0''') {$\twocat_\mathrm{adj}$};
    \node[below of=2,xshift=-3cm](4) {$\dblcat_\mathrm{h,eqp}$}; 
    \node[below of=2',xshift=3cm](4') {$\dblcat_\mathrm{v,eqp}$}; 
     \node[below of=4,xshift=3cm](5) {$\dblcat_\mathrm{tr,ladj}$}; 
     \node[above of=5](6) {$\twocat_\mathrm{ladj}$};
     \node[below of=4',xshift=-3cm](5') {$\dblcat_\mathrm{tr,radj}$};
     \node[above of=5'](6') {$\twocat_\mathrm{radj}$};

\draw[->,teal](0) to node[left,la]{$\Sq$} node[right,la]{$\simeq$} (0');
\draw[->,blue](0) to node[left,la]{$\Sq$} (1);

\draw[->,teal](6) to node[left,la]{$\Sq$} node[right,la]{$\simeq$} (5);
\draw[->,teal](6') to node[left,la]{$\Sq$} node[right,la]{$\simeq$} (5');
\draw[->,teal](0''') to node[left,la]{$\Sq$} node[right,la]{$\simeq$} (0'');

\draw[->,red]($(0.west)-(0,3pt)$) to node[below,la]{$\bH^{\simeq}$} ($(2.east)-(0,3pt)$);    
\draw[->,red]($(0.east)-(0,3pt)$) to node[below,la]{$\bV^\simeq$} ($(2'.west)-(0,3pt)$);
\draw[->,blue]($(0.west)+(0,2pt)$) to node[above,la]{$\bH$} ($(2.east)+(0,2pt)$);    
\draw[->,blue]($(0.east)+(0,2pt)$) to node[above,la]{$\bV$} ($(2'.west)+(0,2pt)$);

\draw[->,red](2) to node[above,la]{$\id$} (1); 
\draw[->,red](2') to node[above,la]{$\id$} (1); 
\draw[->,red](4) to node[above,la]{$\id$} (2);
\draw[->,red](4') to node[above,la]{$\id$} (2');
\draw[->,red](0') to node[below,la]{$\id$} (2); 
\draw[->,red](0') to node[below,la]{$\id$} (2'); 
\draw[->,red](5) to node[below,la]{$\id$} (0');
\draw[->,red](5') to node[below,la]{$\id$} (0');
\draw[->,red](0'') to node[below,la]{$\id$} (5); 
\draw[->,red](0'') to node[below,la]{$\id$} (5');
\draw[->,red](5) to node[below,la]{$\id$} (4); 
\draw[->,red](5') to node[below,la]{$\id$} (4');  

    \node[below of=0''](0') {$\dblcat_\mathrm{tr,gpd}$}; 
\draw[->,red](0') to node[right,la]{$\id$} (0''); 
    \node[below of=0'',xshift=-3cm](0) {$\twogpd$}; 
    \node[below of=0](01) {$\{\top\to \bot\}$}; 
    \node[below of=01](02) {$\{\top\}$}; 
    \node[below of=0'](0'') {$\dblcat_{\emptyset\mathrm{/ctr}}$};
    \node[below of=0''](0''') {$\dblcat_\mathrm{ctr}$}; 

\draw[->,teal](0) to node[below,la]{$\Sq$} node[above,la]{$\simeq$} (0');
\draw[->,red!50!teal](01) to node[below,la]{$\emptyset\to \mathbbm{1}$} node[above,la]{$\simeq$} (0'');
\draw[->,teal](0''') to node[below,la]{$!$} node[above,la]{$\simeq$} (02);
\draw[->,red](0'') to node[right,la]{$\id$} (0'); 
\draw[->,red](0''') to node[right,la]{$\id$} (0''); 
\end{tz}

\subsection*{Acknowledgments}

This project started while the authors were participants in the program ``Topology, representation theory and higher structures'' at the Isaac Newton Institute, and they are very grateful to the organizers for creating such a great work environment. They are also grateful to John Bourke for pointing out a mistake in the proof of the model structure for transposable double categories in \cite{fibrantly_induced}, whose correction prompted the present work, and to Tom Leinster for stimulating discussions. During the realization of this work, the first author was a member of the Collaborative Research Centre ``SFB 1085: Higher Invariants'' funded by the Deutsche Forschungsgemeinschaft~(DFG), and the second author was partially supported by the National Science Foundation grant DMS-2506116.

\section{Preliminaries: model categories}\label{subsec:MSpreliminaries}

We recall some of the specialized model categorical background that will be used throughout the paper; namely, some useful tools to construct model structures: (Bousfield) localizations in \cref{subsec:loc}, and transfer along an adjunction in \cref{subsec:transfer}. For a reader looking for an introduction to model categories, we recommend \cite{Hovey}.

\subsection{Localization}\label{subsec:loc}
We start by recalling some definitions regarding localizations of model structures, a process that allows us to add more maps to the class of weak equivalences under certain conditions.

\begin{defn}
    Let $\cM$ be a model category. A \emph{localization} of $\cM$ is another model structure $\cM_\mathrm{loc}$ on $\cM$ with the same trivial fibrations such that the identity adjunction
    \begin{tz}
\node[](1) {$\cM$}; 
\node[right of=1,xshift=1cm](2) {$\cM_\mathrm{loc}$};

\draw[->,bend left=20] ($(1.east)+(0,4pt)$) to node[above,la]{$\id$} ($(2.west)+(0,4pt)$);
\draw[->,bend left=20] ($(2.west)-(0,4pt)$) to node[below,la]{$\id$} ($(1.east)-(0,4pt)$);

\node[la] at ($(1.east)!0.5!(2.west)$) {$\bot$};
    \end{tz}
    is a Quillen pair whose derived counit is a weak equivalence in $\cM_\mathrm{loc}$. 
\end{defn}

Given a set $S$ of morphisms in a model category $\cM$, we recall how to define the left Bousfield localization of $\cM$ at $S$. 

\begin{defn}
    An object $X$ in $\cM$ is \emph{$S$-local} if it is fibrant in $\cM$ and, for every morphism $s\colon A\to B$ in $S$, the induced map between derived homs
    \[ s^*\colon \bR\Hom_\cM(B,X)\to \bR\Hom_\cM(A,X)\]
    is a weak homotopy equivalence.

    A morphism $f\colon A\to B$ in $\cM$ is an \emph{$S$-local equivalence} if, for every $S$-local object $X$ in~$\cM$, the induced map between derived homs
    \[ f^*\colon \bR\Hom_\cM(B,X)\to \bR\Hom_\cM(A,X)\]
    is a weak homotopy equivalence.
\end{defn}

\begin{defn}
    If it exists, the \emph{left Bousfield localization} of $\cM$ at $S$ is the model structure on $\cM$ such that
    \begin{itemize}[leftmargin=0.8cm]
        \item a morphism is a trivial fibration if and only if it is a trivial fibration in $\cM$, 
        \item a morphism is a weak equivalence if and only if it is an $S$-local weak equivalence. 
    \end{itemize}
\end{defn}

\begin{theorem} \label{thm:existleftBousfield}
    Let $\cM$ be a left proper, combinatorial model structure and $S$ be a set of morphisms in $\cM$. Then the left Bousfield localization of $\cM$ at $S$ exists and is such that 
    \begin{itemize}[leftmargin=0.8cm]
        \item an object $X$ is fibrant if and only if $X$ is $S$-local, 
        \item a morphism $f$ between $S$-local objects is a weak equivalence (resp.~fibration) if and only if it is a weak equivalence (resp.~fibration) in $\cM$. 
    \end{itemize}
\end{theorem}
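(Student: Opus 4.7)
The plan is to apply Jeff Smith's recognition theorem for combinatorial model categories: given a locally presentable category, a set $I$ of generating cofibrations, and a class $W$ of putative weak equivalences satisfying certain closure properties, one obtains a cofibrantly generated model structure whose cofibrations are generated by $I$ and whose weak equivalences are $W$. I would take $I$ to be the generating cofibrations of $\cM$ and $W_S$ to be the class of $S$-local equivalences, and reduce the theorem to verifying Smith's hypotheses for this data.

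The formal properties come first: $W_S$ contains the weak equivalences of $\cM$ (since $\cM$-equivalences are preserved by $\bR\Hom_\cM(-,X)$), is closed under retracts, and satisfies 2-out-of-3, all immediate from the definition via derived mapping spaces. The accessibility of $W_S$ as a subcategory of the arrow category $\cM^{\to}$ is more technical, but follows from combinatoriality of $\cM$: one models $\bR\Hom$ by a small functorial (co)simplicial frame, exhibits the class of $S$-local objects as an accessibly embedded full subcategory, and deduces accessibility of $W_S$. This is standard machinery, due to Smith and written out in detail by Barwick and Lurie.

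The crux is closure of $\cof(I)\cap W_S$ under pushouts, and this is exactly where \emph{left properness} of $\cM$ is used. For a pushout along a cofibration, left properness guarantees that the square is a homotopy pushout, so mapping into an $S$-local object $X$ turns it into a homotopy pullback; thus an $S$-local equivalence pushed out along a cofibration remains an $S$-local equivalence. Closure under transfinite compositions follows from accessibility via a filtered-colimit argument. Combined with the evident inclusion $\inj(I)\subseteq W_S$, Smith's theorem then delivers a combinatorial model structure, and its trivial fibrations necessarily coincide with $\inj(I)$, i.e., the trivial fibrations of $\cM$.

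It remains to describe the fibrant objects and to analyse fibrations and weak equivalences between them. Fibrancy is determined by the right lifting property against a set of generating trivial cofibrations; by arranging this set as suitable ``horns'' of $S$ (pushout-products of the maps in $S$ with the generating cofibrations of $\cM$ against a cylinder/frame), the lifting condition becomes precisely $S$-locality. For the second bullet, a morphism between $S$-local objects that is an $S$-local equivalence is in particular detected as such by $\bR\Hom_\cM(-,X)$ for every $S$-local $X$, hence is an $\cM$-equivalence by a Yoneda-style argument; and fibrations between fibrant objects are shown to agree with $\cM$-fibrations by the usual path-object lifting trick. The hard part throughout is the pushout-closure step, where left properness is essential.
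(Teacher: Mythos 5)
The paper does not prove this statement at all: it is recalled in the preliminaries as the standard existence theorem for left Bousfield localizations of left proper combinatorial model categories (essentially Smith's theorem, as written up by Barwick and in Lurie's appendix), and the paper simply uses it as a black box. Your sketch follows exactly that standard route — Smith's recognition theorem applied to the generating cofibrations of $\cM$ and the class of $S$-local equivalences, with left properness entering precisely to make pushouts along cofibrations homotopy pushouts so that $\mathrm{cof}(\cI)\cap W_S$ is closed under pushout — so it is in essence the same argument the paper implicitly relies on, and I see no genuine gap, only the expected deferrals: the accessibility of $W_S$ and the identification of the fibrant objects with the $S$-local objects (via framings and augmented $S$-horns) are the technical steps you cite rather than carry out, and the final two bullets follow, as you say, from the standard lemma that an $S$-local equivalence between $S$-local objects is an $\cM$-equivalence together with the usual comparison of fibrations between fibrant objects.
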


In the combinatorial case, any localization of a model category $\cM$ can be obtained as a left Bousfield localization. A proof for the simplicial case is given in \cite[Proposition A.3.7.4]{HTT}; we present a non-simplicial version below, which will be required for our purposes.  

\begin{defn} \label{def:anodyne}
    A set $\cJ$ of morphisms in a model category $\cM$ is a set of \emph{generating anodyne extensions} if the following conditions hold:
    \begin{itemize}[leftmargin=0.8cm]
        \item an object $X$ is fibrant in $\cM$ if and only if it has the right lifting property with respect to every morphism in $\cJ$, 
        \item a morphism $f$ between fibrant objects is a fibration in $\cM$ if and only if $f$ has the right lifting property with respect to every morphism in $\cJ$. 
    \end{itemize}
\end{defn}

\begin{prop} \label{prop:locvsBousfieldloc}
    Let $\cM$ be a left proper, combinatorial model category, and suppose that $\cM_\mathrm{loc}$ is a localization of $\cM$ which is cofibrantly generated. Then $\cM_\mathrm{loc}$ coincides with the left Bousfield localization of $\cM$ at any set $\cJ$ of generating anodyne extensions of $\cM_\mathrm{loc}$. 
\end{prop}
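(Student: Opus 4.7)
The plan is to prove $\cM_\mathrm{loc} = \cM_{\cJ}$, where $\cM_{\cJ}$ denotes the left Bousfield localization of $\cM$ at $\cJ$ (which exists by \cref{thm:existleftBousfield}, since $\cM$ is left proper and combinatorial). I would proceed through an intermediate model structure: letting $\cJ_0$ be a set of generating trivial cofibrations of $\cM_\mathrm{loc}$, I establish the chain of equalities $\cM_\mathrm{loc} = \cM_{\cJ_0} = \cM_{\cJ}$.

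For the first equality $\cM_\mathrm{loc} = \cM_{\cJ_0}$, both model structures share the cofibrations of $\cM$. Since $\cJ_0$ consists of trivial cofibrations in $\cM_\mathrm{loc}$, the universal property of Bousfield localization provides a left Quillen functor $\id\colon \cM_{\cJ_0}\to \cM_\mathrm{loc}$, so the trivial cofibrations of $\cM_{\cJ_0}$ are contained in those of $\cM_\mathrm{loc}$. Conversely, since $\cM_\mathrm{loc}$ is cofibrantly generated by $\cJ_0$, its trivial cofibrations form the weak saturation $\cof(\cJ_0)$; and this saturation is contained in the class of trivial cofibrations of $\cM_{\cJ_0}$, which is weakly saturated and contains $\cJ_0$. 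The two classes agree, so the model structures coincide.

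For the second equality $\cM_{\cJ_0} = \cM_{\cJ}$, I would use that a left Bousfield localization at a set is determined by its class of local objects, so it suffices to show that $\cJ_0$-local objects coincide with $\cJ$-local objects. By the first equality and \cref{thm:existleftBousfield}, the $\cJ_0$-local objects are precisely the fibrant objects of $\cM_\mathrm{loc}$, which by hypothesis on $\cJ$ are the objects with the right lifting property against $\cJ$. One inclusion is easy: a $\cJ$-local object is fibrant in $\cM_{\cJ}$ and hence has the right lifting property against the trivial cofibrations of $\cM_{\cJ}$, which include $\cJ$. For the other inclusion, if $X$ has the right lifting property against $\cJ$ then $X$ is fibrant in $\cM_\mathrm{loc}$; combining this with the key lemma below, $\bR\Hom_{\cM_\mathrm{loc}}(j, X)$ is a weak equivalence for each $j\in \cJ$, and this mapping space coincides with $\bR\Hom_\cM(j, X)$ since cosimplicial resolutions in $\cM$ remain cosimplicial resolutions in $\cM_\mathrm{loc}$ (cofibrations agree and weak equivalences of $\cM$ are weak equivalences of $\cM_\mathrm{loc}$), so $X$ is $\cJ$-local.

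The main obstacle is the key lemma that each $j\colon A\to B$ in $\cJ$ is a weak equivalence in $\cM_\mathrm{loc}$, which I would prove by a Yoneda-style argument; here I may assume that $j$ is a cofibration between cofibrant objects, which is the case in the intended applications. Fix $X$ fibrant in $\cM_\mathrm{loc}$; I show that $j^*\colon [B, X]\to [A, X]$ is a bijection. Surjectivity is immediate: the right lifting property of $X\to *$ against $\cJ$ supplies a strict lift of any $A\to X$ to $B$. For injectivity, take a path object factorization $X\to PX\to X\times X$ in $\cM_\mathrm{loc}$; the second map is a fibration between fibrant objects of $\cM_\mathrm{loc}$ and hence, by hypothesis on $\cJ$, has the right lifting property against $\cJ$. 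Given $f, g\colon B\to X$ together with a homotopy $h\colon A\to PX$ witnessing $f\circ j\sim g\circ j$, lifting $h$ along $j$ against $PX\to X\times X$ produces a homotopy $B\to PX$ witnessing $f\sim g$. Hence $j^*$ is bijective, and by Yoneda $j$ becomes an isomorphism in $\mathrm{Ho}(\cM_\mathrm{loc})$, that is, a weak equivalence. This path-object argument is the technical heart of the proof.
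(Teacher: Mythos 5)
Your proposal is correct and reaches the same essential comparison as the paper, but it is organized differently and proves more. The paper argues directly: since $\cM_\mathrm{loc}$ and the Bousfield localization $\cM_\cJ$ have the same trivial fibrations, it suffices (by a criterion of Joyal) that they have the same fibrant objects, and it then checks that fibrant-in-$\cM_\mathrm{loc}$ coincides with $\cJ$-local, identifying $\bR\Hom_{\cM_\mathrm{loc}}$ with $\bR\Hom_{\cM}$ via the identity Quillen pair exactly as you do. You instead interpose the localization $\cM_{\cJ_0}$ at a set of generating trivial cofibrations and invoke the fact that a left Bousfield localization is determined by its local objects; this is a slightly longer route to the same reduction, but it is sound. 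The genuinely different (and valuable) ingredient is your key lemma: the paper simply asserts that every $j\in\cJ$ is a weak equivalence in $\cM_\mathrm{loc}$, which does not literally follow from \cref{def:anodyne}, whereas you derive it from both clauses of that definition by a lifting/path-object argument on homotopy classes $[B,X]\to[A,X]$ (surjectivity from the lifting property of $X\to *$, injectivity from the lifting property of $\Path X\to X\times X$, which is a fibration between fibrant objects). This fills a step the paper glosses over, at the cost of the extra hypothesis that $j$ is a cofibration between cofibrant objects --- a hypothesis the paper's own proof also quietly uses (its phrase ``every cofibration in $\cJ$'' in the converse direction, where one needs $\cJ$ to consist of cofibrations so that $\cJ$-local objects lift against them), and which holds in all the applications in the paper, where the sets $\cJ$ consist of canonical cofibrations with canonically cofibrant domains. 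So: same skeleton, a different bookkeeping of the reduction, plus an explicit proof of a fact the paper takes for granted.
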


\begin{proof}
Since the localization $\cM_\mathrm{loc}$ and the left Bousfield localization $\cM_\cJ$ of $\cM$ at $\cJ$ have the same trivial fibrations, namely those of $\cM$, it suffices to show that they have the same fibrant objects by \cite[Proposition E.1.10]{JoyalVolumeII}.

Suppose that $X$ is fibrant in $\cM_\mathrm{loc}$. Then, in particular, it is fibrant in $\cM$. Since every morphism $j\colon A\to B$ in $\cJ$ is a weak equivalence in $\cM_\mathrm{loc}$, the induced map between derived homs
    \[ j^*\colon \bR\Hom_{\cM_\mathrm{loc}}(B,X)\to \bR\Hom_{\cM_\mathrm{loc}}(A,X) \] 
    is a weak homotopy equivalence. Now, since the identity adjunction is a Quillen pair between $\cM_\mathrm{loc}$ and $\cM$, this map is weakly homotopy equivalent to the map
    \[ j^*\colon \bR\Hom_{\cM}(B,X)\to \bR\Hom_{\cM}(A,X), \]
    which is therefore a weak homotopy equivalence as well. This shows that $X$ is $\cJ$-local. 
    
    Now suppose that $X$ is $\cJ$-local. Since every cofibration in $\cJ$ is a $\cJ$-local equivalence, then $X$ has the right lifting property with respect to all generating anodyne extensions in $\cM_\mathrm{loc}$. This shows that $X$ is fibrant in $\cM_\mathrm{loc}$ and concludes the proof.   
\end{proof}

\subsection{Transfer}\label{subsec:transfer}

We now recall some available methods to transfer a model category structure along an adjunction. 

\begin{defn} Consider an adjunction as below
    \begin{tz}
\node[](1) {$\cM$}; 
\node[right of=1,xshift=.9cm](2) {$\cN$};

\draw[->,bend left=20] ($(1.east)+(0,4pt)$) to node[above,la]{$F$} ($(2.west)+(0,4pt)$);
\draw[->,bend left=20] ($(2.west)-(0,4pt)$) to node[below,la]{$U$} ($(1.east)-(0,4pt)$);

\node[la] at ($(1.east)!0.5!(2.west)$) {$\bot$};
    \end{tz}

\begin{enumerate}[leftmargin=0.8cm]
\item Let $\cM$ be a model structure. When it exists, the \emph{right-transferred model structure} on $\cN$ is such that a morphism $f$ is a weak equivalence (resp.~fibration) if and only if $Uf$ is a weak equivalence (resp.~fibration) in $\cM$.

\item Let $\cN$ be a model structure. When it exists, the \emph{left-transferred model structure} on $\cM$ is such that a morphism $f$ is a weak equivalence (resp.~cofibration) if and only if $Ff$ is a weak equivalence (resp.~cofibration) in $\cN$. 
\end{enumerate}
\end{defn}

\begin{rem}
    If the corresponding transferred model structure exists, then the original adjunction $F\dashv U$ becomes a Quillen pair. 
\end{rem}

The following result gives a criterion to recognize transferred model structures.

\begin{prop} \label{rem:inductionQE}
    Let $\cM$ and $\cN$ be model categories and $U\colon \cN\to \cM$ be a functor which is both a left and a right Quillen equivalence and admits a retraction $R\colon \cM\to \cN$. Then the model structure on $\cN$ is both left- and right-transferred from that on $\cM$ along~$U$. 
\end{prop}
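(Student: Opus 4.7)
The strategy is to show that $U$ preserves and reflects each of the three model-categorical classes---weak equivalences, cofibrations, and fibrations. Once this is established, the definitions of transfer immediately imply that the model structure on $\cN$ coincides with the right-transferred structure along the adjunction $U_l \dashv U$, where $U_l$ is a left adjoint of $U$ (which exists since $U$ is right Quillen), and with the left-transferred structure along $U \dashv U_r$, where $U_r$ is a right adjoint (which exists since $U$ is left Quillen).

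Preservation is the easy part. Since $U$ is simultaneously left and right Quillen, it preserves cofibrations, fibrations, trivial cofibrations, and trivial fibrations. To see $U$ preserves arbitrary weak equivalences, I would factor any weak equivalence $f$ in $\cN$ as $f = pi$ with $i$ a trivial cofibration and $p$ a fibration; by 2-out-of-3, $p$ is a trivial fibration, so $Uf = Up \circ Ui$ is a composite of weak equivalences. For reflection of weak equivalences, I would use the Quillen equivalence hypothesis: since $U$ preserves all weak equivalences, the induced functor $\overline{U}\colon \Ho(\cN) \to \Ho(\cM)$ is naturally isomorphic to $\bL U$, which is an equivalence of categories, hence conservative. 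Translating back, $U$ reflects weak equivalences.

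Reflection of cofibrations and fibrations is where the retraction $R$ enters. Given $f\colon X \to Y$ in $\cN$ with $Uf$ a cofibration in $\cM$, I would verify that $f$ has the left lifting property against an arbitrary trivial fibration $p\colon A \to B$ in $\cN$: the morphism $Up$ is a trivial fibration in $\cM$, so any commutative square from $f$ to $p$ in $\cN$ yields, after applying $U$, a square in $\cM$ admitting a lift $\ell\colon UY \to UA$; then $R\ell\colon Y \to A$ is a lift of the original square, using functoriality of $R$ together with the identity $RU = \id_\cN$. The analogous argument with fibrations and trivial cofibrations gives reflection of fibrations.

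The main subtlety I anticipate is bookkeeping which hypothesis is used where. The Quillen equivalence hypothesis is needed only for reflecting weak equivalences, whereas the retraction plays its role only in reflecting cofibrations and fibrations; in particular, one cannot use $RU = \id_\cN$ directly to reflect weak equivalences, since $R$ is not assumed to preserve them. Once the two uses are cleanly separated, no single step is genuinely hard.
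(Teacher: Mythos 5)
Your proposal is correct and takes essentially the same route as the paper: the key step in the paper's proof is exactly your retraction-based lifting argument (stated there for reflecting fibrations, with the rest of the verifications deferred to the proof of \cite[Lemma 4.15]{MoserNuiten}), and the pieces you spell out yourself---preservation of weak equivalences via factorization, and their reflection via conservativity of the derived equivalence---are the standard arguments that citation covers. No gaps.
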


\begin{proof}
    The proof works as in \cite[Lemma 4.15]{MoserNuiten}, where the functor $U$ is assumed to be fully faithful rather than have a retraction. Inspecting the proof, it suffices to show that $U$ reflects fibrations under the current hypotheses. If $X\to Y$ is a morphism in $\cN$ such that $UX\to UY$ is a fibration in $\cM$, then $UX\to UY$ has the right lifting property with respect to $UA\to UB$ for every trivial cofibration $A\to B$ in $\cN$, as $U\colon \cN\to \cM$ preserves trivial cofibrations. Applying the retraction $R\colon \cM\to \cN$ then shows that $X\to Y$ has the right lifting property with respect to every trivial cofibration $A\to B$ in $\cN$, and so it is a fibration.
\end{proof}

Finally, this last result relating left Bousfield localizations and transfer appears as \cite[Proposition 2.13]{GuettaMoser}. 

\begin{prop} \label{rem:fibindloc}
Let $\cM$ and $\cN$ be left proper, combinatorial model categories. Suppose that $\cN$ is right-transferred from $\cM$ along an adjunction 
    \begin{tz}
\node[](1) {$\cM$}; 
\node[right of=1,xshift=.9cm](2) {$\cN$};

\draw[->,bend left=20] ($(1.east)+(0,4pt)$) to node[above,la]{$F$} ($(2.west)+(0,4pt)$);
\draw[->,bend left=20] ($(2.west)-(0,4pt)$) to node[below,la]{$U$} ($(1.east)-(0,4pt)$);

\node[la] at ($(1.east)!0.5!(2.west)$) {$\bot$};
\end{tz}
Let $S$ be a set of morphisms in $\cM$ between cofibrant objects, and denote by $\cM_S$ and $\cN_{F(S)}$ the left Bousfield localizations of $\cM$ and~$\cN$ at $S$ and $F(S)$, respectively. If $F\dashv U$ is a Quillen equivalence between $\cM$ and $\cN$, then $\cN_{F(S)}$ is right-transferred from $\cM_S$ along the adjunction $F\dashv U$.
\end{prop}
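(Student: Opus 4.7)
The plan is to invoke the principle that a model structure on a given category is determined by its trivial fibrations together with its fibrant objects (Proposition E.1.10 of \cite{JoyalVolumeII}, as already used in the proof of \cref{prop:locvsBousfieldloc}). Concretely, I will show that $\cN_{F(S)}$ has the same trivial fibrations and the same fibrant objects as the right-transferred model structure from $\cM_S$ along $F\dashv U$, and therefore coincides with it.

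The first step is to identify the trivial fibrations. Since $\cN$ is right-transferred from $\cM$, a morphism $f$ in $\cN$ is a trivial fibration iff $Uf$ is a trivial fibration in $\cM$, and trivial fibrations are unchanged under left Bousfield localization. Hence $f$ is a trivial fibration in $\cN_{F(S)}$ iff $Uf$ is a trivial fibration in $\cM_S$, which matches the right-transfer prescription.

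The central step is to establish the correspondence on fibrant objects, namely that $X\in\cN$ is $F(S)$-local iff $UX\in\cM$ is $S$-local. Fibrancy of $X$ in $\cN$ corresponds to fibrancy of $UX$ in $\cM$ by the right-transfer hypothesis, so it suffices to compare the two locality conditions under this assumption. For each $s\colon A\to B$ in $S$, with $A,B$ cofibrant in $\cM$, the derived adjunction associated to the Quillen equivalence $F\dashv U$ produces a natural weak equivalence identifying the arrow of derived mapping spaces $(Fs)^*\colon \bR\Hom_\cN(FB,X)\to \bR\Hom_\cN(FA,X)$ with $s^*\colon \bR\Hom_\cM(B,UX)\to \bR\Hom_\cM(A,UX)$; one is a weak homotopy equivalence iff the other is, so the two locality conditions coincide.

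Having matched trivial fibrations and fibrant objects, the two model structures agree by the cited uniqueness principle. The main obstacle I expect is to ensure that the candidate right-transferred model structure on $\cN$ actually exists, so that the uniqueness principle can be invoked to identify it with $\cN_{F(S)}$. One resolves this by verifying the acyclicity condition for the right-transfer of $\cM_S$: the Quillen adjunction $F\dashv U$ descends to a Quillen equivalence $\cM_S\rightleftarrows \cN_{F(S)}$ — cofibrations in the localizations agree with those of $\cM$ and $\cN$, while the generating local morphisms in $S$ are sent by $F$ into the weak equivalences of $\cN_{F(S)}$ by construction — and this descent forces $F$ of any trivial cofibration in $\cM_S$ to be a trivial cofibration in $\cN_{F(S)}$, yielding the required acyclicity.
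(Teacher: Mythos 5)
The paper itself does not prove this proposition: it is imported verbatim from \cite{GuettaMoser} (their Proposition 2.13), so there is no in-paper argument to measure your proof against, and I can only assess it on its own terms. The first two steps are fine: trivial fibrations in $\cN_{F(S)}$ are those of $\cN$, hence exactly the morphisms $f$ with $Uf$ a trivial fibration in $\cM$, equivalently in $\cM_S$; and your derived-mapping-space argument that a fibrant $X\in\cN$ is $F(S)$-local if and only if $UX$ is $S$-local is correct (it uses only the Quillen adjunction, the cofibrancy of the objects appearing in $S$, the fact that $U$ preserves the terminal object, and the unlocalized transfer to match fibrancy of $X$ with fibrancy of $UX$). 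Given the existence of the right-transferred structure, the uniqueness criterion already invoked in \cref{prop:locvsBousfieldloc} would then close the argument.

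The genuine gap is precisely that existence step. The acyclicity condition for right-transferring $\cM_S$ along $F\dashv U$ is not ``$F$ sends trivial cofibrations of $\cM_S$ to trivial cofibrations of $\cN_{F(S)}$''; it is that $U$ sends relative $F(J_S)$-cell complexes (for $J_S$ a set of generating trivial cofibrations of $\cM_S$) to weak equivalences of $\cM_S$. What you actually establish --- that $F\dashv U$ descends to a Quillen pair $\cM_S\rightleftarrows\cN_{F(S)}$, so that these cell maps are $F(S)$-local equivalences --- needs only the adjunction, and the passage from ``$j$ is an $F(S)$-local equivalence'' to ``$Uj$ is an $S$-local equivalence'' is exactly (half of) the transfer statement being proven, so invoking it here is circular. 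Tellingly, this is the only place where the Quillen-equivalence hypothesis is indispensable, and your proposal never uses it in a substantive way. A correct completion runs as follows: since the unlocalized transfer makes $U$ preserve all weak equivalences of $\cN$, the morphism $Uf$ represents $\bR U f$; the Quillen equivalence makes $\bR U$ homotopically fully faithful, giving natural weak equivalences $\bR\Hom_\cN(X,Z)\simeq\bR\Hom_\cM(UX,UZ)$, and combining your local-object correspondence with essential surjectivity of $\bR U$ shows that the $S$-local objects are, up to weak equivalence, exactly the $UZ$ with $Z$ an $F(S)$-local object; hence $f$ is an $F(S)$-local equivalence if and only if $Uf$ is an $S$-local equivalence. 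This yields the acyclicity (indeed, it gives the weak-equivalence half of the transfer directly), after which your uniqueness argument legitimately finishes the proof.
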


\section{Preliminaries: 2-dimensional categories}\label{subsec:2dimbackground}

We now recall some key facts about $2$-categories and $2$-groupoids in \cref{lackMS}
(mostly in regards to their homotopy theory), describe the many ways to relate $2$-categories and double categories in \cref{subsec:2catvsdblcat}, the monoidal structures available for double categories in \cref{subsec:monoidalbackground}, and finally, we give a description of pushouts of double categories in \cref{subsec:pushoutsdblcat}. The reader familiar with $2$-dimensional structures is encouraged to skip to \cref{section:proper} and refer back to this material as needed.

\subsection{$2$-categorical structures and their homotopy theory}\label{lackMS} 
As mentioned in the introduction, the notion of equivalence of categories admits a $2$-categorical counterpart, which form the weak equivalences of a model structure on the category of $2$-categories. In order to define some of these relevant classes of $2$-functors, we recall what it means for a morphism in a $2$-category to be an equivalence, using this instance to establish our preferred notation.

\begin{notation}
We use the following notation.
\begin{itemize}[leftmargin=0.8cm]
\item We write $\cat$ for the category of categories and functors. 
\item We write $\twocat$ for the category of $2$-categories and $2$-functors. We refer the reader to \cite{JohYau} for a full account on $2$-categories.
\item We write $U\colon \twocat\to \cat$ for the functor that sends a $2$-category to its underlying category. 
\end{itemize}
\end{notation}

\begin{defn}\label{def:equivdata}
    A morphism $f\colon A\to B$ in a $2$-category $\cA$ is an \emph{equivalence} if there is a morphism $g\colon B\to A$ in $\cA$ and two $2$-isomorphisms $\eta\colon \id_A\cong gf$ and $\varepsilon\colon fg\cong \id_B$ in $\cA$. We refer to $(f,g,\eta,\varepsilon)$ as an \emph{equivalence data}. It is called an \emph{adjoint equivalence data} if the $2$-isomorphisms $\eta$ and $\varepsilon$ further satisfy the triangle identities. 
\end{defn}

\begin{defn}\label{defn:bieq}
A $2$-functor $F\colon \cA\to \cB$ is a \emph{biequivalence} if it is
\begin{enumerate}
    \item[(b1)] \emph{surjective on objects up to equivalence:} for every object $B\in \cB$, there is an object $A\in \cA$ and an equivalence $FA\xrightarrow{\simeq} B$ in~$\cB$,
    \item[(b2)] \emph{essentially full on morphisms:} for every pair of objects $A,C\in \cA$ and every morphism $g\colon FA\to FC$ in $\cB$, there is a morphism $f\colon A\to C$ in $\cA$ and a $2$-isomorphism $g\cong Ff$ in $\cB$, and
    \item[(b3)] \emph{fully faithful on $2$-morphisms:} for every pair of morphisms $f,h\colon A\to C$ in $\cA$ and every $2$-cell $\beta\colon Ff\Rightarrow Fh$ in $\cB$, there is a unique $2$-cell $\alpha\colon f\Rightarrow h$ in $\cA$ such that $F\alpha=\beta$.
\end{enumerate}
\end{defn}

\begin{defn}\label{Lackfib}
A $2$-functor $F\colon \cA\to \cB$ is an \emph{equifibration} if
\begin{enumerate}
    \item[(f1)] for every equivalence $g\colon FA\xrightarrow{\simeq} B$ in $\cB$, there is an equivalence $f\colon A\xrightarrow{\simeq} C$ in $\cA$ such that $Ff=g$, and
    \item[(f2)] it is \emph{locally an isofibration:} for every morphism $f\colon A\to C$ in $\cA$ and every $2$-isomorphism $\beta\colon Ff\cong g$ in $\cB$, there is a $2$-isomorphism $\alpha\colon f\cong h$ in $\cA$ such that $F\alpha=\beta$. 
\end{enumerate}
\end{defn}

The following result is a combination of \cite[Theorem 6.3]{Lack2Cat} and \cite[Theorem 4]{LackBicat}.

\begin{theorem}\label{thm:lackMS}
    There is a cofibrantly generated, left proper model structure on $\twocat$ in which the weak equivalences are the biequivalences and the trivial fibrations are the $2$-functors that are surjective on objects, full on morphisms, and fully faithful on $2$-morphisms. Furthermore, the fibrations are the equifibrations.
\end{theorem}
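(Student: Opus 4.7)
The plan is to apply a standard recognition theorem for cofibrantly generated model structures (Kan's theorem), by specifying explicit generating sets $I$ and $J$ so that $I$-injectives match the trivial fibrations stated in the theorem and $J$-injectives match the equifibrations of \cref{Lackfib}. Left properness will then be deduced from an explicit understanding of the cofibrations produced by $I$.

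First I would take $I$ to consist of four maps, each encoding one clause of the trivial fibration definition: $\emptyset\to[0]$ for surjectivity on objects, $\partial[1]\to[1]$ (with $[1]$ the free-living arrow) for fullness on morphisms, $\partial[2]\to[2]$ (with $[2]$ the free-living $2$-cell) for fullness on $2$-morphisms, and a quotient $\mathrm{Sq}[2]\to[2]$ identifying two parallel $2$-cells, which forces faithfulness on $2$-morphisms. A direct unpacking shows that $I$-injectives are exactly the claimed trivial fibrations. For $J$ I would take two inclusions: $[0]\to\Eadj$ into the free-living adjoint equivalence (detecting clause (f1)) and $[1]\to\mathbb{I}$ into the free-living invertible $2$-cell (detecting (f2)). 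An explicit check gives that $J$-injectives are precisely the equifibrations.

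The compatibility to verify is that each map in $J$ is simultaneously an $I$-cofibration and a biequivalence, and that every map which is both $I$-injective and a biequivalence is $J$-injective. The first half is immediate once one observes that $\Eadj$ and $\mathbb{I}$ are biequivalent to a point. The second half requires upgrading the equivalence datum produced by essential surjectivity into an adjoint equivalence datum (standard, via \cref{def:equivdata}) in order to split the relevant lifting problems. Kan's recognition theorem then reduces the remaining work to showing that $\cof(J)$ consists of biequivalences. Since biequivalences satisfy 2-out-of-3 and closure under retracts, it suffices to establish closure under transfinite composition and under pushout along the two morphisms of $J$; both $\Eadj$ and $\mathbb{I}$ are locally contractible, so those pushouts can be analyzed hom-wise.

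For left properness, the key observation is that the generating cofibrations in $I$ are injective on objects, morphisms, and $2$-cells, and hence so are all cofibrations. This lets pushouts of biequivalences along cofibrations be controlled componentwise. The main obstacle throughout is the stability of biequivalences under pushout along $[1]\to\mathbb{I}$: this pushout formally inverts a $2$-cell and could in principle identify $2$-cells in unpredictable ways. Controlling this identification is the technical heart of the proof, and it is precisely to make the identification tractable that $\Eadj$ (rather than the walking equivalence) is used as a generator, ensuring all the universal structure involved comes equipped with explicit adjoint data.
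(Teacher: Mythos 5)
Note first that the paper does not actually prove this statement: \cref{thm:lackMS} is imported from Lack's two papers, so there is no internal argument to compare against, and your proposal is in effect an attempt to reprove Lack's theorem. Your outline does follow Lack's strategy (Kan's recognition theorem, these generating sets, and the free adjoint equivalence $\Eadj$ rather than the walking equivalence, as in the corrected version), and your identification of $\inj(\cI)$ with the stated trivial fibrations and of the $\cJ$-injectives with the equifibrations of \cref{Lackfib} is fine. But two of the load-bearing steps are genuinely gapped. The most clear-cut problem is the left-properness argument: your claim that the generating cofibrations, and hence all cofibrations, are injective on objects, morphisms and $2$-cells is false, since your own fourth generator (the quotient identifying two parallel $2$-cells, which enforces faithfulness) is not injective on $2$-cells. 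Moreover, even for levelwise-injective maps, pushouts of $2$-categories are not computed ``componentwise'': pushing out along $\partial[1]\to[1]$ freely adjoins an arrow and thereby changes every hom-category via new composites, and pushing out along the quotient generator imposes a congruence whose propagation through pasting must be controlled. The genuine proof shows that the canonical trivial fibrations are stable under pushout along each generating cofibration using explicit free/quotient presentations of such pushouts (this is exactly what the present paper redoes for double categories in \cref{pushout_trivfib} and \cref{prop:pushoutoftrivfib}, following Lack); your sketch replaces this with an argument that does not work as stated.

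The second gap is the verification that $\cof(\cJ)$ consists of biequivalences. Asserting that $\Eadj$ and $\mathbb{I}$ are ``locally contractible, so those pushouts can be analyzed hom-wise'' is not an argument: attaching a free adjoint equivalence at an object creates new $1$-cells that compose with all pre-existing ones, so the hom-categories of the pushout admit no simple hom-wise description, and this is precisely the technical heart of Lack's proof (and the locus of the error his later paper corrects). You flag it as the hard part, but nothing in the proposal addresses it. Finally, a smaller slip: the recognition theorem requires $\cW\cap\inj(\cJ)\subseteq\inj(\cI)$, i.e.\ that an equifibration which is a biequivalence is surjective on objects, full on morphisms and fully faithful on $2$-cells; you state the condition with the roles of $\cI$ and $\cJ$ reversed, although your remark about upgrading equivalence data to adjoint equivalence data suggests you intended the correct statement.
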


The model structure above can be restricted to the full subcategory of $\twocat$ consisting of all $2$-groupoids. Recall that a (1-)category is a groupoid if all morphisms are invertible. The $2$-categorical notion is analogous, requiring that both $1$- and $2$-morphisms are invertible in the weakest possible sense.

\begin{defn}
    A \emph{$2$-groupoid} is a $2$-category in which all morphisms are equivalences and all $2$-morphisms are invertible.

    We denote by $\twogpd$ the full subcategory of $\twocat$ spanned by the $2$-groupoids.
\end{defn}

The following result can be found as \cite[Theorem 3.7]{Lack2Cat}, where the notion that we refer to as 2-groupoids are called ``pseudogroupoids''.

\begin{theorem}
    There is a cofibrantly generated model structure on $\twogpd$ in which the weak equivalences are the biequivalences and the trivial fibrations are the $2$-functors that are surjective on objects, full on morphisms, and fully faithful on $2$-morphisms. Furthermore, the fibrations are the equifibrations.
\end{theorem}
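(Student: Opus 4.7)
The plan is to construct the desired model structure on $\twogpd$ by right-transferring the Lack model structure on $\twocat$ (\cref{thm:lackMS}) along the reflective adjunction
\[ \Pi \colon \twocat \rightleftarrows \twogpd \colon i, \]
where $i$ denotes the fully faithful inclusion and $\Pi$ its left adjoint, the ``$2$-groupoidification'' functor that universally inverts $1$-morphisms (up to equivalence) and $2$-morphisms. Since $i$ is fully faithful and preserves both limits and filtered colimits, $\twogpd$ is a reflective, locally presentable, full subcategory of $\twocat$: limits in $\twogpd$ agree with those in $\twocat$, while colimits are computed by applying $\Pi$ to the colimits of $\twocat$. The existence of $\Pi$ is guaranteed by the adjoint functor theorem for locally presentable categories.

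Next I would invoke the right-transfer definition of \cref{subsec:transfer}, declaring a morphism $f$ in $\twogpd$ to be a weak equivalence (resp.\ fibration) if and only if $if$ is one in $\twocat$. Because the definitions of biequivalence, equifibration, and the canonical trivial fibrations (\cref{defn:bieq}, \cref{Lackfib}) refer only to the underlying $2$-categorical data, and because $i$ is fully faithful, these transferred classes are precisely the biequivalences of $2$-groupoids, the equifibrations between them, and the $2$-functors that are surjective on objects, full on morphisms, and fully faithful on $2$-morphisms, respectively. Cofibrant generation is inherited by taking $\Pi$ applied to the generating (trivial) cofibrations of $\twocat$.

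The main obstacle is verifying the acyclicity condition required by the transfer theorem, namely that every map with the left lifting property against the putative fibrations is a weak equivalence. I would handle this via Quillen's path object argument: for a $2$-groupoid $\cA$, form the path object $\cA^{\Eadj}$, where $\Eadj$ denotes the free-living adjoint equivalence. Since $\Eadj$ is itself a $2$-groupoid and limits in $\twogpd$ coincide with those in $\twocat$, the object $\cA^{\Eadj}$ lies in $\twogpd$, and the standard factorization $\cA \to \cA^{\Eadj} \to \cA \times \cA$ into a biequivalence followed by an equifibration (as used in the proof of \cref{thm:lackMS}) transports verbatim, supplying the required acyclicity.

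Finally, the characterization of fibrations as equifibrations then follows directly from the transfer definition together with the fully faithfulness of $i$, and the identification of trivial fibrations as the stated class of $2$-functors is immediate for the same reason, completing the construction.
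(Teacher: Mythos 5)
The paper does not actually prove this statement: it is recalled from the literature (cited as \cite[Theorem 3.7]{Lack2Cat}), so your transfer argument is by necessity a different route. Your overall strategy is reasonable: right-transfer Lack's model structure on $\twocat$ along the reflective inclusion $\iota\colon\twogpd\to\twocat$ of \cref{MS2gpdvs2cat}, note that $\iota$ being fully faithful makes the transferred weak equivalences, fibrations and trivial fibrations exactly the stated classes, take $\mathrm{loc}$ of Lack's generating sets for cofibrant generation, observe that all objects of $\twogpd$ are fibrant in the prospective structure (every $2$-category is fibrant in Lack's model structure), and verify acyclicity by Quillen's path object argument.

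The genuine gap is the path object step. The object $\cA^{\Eadj}$ is not a $1$-categorical limit of a diagram of $2$-groupoids, so the fact that $\iota$ creates limits says nothing about it lying in $\twogpd$; the justification you give is a non sequitur. Moreover, the meaning of $\cA^{\Eadj}$ matters. If you mean the cartesian internal hom (strict $2$-functors, strict $2$-natural transformations, modifications), then neither its membership in $\twogpd$ nor the required factorization is available by your reasoning: Lack's model structure is \emph{not} monoidal for the cartesian product, and the projection $[\Eadj,\cA]\to\cA\times\cA$ has no reason to be an equifibration, since strict $2$-naturality cannot in general be transported along the given $2$-isomorphisms (this is exactly why the present paper's path object for double categories is the \emph{pseudo}-hom $\llbracket\Sq\Eadj,\bA\rrbracket_\mathrm{ps}$, not the strict hom). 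The correct construction is the Gray cotensor $\mathrm{Ps}(\Eadj,\cA)$ of $2$-functors, pseudo-natural transformations and modifications: it is a $2$-groupoid because a pseudo-natural transformation whose components are equivalences is an equivalence and a modification with invertible components is invertible, and the factorization $\cA\to\mathrm{Ps}(\Eadj,\cA)\to\cA\times\cA$ into a biequivalence followed by an equifibration follows from the monoidality of Lack's model structure with respect to the Gray tensor product, in exact parallel with \cref{lem:pathtrivfib,lem:pathnaive0,lem:pathobject0}. With that replacement (and the fibrancy of all objects made explicit), your transfer argument goes through; as written, the one non-formal step is incorrectly justified.
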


Finally, we observe the following relation between the model structures on $\twogpd$ and~$\twocat$. 

\begin{prop} \label{MS2gpdvs2cat}
    The inclusion functor $\iota\colon \twogpd\to \twocat$ is fully faithful and admits both a left and a right adjoint.
    \begin{tz}
\node[](1) {$\twogpd$}; 
\node[right of=1,xshift=1.4cm](2) {$\twocat$};

\draw[->,bend right=35] ($(2.west)+(0,8pt)$) to node[above,la]{$\mathrm{loc}$} ($(1.east)+(0,8pt)$);
\draw[->] ($(1.east)$) to node[over,la]{$\iota$} ($(2.west)$);
\draw[->,bend left=35] ($(2.west)-(0,8pt)$) to node[below,la]{$\mathrm{core}$} ($(1.east)-(0,8pt)$);

\node[la] at ($(1.east)!0.5!(2.west)+(0,10pt)$) {$\bot$};
\node[la] at ($(1.east)!0.5!(2.west)-(0,10pt)$) {$\bot$};
\end{tz}
Moreover, both adjunctions are Quillen pairs, and the derived counit of $\mathrm{loc}\dashv \iota$ and the derived unit of $\iota\dashv \mathrm{core}$ are biequivalences in $\twogpd$.
\end{prop}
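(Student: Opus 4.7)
The plan is to first establish the adjoints at the level of ordinary categories, and then handle the homotopical content by leveraging the fact that every 2-category is fibrant in the Lack model structure. First I would define $\mathrm{core}(\cA)$ as the maximal 2-subgroupoid of a 2-category $\cA$ (retaining all objects, taking equivalences as 1-morphisms, and invertible 2-cells between them), and $\mathrm{loc}(\cA)$ as the free 2-groupoid obtained by formally inverting all 1- and 2-morphisms. The adjunctions $\iota\dashv \mathrm{core}$ and $\mathrm{loc}\dashv \iota$ then follow from the standard universal properties: any 2-functor out of a 2-groupoid lands in $\mathrm{core}(\cA)$, and any 2-functor into a 2-groupoid factors uniquely through $\mathrm{loc}(\cA)$. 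Full faithfulness of $\iota$ is immediate from the definitions, and this forces the unit of $\iota\dashv \mathrm{core}$ and the counit of $\mathrm{loc}\dashv \iota$ to be natural isomorphisms; this is the key feature that will power the derived-unit/counit computations.

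Next I would verify that both adjunctions are Quillen pairs. For $\mathrm{loc}\dashv \iota$, I would check that $\iota$ preserves fibrations and trivial fibrations directly from \cref{defn:bieq} and \cref{Lackfib}: each surjectivity, fullness, and lifting condition for equifibrations and trivial fibrations between 2-groupoids translates verbatim to the same condition for the corresponding 2-functor between 2-categories, since every morphism in a 2-groupoid is already an equivalence and every 2-cell already invertible. For $\iota\dashv \mathrm{core}$, I would instead show $\iota$ preserves (trivial) cofibrations. Using that these are characterized by the left lifting property against (trivial) fibrations, it suffices that $\mathrm{core}$ send (trivial) fibrations in $\twocat$ to ones in $\twogpd$---which is clear for objects and 2-morphisms, and for 1-morphisms relies on the fact that equivalences lift along (trivial) equifibrations of 2-categories. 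The lifting problems in $\twocat$ then transfer back to ones in $\twogpd$ via the adjunction. An upshot is that $\iota$ is both left and right Quillen, and hence preserves all weak equivalences.

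For the derived counit of $\mathrm{loc}\dashv \iota$ at $\cA\in\twogpd$, I would exploit that every 2-category is fibrant in the Lack model structure, so $\mathbf{R}\iota(\cA)=\iota(\cA)$. Taking a cofibrant replacement $Q\cA\xrightarrow{\sim}\cA$ in $\twogpd$, the preservation of cofibrations and weak equivalences by $\iota$ means $\iota(Q\cA)$ is cofibrant in $\twocat$ and biequivalent to $\iota(\cA)$, so it may be used to compute $\mathbf{L}\mathrm{loc}(\iota\cA)$ up to weak equivalence. The derived counit is then represented by $\mathrm{loc}(\iota(Q\cA))\to \cA$, which the iso counit of $\mathrm{loc}\dashv\iota$ identifies with the trivial fibration $Q\cA\to \cA$ in $\twogpd$, a biequivalence. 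For the derived unit of $\iota\dashv \mathrm{core}$, it suffices to check on cofibrant $\cA$; since $\iota(\cA)$ is already fibrant, the derived unit coincides with the strict unit $\cA\to \mathrm{core}(\iota(\cA))$, which is an isomorphism by full faithfulness of $\iota$. The main subtlety is the derived counit step: one might worry that using $\iota(Q\cA)$ in place of an actual factorization of $\emptyset\to \iota(\cA)$ is illegitimate, but this is resolved by the standard fact that a left derived functor depends only on a cofibrant object up to weak equivalence, not on any specific choice of replacement.
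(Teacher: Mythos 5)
Your proposal is correct and follows essentially the same route as the paper: explicit construction of $\mathrm{loc}$ and $\mathrm{core}$, the observation that $\iota$ preserves cofibrations, fibrations and weak equivalences (so both adjunctions are Quillen pairs), and the identification of the derived unit of $\iota\dashv\mathrm{core}$ with the strict unit via fibrancy of all objects in $\twocat$ and full faithfulness of $\iota$. Your treatment of the derived counit of $\mathrm{loc}\dashv\iota$ --- transporting a cofibrant replacement along $\iota$ and using the isomorphic counit --- is just an explicit unwinding of the paper's shorter remark that the claim follows since $\iota$ is homotopically fully faithful.
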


\begin{proof}
   The first claim regarding the adjunctions and fully faithfulness of $\iota$ is a standard verification. Furthermore, it is straightforward to see that the inclusion $\iota\colon \twogpd\to \twocat$ preserves cofibrations, fibrations, and weak equivalences. The derived unit of $\iota\dashv \mathrm{core}$ coincides with the unit, as all objects in $\twocat$ are fibrant, and so it is an isomorphism since $\iota$ is fully faithful. The statement for the derived counit of $\mathrm{loc}\dashv \iota$ then follows, as $\iota$ is homotopically fully faithful. 
\end{proof}

\subsection{Connections between $2$-categories and double categories}\label{subsec:2catvsdblcat}

The model structures we aim to construct on double categories bear a close connection to the homotopy theory of the $2$-categorical structures introduced in the previous section. With this in mind, we now recall several adjunctions that will later allow us to make this precise.

\begin{notation}
    We denote by $\dblcat$ the category of double categories and double functors. We refer the reader to \cite{Grandis} for a full account on double categories.
\end{notation}

\begin{notation} 
We will use the same notation as in \cite[\S 2]{MSV} to describe the data of a double category. If $\alpha,\beta$ are squares in a double category, we denote by $\alpha\vert\beta$ their horizontal composition, and by $\frac{\alpha}{\beta}$ their vertical composition, provided these are defined.
\end{notation}

\begin{notation}
    We say that a square $\alpha$ in a double category $\bA$ is \emph{horizontal globular} if its source and target vertical morphisms are identities, as depicted below left. We say that it is \emph{vertical globular} if its source and target horizontal morphisms are identities, as depicted below right.
    \begin{tz}
\node[](1) {$A$}; 
\node[below of=1](1') {$A$}; 
\node[right of=1](2) {$B$}; 
\node[below of=2](2') {$B$}; 

\draw[d,pro](1) to (1');
\draw[d,pro](2) to (2');
\draw[->](1) to node[above,la]{$f$} (2);
\draw[->](1') to node[below,la]{$g$} (2');

\node[la] at ($(1)!0.5!(2')$) {$\alpha$};

\node[right of=2,xshift=2cm](1) {$A$}; 
\node[below of=1](1') {$A'$}; 
\node[right of=1](2) {$A$}; 
\node[below of=2](2') {$A'$}; 

\draw[->,pro](1) to node[left,la]{$u$} (1');
\draw[->,pro](2) to node[right,la]{$v$} (2');
\draw[d](1) to (2);
\draw[d](1') to (2');

\node[la] at ($(1)!0.5!(2')$) {$\alpha$};
\end{tz}
\end{notation}

There are different adjunctions relating $\twocat$ and $\dblcat$, arising from the different ways of including $2$-categories into double categories.

\begin{defn}\label{def:bbH}
    The \emph{horizontal embedding} $\bH\colon \twocat\to \dblcat$ sends a $2$-category~$\cA$ to the double category $\bH\cA$ having the same objects as $\cA$, the morphisms of $\cA$ as horizontal morphisms, only identities as vertical morphisms, and squares given by the $2$-morphisms in $\cA$.

    Similarly, the \emph{vertical embedding} $\bV\colon \twocat\to \dblcat$ is obtained by reversing the role of the horizontal and vertical morphisms in the above description. 
\end{defn}

\begin{prop}
    The horizontal and vertical embeddings are fully faithful and admit both a left and a right adjoint.
    \begin{tz}
\node[](1) {$\twocat$}; 
\node[right of=1,xshift=1.5cm](2) {$\dblcat$};

\draw[->,bend right=35] ($(2.west)+(0,8pt)$) to node[above,la]{$L_H$} ($(1.east)+(0,8pt)$);
\draw[->] ($(1.east)$) to node[over,la]{$\bH$} ($(2.west)$);
\draw[->,bend left=35] ($(2.west)-(0,8pt)$) to node[below,la]{$\bfH$} ($(1.east)-(0,8pt)$);

\node[la] at ($(1.east)!0.5!(2.west)+(0,10pt)$) {$\bot$};
\node[la] at ($(1.east)!0.5!(2.west)-(0,10pt)$) {$\bot$};

\node[right of=2,xshift=2cm](1) {$\twocat$}; 
\node[right of=1,xshift=1.5cm](2) {$\dblcat$};

\draw[->,bend right=35] ($(2.west)+(0,8pt)$) to node[above,la]{$L_V$} ($(1.east)+(0,8pt)$);
\draw[->] ($(1.east)$) to node[over,la]{$\bV$} ($(2.west)$);
\draw[->,bend left=35] ($(2.west)-(0,8pt)$) to node[below,la]{$\bfV$} ($(1.east)-(0,8pt)$);

\node[la] at ($(1.east)!0.5!(2.west)+(0,10pt)$) {$\bot$};
\node[la] at ($(1.east)!0.5!(2.west)-(0,10pt)$) {$\bot$};
\end{tz}
\end{prop}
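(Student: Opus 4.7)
The plan is to focus on $\bH$; the case of $\bV$ follows symmetrically, or by composing $\bH$ with the transposition involution on $\dblcat$ that interchanges horizontal and vertical directions.

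For full faithfulness of $\bH$, I would unpack definitions directly. A double functor $F\colon \bH\cA \to \bH\cB$ is determined by its action on objects, on horizontal morphisms (which are the morphisms of $\cA$), on vertical morphisms (forced to send identities to identities, since only identities occur on both sides), and on squares (which are the $2$-morphisms of $\cA$, sent to squares of $\bH\cB$, which are the $2$-morphisms of $\cB$). The compatibilities with sources, targets, units, and the two compositions are precisely those required of a $2$-functor, so $\Hom_{\dblcat}(\bH\cA, \bH\cB) \cong \Hom_{\twocat}(\cA, \cB)$ naturally.

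For the right adjoint, I would define $\bfH\colon \dblcat \to \twocat$ by sending a double category $\bA$ to the $2$-category whose objects are the objects of $\bA$, whose $1$-morphisms are the horizontal morphisms of $\bA$, and whose $2$-morphisms are the horizontal globular squares of $\bA$, with composition inherited from $\bA$. The adjunction $\bH \dashv \bfH$ is then immediate: any double functor $\bH\cA \to \bA$ must send the (only identity) vertical morphisms of $\bH\cA$ to vertical identities in $\bA$, forcing every square to land in the horizontal globular part, which is exactly the data of a $2$-functor $\cA \to \bfH\bA$; naturality in both variables is evident.

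For the left adjoint, I would use the identification $\dblcat = \cat(\cat)$ of internal categories in $\cat$, under which $\twocat$ corresponds to the full subcategory of those internal categories whose category of objects is discrete, and $\bH$ is the inclusion obtained by applying the discrete-category embedding $d\colon \Set \hookrightarrow \cat$ to the category of objects. Since $d$ has a left adjoint $\pi_0\colon \cat \to \Set$ that preserves finite products, and hence the pullbacks used to define composition in an internal category, applying $\pi_0$ to the category of objects yields a well-defined functor $L_H\colon \dblcat \to \twocat$ left adjoint to $\bH$. Concretely, $L_H\bA$ has set of objects $\pi_0(\bA_0)$ (the connected components of the vertical category), and its hom-categories are obtained as a quotient of $\bA_1$: morphisms are horizontal morphisms of $\bA$ modulo the identifications of their sources and targets forced by vertical morphisms, and $2$-cells come from the images of squares, the vertical boundary morphisms now playing the role of identities.

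I expect the main obstacle to be in pinning down $L_H$: unlike $\bfH$, whose description is transparent and pointwise, the left adjoint involves quotienting $\bA_1$ in a way that remains compatible with the internal-category structure. This reduces to verifying that $\pi_0$ preserves the pullbacks $\bA_1 \times_{\bA_0} \bA_1$ defining composition, which ensures the composition on $\bA_1$ descends to a composition on the quotient. Once this is established, the adjunction follows formally: every double functor $\bA \to \bH\cB$ must send vertical morphisms to identities, and hence factors uniquely through the unit $\bA \to \bH(L_H\bA)$.
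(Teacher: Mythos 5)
Your full-faithfulness argument and your construction of the right adjoint $\bfH$ together with the adjunction $\bH\dashv\bfH$ are fine (the paper simply cites \cite[Proposition 2.5]{FPP} for the latter), but the construction of the left adjoint $L_H$ has a genuine gap. The key step you lean on --- ``$\pi_0\colon \cat\to\Set$ preserves finite products, and hence the pullbacks used to define composition in an internal category'' --- is a non sequitur: the composition pullback $\bA_1\times_{\bA_0}\bA_1$ is a pullback over $\bA_0$, not a product, and $\pi_0$ does not preserve such pullbacks (e.g.\ the pullback of the two endpoint inclusions $\mathbbm{1}\rightrightarrows \mathbbm{2}$ is empty, while the pullback of their images under $\pi_0$ is a point). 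Precisely when $\bA$ has vertical morphisms joining distinct objects, the comparison map $\pi_0(\bA_1\times_{\bA_0}\bA_1)\to \pi_0(\bA_1)\times_{\pi_0(\bA_0)}\pi_0(\bA_1)$ fails to be surjective, so the composition of $\bA$ does not descend to your proposed quotient.

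Correspondingly, the explicit description you give of $L_H\bA$ (morphisms a quotient of the horizontal morphisms of $\bA$, $2$-cells images of squares) is not the left adjoint. Take $\bA$ with horizontal morphisms $f\colon A\to B$ and $g\colon B'\to C$ and a vertical morphism $u\colon B\arrowdot B'$, and no other nontrivial data: any double functor $\bA\to\bH\cB$ sends $u$ to an identity, so the images of $f$ and $g$ become composable in $\cB$, and the universal such $2$-category must contain a formal composite $[g][f]$ which is not in the image of the horizontal morphisms of $\bA$. So $L_H$ must freely adjoin composites (and likewise whiskerings and composites of $2$-cells) subject to relations, not merely pass to a quotient of $\bA_1$. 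The paper avoids this entirely: since $\twocat$ and $\dblcat$ are locally presentable and $\bH$ preserves all limits and colimits, the Adjoint Functor Theorem yields $L_H$ with no explicit description needed. Your argument would be repaired either by invoking that abstract existence result, or by honestly constructing $L_H\bA$ as a $2$-category presented by generators (horizontal morphisms and squares of $\bA$) and relations, which is substantially more work than the quotient you describe.
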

\begin{proof}
    The adjoint pair $\bH\dashv\bfH$ is established for instance in \cite[Proposition 2.5]{FPP}. Furthermore, the fact that $\bH$ admits a left adjoint is ensured by
the Adjoint Functor Theorem, since $\bH$ preserves all limits and colimits, and the categories involved are locally presentable. The statements for $\bV$ are analogous.
\end{proof}

\begin{rem} \label{underlyinghorver2cat}
    The right adjoint functor $\bfH\colon\dblcat\to\twocat$ takes a double category $\bA$ to its \emph{underlying horizontal $2$-category} $\bfH\bA$, whose objects are the objects of $\bA$, whose morphisms are the horizontal morphisms of $\bA$, and whose $2$-morphisms are given by the horizontal globular squares in $\bA$.

    The right adjoint functor $\bfV\colon \dblcat\to \twocat$ can be described similarly, sending a double category to its \emph{underlying vertical $2$-category}.
\end{rem}

A variation of this construction can be obtained by allowing equivalences rather than just identities in the relevant direction. 

\begin{defn}
    The \emph{homotopical horizontal embedding}  $\bH^\simeq\colon \twocat\to  \dblcat$ sends a $2$-category $\cA$ to the double category $\bH^\simeq\cA$ whose objects are the objects of $\cA$, whose horizontal morphisms are the morphisms of $\cA$, whose vertical morphisms are the adjoint equivalence data in $\cA$, and whose squares 
    \begin{tz}
\node[](1) {$A$}; 
\node[below of=1](2) {$A'$}; 
\node[right of=1](3) {$B$}; 
\node[right of=2](4) {$B'$};

\draw[->,pro] (1) to node[left,la]{$(u,v,\eta,\varepsilon)$} (2); 
\draw[->] (1) to node[above,la]{$f$} (3); 
\draw[->] (2) to node[below,la]{$f'$} (4); 
\draw[->,pro](3) to node[right,la]{$(u',v',\eta',\varepsilon')$} (4); 
 
\node[la] at ($(1)!0.5!(4)$) {$\alpha$};
\end{tz}
    are the $2$-morphisms $\alpha\colon u'f\Rightarrow f'u$ in $\cA$. 

    Similarly, the \emph{homotopical vertical embedding}  $\bV^\simeq\colon \twocat\to  \dblcat$ is obtained by reversing the role of the horizontal and vertical morphisms. 
\end{defn}

While these functors are not left adjoints (see \cite[Remark 2.17]{whi}), they are right adjoints by \cite[Proposition 2.15]{whi}. 

\begin{prop}
    The homotopical horizontal and vertical embeddings admit left adjoints.
   \begin{tz}
\node[](1) {$\twocat$}; 
\node[right of=1,xshift=1.5cm](2) {$\dblcat$};

\draw[->,bend left=20] ($(1.east)+(0,4pt)$) to node[above,la]{$L_{H}^\simeq$} ($(2.west)+(0,4pt)$);
\draw[->,bend left=20] ($(2.west)-(0,4pt)$) to node[below,la]{$\bH^\simeq$} ($(1.east)-(0,4pt)$);

\node[la] at ($(1.east)!0.5!(2.west)$) {$\bot$};

\node[right of=2,xshift=1.5cm](1) {$\twocat$}; 
\node[right of=1,xshift=1.5cm](2) {$\dblcat$};

\draw[->,bend left=20] ($(1.east)+(0,4pt)$) to node[above,la]{$L_{V}^\simeq$} ($(2.west)+(0,4pt)$);
\draw[->,bend left=20] ($(2.west)-(0,4pt)$) to node[below,la]{$\bV^\simeq$} ($(1.east)-(0,4pt)$);

\node[la] at ($(1.east)!0.5!(2.west)$) {$\bot$};
\end{tz}
\end{prop}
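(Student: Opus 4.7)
The plan is to invoke the Adjoint Functor Theorem for locally presentable categories: since both $\twocat$ and $\dblcat$ are locally presentable (being categories of models of essentially algebraic theories), a functor between them admits a left adjoint if and only if it preserves all small limits and is accessible. I would apply this criterion to $\bH^\simeq$ and then repeat the argument, \emph{mutatis mutandis}, for $\bV^\simeq$ by swapping the roles of horizontal and vertical data.

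First I would verify that $\bH^\simeq$ preserves small limits. Limits in both $\twocat$ and $\dblcat$ are computed componentwise on the underlying sets of objects, morphisms, and $2$-morphisms/squares. The parts of the double category $\bH^\simeq\cA$ consisting of objects, horizontal morphisms, and squares (which are certain $2$-morphisms of $\cA$) are manifestly limit-preserving as functors in $\cA$, since they are defined by finite diagrams on the underlying-set level. The delicate point is the set of vertical morphisms of $\bH^\simeq\cA$, namely the adjoint equivalence data $(u,v,\eta,\varepsilon)$ in $\cA$ as in \cref{def:equivdata}. Here I would note that ``being an adjoint equivalence'' is an \emph{equational} condition: $\eta$ and $\varepsilon$ are $2$-isomorphisms (a condition definable by finite equalizers on the hom-categories) satisfying the triangle identities. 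It follows that, given a small diagram of $2$-categories $\{\cA_i\}$, an adjoint equivalence data in $\lim_i \cA_i$ is exactly a coherent family of adjoint equivalence data in each $\cA_i$; hence the vertical morphisms of $\bH^\simeq(\lim_i\cA_i)$ form the limit of the vertical morphisms of $\bH^\simeq\cA_i$.

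Next I would verify accessibility: since every component of $\bH^\simeq\cA$ is computed from only finitely many components of $\cA$ (objects, morphisms, $2$-morphisms, together with finitely many equations among them), the functor $\bH^\simeq$ preserves $\omega$-filtered colimits, and in particular is accessible. Combining this with limit preservation, the Adjoint Functor Theorem for locally presentable categories yields the desired left adjoint $L_H^\simeq$. The same reasoning, applied to $\bV^\simeq$, produces $L_V^\simeq$.

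The main obstacle is the verification that the assignment $\cA\mapsto \{\text{adjoint equivalence data in }\cA\}$ commutes with small limits in $\twocat$; once one has unpacked the explicit construction of limits of $2$-categories and observed that the triangle identities are finite equations between $2$-cells, this becomes a routine equational check, but it is the step that genuinely uses the adjointness condition in \cref{def:equivdata} rather than a mere equivalence. No explicit description of the left adjoints is needed for the statement, which is a pleasant feature of going through the Adjoint Functor Theorem.
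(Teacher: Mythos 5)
Your proof is correct and takes essentially the approach the paper relies on: the paper gives no argument of its own for this proposition, citing \cite[Proposition 2.15]{whi}, and the strategy there---and for the analogous statement about $\bH$ and $\bV$ in this paper---is precisely the Adjoint Functor Theorem between locally presentable categories via preservation of limits and (filtered) colimits. One streamlining worth noting: the vertical morphisms of $\bH^\simeq\cA$ are exactly $2$-functors $\Eadj\to\cA$ with $\Eadj$ finitely presentable, so the delicate component is handled by representability (what matters is that equivalences appear as \emph{data} rather than as a property), making the equational check you describe immediate.
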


Finally, the last inclusion we will study allows all morphisms of the $2$-category to populate both directions of morphisms in the double category.

\begin{defn} \label{def:square}
    The \emph{square functor}  $\Sq\colon \twocat\to \dblcat$ sends a $2$-category $\cA$ to the double category $\Sq\cA$ whose objects are the objects of $\cA$, whose horizontal and vertical morphisms are the morphisms of $\cA$, and whose squares 
\begin{tz}
\node[](1) {$A$}; 
\node[below of=1](2) {$A'$}; 
\node[right of=1](3) {$B$}; 
\node[right of=2](4) {$B'$};

\draw[->,pro] (1) to node[left,la]{$u$} (2); 
\draw[->] (1) to node[above,la]{$f$} (3); 
\draw[->] (2) to node[below,la]{$f'$} (4); 
\draw[->,pro](3) to node[right,la]{$v$} (4); 
 
\node[la] at ($(1)!0.5!(4)$) {$\alpha$};
\end{tz}
are the $2$-morphisms $\alpha\colon vf\Rightarrow f'u$ in $\cA$.
\end{defn}

The following result combines  \cite[Theorem 1.7(a)]{GPAdjointsDblCats} and \cite[Proposition 8]{EhresmannIV}.

\begin{prop} \label{adj:sq}
    The square functor admits both a left and a right adjoint.
    \begin{tz}
\node[](1) {$\twocat$}; 
\node[right of=1,xshift=1.5cm](2) {$\dblcat$};

\draw[->,bend right=35] ($(2.west)+(0,8pt)$) to node[above,la]{$L_{Sq}$} ($(1.east)+(0,8pt)$);
\draw[->] ($(1.east)$) to node[over,la]{$\Sq$} ($(2.west)$);
\draw[->,bend left=35] ($(2.west)-(0,8pt)$) to node[below,la]{$\bfR$} ($(1.east)-(0,8pt)$);

\node[la] at ($(1.east)!0.5!(2.west)+(0,10pt)$) {$\bot$};
\node[la] at ($(1.east)!0.5!(2.west)-(0,10pt)$) {$\bot$};
\end{tz}
\end{prop}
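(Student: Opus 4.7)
The plan is to invoke the Adjoint Functor Theorem for locally presentable categories. Both $\twocat$ and $\dblcat$ are locally presentable, being categories of algebras for finitary monads on appropriate presheaf categories ($2$-globular sets for $\twocat$, and reflexive double graphs for $\dblcat$). By the standard adjoint functor theorem in this setting, it suffices to show that $\Sq$ preserves all small limits (for the existence of a left adjoint, together with accessibility) and all small colimits (for the existence of a right adjoint).

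Preservation of limits is immediate from the levelwise nature of the construction. Limits in both $\twocat$ and $\dblcat$ are computed coordinatewise on objects, on morphisms (and separately on horizontal and vertical morphisms in the double categorical case), and on higher cells. Since the data of $\Sq\cA$ is built levelwise from $\cA$ --- the objects and both the horizontal and vertical morphisms of $\Sq\cA$ are respectively the objects and morphisms of $\cA$, and its squares are the $2$-morphisms of $\cA$ of shape $vf\Rightarrow f'u$ --- the functor $\Sq$ respects these pointwise limit computations, and is accessible for the same reason.

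For the right adjoint, rather than verifying preservation of colimits abstractly, the cleanest approach is to give $\bfR$ explicitly: let $\bfR\bD$ be the $2$-category whose objects are those of $\bD$, whose morphisms $A\to B$ are pairs $(f_h,f_v)$ of a horizontal and a vertical morphism in $\bD$ from $A$ to $B$, and whose $2$-morphisms are induced from the squares of $\bD$ relating such pairs. The universal property $\dblcat(\Sq\cA,\bD)\cong \twocat(\cA,\bfR\bD)$ then follows by unpacking the data: a double functor from $\Sq\cA$ sends each morphism of $\cA$ simultaneously to a horizontal and a vertical morphism of $\bD$, and each $2$-morphism of $\cA$ to a corresponding square of $\bD$.

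The main obstacle is the explicit description of the left adjoint $L_{Sq}$, which would require freely identifying horizontal and vertical morphisms of a double category into a single set of $1$-cells subject to the relations imposed by squares; this is delicate, since colimits in $\twocat$ are notoriously complex. Fortunately this difficulty is circumvented by the Adjoint Functor Theorem route above, which yields the existence of $L_{Sq}$ without an explicit formula, and the explicit constructions can be consulted in the classical references cited in the statement.
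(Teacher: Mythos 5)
Your strategy for the left adjoint (local presentability of $\twocat$ and $\dblcat$ plus the Adjoint Functor Theorem, after checking that $\Sq$ preserves limits and is accessible) is sound, and it matches how the paper handles such existence statements elsewhere; the paper itself simply cites the classical references for both adjoints rather than proving anything. The problem is your explicit construction of the right adjoint, which is where the actual mathematical content of your argument lies, and it is incorrect as stated. The morphisms $A\to B$ of $\bfR\bD$ cannot be bare pairs $(f_h,f_v)$ of a horizontal and a vertical morphism: they must be \emph{companion pairs}, i.e.\ tuples $(f,u,\varphi,\psi)$ consisting of a horizontal morphism $f$, a vertical morphism $u$, and two binding squares satisfying $\psi\vert\varphi=e_f$ and $\tfrac{\psi}{\varphi}=\id_u$ --- equivalently, double functors $\Sq\mathbbm{2}\to\bD$ --- and the $2$-morphisms are the horizontal globular squares between the horizontal components (this is the description in the paper's \cref{rightadjofSq}).

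The gap is visible already in your own verification of the universal property: a double functor $\Sq\cA\to\bD$ does not merely send each morphism $g$ of $\cA$ ``simultaneously to a horizontal and a vertical morphism of $\bD$''; it must also send the squares of $\Sq\cA$ with boundary $(g,\id;\,g,\id)$ and $(\id,g;\,\id,g)$ (the identity $2$-cells of $\cA$ viewed as squares $v f\Rightarrow f'u$) to squares of $\bD$ exhibiting the two images as companions, and functoriality forces the companion identities. With your definition of $\bfR\bD$ the bijection $\dblcat(\Sq\cA,\bD)\cong\twocat(\cA,\bfR\bD)$ fails: take $\cA=\mathbbm{2}$ and $\bD$ a double category containing a horizontal morphism $f\colon A\to B$ and a vertical morphism $u\colon A\arrowdot B$ but no nonidentity squares; then $(f,u)$ defines a $2$-functor $\mathbbm{2}\to\bfR\bD$ in your sense, but there is no double functor $\Sq\mathbbm{2}\to\bD$ with these components. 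So the right-adjoint half of your argument needs to be redone with companion pairs (and one must then also check $2$-functoriality of $\bfR$ on these, using the uniqueness part of the universal property of companions); the left-adjoint half via the Adjoint Functor Theorem can stand.
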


In later sections we will use a description of the right adjoint of the square functor. This requires the following notion, which will also be of central importance in the examples of \cref{section:ex1,section:ex2}.

\begin{defn}\label{def:companion}
    A \emph{companion pair} in a double category $\bA$ is the data of a functor $\Sq\mathbbm{2}\to \bA$, where $\mathbbm{2}$ denotes the walking morphism. It consists of a tuple $(f,u,\varphi,\psi)$ of a horizontal morphism $f\colon A\to B$, a vertical morphism $u\colon A\arrowdot B$, and two squares $\varphi$ and $\psi$ in $\bA$ of the form
\begin{tz}
\node[](1) {$A$}; 
\node[below of=1](2) {$B$}; 
\node[right of=1](3) {$B$}; 
\node[right of=2](4) {$B$};

\draw[->,pro] (1) to node[left,la]{$u$} (2); 
\draw[->] (1) to node[above,la]{$f$} (3); 
\draw[d] (2) to (4); 
\draw[d,pro](3) to (4); 
 
\node[la] at ($(1)!0.5!(4)$) {$\varphi$};

\node[right of=3,xshift=2cm](1) {$A$}; 
\node[below of=1](2) {$A$}; 
\node[right of=1](3) {$A$}; 
\node[right of=2](4) {$B$};

\draw[->,pro] (3) to node[right,la]{$u$} (4); 
\draw[->] (2) to node[below,la]{$f$} (4); 
\draw[d] (1) to (3); 
\draw[d,pro](1) to (2); 
 
\node[la] at ($(1)!0.5!(4)$) {$\psi$};
\end{tz}
    such that $\psi \vert \varphi=e_f$ and $\frac{\psi}{\varphi}=\id_u$.
\end{defn}

\begin{ex} \label{rem:Sqnaivefib}
Given a $2$-category $\cA$, the double category $\Sq\cA$ is such that every horizontal and vertical morphism is part of a companion pair. Indeed, every horizontal or vertical morphism in $\Sq\cA$ is represented by a morphism $f$ in $\cA$ and the tuple $(f,f,\id_f,\id_f)$ is a companion pair.
\end{ex}

\begin{rem} \label{rightadjofSq}
    The right adjoint functor $\bfR\colon\dblcat\to\twocat$ sends a double category~$\bA$ to the $2$-category $\bfR\bA$ whose objects are the objects of $\bA$, whose morphisms are the companion pairs in $\bA$, and whose $2$-morphisms $\alpha\colon (f,u,\varphi,\psi)\Rightarrow (f',u',\varphi',\psi')$ are the horizontal globular squares in $\bA$ of the form 
\begin{tz}
\node[](1) {$A$}; 
\node[below of=1](1') {$A$}; 
\node[right of=1](2) {$B$}; 
\node[below of=2](2') {$B$}; 

\draw[d,pro](1) to (1');
\draw[d,pro](2) to (2');
\draw[->](1) to node[above,la]{$f$} (2);
\draw[->](1') to node[below,la]{$f'$} (2');

\node[la] at ($(1)!0.5!(2')$) {$\alpha$};
\end{tz}
\end{rem}

We conclude this subsection with a useful technical result about lifting adjoint equivalences. In order to do so, we first recall the following definitions.

\begin{defn}
    A \emph{horizontal (adjoint) equivalence} in a double category $\bA$ is an (adjoint) equivalence in the underlying horizontal $2$-category $\bfH\bA$. Analogously, we define \textit{(adjoint) vertical equivalences} using the underlying vertical $2$-category $\bfV\bA$.
\end{defn}

\begin{rem}
Note that specifying an adjoint horizontal equivalence data in a double category $\bA$ amounts to specifying a double functor $\bH\Eadj\to\bA$, where $\Eadj$ denotes the $2$-category freely generated by an adjoint equivalence.
\end{rem}

\begin{prop}\label{rmk:companionsforequivs}
    Let $\bA$ be a double category and let $(f,g,\eta,\epsilon)$ be a horizontal adjoint equivalence data in $\bA$. If $f$ and $g$ have companions, then there is a lift in the following diagram.
    \begin{tz}
\node[](1) {$\bH \Eadj$}; 
\node[below of=1](2) {$\Sq \Eadj$}; 
\node[right of=1,xshift=.5cm](1') {$\bA$}; 
\draw[->] (1) to node[above,la]{$(f,g,\eta,\epsilon)$} (1'); 
\draw[->] (1) to (2); 
\draw[->,dashed] (2) to (1');
\end{tz}
\end{prop}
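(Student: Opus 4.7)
The plan is to exploit the adjunction $\Sq \dashv \bfR$ from \cref{adj:sq}: a double functor $\Sq\Eadj \to \bA$ is the same data as a $2$-functor $\Eadj \to \bfR\bA$, i.e., an adjoint equivalence data in $\bfR\bA$. By the same adjunction $\bH\dashv \bfH$, a double functor $\bH\Eadj \to \bA$ corresponds to an adjoint equivalence data in $\bfH\bA$, and the inclusion $\bH\Eadj\to \Sq\Eadj$ corresponds to the canonical $2$-functor $P\colon \bfR\bA\to \bfH\bA$ that sends a companion pair $(f,u,\varphi,\psi)$ to $f$. Using \cref{rightadjofSq}, one checks that $P$ is the identity on objects and on $2$-morphisms, as both $\Hom_{\bfR\bA}(A,B)$ and $\Hom_{\bfH\bA}(A,B)$ have the horizontal globular squares between horizontal morphisms as $2$-cells; hence $P$ is locally fully faithful and the lifting problem reduces to lifting the adjoint equivalence data $(f,g,\eta,\varepsilon)$ from $\bfH\bA$ along $P$.

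Starting from the companion pairs $F=(f,u_f,\varphi_f,\psi_f)$ and $G=(g,u_g,\varphi_g,\psi_g)$ given by hypothesis, which are morphisms of $\bfR\bA$ satisfying $PF=f$ and $PG=g$, I would first verify that the horizontal composite $GF$ in $\bfR\bA$ (obtained by pasting companion pairs via $\varphi$ on one side, $\psi$ on the other, with the vertical composite $u_g u_f$ as the vertical leg) has underlying horizontal morphism $gf$, and similarly for $FG$. Consequently the $2$-morphisms $\eta\colon \id_A\Rightarrow gf$ and $\varepsilon\colon fg\Rightarrow \id_B$ in $\bfH\bA$, viewed as horizontal globular squares in $\bA$, have the correct boundaries to define $2$-morphisms $\widetilde\eta\colon \id_A\Rightarrow GF$ and $\widetilde\varepsilon\colon FG\Rightarrow \id_B$ in $\bfR\bA$.

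It then remains to check the triangle identities for $(F,G,\widetilde\eta,\widetilde\varepsilon)$ in $\bfR\bA$. Since $P$ is the identity on $2$-morphisms and preserves horizontal and vertical composition of $2$-morphisms (these compositions in $\bfR\bA$ are computed purely from the horizontal structure of $\bA$, with whiskering by a companion pair $(f,u,\varphi,\psi)$ realized as horizontal composition with the identity square $e_f$), the two composite $2$-morphisms involved in each triangle identity in $\bfR\bA$ coincide with their images under $P$, where the identities hold by assumption on $(f,g,\eta,\varepsilon)$. This gives the desired adjoint equivalence in $\bfR\bA$, hence a double functor $\Sq\Eadj\to \bA$; that it extends the given $\bH\Eadj\to \bA$ is automatic from the correspondence.

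The main obstacle I anticipate is bookkeeping rather than conceptual: carefully unpacking how horizontal composition of companion pairs is defined in $\bfR\bA$ and verifying that whiskering of $2$-morphisms by such composites in $\bfR\bA$ agrees with whiskering in $\bfH\bA$, so that the triangle identities genuinely transfer. This amounts to a diagram chase with the squares $\varphi,\psi$ and the identity relations $\psi\vert\varphi = e_f$ and $\frac{\psi}{\varphi}=\id_u$ of \cref{def:companion}, which is routine but must be done once for both $F$ and $G$.
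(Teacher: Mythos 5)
Your proposal is correct, but it takes a genuinely different route from the paper. The paper works entirely inside $\bA$: starting from companion pairs $(f,u,\varphi,\psi)$ and $(g,v,\chi,\omega)$, it invokes the universal property of companions (uniqueness up to a unique compatible isomorphism, \cite[\textsection 4.1.1]{Grandis}) to manufacture \emph{new} vertical globular $2$-isomorphisms $\alpha\colon e_A\cong vu$ and $\beta\colon uv\cong e_B$ from $\eta$ and $\epsilon$, checks that the triangle identities transfer, and then argues that this data generates the desired double functor $\Sq\Eadj\to\bA$. You instead transpose the lifting problem along $\Sq\dashv\bfR$ (\cref{adj:sq}), reducing it to lifting the adjoint equivalence $(f,g,\eta,\epsilon)$ from $\bfH\bA$ to $\bfR\bA$ along the forgetful $2$-functor $P$, which by \cref{rightadjofSq} is locally fully faithful; with the given companion pairs as $1$-cells, the cells $\eta$ and $\epsilon$ themselves serve as unit and counit in $\bfR\bA$, and invertibility plus the triangle identities are reflected by local faithfulness. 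Your route produces no new cells and cleanly handles the paper's slightly informal final step (``the data generates a double functor'') via the adjunction, since a $2$-functor $\Eadj\to\bfR\bA$ transposes automatically to a double functor $\Sq\Eadj\to\bA$ compatible with the restriction along $\bH\Eadj\to\Sq\Eadj$ (this is the mate compatibility you implicitly use, and it does hold). Its cost is exactly the bookkeeping you flag: \cref{rightadjofSq} only describes the cells of $\bfR\bA$, so you must spell out that $1$-cell composition is the pasting of companion pairs and that vertical composition and whiskering of $2$-cells are computed as in $\bfH\bA$ (whiskering by a companion pair being horizontal pasting with the identity square on its horizontal component), so that $P$ is a strict, locally fully faithful $2$-functor; this is true and routine, whereas the paper leans only on the documented universal property of companions.
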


\begin{proof} 
Suppose that $(f,g,\eta,\epsilon)$ is a horizontal adjoint equivalence between objects $A$ and~$B$ in $\bA$, and let $(f,u,\varphi,\psi)$ and $(g,v,\chi,\omega)$ be companion pairs in $\bA$. We claim that the vertical morphisms $u$ and $v$ form a vertical adjoint equivalence, by constructing a unit and a counit which are compatible with the rest of the data. 

To do this, note that the vertical identity $e_A$ is a companion of the horizontal identity~$\id_A$, and the vertical composite $vu$ is a companion of the horizontal composite $gf$. Then, since $\eta\colon \id_A\cong gf$,  by the universal property of companions from \cite[\textsection 4.1.1]{Grandis} we have that the composite $vu$ is also a companion of $\id_A$ and so there is a unique $2$-isomorphism $\alpha\colon e_A\cong vu$ in $\bfV\bA$ compatible with the companion data $(\varphi,\psi)$ and $(\chi,\omega)$. Similarly, we get a $2$-isomorphism $\beta\colon uv\cong e_B$ compatible with the companion data. As $(\eta,\epsilon)$ satisfy the triangle identities, by construction so do $(\alpha,\beta)$. This determines a double functor $\Sq\Eadj\to \bA$ as the above defined data generates it.
\end{proof}

\subsection{Monoidal structures on double categories}\label{subsec:monoidalbackground}

Similarly to the case of 2-categories, $\dblcat$ admits two symmetric monoidal structures which we now briefly recall: the cartesian product, and the Gray tensor product. The following is due to \cite{TwoEhresmann}.

\begin{prop}
    The category $\dblcat$ is cartesian closed: for every double category~$\bA$, there is an adjunction 
    \begin{tz}
\node[](1) {$\dblcat$}; 
\node[right of=1,xshift=1.7cm](2) {$\dblcat$};

\draw[->,bend left=20] ($(1.east)+(0,4pt)$) to node[above,la]{$\bA\times (-)$} ($(2.west)+(0,4pt)$);
\draw[->,bend left=20] ($(2.west)-(0,4pt)$) to node[below,la]{$\llbracket \bA,-\rrbracket$} ($(1.east)-(0,4pt)$);

\node[la] at ($(1.east)!0.5!(2.west)$) {$\bot$}; 
\end{tz}
\end{prop}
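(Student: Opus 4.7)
My plan is to prove the proposition by constructing the internal hom $\llbracket \bA, -\rrbracket$ explicitly and verifying the required natural bijection $\Hom_{\dblcat}(\bA\times \bC, \bB)\cong \Hom_{\dblcat}(\bC, \llbracket \bA, \bB\rrbracket)$. Since $\dblcat$ is locally presentable (as the category of internal categories in $\cat$, i.e.\ the models of a finite limit sketch), one alternative is to invoke the adjoint functor theorem after checking that $\bA\times(-)$ preserves small colimits; but because the structure of the internal hom will be needed throughout the paper and for establishing the Gray variant in \cref{subsec:monoidalbackground}, an explicit construction is more informative.

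First I would define $\llbracket \bA,\bB\rrbracket$ levelwise. Its objects are the double functors $F\colon\bA\to\bB$. A horizontal morphism $F\to G$ is a \emph{horizontal transformation}: it consists of a horizontal morphism $\alpha_A\colon FA\to GA$ for each object $A$ of $\bA$, a square $\alpha_f$ with boundary $(Ff\;{}^{\alpha_A}_{\alpha_{A'}}\;Gf)$ for every horizontal morphism $f\colon A\to A'$ of $\bA$, and a square $\alpha_u$ with boundary $(\alpha_A\;{}^{Fu}_{Gu}\;\alpha_{A'})$ for every vertical morphism $u\colon A\arrowdot A'$, subject to unit, functoriality and interchange axioms with respect to the two composition operations of $\bA$. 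Vertical morphisms of $\llbracket \bA,\bB\rrbracket$ are defined dually. A square is a \emph{modification} sitting in a square of transformations, i.e.\ an assignment of a square in $\bB$ to each object of $\bA$ satisfying horizontal and vertical naturality with respect to the four boundary transformations. Horizontal and vertical composition, together with the identities, are inherited pointwise from $\bB$; the relevant interchange law follows from interchange in $\bB$.

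With this in place, the natural bijection would be established by unpacking both sides. A double functor $H\colon \bA\times\bC\to \bB$ determines, for each object $C$ of $\bC$, the double functor $H(-,C)\colon\bA\to\bB$; for each horizontal morphism $g\colon C\to C'$ in $\bC$, the horizontal transformation whose component at $A$ is $H(\id_A, g)$ and whose naturality squares at $f$ and $u$ are $H(f,g)$ and $H(u,g)$; and analogously for vertical morphisms and squares of $\bC$. One checks this assignment satisfies the axioms of a double functor $\bC\to\llbracket\bA,\bB\rrbracket$, and conversely every such double functor arises uniquely this way. Naturality in $\bA$, $\bB$, $\bC$ is immediate from the componentwise nature of the construction.

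The main obstacle I anticipate is the bookkeeping around the interchange and functoriality axioms: the data of a horizontal transformation involves squares indexed by horizontal \emph{and} vertical morphisms of $\bA$, and the compatibility these must satisfy with respect to horizontal composition, vertical composition, and globularity of identities in $\bA$ must be carefully enumerated so that they exactly match what is encoded by the single axiom that $H\colon \bA\times\bC\to\bB$ preserves horizontal composition, vertical composition, and units. Once the dictionary is set up correctly, each axiom on one side translates into a corresponding axiom on the other side, and the bijection is forced; but translating it without leaving any condition unchecked is the delicate part.
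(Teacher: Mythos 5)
Your general strategy is fine and is essentially the standard one (the paper gives no proof at all here, citing Ehresmann--Ehresmann and describing the hom only in \cref{internalhom}), but your explicit description of $\llbracket\bA,\bB\rrbracket$ is wrong, and the error sits exactly at the ``delicate part'' you flag. First, the data is not well formed: in a double category the left and right edges of a square are \emph{vertical} morphisms, so there is no square with boundary $(Ff\;{}^{\alpha_A}_{\alpha_{A'}}\;Gf)$ when the components $\alpha_A\colon FA\to GA$ are horizontal. More importantly, the cartesian internal hom carries no comparison squares indexed by the horizontal morphisms of $\bA$ at all: a horizontal (natural) transformation $\alpha\colon F\Rightarrow G$ consists only of horizontal components $\alpha_A$ and squares $\alpha_u$ of boundary $(\alpha_A\;{}^{Fu}_{Gu}\;\alpha_{A'})$ for vertical $u$, subject to the \emph{strict} naturality equation $Gf\circ\alpha_A=\alpha_{A'}\circ Ff$ for every horizontal $f$, compatibility with vertical identities and composites, and the pasting-naturality condition relating $F\beta,\alpha_v$ and $\alpha_u,G\beta$ for every square $\beta$ of $\bA$. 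This strictness is forced by the very bijection you want to prove: a horizontal morphism of $\llbracket\bA,\bB\rrbracket$ must correspond to a double functor $\bH\mathbbm{2}\times\bA\to\bB$, and the two factorizations $(k,f)=(k,\id)\circ(\id,f)=(\id,f)\circ(k,\id)$ of the horizontal morphism $(k,f)$ of $\bH\mathbbm{2}\times\bA$ force $\alpha_{A'}\circ Ff=Gf\circ\alpha_A$ on the nose (and similarly determine any would-be square component at $f$ as an identity).

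The structure you describe---transformations equipped with a (globular, invertible) square for each horizontal morphism of $\bA$ measuring the failure of naturality---is precisely a horizontal \emph{pseudo}-natural transformation, and the double category it assembles into is $\llbracket\bA,\bB\rrbracket_{\mathrm{ps}}$ of \cref{internalpseudohom}, the right adjoint of the Gray tensor product $\bA\otimes(-)$ from \cref{prop:graypseudo}, not of the cartesian product $\bA\times(-)$; with that hom the claimed bijection $\Hom_{\dblcat}(\bA\times\bC,\bB)\cong\Hom_{\dblcat}(\bC,\llbracket\bA,\bB\rrbracket)$ fails. (The same confusion shows up in your unpacking, where e.g.\ ``$H(f,g)$'' for $f$ horizontal in $\bA$ and $g$ horizontal in $\bC$ is a horizontal morphism of the product, not a square.) The fix is to use the strict horizontal and vertical transformations as above, and modifications given, as you correctly say, by object-indexed squares compatible with the four boundary transformations; with that corrected definition the verification of the bijection is the routine bookkeeping you outline, and the alternative of local presentability plus cocontinuity of $\bA\times(-)$ also works if one does not need the explicit hom.
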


\begin{rem}\label{internalhom}
    The right adjoint functor $\llbracket\bA,-\rrbracket\colon \dblcat\to \dblcat$ sends a double category $\bB$ to the internal hom double category $\llbracket\bA,\bB\rrbracket$ of double functors $\bA\to \bB$, horizontal and vertical natural transformations between them, and modifications. See \cite[\S 3.2.7]{Grandis} for a detailed description. 
\end{rem}

The monoidal structure we will be most interested in is given by B\"ohm's Gray tensor product, whose construction can be found in \cite{Bohm}. This monoidal structure is an enhancement of the (pseudo) Gray tensor product of $2$-categories to the double categorical setting.

\begin{prop}\label{prop:graypseudo}
    There is a symmetric monoidal structure \[ \otimes\colon \dblcat\times \dblcat\to \dblcat, \] 
    called the \emph{Gray tensor product}. Moreover, it is closed: for every double category $\bA$, there is an adjunction 
    \begin{tz}
\node[](1) {$\dblcat$}; 
\node[right of=1,xshift=1.7cm](2) {$\dblcat$};

\draw[->,bend left=20] ($(1.east)+(0,4pt)$) to node[above,la]{$\bA\otimes (-)$} ($(2.west)+(0,4pt)$);
\draw[->,bend left=20] ($(2.west)-(0,4pt)$) to node[below,la]{$\llbracket \bA,-\rrbracket_\mathrm{ps}$} ($(1.east)-(0,4pt)$);

\node[la] at ($(1.east)!0.5!(2.west)$) {$\bot$}; 
\end{tz}
\end{prop}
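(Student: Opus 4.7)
The plan is to follow B\"ohm's construction \cite{Bohm}. First I would define the Gray tensor product $\bA \otimes \bB$ via a presentation: the objects are pairs $(A,B)$; horizontal morphisms are freely generated by pairs $(f, \id_B)$, with $f$ horizontal in $\bA$ and $B$ an object of $\bB$, together with pairs $(\id_A, g)$ for $g$ horizontal in $\bB$; vertical morphisms are generated analogously; and squares are generated by the images of squares of $\bA$ and $\bB$ along the evident inclusions, plus comparison squares $\gamma_{f,u}$ which witness the (non-strict) interchange of a horizontal morphism of $\bA$ with a vertical morphism of $\bB$, and dually. One then quotients by the coherence relations making the comparison squares bifunctorial in each variable, recovering the usual interchange-type equations familiar from the Gray tensor product for $2$-categories.

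Symmetry is then immediate from the symmetry of the presenting data, the unit is the terminal double category $\mathbbm{1}$, and associativity follows from a routine but lengthy comparison of the two iterated tensor products, using that both are characterized by a universal property with respect to triples of compatible generators.

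For closedness, I would invoke the adjoint functor theorem: $\dblcat$ is locally finitely presentable, and by construction $\bA \otimes (-)$ preserves small colimits, since it is defined by freely adjoining generating cells subject to relations, a procedure that is itself a left adjoint. Hence $\bA \otimes (-)$ admits a right adjoint $\llbracket \bA, -\rrbracket_{\mathrm{ps}}$. Unwinding the universal property against the standard ``shape'' double categories then yields the expected explicit description: objects of $\llbracket \bA, \bB \rrbracket_{\mathrm{ps}}$ are double functors $\bA \to \bB$, horizontal (resp.\ vertical) morphisms are horizontal (resp.\ vertical) pseudo-natural transformations, and squares are pseudo-modifications.

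The main obstacle is verifying that the generators-and-relations construction produces a well-defined symmetric monoidal structure; in particular, identifying the precise coherence axioms that the comparison squares $\gamma_{f,u}$ must satisfy so that associativity holds strictly and the pentagon and triangle identities are respected. This bookkeeping, together with the verification that $\bA\otimes(-)$ genuinely preserves all colimits, constitutes the bulk of \cite{Bohm}, which is why we simply cite the result here.
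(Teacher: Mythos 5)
The paper does not prove this statement at all: it is recalled verbatim from B\"ohm's work, with \cite{Bohm} as the proof, exactly as you do by deferring the coherence bookkeeping and colimit-preservation checks to that reference. Your sketch of the generators-and-relations construction and of closedness (local presentability plus the explicit identification of $\llbracket \bA,-\rrbracket_\mathrm{ps}$ via pseudo-natural transformations and modifications) is consistent with the description the paper itself recalls, so this is essentially the same, citation-based approach.
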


\begin{rem}\label{internalpseudohom}
    The right adjoint functor $\llbracket\bA,-\rrbracket_\mathrm{ps}\colon \dblcat\to \dblcat$ sends a double category $\bB$ to the internal pseudo-hom double category $\llbracket\bA,\bB\rrbracket_\mathrm{ps}$ of double functors $\bA\to \bB$, horizontal and vertical pseudo-natural transformations between them, and modifications. A full description can be found in \cite[\S 3.8]{Grandis} or \cite[\S 2.2]{Bohm}. 
\end{rem}

\subsection{Pushouts of double categories}\label{subsec:pushoutsdblcat}

In this section we recall some definitions and results from \cite[\S3 and \S4]{FPP}. The end goal is to have notions of free and quotient double categories that allow for an explicit description of colimits in $\dblcat$.

All graphs here are assumed to be directed.

\begin{defn}\label{defn:double_graph}
    A \emph{double graph} is an internal graph in the category of graphs. It consists of sets of objects, horizontal edges, vertical edges, and squares, each equipped with source and target maps.

    A \emph{morphism of double graphs} is an assignment on each of these sets which preserves sources and targets.
\end{defn}

\begin{rem}
    A double graph has both an \emph{underlying horizontal graph} given by the objects and horizontal edges, and an \emph{underlying vertical graph} given by the objects and vertical edges.
\end{rem}

\begin{defn}
    A \emph{double graph with $1$-identities}  is a double graph in which the underlying horizontal and vertical graphs are reflexive.

    A \emph{morphism of double graphs with $1$-identities} is a morphism of double graphs which preserves identity edges.
\end{defn}

There is an intermediate notion between double graphs with $1$-identities and double categories, which is analogous to Street's derivation schemes of \cite{Street_DerSchemes}.

\begin{defn} 
A \emph{double derivation scheme} is a double graph with $1$-identities whose underlying horizontal and vertical reflexive graphs are categories.

A \emph{morphism of double derivation schemes} is a morphism of double graphs with $1$-identities which is a functor on underlying horizontal and vertical categories.

We denote by $\dbldersch$ the category of double derivation schemes and their morphisms.
\end{defn}

\begin{notation}
    We denote the underlying horizontal and vertical categories of a double derivation scheme $\bS$ by $U\bfH\bS$ and $U\bfV\bS$, respectively.
\end{notation}

The forgetful functor from double categories to double derivation schemes admits a left adjoint, giving the \emph{free double category} on a double derivation scheme. This appears as \cite[Proposition 3.8]{FPP}.

\begin{prop}
The forgetful functor admits a left adjoint. 
\begin{tz}
\node[](2) {$\dbldersch$}; 
\node[right of=2,xshift=2cm](3) {$\dblcat$};

\draw[->,bend left=20] ($(2.east)+(0,4pt)$) to node[above,la]{$\bF_\mathrm{dbl}$} ($(3.west)+(0,4pt)$);
\draw[->,bend left=20] ($(3.west)-(0,4pt)$) to node[below,la]{$U$}  ($(2.east)-(0,4pt)$);

\node[la] at ($(2.east)!0.5!(3.west)$) {$\bot$}; 
\end{tz}
\end{prop}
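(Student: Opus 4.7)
The plan is to apply the Adjoint Functor Theorem. First I would observe that both $\dbldersch$ and $\dblcat$ are locally presentable categories: each is (equivalent to) the category of models of a small essentially algebraic theory, so both are bicomplete, locally presentable, and all constructions are controlled by limit sketches. The forgetful functor $U\colon \dblcat\to \dbldersch$ preserves all limits, because limits in both categories are computed componentwise on the underlying sets of objects, horizontal morphisms, vertical morphisms, and squares, and $U$ simply forgets the operations of horizontal and vertical composition of squares (the horizontal and vertical category structures on morphisms are kept by a double derivation scheme, by definition). Hence $U$ is continuous and accessible, so by the adjoint functor theorem for locally presentable categories it admits a left adjoint $\bF_\mathrm{dbl}$.

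If one prefers a concrete construction, the plan is as follows. Given a double derivation scheme $\bS$, I would define $\bF_\mathrm{dbl}(\bS)$ to have the same objects as $\bS$, horizontal category $U\bfH\bS$, and vertical category $U\bfV\bS$. The squares would be equivalence classes of formally well-formed ``tilings'' whose tiles are either squares of $\bS$ or globular identity squares of the form $e_f$ and $\id_u$, with boundary data composed in $U\bfH\bS$ and $U\bfV\bS$, and where two tilings are identified when they are related by the middle-four interchange law together with the unit laws for identity squares and the compatibility of horizontal and vertical $1$-identities. The universal map $\eta_\bS\colon \bS\to U\bF_\mathrm{dbl}(\bS)$ is then the inclusion of single-tile diagrams, and checking the universal property reduces to noting that any morphism of double derivation schemes $\bS\to U\bA$ extends uniquely over tilings, because $\bA$ already has horizontal and vertical composition of squares satisfying exactly the relations imposed on tilings.

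The main obstacle in the explicit construction is verifying that horizontal and vertical composition descend to well-defined, associative, unital operations on equivalence classes of tilings, and in particular that every tiling of a rectangular region admits a unique composite in $\bA$; this is the standard but tedious pasting-theorem verification for the free double category on a computad-like datum. The abstract argument via the adjoint functor theorem sidesteps this combinatorics altogether and is therefore the shorter route; for the purposes of the paper I would give the abstract argument and, if desired, simply remark that $\bF_\mathrm{dbl}(\bS)$ admits the tiling description above.
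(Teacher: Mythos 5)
Your abstract argument is correct and complete for the statement as given: both $\dbldersch$ and $\dblcat$ are categories of models of finitary essentially algebraic theories, hence locally presentable; limits and filtered colimits in both are computed componentwise on the sets of objects, horizontal morphisms, vertical morphisms, and squares, so the forgetful functor is continuous and accessible and therefore a right adjoint. Note, however, that the paper does not prove this proposition at all---it is recalled from the literature (it appears as Proposition 3.8 of the cited work of Fiore--Paoli--Pronk), where the left adjoint $\bF_\mathrm{dbl}$ is constructed \emph{explicitly}. That explicit description is not an optional refinement in the present context: the paper later uses the concrete free double category $\bF_\mathrm{dbl}(\bS)$ (and congruences on it) to compute colimits and pushouts of double categories and to prove the key lemma on pushouts of canonical trivial fibrations, so a pure existence statement via the adjoint functor theorem would not support those later arguments. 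Also be aware that your tiling sketch glosses over a genuine subtlety in free double categories: not every rectangular arrangement of tiles is composable (the pinwheel phenomenon), so describing the free squares as ``tilings modulo interchange and unit laws'' requires the careful pasting-theoretic treatment you allude to (as in Dawson--Par\'e or the cited source); since you explicitly defer to the abstract argument for the proof, this is a caveat rather than a gap.
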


\begin{defn}
A \emph{congruence} on a double category consists of an equivalence relation on each set of squares with a fixed boundary
such that if $\alpha\sim\alpha'$, $\beta\sim\beta'$, and $\gamma\sim\gamma'$ then 
\[ \alpha \vert \beta\sim \alpha' \vert \beta' \hspace{1em}\text{ and }  \hspace{1em}\frac{\alpha}{\gamma}\sim\frac{\alpha'}{\gamma'} \]
whenever the composites are defined.
\end{defn}

These congruence relations can be used to define quotient double categories.

\begin{defn} 
Let $\bD$ be a double category and $\sim$ be a congruence on $\bD$. The \emph{quotient double category} $\bD/_\sim$ has the same objects, horizontal and vertical morphisms as~$\bD$. For a square boundary $\sqboundary{f}{g}{u}{v}$, the set of squares is given by
    \[ (\bD/_\sim)\sqboundary{f}{g}{u}{v}=\left(\bD\sqboundary{f}{g}{u}{v}\right)/_\sim, \]
with horizontal and vertical compositions of squares induced by those in $\bD$. It comes together with a canonical quotient double functor $\bD\to \bD/_\sim$.
\end{defn}

The following is a consequence of \cite[Lemma 4.4 and Theorem 4.5]{FPP}.

\begin{prop}
    Let $F\colon I\to \dbldersch$ be a functor. Then the colimit of $F$ is the double derivation scheme whose underlying horizontal and vertical categories are the colimits of $U\bfH F\colon I\to \cat$ and $U\bfV F\colon I\to \cat$, respectively, and whose set of squares is given by the colimit of the sets of squares. 
\end{prop}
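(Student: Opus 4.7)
The plan is to construct the candidate colimit $\bS_\infty$ explicitly using the prescribed data and then verify that it satisfies the universal property of $\colim F$ in $\dbldersch$.

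First, I would set $U\bfH\bS_\infty := \colim_{i\in I} U\bfH F(i)$ and $U\bfV\bS_\infty := \colim_{i\in I} U\bfV F(i)$, both computed in $\cat$, and let the set of squares $\Sq\bS_\infty$ be $\colim_{i\in I} \Sq F(i)$ in $\Set$. Since the object functor $\ob\colon \cat\to \Set$ is a left adjoint to the discrete category functor, it preserves colimits; hence $\ob U\bfH\bS_\infty$ and $\ob U\bfV\bS_\infty$ both coincide with $\colim_{i\in I} \ob F(i)$, providing the horizontal and vertical categories with a common set of objects. Naturality in $i$ of the source and target maps from squares to horizontal (resp.\ vertical) morphisms, together with the universal property of colimits in $\Set$, then induces source and target maps from $\Sq\bS_\infty$ to the sets of horizontal (resp.\ vertical) morphisms of $\bS_\infty$; the identity-square assignment transports along the cocone maps similarly. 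This equips $\bS_\infty$ with the structure of a double derivation scheme, and the component cocone maps assemble into morphisms $F(i) \to \bS_\infty$ in $\dbldersch$.

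Next, I would verify the universal property. Given any cocone $\{\lambda_i\colon F(i) \to \bT\}_{i\in I}$ in $\dbldersch$, the horizontal components $U\bfH\lambda_i$ form a cocone of functors in $\cat$, inducing a unique functor $U\bfH\bS_\infty \to U\bfH\bT$; one obtains a unique functor $U\bfV\bS_\infty \to U\bfV\bT$ in the same way, and a unique function $\Sq\bS_\infty \to \Sq\bT$ from the universal property of colimits in $\Set$. These three pieces of data agree on objects (by the cocontinuity of $\ob$) and are compatible with the source, target, and identity-square maps (by the uniqueness clauses in the three universal properties), so they assemble into a unique morphism $\bS_\infty \to \bT$ of double derivation schemes factoring the given cocone.

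The main obstacle I anticipate is the bookkeeping needed to track the three components jointly. In particular, one must ensure that the horizontal and vertical colimit categories share their set of objects (which reduces to cocontinuity of $\ob$), and that the boundary and identity-square maps descend coherently from the diagram to the colimit (which is a naturality argument). A cleaner, more structural approach---presumably along the lines of the cited \cite[Lemmas 4.4 and 4.5]{FPP}---would be to exhibit $\dbldersch$ as a suitable pullback of locally presentable categories and appeal to general cocompleteness results, but the direct verification sketched above suffices.
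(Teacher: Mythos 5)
Your proposal is correct, but note that the paper does not actually prove this proposition: it is stated as a consequence of the cited results \cite[Lemma 4.4 and Theorem 4.5]{FPP}, so your direct verification of the universal property is essentially a self-contained reproof of that imported result rather than a variant of an argument in this paper. Your route is the natural one: build $\bS_\infty$ from $\colim U\bfH F$, $\colim U\bfV F$ (in $\cat$) and the colimit of square sets (in $\Set$), use that $\ob\colon\cat\to\Set$ preserves colimits to get a common object set, push the boundary maps forward along the cocones (well-defined by naturality, and the corner compatibilities are inherited), and then check the universal property componentwise, with agreement on objects and compatibility with boundaries following from uniqueness of maps out of the $\Set$-colimits. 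This works precisely because a double derivation scheme carries no composition or identities on squares, so the squares really are just a set equipped with boundary maps. Two small corrections: the discrete category functor is \emph{left} adjoint to $\ob$, while $\ob$ is left adjoint to the indiscrete (chaotic) category functor --- either way $\ob$ preserves colimits, so your conclusion stands but the stated reason misidentifies the adjoint; and double derivation schemes have no identity \emph{squares} (only identity horizontal and vertical edges, which are already part of the underlying category structures handled by the $\cat$-colimits), so that clause of your verification is vacuous. What the paper's citation buys is brevity and consistency with the colimit formula for $\dblcat$ (\cref{thm:colimits_dblcats}), which is also taken from \cite{FPP}; what your argument buys is an elementary, self-contained proof with no dependence on the external reference.
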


One can then leverage this result to obtain a formula for colimits in double categories; this appears as \cite[Theorem 4.6]{FPP}.

\begin{theorem}
\label{thm:colimits_dblcats}
Let $F\colon I\to \dblcat$ be a functor and denote by $\bS$ the colimit of $UF\colon I\to \dbldersch$. Then the colimit of $F$ is the quotient
\[ \bF_\mathrm{dbl}(\bS)/_\sim \]
where $\sim$ is the smallest congruence relation such that the morphisms 
\[ q_{Fi}
\colon Fi\to \bF_\mathrm{dbl}(\bS)/_\sim \]
 are double functors. Here, the morphisms $q_{Fi}$ are induced by the morphisms of double derivation schemes $UFi\to \bS\to U\bF_\mathrm{dbl}(\bS)$ and the canonical quotient double functor $\bF_\mathrm{dbl}(\bS)\to \bF_\mathrm{dbl}(\bS)/_\sim$.
\end{theorem}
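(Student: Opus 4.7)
The plan is to verify the universal property of the colimit directly, by combining three already-established universal properties from the excerpt: that of $\bS$ as colimit in $\dbldersch$, the free-forgetful adjunction $\bF_\mathrm{dbl}\dashv U$, and quotients by congruences. I first observe that $\sim$ is well-defined: the intersection of any family of congruences on a double category is again a congruence, and the family of congruences making all the $q_{Fi}$ into double functors is nonempty (the ``maximal'' congruence relating any two squares over a common boundary trivially works). The cocone property of $\bF_\mathrm{dbl}(\bS)/_\sim$ equipped with the maps $q_{Fi}$ is then immediate by construction: each $q_{Fi}$ is a double functor by choice of $\sim$, and compatibility with the structure maps of $F$ follows, after applying $U$, from the cocone property of $\{UFi\to\bS\}$ in $\dbldersch$ composed with the unit $\bS\to U\bF_\mathrm{dbl}(\bS)$ and the quotient map.

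For the universal property, consider an arbitrary cocone $\{f_i\colon Fi\to\bD\}_{i\in I}$ in $\dblcat$. Applying $U$ yields a cocone in $\dbldersch$, and by the colimit property of $\bS$, there is a unique morphism $\bar f\colon \bS\to U\bD$; by the adjunction $\bF_\mathrm{dbl}\dashv U$, this corresponds to a unique double functor $\tilde f\colon \bF_\mathrm{dbl}(\bS)\to\bD$. To descend $\tilde f$ through the quotient, I would introduce on $\bF_\mathrm{dbl}(\bS)$ the ``kernel congruence'' $\sim_{\tilde f}$, where two squares $\alpha,\beta$ with a common boundary are declared equivalent iff $\tilde f(\alpha)=\tilde f(\beta)$. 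This is readily checked to be a congruence (using that $\tilde f$ is a double functor), and the induced morphism $\bF_\mathrm{dbl}(\bS)/_{\sim_{\tilde f}}\to\bD$ is injective on squares over every fixed boundary. Using this injectivity, one verifies that $\sim_{\tilde f}$ makes each underlying composite $UFi\to U\bF_\mathrm{dbl}(\bS)\to U(\bF_\mathrm{dbl}(\bS)/_{\sim_{\tilde f}})$ lift to a double functor from $Fi$: the compatibilities with horizontal and vertical composition hold after passing to $\bD$ (since each $f_i$ is a double functor), and they are reflected back up to the quotient by the injectivity on fibers of the boundary map. By minimality of $\sim$, we deduce $\sim\,\subseteq\,\sim_{\tilde f}$, so $\tilde f$ descends further to a double functor $g\colon \bF_\mathrm{dbl}(\bS)/_\sim\to\bD$ factoring the cocone. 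Uniqueness of $g$ follows from the surjectivity of the quotient together with the uniqueness of $\tilde f$ in the adjunction.

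The main obstacle is the careful interplay between the three universal properties, and in particular the handling of the kernel congruence. By the convention adopted in this paper, a congruence only relates squares over a fixed boundary and cannot identify objects or morphisms, so one cannot simply pass to ``$\bF_\mathrm{dbl}(\bS)/\ker\tilde f$'' in the usual sense; instead the descent must be carried out at the level of squares only, and the verification that $\sim_{\tilde f}$ lifts each $q_{Fi}$ to a genuine double functor relies crucially on the boundary-preserving injectivity highlighted above. This is the heart of the argument.
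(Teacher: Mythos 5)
Your argument is correct. Note that the paper itself does not prove this statement -- it is recalled verbatim from the literature (cited there as [FPP, Theorem 4.6]) -- so there is no in-paper proof to compare against; your direct verification of the universal property, combining the colimit in $\dbldersch$, the adjunction $\bF_\mathrm{dbl}\dashv U$, and the kernel congruence $\sim_{\tilde f}$ (kept at the level of squares over a fixed boundary, exactly as the paper's notion of congruence requires), is the natural and standard route, and each step checks out: $\sim_{\tilde f}$ is a congruence, it makes the maps $q_{Fi}$ into double functors because the required relations hold after applying $\tilde f$ (whose composite with the coprojections is $f_i$), minimality gives $\sim\subseteq\sim_{\tilde f}$, and $\tilde f$ descends. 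The only compression is in the uniqueness step: besides surjectivity of the quotient and the bijection of the adjunction, you also implicitly use the uniqueness clause of the colimit property of $\bS$ to identify $U(g'\circ p_\sim)\circ\eta$ with $\bar f$ for a competing factorization $g'$; this is routine and does not affect the validity of the proof.
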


\begin{rem} 
Note that $\bS$, $\bF_\mathrm{dbl}(\bS)$, and $\bF_\mathrm{dbl}(\bS)/_\sim$ have the same underlying horizontal and vertical categories. In particular, the underlying horizontal and vertical categories of the colimit $\bF_\mathrm{dbl}(\bS)/_\sim$ are the colimits of $U\bfH F\colon I\to \cat$ and $U\bfV F\colon I\to \cat$, respectively.
\end{rem}

We will be interested in the description that this formula gives for the case of pushouts of double categories. For convenience, we interpret the theorem in this particular case.

\begin{cor} \label{charpushout}
    Consider a diagram of double categories
    \[
    \bB\leftarrow\bA\rightarrow \bC \]
    and denote by $\bS$ the pushout of the corresponding diagram of underlying double derivation schemes. 
    Then the pushout of $\bB\leftarrow\bA\rightarrow\bC$ is the quotient 
    \[ \bF_\mathrm{dbl}(\bS)/_\sim, \]
    where $\sim$ is the smallest congruence on the double category $\bF_\mathrm{dbl}(\bS)$ such that the induced morphisms $q_\bB\colon \bB\to  \bF_\mathrm{dbl}(\bS)/_\sim$ and $q_\bC\colon \bC\to \bF_\mathrm{dbl}(\bS)/_\sim$ are double functors.
\end{cor}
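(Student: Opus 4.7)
The plan is to recognize that a pushout $\bB \leftarrow \bA \rightarrow \bC$ is a particular colimit and then directly apply \cref{thm:colimits_dblcats}. Let $I$ be the span category indexing pushouts (three objects, two non-identity arrows emanating from a common source) and let $F\colon I \to \dblcat$ be the functor whose value on this diagram is $\bB \leftarrow \bA \rightarrow \bC$.

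First I would identify the colimit of the underlying double derivation schemes: by definition, $\bS$ in the statement is precisely the pushout of $U\bB \leftarrow U\bA \rightarrow U\bC$ in $\dbldersch$, which is the colimit of $UF\colon I \to \dbldersch$. Thus, applying \cref{thm:colimits_dblcats}, the pushout of $\bB \leftarrow \bA \rightarrow \bC$ in $\dblcat$ is given by $\bF_\mathrm{dbl}(\bS)/_{\sim'}$, where $\sim'$ is the smallest congruence on $\bF_\mathrm{dbl}(\bS)$ such that each of the three morphisms $q_{\bA}\colon \bA \to \bF_\mathrm{dbl}(\bS)/_{\sim'}$, $q_{\bB}\colon \bB \to \bF_\mathrm{dbl}(\bS)/_{\sim'}$, and $q_{\bC}\colon \bC \to \bF_\mathrm{dbl}(\bS)/_{\sim'}$ is a double functor.

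The remaining step, and the only real content, is to show that the congruence $\sim'$ coincides with the congruence $\sim$ described in the statement, namely the smallest one for which $q_{\bB}$ and $q_{\bC}$ are double functors. One inclusion is immediate since $\sim'$ satisfies a stronger requirement, so $\sim \,\subseteq\, \sim'$. For the reverse, I would use that the morphism $q_{\bA}$ is obtained by composition of $\bA \to \bB$ (or $\bA \to \bC$) with $q_{\bB}$ (or $q_{\bC}$), together with the underlying morphism of double derivation schemes $U\bA \to \bS \to U\bF_\mathrm{dbl}(\bS)$ and the quotient. Since $\bA \to \bB$ is already a double functor and composition of double functors is a double functor, once $q_{\bB}$ is a double functor so is $q_{\bA}$. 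Hence $\sim$ already forces $q_{\bA}$ to be a double functor, so $\sim' \,\subseteq\, \sim$.

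The main obstacle (really, the only subtlety) is making the compatibility between the composite structure on $q_{\bA}$ and the one inherited from $q_{\bB}$ or $q_{\bC}$ explicit; this is ultimately routine because both descriptions of $q_{\bA}$ factor through the common map $\bS \to U\bF_\mathrm{dbl}(\bS)$ at the level of underlying double derivation schemes, so the quotienting by $\sim$ automatically identifies the two.
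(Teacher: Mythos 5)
Your proposal is correct and matches the paper's treatment: the paper states this corollary as a direct specialization of \cref{thm:colimits_dblcats} to the span diagram, with no further argument given. Your extra observation that the condition on $q_\bA$ is redundant (since $q_\bA$ factors as the composite of the double functor $\bA\to\bB$ with $q_\bB$, by commutativity of the pushout square of underlying double derivation schemes) correctly fills in the only detail the paper leaves implicit.
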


\section{Left properness}\label{section:proper}
 
 The goal of this section is to prove that any model structure on $\dblcat$ whose trivial fibrations are the canonical ones must be left proper. To do so, we first describe in \cref{subsec:cofibs} a set of generating canonical cofibrations that identifies the canonical trivial fibrations as the right class in a weak factorization system; these are then used in \cref{subsec:properness} to establish our desired left properness result.

\subsection{Canonical cofibrations and trivial fibrations}\label{subsec:cofibs}

Let us first formally define the class of canonical trivial fibrations, which will be a common feature of all the model structures we consider in this work.

\begin{defn}\label{def:trivfibs}
    A double functor is a \emph{canonical trivial fibration} if it is surjective on objects, full on both horizontal and vertical morphisms, and fully faithful on squares.
\end{defn}

We recall the following terminology.

\begin{notation}\label{notation}
Let $\cI$ be a set of morphisms in a cocomplete category $\cC$. A morphism in~$\cC$ is
\begin{rome}
   \item \emph{$\cI$-injective} if it has the right lifting property with respect to every morphism in~$\cI$; the class of all such morphisms is denoted $\inj(\cI)$,
    \item an \emph{$\cI$-cofibration} if it has the left lifting property with respect to every $\cI$-injective morphism; the class of all such morphisms is denoted $\cof(\cI)$.
\end{rome}
\end{notation}

\begin{rem}
  In a locally presentable category $\cC$, the pair $(\cof(\cI),\inj(\cI))$ forms a weak factorization system for any set $\cI$ of morphisms in $\cC$.
\end{rem}

Next, we define a set of generating canonical cofibrations in the category $\dblcat$.

\begin{defn}\label{gen_cofib}
Let $\cI$ be the set consisting of the following double functors: 
\begin{rome}
\item the unique morphism $\emptyset\to \mathbbm{1}$, 
\item the inclusion $\mathbbm{1}\sqcup\mathbbm{1}\to \bH\mathbbm{2}$, 
\item the inclusion $\mathbbm{1}\sqcup\mathbbm{1}\to \bV\mathbbm{2}$, 
\item the inclusion $\partial(\bH\mathbbm{2}\times\bV\mathbbm{2})\to \bH\mathbbm{2}\times \bV\mathbbm{2}$ of the boundary of the square,
\begin{tz}
    \node[](1) {$\boldsymbol{\cdot}$}; 
    \node[right of=1](2) {$\boldsymbol{\cdot}$};
    \node[below of=1](1') {$\boldsymbol{\cdot}$};
    \node[below of=2](2') {$\boldsymbol{\cdot}$};
    \draw[->] (1) to (2);
    \draw[->] (1') to (2');
    \draw[->,pro] (1) to (1');
    \draw[->,pro] (2) to (2');
    \node[right of=2,xshift=.3cm](1) {$\boldsymbol{\cdot}$}; 
    \node at ($(1)!0.5!(2')$) {$\longrightarrow$}; 
    \node[right of=1](2) {$\boldsymbol{\cdot}$};
    \node[below of=1](1') {$\boldsymbol{\cdot}$};
    \node[below of=2](2') {$\boldsymbol{\cdot}$};
    \draw[->] (1) to (2);
    \draw[->] (1') to (2');
    \draw[->,pro] (1) to (1');
    \draw[->,pro] (2) to (2');
    \node[] at ($(1)!0.5!(2')$) {\rotatebox{270}{$\Rightarrow$}};
    \end{tz}
\item the double functor $\bH\mathbbm{2}\times\bV\mathbbm{2}\sqcup_{\partial(\bH\mathbbm{2}\times\bV\mathbbm{2})}\bH\mathbbm{2}\times\bV\mathbbm{2}\to \bH\mathbbm{2}\times\bV\mathbbm{2}$ sending the two parallel squares to the non-trivial square.
\begin{tz}
    \node[](1) {$\boldsymbol{\cdot}$}; 
    \node[right of=1](2) {$\boldsymbol{\cdot}$};
    \node[below of=1](1') {$\boldsymbol{\cdot}$};
    \node[below of=2](2') {$\boldsymbol{\cdot}$};
    \draw[->] (1) to (2);
    \draw[->] (1') to (2');
    \draw[->,pro] (1) to (1');
    \draw[->,pro] (2) to (2');
    \node[] at ($(1)!0.5!(2')-(5pt,0)$) {\rotatebox{270}{$\Rightarrow$}};
    \node[] at ($(1)!0.5!(2')+(5pt,0)$) {\rotatebox{270}{$\Rightarrow$}};

    \node[right of=2,xshift=.3cm](1) {$\boldsymbol{\cdot}$}; 
    \node at ($(1)!0.5!(2')$) {$\longrightarrow$}; 
    \node[right of=1](2) {$\boldsymbol{\cdot}$};
    \node[below of=1](1') {$\boldsymbol{\cdot}$};
    \node[below of=2](2') {$\boldsymbol{\cdot}$};
    \draw[->] (1) to (2);
    \draw[->] (1') to (2');
    \draw[->,pro] (1) to (1');
    \draw[->,pro] (2) to (2');
    \node[] at ($(1)!0.5!(2')$) {\rotatebox{270}{$\Rightarrow$}};
    \end{tz}
\end{rome}
\end{defn}

Inspecting the right lifting conditions with respect to the double functors in $\cI$, we can identify the $\cI$-injective morphisms with the canonical trivial fibrations.

\begin{prop}\label{prop:trivfibs} 
 A double functor has the right lifting property with respect to the maps in $\cI$ if and only if it is a canonical trivial fibration.
\end{prop}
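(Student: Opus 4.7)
The plan is to prove both implications simultaneously by matching each generator in $\cI$ with one of the four defining conditions of a canonical trivial fibration. The guiding observation is that each source and target in $\cI$ is a ``classifier'' for a piece of double-categorical data: double functors out of $\mathbbm{1}$, $\bH\mathbbm{2}$, $\bV\mathbbm{2}$, $\bH\mathbbm{2}\times\bV\mathbbm{2}$, and $\partial(\bH\mathbbm{2}\times\bV\mathbbm{2})$ correspond respectively to objects, horizontal morphisms, vertical morphisms, squares, and square boundaries. Under this dictionary, each map in $\cI$ is the inclusion of one piece of structure into a more refined one, and the right lifting property translates directly into the corresponding surjectivity/fullness/faithfulness condition.

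Concretely, lifting against $\emptyset\to\mathbbm{1}$ asks that every object of the target be hit by an object of the source, which is surjectivity on objects. Lifting against $\mathbbm{1}\sqcup\mathbbm{1}\to\bH\mathbbm{2}$ asks that, given objects $A,C$ in the source $\bA$ and a horizontal morphism $g\colon FA\to FC$ in the target $\bB$, there exist a horizontal morphism $f\colon A\to C$ in $\bA$ with $Ff=g$; this is fullness on horizontal morphisms. The case (iii) is identical with vertical morphisms in place of horizontal ones.

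For squares, the analysis splits across (iv) and (v). The double category $\bH\mathbbm{2}\times\bV\mathbbm{2}$ consists of four corner objects, two horizontal and two vertical morphisms, and a single non-trivial filling square; consequently, a double functor $\bH\mathbbm{2}\times\bV\mathbbm{2}\to\bD$ picks out a square together with its boundary, while a double functor $\partial(\bH\mathbbm{2}\times\bV\mathbbm{2})\to\bD$ picks out only the boundary. Lifting against (iv) thus produces, for any square in $\bB$ whose boundary lifts to $\bA$, a filling square in $\bA$; combined with (i)--(iii), this yields fullness on squares. For (v), invoking the description of pushouts in $\dblcat$ from \cref{charpushout}, double functors out of the pushout $\bH\mathbbm{2}\times\bV\mathbbm{2}\sqcup_{\partial(\bH\mathbbm{2}\times\bV\mathbbm{2})}\bH\mathbbm{2}\times\bV\mathbbm{2}$ classify pairs of squares sharing a common boundary. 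The right lifting property against the folding map then asserts that two parallel squares whose images agree in $\bB$ must already be equal in $\bA$, i.e., faithfulness on squares; together with (iv), this is full faithfulness on squares.

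The argument is essentially a routine unwinding of lifting diagrams, so no step presents a serious obstacle. The subtlety requiring most care is the correct identification of the universal properties of $\bH\mathbbm{2}\times\bV\mathbbm{2}$ and of its boundary $\partial(\bH\mathbbm{2}\times\bV\mathbbm{2})$: one must confirm that the boundary contains exactly the two horizontal morphisms, two vertical morphisms, and four corner objects (with no squares), and that the product contains precisely one filling square on top of this boundary. Once these identifications are in hand, and the pushout in (v) is unpacked via \cref{charpushout}, each of the five equivalences follows at once from reading off the lifting condition.
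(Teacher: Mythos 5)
Your proof is correct and is exactly the routine verification the paper has in mind (the paper omits the proof, stating only that it follows by "inspecting the right lifting conditions"): each generator in $\cI$ classifies the corresponding piece of data, and the lifting properties translate generator-by-generator into surjectivity on objects, fullness on horizontal and vertical morphisms, and fullness plus faithfulness on squares. The only negligible quibble is that for (v) you only need the universal property of the pushout for maps out of it (rather than the explicit description of \cref{charpushout}), and fullness on squares follows from (iv) alone without invoking (i)--(iii).
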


\begin{defn}\label{defn:cofibs} A double functor is a \emph{canonical cofibration} if it is an $\cI$-cofibration.
\end{defn}

The following description is a combination of \cite[Theorem 3.11, Remark 3.12, and Corollary 3.13]{whi}.

\begin{prop}\label{charcof}
   A double functor $F\colon\bA\to\bB$ is a canonical cofibration if and only if 
   \begin{rome}
       \item it is injective on objects and faithful on both horizontal and vertical morphisms,
       \item there is a double category $\bC$ and double functors $\bB\xrightarrow{I}\bC\xrightarrow{R}\bB$ with $RI=\id_\bB$ such that the underlying categories $U\bfH \bC$ and $U\bfV \bC$ are obtained from the image of $U\bfH (IF)$ and $U\bfV (IF)$ by freely adding objects and then freely adding morphisms between specified objects.
   \end{rome}
   
   In particular, a double category $\bA$ is canonically cofibrant if and only if its underlying horizontal and vertical categories $U\bfH\bA$ and $U\bfV\bA$ are free.
\end{prop}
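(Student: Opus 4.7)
The plan is to deduce the characterization from the small object argument applied to $\cI$ in the locally presentable category $\dblcat$, which expresses every canonical cofibration as a retract of a relative $\cI$-cell complex. The central observation, readable from the explicit pushout formula in \cref{charpushout}, is that pushouts of generators (iv) and (v) only attach or identify squares and leave the underlying horizontal and vertical categories unchanged, whereas pushouts of (i)--(iii) modify these categories by freely adjoining either an object or a morphism between two specified objects.

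For the ``if'' direction, the data in (ii) exhibits $F$ as a retract of $IF\colon \bA\to\bC$, with legs $\id_\bA$ on the domain side and $\bB\xrightarrow{I}\bC\xrightarrow{R}\bB$ on the codomain side. Since $\cof(\cI)$ is closed under retracts, it suffices to show that $IF$ is a canonical cofibration. From $R\circ I=\id_\bB$ and property (i) for $F$, one sees that $IF$ is itself injective on objects and faithful, so the image of $U\bfH(IF)$ is isomorphic to $U\bfH\bA$ inside $U\bfH\bC$, and similarly for $U\bfV$. The free description in (ii) then lets us realize $IF$ as a transfinite composition of pushouts of generators in $\cI$: attaching the added objects via $\emptyset\to\mathbbm{1}$, the added horizontal and vertical morphisms via $\mathbbm{1}\sqcup\mathbbm{1}\to\bH\mathbbm{2}$ and $\mathbbm{1}\sqcup\mathbbm{1}\to\bV\mathbbm{2}$, and all squares of $\bC$ lying outside the image via $\partial(\bH\mathbbm{2}\times\bV\mathbbm{2})\to\bH\mathbbm{2}\times\bV\mathbbm{2}$.

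For the ``only if'' direction, write $F$ as a retract of a relative $\cI$-cell complex $G\colon \bA\to\bB'$. For (i), I would verify from \cref{charpushout} that injectivity on objects and faithfulness on horizontal and vertical morphisms are stable under pushouts of each generator in $\cI$, transfinite composition, and retracts, so these properties propagate from $\id_\bA$ to $G$ and thence to $F$. For (ii), I would take $\bC:=\bB'$ together with the legs $I$ and $R$ of the retract, so that $IF=G$. Since pushouts of (iv) and (v) leave the underlying categories unchanged, $U\bfH\bC$ and $U\bfV\bC$ are built from $U\bfH\bA$ and $U\bfV\bA$ by pushouts of (i)--(iii) only; reorganizing this cell filtration so that all pushouts of (i) occur before those of (ii) and (iii) yields the required ``freely adding objects and then freely adding morphisms'' description, while property (i) for $G$ identifies the image of $U\bfH(IF)$ and $U\bfV(IF)$ with $U\bfH\bA$ and $U\bfV\bA$. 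The special case of canonically cofibrant objects then follows by applying the equivalence to $F\colon\emptyset\to\bA$.

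The principal obstacle is the stability analysis in the ``only if'' direction: one must show, directly from the pushout formula in \cref{charpushout}, that the congruence $\sim$ produced by pushouts of (v) never secretly identifies horizontal or vertical morphisms, and that pushouts of (ii) and (iii) genuinely add morphisms freely without collapsing any existing ones. Once this is in place, the reorganization of the cell filtration and the matching of the free description with the image of $U\bfH(IF)$ and $U\bfV(IF)$ are essentially bookkeeping.
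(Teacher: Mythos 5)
Your general strategy (small object argument plus the explicit pushout formula of \cref{charpushout}) is reasonable, and your ``only if'' direction is essentially sound: the stability of injectivity on objects and faithfulness under pushouts of the generators of \cref{gen_cofib}, transfinite composition, and retracts does follow from the kind of analysis carried out in the proof of \cref{pushout_trivfib} (generators (iv) and (v) leave the underlying horizontal and vertical categories untouched, while (i)--(iii) add an object or freely adjoin a morphism), and the reordering of the cell filtration is indeed bookkeeping. Note that the paper itself does not prove this proposition but imports it from \cite{whi}; the argument there checks the lifting property against canonical trivial fibrations directly, using that these are \emph{fully faithful on squares}, so that a lift is already determined (and automatically functorial) once it is chosen on objects and on horizontal and vertical morphisms --- this is what makes a purely $1$-dimensional freeness condition suffice.

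The genuine gap is in your ``if'' direction. You propose to realize $IF\colon\bA\to\bC$ as a transfinite composite of pushouts of the generators by attaching the new objects via $\emptyset\to\mathbbm{1}$, the new horizontal and vertical morphisms via $\mathbbm{1}\sqcup\mathbbm{1}\to\bH\mathbbm{2}$ and $\mathbbm{1}\sqcup\mathbbm{1}\to\bV\mathbbm{2}$, and ``all squares of $\bC$ lying outside the image'' via the boundary inclusion (iv) --- generator (v) is never used. But hypothesis (ii) constrains only the underlying horizontal and vertical categories of $\bC$; it says nothing about its squares, which are in general not free: pastings of the attached squares may coincide in $\bC$, may agree with squares coming from $\bA$, and $IF$ need not even be injective on squares. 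Consequently the double category produced by your attachments has the right underlying categories but only maps \emph{onto} $\bC$, non-isomorphically on squares, so the cell complex you describe is not $IF$ (already for $\bA=\emptyset$ and $\bC$ any double category with free underlying categories whose squares satisfy relations). The repair is to add a final stage: the comparison map from your construction to $\bC$ is bijective on objects and on horizontal and vertical morphisms and surjective on squares, its kernel is a congruence, and quotienting by it is a pushout of a coproduct of copies of generator (v) --- exactly the operation analyzed in the proof of \cref{pushout_trivfib}; alternatively, bypass cellular presentations altogether and verify the left lifting property against canonical trivial fibrations directly, as in \cite{whi}.
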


\subsection{Left properness}\label{subsec:properness}

The left properness of the relevant model structures is a consequence of the following lemma, whose proof will be the main content of this section.

\begin{lem}\label{pushout_trivfib} Given a diagram of pushouts in $\dblcat$ 
\begin{tz}
\node[] (1)     {$\bA$};
\node[right of=1] (2)   {$\bC$};
\node[right of=2] (3) {$\bE$};
\node[below of=1] (4) {$\bB$};
\node[below of=2] (5) {$\bD$};
\node[below of=3] (6) {$\bF$};

\draw[->] (1) to node[left,la]{$I$} (4);
\draw[->] (2) to node[left, la]{$J$} (5);
\draw[->] (3) to node[right, la]{$K$} (6);
\draw[->] (1) to (2);

\draw[->] (2) to node[above, la]{$P$} (3);
\draw[->] (4) to (5);
\draw[->] (5) to node[below, la]{$Q$} (6);

\pushout{5};
\pushout{6};
\end{tz}
where $I\colon\mathbb{A}\to\mathbb{B}$ is a double functor in $\cI$ and $P$ is a canonical trivial fibration, we have that $Q$ is a canonical trivial fibration.
\end{lem}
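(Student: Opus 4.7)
We proceed by case analysis on $I \in \cI$, using the explicit description of pushouts in $\dblcat$ provided by \cref{charpushout}. In each case we describe the pushouts $\bD$ and $\bF$ concretely and then verify that $Q$ is surjective on objects, full on horizontal and vertical morphisms, and fully faithful on squares.

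For cases (i)--(iii), where $I$ freely adjoins an object, a horizontal morphism, or a vertical morphism, the pushouts $\bD$ and $\bF$ are obtained from $\bC$ and $\bE$ by adjoining the same freely added data, and $Q$ restricts to $P$ on the old data and to the identity on the new data. The four conditions for $Q$ then follow quickly from the corresponding conditions on $P$: one decomposes any object or morphism in $\bF$ into pieces coming from $\bE$ (which lift since $P$ is surjective or full on the relevant class) alternating with the new free pieces (which lift tautologically), while squares away from the new data come from $\bC$ where $P$ is fully faithful. Case (iv), freely adjoining a single square with prescribed boundary, is analogous: the freely added square in $\bD$ maps to the freely added square in $\bF$, so both fullness and faithfulness on squares follow from the free description together with the fully faithfulness of $P$ on squares.

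The subtlest case is (v), where $I$ identifies two parallel squares $\alpha, \beta$ of $\bC$ with common boundary. By \cref{charpushout}, $\bD$ is the quotient $\bC/{\sim}$ by the minimal congruence with $\alpha \sim \beta$, and $\bF$ is the analogous quotient $\bE/{\sim'}$ identifying $P(\alpha)$ with $P(\beta)$. Surjectivity on objects and fullness on morphisms for $Q$ follow directly from $P$. For fully faithfulness on squares, we must show that if $\gamma, \delta$ are parallel squares in $\bC$ with $[P(\gamma)] = [P(\delta)]$ in $\bF$, then $[\gamma] = [\delta]$ in $\bD$. Unwinding the minimal congruence $\sim'$, we may reduce to the case where there is a pasting context $C$ in $\bE$ with $P(\gamma) = C(P(\alpha))$ and $P(\delta) = C(P(\beta))$. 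Using fullness of $P$ on horizontal and vertical morphisms to lift the morphisms in $C$ to $\bC$, and fully faithfulness on squares to lift each square-component uniquely given its lifted boundary, we assemble a pasting context $C'$ in $\bC$. Then $P(C'(\alpha)) = P(\gamma)$ and both squares have the same boundary, so faithfulness of $P$ on squares yields $C'(\alpha) = \gamma$; similarly $C'(\beta) = \delta$. Hence $[\gamma] = [\delta]$ in $\bD$ via the elementary substitution $\alpha \leadsto \beta$.

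The main obstacles are keeping track of the free composites in case (iv) and carefully unwinding the minimal congruence in case (v); both are controlled by \cref{charpushout}, and the fully faithfulness of $P$ on squares is exactly what allows us to transfer substitutions back from $\bE$ to $\bC$.
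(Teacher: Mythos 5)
Your overall strategy---a case analysis over the generating set $\cI$ together with explicit descriptions of the pushouts---is the same as the paper's, and your treatment of the generators $\emptyset\to\mathbbm{1}$, $\mathbbm{1}\sqcup\mathbbm{1}\to\bH\mathbbm{2}$ and $\mathbbm{1}\sqcup\mathbbm{1}\to\bV\mathbbm{2}$ matches it. (Your simpler description of $\bD$ as $\bC/\!\sim$ in case (v) is also fine; it satisfies the universal property of the pushout, so the issue is not there.) The problem is the faithfulness-on-squares step for the last two generators, which is where all the difficulty of the lemma lies. First, a repairable omission: the congruence on $\bE$ generated by $P\alpha\sim P\alpha'$ is the reflexive--symmetric--transitive closure of elementary substitutions, so two squares with equal classes are in general related by a zigzag of substitutions, not by a single context $C$ as you assume; this can be fixed by lifting the intermediate squares of the chain using fullness of $P$ on squares. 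The genuine gap is in the lifting of the context itself: you lift the morphisms of $C$ by fullness of $P$ on horizontal and vertical morphisms and then assert that $C'(\alpha)$ ``has the same boundary'' as $\gamma$. But $P$ is not assumed faithful on morphisms, so the chosen lifts of the edges along the outer boundary of $C$ need not compose to the boundary of $\gamma$---they only compose to a morphism with the same image under $P$---and likewise the edges adjacent to the hole must be chosen to be exactly the boundary of $\alpha$ (possible, but unaddressed). Without the same-boundary conclusion you cannot invoke faithfulness of $P$ on squares to deduce $C'(\alpha)=\gamma$, and since a congruence only relates squares with equal boundaries, the chain does not close up to give $[\gamma]=[\delta]$ in $\bD$. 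The same issue is hidden in your case (iv), which you treat as if the squares of $\bD$ were just ``old squares plus one new one''; they are arbitrary pastings involving the new square, and the faithfulness verification there is of the same nature as in case (v) (the paper treats (iv) as analogous to (v), not to the free-morphism cases).

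This is precisely where the paper's proof takes a different, more structural route: it presents $\bD$ and $\bF$ via \cref{charpushout} as quotients of free double categories on double derivation schemes, observes that the induced map $\bF_\mathrm{dbl}(\hat{Q})$ is a canonical trivial fibration, transports $\sim_\bC$ forward to an ``image'' congruence on $\bF_\mathrm{dbl}(\bS_\bF)$ for which faithfulness of the induced map on quotients can be checked (the boundary discrepancy being absorbed by pasting with squares relating the boundary morphisms), and then identifies $Q$ with that induced map by showing the two congruences on $\bF_\mathrm{dbl}(\bS_\bF)$ coincide---the nontrivial inclusion being reduced, via surjectivity of $P$ on objects and fullness on morphisms and squares, to the statement that a certain composite out of $\bC$ is a double functor by construction. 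If you want to keep a direct lifting argument you need some device of this kind to handle non-matching boundaries; as written, the key step of your proof does not go through.
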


Before we focus on the proof of this lemma, we explain how one can use it to conclude our desired left properness result. First, using the same model categorical arguments as in the proof of \cite[Theorem 6.3]{Lack2Cat} we deduce the following.

\begin{prop} \label{prop:pushoutoftrivfib}
    Canonical trivial fibrations are stable under pushouts along canonical cofibrations.
\end{prop}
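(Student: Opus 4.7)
The plan is to extend \cref{pushout_trivfib} from the generating cofibrations in $\cI$ to all canonical cofibrations by exploiting the cellular structure of the weak factorization system $(\cof(\cI), \inj(\cI))$. I would let $\mathcal{P}$ denote the class of double functors $g\colon \bX \to \bY$ such that, for every canonical trivial fibration $P\colon \bX \to \bZ$, the pushout $Q\colon \bY \to \bY\sqcup_{\bX}\bZ$ of $P$ along $g$ is again a canonical trivial fibration. \cref{pushout_trivfib} is precisely the assertion that every cobase change of a map in $\cI$ belongs to $\mathcal{P}$, so it remains to verify that $\mathcal{P}$ is closed under transfinite composition and retracts, the remaining operations needed to reach all of $\cof(\cI)$.

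For transfinite composition, given a $\lambda$-sequence $g_\alpha\colon \bY_\alpha \to \bY_{\alpha+1}$ of maps in $\mathcal{P}$ and a canonical trivial fibration $P\colon \bY_0 \to \bZ$, I would inductively form the iterated pushouts $P_\alpha\colon \bY_\alpha \to \bZ_\alpha$ of $P$ along $\bY_0 \to \bY_\alpha$. At successor stages, $P_{\alpha+1}$ is the pushout of $P_\alpha$ along $g_\alpha$, hence a canonical trivial fibration by the inductive hypothesis together with $g_\alpha \in \mathcal{P}$. At limit stages one takes colimits and appeals to the standard smallness argument: inspection of \cref{gen_cofib} shows that the sources and targets of the maps in $\cI$ are finite double categories, hence finitely presentable, so any $\cI$-lifting problem into a transfinite colimit factors through some bounded stage and can be solved there. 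Applying this at the final stage shows that $\mathcal{P}$ contains every relative $\cI$-cell complex. I expect this smallness step to require the most care, though it follows a well-worn template.

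For closure under retracts, I would invoke the retract trick: applying the small object argument to $g\colon \bX \to \bY$ in $\cof(\cI)$ yields a factorization $g = p \circ g'$ with $g'\colon \bX \to \bY'$ a relative $\cI$-cell complex and $p\colon \bY' \to \bY$ a canonical trivial fibration; the left lifting property of $g$ against $p$ then produces a section $s\colon \bY \to \bY'$ with $ps = \id_\bY$ and $sg = g'$, realizing $g$ as a retract of $g'$ over the identity on $\bX$. Given a canonical trivial fibration $P\colon \bX \to \bZ$, the universal properties of the pushouts of $P$ along $g$ and along $g'$ produce canonical maps between them (induced by $s$ and $p$ on the $\bY$-components and by the identity on the $\bZ$-components) that inherit the retraction structure; so the pushout of $P$ along $g$ is a retract of the pushout of $P$ along $g'$ in the arrow category. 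Since $g' \in \mathcal{P}$ by the previous paragraph and canonical trivial fibrations, being defined by a right lifting property, are closed under retracts, we conclude $g \in \mathcal{P}$, giving $\cof(\cI) \subseteq \mathcal{P}$ as desired.
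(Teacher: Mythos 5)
Your argument is correct and is essentially the paper's own: the paper deduces \cref{prop:pushoutoftrivfib} from \cref{pushout_trivfib} by citing the "same model categorical arguments" as in Lack's proof of left properness for $\twocat$, which is precisely the cellular induction you spell out (closure of your class $\mathcal{P}$ under transfinite composition via finite presentability of the domains and codomains of $\cI$, plus the retract argument using that canonical trivial fibrations are $\inj(\cI)$). So you have simply made explicit the standard argument the paper outsources.
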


With this in hand, we can prove the desired left properness result.

\begin{theorem} \label{thm:leftproper}
Any model structure on $\dblcat$ whose trivial fibrations are the canonical ones is left proper.
\end{theorem}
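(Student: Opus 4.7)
The plan is to use the standard pushout-factorization argument for left properness, leveraging \cref{prop:pushoutoftrivfib} as the crucial ingredient that replaces the usual ad-hoc verification for each model structure.

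First I would observe that in any such model structure the class of cofibrations is completely determined: cofibrations are characterized as having the left lifting property against trivial fibrations, which by hypothesis are the canonical trivial fibrations, i.e., the class $\inj(\cI)$ by \cref{prop:trivfibs}. Since $\cI$ generates a weak factorization system in the locally presentable category $\dblcat$, we have $\cof(\inj(\cI))=\cof(\cI)$, so the cofibrations are exactly the canonical cofibrations of \cref{defn:cofibs}. In particular, every cofibration in the model structure is a canonical cofibration.

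To prove left properness, consider a pushout square
\begin{tz}
\node[](1) {$\bA$};
\node[right of=1](2) {$\bC$};
\node[below of=1](3) {$\bB$};
\node[below of=2](4) {$\bD$};
\draw[->](1) to node[above,la]{$C$} (2);
\draw[->](1) to node[left,la]{$W$} (3);
\draw[->](2) to (4);
\draw[->](3) to (4);
\pushout{4};
\end{tz}
where $C$ is a cofibration (hence a canonical cofibration by the above) and $W$ is a weak equivalence. Applying the (canonical cofibration, trivial fibration) weak factorization system to $W$, factor it as $W=P\circ J$ with $J\colon \bA\to \bA'$ a canonical cofibration and $P\colon \bA'\to \bB$ a canonical trivial fibration. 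By two-out-of-three, $J$ is a weak equivalence, hence a trivial cofibration.

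Now I would form the pushouts in two stages: first push out $J$ along $C$ to obtain $\bC\to \bC'=\bC\sqcup_\bA \bA'$, then push out $P$ along the induced map $\bA'\to \bC'$ to obtain $\bC'\to \bD$. The composite $\bC\to \bD$ agrees with the pushout of $W$ along $C$ by the pasting lemma for pushouts. The first map $\bC\to \bC'$ is a trivial cofibration, as trivial cofibrations are stable under pushouts in any model category. For the second, observe that $\bA'\to \bC'$ is the pushout of the canonical cofibration $C$ along $J$, hence is itself a canonical cofibration (canonical cofibrations are closed under pushouts as the left class of a weak factorization system). Therefore \cref{prop:pushoutoftrivfib} applies to guarantee that $\bC'\to \bD$ is a canonical trivial fibration, which is a weak equivalence by hypothesis. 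Thus $\bC\to \bD$ is a composite of two weak equivalences, and left properness follows.

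No real obstacle is anticipated here: the substantive content has been isolated in \cref{pushout_trivfib} (which handles generating canonical cofibrations one at a time) and its propagation to all canonical cofibrations via \cref{prop:pushoutoftrivfib}. The theorem itself is then a purely formal consequence of the weak factorization system structure together with the identification of cofibrations as canonical cofibrations.
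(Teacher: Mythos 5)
Your proposal is correct and follows essentially the same argument as the paper: factor the weak equivalence as a (trivial) canonical cofibration followed by a canonical trivial fibration, take the pushout in two stages, and use \cref{prop:pushoutoftrivfib} to see that the second leg is a canonical trivial fibration, concluding by composition of weak equivalences. The only addition is your explicit preliminary identification of the cofibrations with the canonical cofibrations, which the paper takes as given since the trivial fibrations are fixed.
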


\begin{proof} 
    Consider a pushout square in $\dblcat$
    \begin{tz}
\node[] (1)     {$\bA$};
\node[right of=1] (2)   {$\bC$};
\node[below of=1] (4) {$\bB$};
\node[below of=2] (5) {$\bD$};

\draw[->] (1) to node[left,la]{$I$} (4);
\draw[->] (2) to (5);

\draw[->] (1) to node[above, la]{$W$} (2);
\draw[->] (4) to node[below,la]{$V$} (5);

\pushout{5};
\end{tz}
where $I$ is a canonical cofibration and $W$ is a weak equivalence. We need to show that $V$ is also a weak equivalence. Factoring $W=P J$ as a trivial cofibration $J\colon \bA\to \bX$ followed by a canonical trivial fibration $P\colon \bX\to \bC$, we can then consider the following diagram in~$\dblcat$
\begin{tz}
\node[] (1)     {$\bA$};
\node[right of=1] (2)   {$\bX$};
\node[right of=2] (3) {$\bC$};
\node[below of=1] (4) {$\bB$};
\node[below of=2] (5) {$\bY$};
\node[below of=3] (6) {$\bD$};

\draw[->] (1) to node[left,la]{$I$} (4);
\draw[->] (2) to node[right,la]{$I'$} (5);
\draw[->] (3) to (6);
\draw[->] (1) to node[above,la]{$J$} (2);

\draw[->] (2) to node[above, la]{$P$} (3);
\draw[->] (4) to node[below,la]{$K$} (5);
\draw[->,dashed] (5) to node[below, la]{$Q$} (6);

\draw[->,bend left=40] (1) to node[above,la]{$W$} (3);
\draw[->,bend right=40] (4) to node[below,la]{$V$} (6);

\pushout{5};
\pushout{6};
\end{tz}
where the left-hand square is a pushout and the double functor $Q$ is induced by the universal property of $\bY$. Note that the right-hand square is also a pushout by the cancellation property of pushouts. 

We have that $K$ is a trivial cofibration as a pushout of the trivial cofibration $J$, and $I'$ is a canonical cofibration as a pushout of the canonical cofibration $I$. Hence, by \cref{prop:pushoutoftrivfib}, it follows that $Q$ is a canonical trivial fibration as a pushout of the canonical trivial fibration $P$ along~$I'$. We then conclude that $V=Q K$ is a weak equivalence as a composite of the weak equivalences $K$ and $Q$.  
\end{proof}

Finally, we conclude this section with a proof of the lemma.

\begin{proof}[Proof of \cref{pushout_trivfib}] 
We prove the result for each of the generating canonical cofibrations of the set $\cI$ in \cref{gen_cofib}.

\textbf{The double functor $\emptyset\to \mathbbm{1}$:} In this case, we have $\bD \cong\bC \sqcup\mathbbm{1}$ and $\bF \cong\bE \sqcup\mathbbm{1}$. Thus the double functor $Q\colon \bD \to \bF$ is isomorphic to $P\sqcup\mathbbm{1}\colon \bC \sqcup\mathbbm{1}\to \bE \sqcup\mathbbm{1}$ and hence it is a canonical trivial fibration, as $P$ is so. 

\textbf{The double functors $\mathbbm{1}\sqcup\mathbbm{1}\to \bH\mathbbm{2}$ and $\mathbbm{1}\sqcup\mathbbm{1}\to \bV\mathbbm{2}$:} We deal with the case of $\mathbbm{1}\sqcup\mathbbm{1}\to \bH\mathbbm{2}$, as the result for $\mathbbm{1}\sqcup\mathbbm{1}\to \bV\mathbbm{2}$ follows by reversing the role of the horizontal and vertical morphisms. We begin by describing the double functor $Q\colon\bD \to\bF$. 

The double category $\bD$ is obtained from $\bC$ by freely adjoining a horizontal morphism $f\colon A\to B$ between objects $A,B$ in $\bC$. Therefore $\bD$ has the same objects and vertical morphisms as $\bC$, and its horizontal morphisms are composites
\[ c_n\circ f\circ c_{n-1}\circ f\circ\cdots\circ f\circ c_1, \]
where $n\geq 0$ and $c_i$ is a horizontal morphism in $\bC$, for all $0\leq i\leq n$. Similarly, squares in~$\bD$ are horizontal composites of squares
\[ \gamma_1\vert e_f \vert \gamma_{2}\vert e_f\vert\cdots\vert e_f\vert \gamma_n\] 
where $n\geq 0$, each $\gamma_i$ is a square in $\bC$ for all $0\leq i\leq n$, and $e_f$ denotes the vertical identity square at the horizontal morphism $f$. Similarly, the double category $\bF$ is obtained from $\bE$ by freely adjoining a horizontal morphism $g\colon PA\to PB$ between the objects $PA,PB$ in $\bE$ and thus $\bF$ admits a description as above.

We now use these descriptions to unpack the action of the double functor $Q\colon\bD\to \bF$. It acts as $P$ on objects and vertical morphisms. For horizontal morphisms, we have 
\[ Q(c_n\circ f\circ\cdots\circ f\circ c_1)=Pc_n\circ g\circ \cdots\circ g\circ Pc_1 \]
and for squares, we have
\[ Q(\gamma_1\vert e_f \vert \gamma_{2}\vert e_f\vert\cdots\vert e_f\vert \gamma_n)=P\gamma_1\vert e_g \vert P\gamma_{2}\vert e_g\vert\cdots\vert e_g\vert P\gamma_n. \] 
Hence, it is straightforward to check that $Q$ is a canonical trivial fibration, as $P$ is so.
 
\textbf{The double functor $\bH\mathbbm{2}\times\bV\mathbbm{2}\sqcup_{\partial(\bH\mathbbm{2}\times\bV\mathbbm{2})}\bH\mathbbm{2}\times\bV\mathbbm{2}\to \bH\mathbbm{2}\times\bV\mathbbm{2}$:} We begin by giving a different description of the double functor $Q$, that facilitates access to the  information required to prove that $Q$ is a canonical trivial fibration. To describe the domain $\bD$, let $\alpha$ and $\alpha'$ be the squares with the same boundary in $\bC$ that are identified in $\bD$. 

Consider the free double category $\bF_\mathrm{dbl}(\bS_\bD)$, where  $\bS_\bD$ is the double derivation scheme whose objects, horizontal morphisms, and vertical morphisms are those of $\bC$, and whose set of squares is given by the quotient $\mathrm{sq}\bC /\alpha\sim\alpha'$ of the set of squares of $\bC$ resulting by identifying $\alpha$ and $\alpha'$. Now, denote by $\sim_\bC$ the smallest congruence on $\bF_\mathrm{dbl}(\bS_\bD)$ such that 
\[ q_\bC\colon \bC\longrightarrow \bF_\mathrm{dbl}(\bS_\bD)/_{\sim_\bC}\]
is a double functor; by \cref{charpushout} we have that $\bD \cong \bF_\mathrm{dbl}(\bS_\bD)/_{\sim_\bC}$.

Similarly, let $\bF_\mathrm{dbl}(\bS_\bF)$ be the free double category on the double derivation scheme $\bS_\bF$ whose objects, horizontal morphisms, and vertical morphisms are those of $\bE$, and whose set of squares is the quotient $\mathrm{sq}\bE /P\alpha\sim P\alpha'$ of the set of square of $\bE$ resulting by identifying $P\alpha$ and $P\alpha'$. Denote by $\sim_\bE$ the smallest congruence on $\bF_\mathrm{dbl}(\bS_\bF)$ such that 
\[ q_\bE\colon \bE \longrightarrow \bF_\mathrm{dbl}(\bS_\bF)/_{\sim_\bE}\]
is a double functor; by \cref{charpushout} we have that $\bF \cong\bF_\mathrm{dbl}(\bS_\bF)/_{\sim_\bE}$.

Note that the double functor $P\colon\bC\to\bE$ induces a map between double derivation schemes $\hat{Q}\colon \bS_\bD \to\bS_\bF$. This, in turn, gives a double functor $\bF_\mathrm{dbl}(\hat{Q})$ such that the following diagram of underlying derivation schemes commutes.
\begin{diagram} \label{diagram1}
\node[] (2)   {$\bC$};
\node[right of=2,xshift=1.8cm] (3) {$\bE$};

\node[below of=2] (5) {$\bF_\mathrm{dbl}(\bS_\bD)$};
\node[below of=3] (6) {$\bF_\mathrm{dbl}(\bS_\bF)$};

\draw[->] (2) to node[left, la]{} (5);
\draw[->] (3) to node[right, la]{} (6);

\draw[->] (2) to node[above, la]{$P$} (3);
\draw[->] (5) to node[below, la]{$\bF_\mathrm{dbl}(\hat{Q})$} (6);
\end{diagram}
Since $P$ is a canonical trivial fibration, it follows by construction that so is $\bF_\mathrm{dbl}(\hat{Q})$. Moreover, as $Q\colon \bF_\mathrm{dbl}(\bS_\bD)/_{\sim_\bC}\to \bF_\mathrm{dbl}(\bS_\bF)/_{\sim_\bE}$ is the double functor induced by $\bF_\mathrm{dbl}(\hat{Q})$ on quotients, we see that $Q$ must be surjective on objects, full on horizontal and vertical morphisms, and full on squares. Thus, it remains to show that $Q$ is faithful on squares.

To do this, we first define a new congruence $\sim_\bC$ on $\bF_\mathrm{dbl}(\bS_\bF)$ by letting $\partial\sim_\bC \partial'$ in $\bF_\mathrm{dbl}(\bS_\bF)$ if there exist squares $\gamma\sim_\bC \gamma'$ in $\bF_\mathrm{dbl}(\bS_\bD)$ such that $\bF_\mathrm{dbl}(\hat{Q})\gamma=\partial$ and $\bF_\mathrm{dbl}(\hat{Q})\gamma'=\partial'$. The fact that this is a congruence follows from $\bF_\mathrm{dbl}(\hat{Q})$ being full on squares. We thus get an induced double functor $\overline{\bF_\mathrm{dbl}(\hat{Q})}$ on quotients
making the following diagram commute.
\begin{diagram} \label{diagram2}
\node[] (2)   {$\bF_\mathrm{dbl}(\bS_\bD)$};
\node[right of=2,xshift=2.4cm] (3) {$\bF_\mathrm{dbl}(\bS_\bF)$};

\node[below of=2] (5) {$\bF_\mathrm{dbl}(\bS_\bD)/_{\sim_\bC}$};
\node[below of=3] (6) {$\bF_\mathrm{dbl}(\bS_\bF)/_{\sim_\bC}$};

\draw[->] (2) to node[left, la]{} (5);
\draw[->] (3) to node[right, la]{} (6);

\draw[->] (2) to node[above, la]{$\bF_\mathrm{dbl}(\hat{Q})$} (3);
\draw[->] (5) to node[below, la]{$\overline{\bF_\mathrm{dbl}(\hat{Q})}$} (6);
\end{diagram}

We claim that $\overline{\bF_\mathrm{dbl}(\hat{Q})}$ is faithful on squares. Indeed, let $\delta,\delta'$ be squares in $\bF_\mathrm{dbl}(\bS_\bD)$ with the same boundary such that $\overline{\bF_\mathrm{dbl}(\hat{Q})\delta}=\overline{\bF_\mathrm{dbl}(\hat{Q})\delta'}$. By definition, this means that $\bF_\mathrm{dbl}(\hat{Q})\delta\sim_\bC \bF_\mathrm{dbl}(\hat{Q})\delta'$, and so there exist squares $\gamma\sim_\bC \gamma'$ in $\bF_\mathrm{dbl}(\bS_\bD)$ such that $\bF_\mathrm{dbl}(\hat{Q})\gamma=\bF_\mathrm{dbl}(\hat{Q})\delta$ and $\bF_\mathrm{dbl}(\hat{Q})\gamma'=\bF_\mathrm{dbl}(\hat{Q})\delta'$. Using the fact that $\bF_\mathrm{dbl}(\hat{Q})$ is a canonical trivial fibration, we can express $\delta$ as a pasting involving $\gamma$ and invertible squares that relate the boundary morphisms in $\delta$ to those in $\gamma$. But recall that $\delta$ and $\delta'$ have the same boundary, as do $\gamma$ and $\gamma'$; hence, we can use the same invertible squares to express $\delta'$ in terms of $\gamma'$. Since $\gamma\sim_\bC \gamma'$, these pastings must also be congruent and thus $\delta\sim_\bC\delta'$, as desired.

Our final step is to show that $Q=\overline{\bF_\mathrm{dbl}(\hat{Q})}$, which would imply the desired faithfulness on squares for $Q$, by the previous paragraph. By the universal property of the pushout for $\bF_\mathrm{dbl}(\bS_\bD)/_{\sim_\bC}$, it suffices to show that $\bF_\mathrm{dbl}(\bS_\bE)/_{\sim_\bC}=\bF_\mathrm{dbl}(\bS_\bE)/_{\sim_\bE}$. The inclusion $\sim_\bC \subseteq\sim_\bE$ is straightforward, as given any two congruent squares $\gamma\sim_\bC \gamma'$ in $\bF_\mathrm{dbl}(\bS_\bD)$, we know that $\bF_\mathrm{dbl}(\hat{Q})\gamma\sim_\bE \bF_\mathrm{dbl}(\hat{Q})\gamma'$ because $Q$ is a double functor. On the other hand, $\sim_\bE$ is the smallest congruence such that the map
\begin{tz}
\node[](1)      {$\bE$};
\node[right of=1,xshift=1.2cm](2)    {$\bF_\mathrm{dbl}(\bS_\bF)/_{\sim_\bE}$};
\draw[->] (1) to node[above,la]{$q_{\bE }$} (2);
\end{tz}
is a double functor. Then, to show that $\sim_\bE \subseteq\sim_\bC$, it is enough to show that the map 
\begin{tz}
\node[](1)      {$\bE$};
\node[right of=1,xshift=1.2cm](2)    {$\bF_\mathrm{dbl}(\bS_\bF)/_{\sim_\bC}$};
\draw[->] (1) to node[above,la]{$q'_{\bC }$} (2);
\end{tz}
is a double functor. 

Since $P$ is surjective on objects, full on both horizontal and vertical morphisms, and full on squares, it is easy to check that $\bE \xrightarrow{q'_\bC } \bF_\mathrm{dbl}(\bS_\bF)/_{\sim_\bC}$ is a double functor if and only if the composite $\bC \xrightarrow{P}\bE \xrightarrow{q'_\bC } \bF_\mathrm{dbl}(\bS_\bF)/_{\sim_\bC}$ is a double functor. Combining the commutative diagrams \ref{diagram1} and \ref{diagram2}, we see that this composite is equal to \[\bC\to \bF_\mathrm{dbl}(\bS_\bD)/_{\sim_\bC}\xrightarrow{\overline{\bF_\mathrm{dbl}(\hat{Q})}}\bF_\mathrm{dbl}(\bS_\bF)/_{\sim_\bC},\] which is a double functor by construction.

\textbf{The double functor $\partial(\bH\mathbbm{2}\times\bV\mathbbm{2})\to \bH\mathbbm{2}\times \bV\mathbbm{2}$:} The argument is analogous to the previous case. Instead of taking double derivation schemes whose squares identify two squares, we now want to take into account that we are ``adding a square''; for instance the set of squares of the double derivation scheme associated to $\bD$ is given by $\mathrm{sq}\bC \sqcup \{\alpha\}$.
\end{proof}

\section{Canonical equivalences and the gregarious model structure}

In this section we use a result from previous work of Guetta and the authors \cite{fibrantly_induced}, that we recall in \cref{subsec:MSbackground}, to present in \cref{subsec:gregarious} an alternative proof of the existence of the gregarious model structure on the category of double categories of Campbell \cite{Camp}. This forms the ``base case'' of the method we present in \cref{section:generalMS}, as all other model structures we produce will be localizations of the gregarious model structure. This result is established in \cref{subsec:generalcase}, and used as evidence that the gregarious double equivalences are the canonical notion of double categorical equivalence between strict double categories.

\subsection{Recollection: a tool to construct model structures} \label{subsec:MSbackground}

Our strategy for this section and the next will be to use a result from \cite{fibrantly_induced} to construct model structures from a given data. For convenience, we briefly recall the set-up and statement of the relevant theorem.

We start with the data of a locally presentable category $\cC$ and a weak factorization system $(\an,\nfib)$ on $\cC$ generated by a set $\cJ$, that serves a similar role as the \emph{anodyne extensions} and \emph{naive fibrations} of \cite{cisinski}; see also \cref{def:anodyne}. This weak factorization system yields the following notions of ``fibrant objects'' and ``fibrant replacements''.

\begin{defn}
An object $X\in \cC$ is \emph{naive fibrant} if the unique morphism $X\to *$ to the terminal object in $\cC$ is a naive fibration. 
\end{defn}

\begin{defn}
Given an object $X\in\cC$, a \emph{naive fibrant replacement} of $X$ is an anodyne extension $\iota_X \colon X \to X'$ such that $X'$ is naive fibrant.

Similarly, given a morphism $f \colon X\to Y$ in $\cC$, a \emph{naive fibrant replacement} of  $f$ is a commutative square
\begin{diagram} \label{naiverepl}
\node[](1) {$X$}; 
\node[right of=1](2) {$Y$}; 
\node[below of=1](1') {$X'$}; 
\node[below of=2](2') {$Y'$}; 

\draw[->] (1) to node[above,la]{$f$} (2); 
\draw[->] (1) to node[left,la]{$\iota_X$} (1'); 
\draw[->] (2) to node[right,la]{$\iota_Y$} (2'); 
\draw[->] (1') to node[below,la]{$f'$} (2');
\end{diagram}
where $\iota_X$, $\iota_Y$ are anodyne extensions and $X'$, $Y'$ are naive fibrant.
\end{defn}

\begin{rem} \label{rem:naivefibrep}
Note that naive fibrant replacements always exist, as they can be constructed using the factorization system $(\an,\nfib)$.
\end{rem}

Now, suppose that in addition to the factorization system $(\an, \nfib)$ we are given a class $\Wf$ of morphisms between naive fibrant objects, which we interpret as weak equivalences between fibrant objects. We can then use the above definitions to construct a new class $\cW$, which will play the role of our weak equivalences in $\cC$.

\begin{defn}\label{def:we}
A morphism $f \colon X \to Y$ in $\cC$ is a \emph{weak equivalence} if there exists a naive fibrant replacement \eqref{naiverepl} of $f$ such that $f'$ is in $\Wf$. 

We denote by $\cW$ the class of weak equivalences.
\end{defn}

The following can be found as \cite[Theorem 2.8]{fibrantly_induced}.

\begin{theorem}  \label{thm:GMSV}
    Let $\cC$ be a locally presentable category, $\cI$ be a set of morphisms in $\cC$, and $(\an,\nfib)$ be a weak factorization system in $\cC$ generated by a set $\cJ$ such that ${\an\subseteq\cof(\cI)}$. Suppose in addition that we have a class $\Wf$ of morphisms in $\cC$ between naive fibrant objects such that
\begin{enumerate}[label=(\arabic*)]
    \item\label{ax:trivfib} $\inj(\cI)\subseteq\cW$, where $\cW$ is described in \cref{def:we},
    \item\label{2of6Wf} $\Wf$ has $2$-out-of-$6$, 
    \item \label{accessibility} there exists a class $\overline{\cW}$ of morphisms such that $\Wf$ is the restriction of $\overline{\cW}$ to the morphisms between naive fibrant objects and $\overline{\cW}$ considered as a full subcategory of $\cC^{\mathbbm{2}}$ is accessible,
    \item\label{anwe} the morphisms in $\an$ between naive fibrant objects are in $\Wf$,
    \item\label{fibwe} $\nfib\cap\Wf\subseteq\inj(\cI)$.
\end{enumerate}
Then there exists a combinatorial model structure on $\cC$ with cofibrations given by the $\cI$-cofibrations, fibrant objects given by the naive fibrant objects, and weak equivalences given by the morphisms in $\cW$. Furthermore, the weak equivalences (resp.\ fibrations) between fibrant objects are precisely the morphisms in $\Wf$ (resp.\ $\nfib$).
\end{theorem}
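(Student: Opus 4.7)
My plan is to build a candidate model structure on $\cC$ with cofibrations $\cof(\cI)$, weak equivalences $\cW$ (as in Definition \ref{def:we}), and fibrations those morphisms with the right lifting property against $\cof(\cI)\cap\cW$, and then apply a variant of J.~Smith's recognition theorem for combinatorial model structures. The generating cofibrations will be $\cI$ and the generating trivial cofibrations will be $\cJ$, so the existence of the two weak factorization systems $(\cof(\cI),\inj(\cI))$ and $(\an,\nfib)$ is automatic from the small object argument and local presentability; the work lies in showing that these generators actually correspond to the intended classes in the model structure.

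\textbf{Central lemma: well-definedness and closure properties of $\cW$.} The technical heart of the argument is to show that membership in $\cW$ is independent of the choice of naive fibrant replacement, so that it is genuinely a property of the morphism. Given two naive fibrant replacements $(X'_1,Y'_1,f'_1)$ and $(X'_2,Y'_2,f'_2)$ of a morphism $f$, I would use the lifting property of anodyne extensions against naive fibrations (with fibrant codomains) to produce anodyne comparison maps between the replacements that fit into commutative squares; axiom \ref{anwe} places these comparison maps in $\Wf$, and 2-out-of-6 from \ref{2of6Wf} transfers $\Wf$-membership between $f'_1$ and $f'_2$. As immediate consequences, $\cW$ contains identities, is closed under retracts, and satisfies 2-out-of-3.

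\textbf{Factorizations and identification of fibrations.} The cofibration--trivial fibration factorization comes from the small object argument on $\cI$, and axiom \ref{ax:trivfib} identifies $\inj(\cI)\subseteq\cW$. For the other weak factorization system, the inclusion $\an\subseteq\cof(\cI)$ is given, while $\an\subseteq\cW$ follows from the well-definedness lemma: given $j\in\an$, one assembles a naive fibrant replacement of $j$ whose bottom row is an anodyne extension between naive fibrant objects, which lies in $\Wf$ by \ref{anwe}. To identify $\nfib\cap\cW$ globally with $\inj(\cI)$, axiom \ref{fibwe} handles the fibrant case, and a retract argument using lifts of naive fibrations through anodyne fibrant replacements of source and target promotes this to the general case. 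The characterization of fibrations between fibrant objects as $\nfib$ then follows by comparing right lifting classes. Axiom \ref{accessibility} supplies the accessibility of $\cW$ as a subcategory of $\cC^\mathbbm{2}$ needed to conclude combinatoriality via Smith's theorem, using the fact that $\cW$ is cut out from $\overline{\cW}$ by an accessible naive fibrant replacement construction.

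\textbf{Main obstacle.} The most delicate step is the well-definedness of $\cW$, since it is the mechanism that transforms the ``between naive fibrant objects'' hypotheses \ref{anwe} and \ref{fibwe} into genuinely global statements usable in the construction. Producing the required comparison maps between two arbitrary fibrant replacements requires carefully chaining lifts of anodyne extensions against naive fibrations and then carefully arranging the resulting squares so that 2-out-of-6 can be invoked; once well-definedness is secured, the closure properties of $\cW$, the two factorization axioms, and the characterizations of weak equivalences and fibrations between fibrant objects all fall out of standard arguments.
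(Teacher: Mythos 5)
There is no in-paper proof to compare against here: the paper imports this statement verbatim from \cite{fibrantly_induced} (Theorem 2.8). Judged on its own terms, your proposal has a genuine gap at its strategic core. Declaring $\cJ$ to be the set of generating trivial cofibrations amounts to claiming that the (trivial cofibration, fibration) weak factorization system of the resulting model structure is $(\an,\nfib)$, i.e.\ that $\cof(\cI)\cap\cW=\an$ and that the fibrations are exactly the naive fibrations \emph{globally}. Nothing in hypotheses (1)--(5) gives this: conditions \ref{anwe} and \ref{fibwe} only constrain morphisms between naive fibrant objects, and the theorem's conclusion deliberately identifies fibrations with $\nfib$ only between fibrant objects. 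In the intended examples (compare the construction of the Joyal model structure from inner horns plus an interval, or the localized structures later in this paper where $\nfib\subsetneq\nfib_0$), naive fibrations with non-fibrant codomain need not be fibrations and $\an$ need not exhaust the trivial cofibrations, so your plan would prove a stronger statement that is false in general.

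Concretely, the step that fails is the promotion of \ref{fibwe} to the global claim $\nfib\cap\cW\subseteq\inj(\cI)$ by ``a retract argument using lifts through anodyne fibrant replacements of source and target.'' That argument works only when the codomain $Y$ is already naive fibrant: then one can take the identity replacement of $Y$, lift the anodyne $X\to X'$ against $p\in\nfib$, and exhibit $p$ as a retract of a map between fibrant objects to which \ref{fibwe} applies. For general $Y$ the anodyne $\iota_Y\colon Y\to Y'$ admits no retraction, so $p$ cannot be written as a retract of the comparison map, and no hypothesis repairs this. Moreover, even if one abandons ``generating trivial cofibrations $=\cJ$'' and runs Smith's recognition theorem honestly (cofibrations $\cof(\cI)$, weak equivalences $\cW$, with a set of generating trivial cofibrations produced by the theorem), your sketch omits the essential remaining verification: stability of $\cof(\cI)\cap\cW$ under pushout and transfinite composition (or an equivalent acyclicity/solution-set argument), which is precisely where \ref{anwe} and \ref{fibwe} -- via the path-object reformulation recorded in \cref{rem:pathobject} -- must be used. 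Your well-definedness/2-out-of-6 analysis of $\cW$ and the use of \ref{accessibility} for accessibility are on target, but they are not the main obstacle; the discrepancy between $\an$ and the trivial cofibrations is, and the proposal as written conflates the two.
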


\begin{rem} \label{rem:pathobject}
    By \cite[Proposition 2.21]{fibrantly_induced}, condition (4) is equivalent to the following: 
    \begin{enumerate}
        \item[(4')] for every naive fibrant object $X$, there is a factorization of the diagonal morphism 
    \[ X\xrightarrow{w} \Path X\xrightarrow{p} X\times X \]
    such that $w\in \Wf$ and $p\in \nfib$.
    \end{enumerate}
\end{rem}

\subsection{The gregarious model structure}\label{subsec:gregarious}

We now give an alternative proof of the existence of the gregarious model structure of Campbell \cite{Camp}.

\begin{notation} \label{notn:J0}
    Let $\cJ_0$ be the set consisting of the following double functors: 
    \begin{rome}
        \item the inclusion $\mathbbm{1}\to \Sq \Eadj$, where $\Eadj$ denotes the $2$-category free on an adjoint equivalence data, 
        \item the inclusion $\bH \mathbbm{2}\to \bH \Sigma I$, where $\Sigma I$ denotes the $2$-category free on a $2$-isomorphism,
        \item the inclusion $\bV \mathbbm{2}\to \bV \Sigma I$. 
    \end{rome}
    It is straightforward to check that all double functors in $\cJ_0$ are canonical cofibrations in the sense of \cref{defn:cofibs}. We denote by $(\an_0,\nfib_0)$ the corresponding weak factorization system on $\dblcat$. 
\end{notation}

Inspecting the right lifting conditions with respect to the double functors in $\cJ_0$, we can extract an explicit description of the double functors in $\nfib_0$, for which we introduce the following terminology.

\begin{defn}
    A double functor is a \emph{gregarious fibration} if it has the right lifting property with respect to every double functor in $\cJ_0$. 
\end{defn}

\begin{defn}
    A companion pair $(f,u,\varphi,\psi)$ in a double category $\bA$ is a \emph{gregarious (adjoint) equivalence} if it induces an (adjoint) equivalence in the $2$-category $\bfR \bA$ from \cref{rightadjofSq}. 
\end{defn}

\begin{rem}
    Equivalently, the data of a gregarious adjoint equivalence in a double category $\bA$ can be encoded as a double functor $\Sq\Eadj\to \bA$. 
\end{rem}

\begin{lem} \label{lem:descrnfib0}
   A double functor $F\colon\bA\to\bB$ is a gregarious fibration if and only if
   \begin{enumerate}
       \item[(f1)] for every object $A\in\bA$ and every gregarious equivalence $(g,v,\chi,\omega)\colon FA\to B$ in~$\bB$, there is a gregarious equivalence $(f,u,\varphi,\psi)\colon A\to C$ in $\bA$ such that $Ff=g$, $Fu=v$, $F\varphi=\chi$, and $F\psi=\omega$,
       \item[(f2)] the induced $2$-functor $\bfH F\colon \bfH \bA\to \bfH \bB$ between underlying horizontal $2$-categories is locally an isofibration, 
       \item[(f3)] the induced $2$-functor $\bfV F\colon \bfV \bA\to \bfV \bB$ between underlying vertical $2$-categories is locally an isofibration. 
   \end{enumerate}
   In particular, every double category is gregarious fibrant.
\end{lem}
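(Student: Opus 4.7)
The plan is to unpack the right lifting property of $F$ against each of the three generators of $\cJ_0$ in turn, and match the resulting conditions with (f1), (f2), and (f3). The two generators $\bH\mathbbm{2}\to\bH\Sigma I$ and $\bV\mathbbm{2}\to\bV\Sigma I$ are direct: a map out of $\bH\mathbbm{2}$ selects a horizontal morphism $f$ in $\bA$, a map $\bH\Sigma I\to\bB$ corresponds to an invertible horizontal globular square in $\bB$ with source $Ff$, i.e., a $2$-isomorphism in $\bfH\bB$, and a lift produces such a square in $\bA$ with source $f$ whose $F$-image is the given one. This is precisely condition (f2), and the identical argument after exchanging the roles of horizontal and vertical morphisms gives (f3) from the lifting property against $\bV\mathbbm{2}\to\bV\Sigma I$.

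The generator $\mathbbm{1}\to\Sq\Eadj$ is the interesting one. A map $\mathbbm{1}\to\bA$ picks an object $A\in\bA$, while by the description of $\Sq\Eadj$ together with \cref{rmk:companionsforequivs}, a map $\Sq\Eadj\to\bB$ amounts to the data of a gregarious adjoint equivalence in $\bB$ starting at $FA$. For the forward direction, a lift provides such an adjoint equivalence in $\bA$ starting at $A$, and restricting to its underlying companion pair yields (f1) for adjoint equivalences; the general form of (f1) follows via the standard $2$-categorical argument that any equivalence can be promoted to an adjoint equivalence by adjusting the counit, after which one lifts the adjoint version and restricts. For the converse direction, assume (f1), (f2), (f3) and let a gregarious adjoint equivalence in $\bB$ starting at $FA$ be given with underlying companion pair $(g,v,\chi,\omega)$. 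I would first apply (f1) to lift $(g,v,\chi,\omega)$ to a gregarious equivalence $(f,u,\varphi,\psi)\colon A\to C$ in $\bA$, then complete this to a gregarious adjoint equivalence inside $\bfR\bA$. Its $F$-image is then an adjoint equivalence in $\bfR\bB$ sharing $(g,v,\chi,\omega)$ with the original, so by the uniqueness of adjoints in a $2$-category there is a comparison $2$-isomorphism in $\bfR\bB$ between the two inverse data, realized by an invertible horizontal globular square in $\bB$. I would lift this comparison square to $\bA$ via (f2) and combine it with the universal property of companions from \cite[\textsection 4.1.1]{Grandis} to transport the constructed adjoint equivalence so that its $F$-image coincides with the prescribed one.

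The main obstacle will be the bookkeeping in this last transport step: arranging that the adjusted inverse companion pair together with the unit and counit in $\bA$ all map precisely to the prescribed data in $\bB$, while retaining the triangle identities. This reduces to a purely $2$-categorical argument inside $\bfR\bA$ and $\bfR\bB$, together with the companion universal property to ensure that the vertical data is uniquely determined by the horizontal one. The final assertion that every double category is gregarious fibrant is then immediate, as conditions (f1), (f2), (f3) are vacuously satisfied by the unique map $\bA\to *$.
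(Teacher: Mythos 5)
The paper states this lemma without a written proof, presenting it as a direct inspection of the right lifting property against the three generators of $\cJ_0$, and your unpacking of those lifting conditions is exactly the intended argument: the treatment of $\bH\mathbbm{2}\to\bH\Sigma I$ and $\bV\mathbbm{2}\to\bV\Sigma I$, and the forward direction for $\mathbbm{1}\to\Sq\Eadj$ (promote the given gregarious equivalence to an adjoint one with the same underlying companion pair, lift, restrict), are all correct. One refinement is needed in your transport step for the converse: the universal property of companions only determines the vertical datum up to a \emph{unique compatible vertical globular isomorphism} $Fu'\cong v'$ in $\bB$, so to make the lifted inverse companion pair hit $(g',v',\chi',\omega')$ on the nose you must also lift that vertical comparison square using (f3), in exact parallel with your use of (f2) for the horizontal comparison; ``the vertical data is uniquely determined by the horizontal one'' is not literally true, and this extra lift is precisely the maneuver the paper performs later in the proof of \cref{prop:compsetJ}. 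Two small quibbles: the identification of double functors $\Sq\Eadj\to\bB$ with gregarious adjoint equivalences comes from the adjunction $\Sq\dashv\bfR$ (the remark following the definition), not from \cref{rmk:companionsforequivs}; and the final fibrancy claim is trivially rather than vacuously satisfied, since the data to be lifted from the terminal double category consists of identities, which lift by identities.
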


The weak equivalences in this model structure will be given by the following.

\begin{defn}\label{def:gregwe}
   A double functor $F\colon\bA\to\bB$ is a \textit{gregarious double equivalence} if it is
   \begin{enumerate}
       \item[(g1)] \emph{surjective on objects up to gregarious equivalence:} for every object $B\in\bB$, there is an object $A\in\bA$ and a gregarious equivalence $(g,v,\chi,\omega)\colon FA\to B$ in $\bB$,
       \item[(g2)] \emph{essentially full on horizontal morphisms:} the induced $2$-functor $\bfH F\colon \bfH \bA\to \bfH \bB$ between underlying horizontal $2$-categories is essentially full on morphisms, 
    \item[(g3)] \emph{essentially full on vertical morphisms:} the induced $2$-functor $\bfV F\colon \bfV \bA\to \bfV \bB$ between underlying vertical $2$-categories is essentially full on morphisms,  
    \item[(g4)] fully faithful on squares.
\end{enumerate}
   We denote by $\overline{\cW}$ the class of gregarious double equivalences. 
\end{defn}

Placing these notions in the terminology of \cref{thm:GMSV}, we consider the case where
\begin{itemize}[leftmargin=0.8cm]
    \item $\cC$ is the category $\dblcat$ of double categories and double functors, 
    \item $\cI$ is the set of generating canonical cofibrations of \cref{gen_cofib}, 
    \item $(\an_0,\nfib_0)$ is the weak factorization system on $\dblcat$ generated by the set $\cJ_0$ from \cref{notn:J0}, 
    \item $\Wf=\overline{\cW}$ is the class of gregarious double equivalences.
\end{itemize}
Our goal is now to establish all of the requirements of \cref{thm:GMSV}. 

First note that by the characterization of the canonical trivial fibrations in \cref{prop:trivfibs}, it is straightforward to see that canonical trivial fibrations satisfy the conditions of a gregarious double equivalence.

\begin{prop} \label{lem:trivfibarewe}
    Every canonical trivial fibration is a gregarious double equivalence.
\end{prop}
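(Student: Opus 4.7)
The plan is to verify that a canonical trivial fibration $F\colon \bA\to \bB$ satisfies each of the four conditions (g1)--(g4) in \cref{def:gregwe}. The key observation is that a canonical trivial fibration satisfies strict versions of these conditions, so each verification amounts to producing an identity witness; no homotopical construction is required.

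Condition (g4), fully faithfulness on squares, is part of the definition of a canonical trivial fibration in \cref{def:trivfibs}, so it is immediate. For (g1), I would use strict surjectivity on objects: given any $B\in \bB$, choose $A\in \bA$ with $FA=B$, and take the identity companion pair at $B$, given by $(\id_B, e_B, \mathrm{id}, \mathrm{id})$ with the evident identity binding squares. Under the functor $\bfR\colon \dblcat\to \twocat$ of \cref{rightadjofSq} this represents the identity morphism at $B$ in $\bfR\bB$, which is trivially an equivalence; hence it is a gregarious equivalence $FA\to B$ in the sense of \cref{def:gregwe}.

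For (g2), I would use fullness on horizontal morphisms: given $A, C\in \bA$ and a horizontal morphism $g\colon FA\to FC$ in $\bB$, lift it to a horizontal morphism $f\colon A\to C$ in $\bA$ with $Ff=g$, and take the horizontal globular identity square $e_g$ as the required $2$-isomorphism $Ff\cong g$ in $\bfH\bB$. The argument for (g3) is entirely symmetric, using fullness on vertical morphisms and the corresponding vertical globular identity square.

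I do not anticipate any obstacle: the statement is essentially a side-by-side comparison of \cref{def:trivfibs} and \cref{def:gregwe}, where strict surjectivity on objects yields (g1), strict fullness on horizontal and vertical morphisms yields (g2) and (g3), and fully faithfulness on squares is (g4) verbatim.
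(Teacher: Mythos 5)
Your proposal is correct and matches the paper's argument: the paper simply notes (via \cref{prop:trivfibs}) that it is straightforward to check that a canonical trivial fibration satisfies (g1)--(g4), and your verification spells out exactly that check, using strict surjectivity plus identity companion pairs for (g1), strict fullness plus identity globular squares for (g2)--(g3), and the definition itself for (g4).
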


Moreover, the following result is obtained from a careful inspection of the explicit descriptions in \cref{prop:trivfibs,lem:descrnfib0,def:gregwe}.

\begin{prop} \label{lem:fibwe0}
    Every gregarious fibration that is also a gregarious double equivalence is a canonical trivial fibration.
\end{prop}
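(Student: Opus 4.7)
The plan is to verify each of the four defining conditions of a canonical trivial fibration (\cref{def:trivfibs}) by pairing each requirement with the matching hypothesis among (f1)--(f3) and (g1)--(g4). Surjectivity on objects, fullness on horizontal morphisms, fullness on vertical morphisms, and full faithfulness on squares will each arise from one such pairing, so the argument splits cleanly into four short steps with no interaction between them.

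For surjectivity on objects, take $B\in\bB$; by (g1) there is an object $A\in\bA$ together with a gregarious equivalence $(g,v,\chi,\omega)\colon FA\to B$ in $\bB$. Applying (f1) lifts this data to a gregarious equivalence $(f,u,\varphi,\psi)\colon A\to C$ in $\bA$ with $Ff=g$, so in particular $FC=B$. For fullness on horizontal morphisms, given $g\colon FA\to FC$ in $\bfH\bB$, one uses (g2) to obtain $f\colon A\to C$ in $\bfH\bA$ together with a $2$-isomorphism $\beta\colon Ff\cong g$ in $\bfH\bB$, and then uses the local isofibration property (f2) to lift $\beta$ to a $2$-isomorphism $\alpha\colon f\cong h$ in $\bfH\bA$ with $F\alpha=\beta$; the horizontal morphism $h$ then satisfies $Fh=g$. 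Replacing $\bfH$ by $\bfV$ throughout and invoking (g3) and (f3) instead gives fullness on vertical morphisms by exactly the same argument. Full faithfulness on squares is then immediate, since it is precisely condition (g4).

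The only mildly delicate point is this pairing pattern in the second step: essential fullness alone would only provide horizontal (or vertical) morphisms up to $2$-isomorphism, and it is exactly the local isofibration part of the gregarious fibration property that allows one to straighten such an isomorphism into an equality, yielding true fullness. This is why (g2) together with (f2) (and symmetrically (g3) together with (f3)) is the precise combination needed; once this is observed, the whole proof is essentially a bookkeeping verification and one can expect no further obstacle.
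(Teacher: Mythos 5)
Your proposal is correct and matches the paper's (implicit) argument: the paper states this proposition as following from a careful inspection of the descriptions in \cref{prop:trivfibs,lem:descrnfib0,def:gregwe}, and your pairing of (g1) with (f1) for surjectivity on objects, (g2)/(g3) with the local isofibration conditions (f2)/(f3) for fullness on horizontal and vertical morphisms, and (g4) alone for full faithfulness on squares is exactly that inspection spelled out. No gaps.
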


We now show that gregarious double equivalences are accessible and satisfy $2$-out-of-$6$. For this, we show that the class of gregarious double equivalences is the preimage under the right adjoint functor \[(\bfR,\bfH,\bfV,\bfH\llbracket\bV\mathbbm{2},-\rrbracket)\colon \dblcat\to \twocat^{\times 4}\]  
of the pointwise biequivalences in  $\twocat$. Here $\bfR$ is the right adjoint functor from \cref{rightadjofSq}, $\bfH$ and $\bfV$ are the right adjoint functors from \cref{underlyinghorver2cat}, and $\bfH\llbracket\bV\mathbbm{2},-\rrbracket$ is the composite of the right adjoint functors 
\[ \dblcat\xrightarrow{\llbracket\bV\mathbbm{2},-\rrbracket}\dblcat\xrightarrow{\bfH} \twocat \]
from \cref{underlyinghorver2cat,internalhom}.

\begin{prop}\label{prop:4tupleweakequiv}
    If $F\colon\bA\to\bB$ is a gregarious double equivalence, then the image $(\bfR,\bfH,\bfV,\bfH\llbracket\bV\mathbbm{2},- \rrbracket)F$ is a pointwise biequivalence.
\end{prop}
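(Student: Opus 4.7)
The plan is to verify separately that each of $\bfR F$, $\bfH F$, $\bfV F$, and $\bfH\llbracket\bV\mathbbm{2}, F\rrbracket$ is a biequivalence of $2$-categories, by checking conditions (b1), (b2), (b3) of \cref{defn:bieq} in each case.

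For $\bfH F$ and $\bfV F$, the verification will be essentially a direct translation of the four conditions defining a gregarious double equivalence. Condition (g2) is exactly (b2) for $\bfH F$, (g4) restricted to horizontal globular squares yields (b3), and (g1) combined with the fact that the horizontal component of a gregarious equivalence is a horizontal equivalence (as follows from the description in \cref{rightadjofSq} and \cref{rmk:companionsforequivs}) delivers (b1). The argument for $\bfV F$ will be symmetric, using (g3) and the vertical component of the gregarious equivalence instead.

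For $\bfR F$, I will exploit the explicit description of $\bfR\bA$ from \cref{rightadjofSq}: objects are those of $\bA$, morphisms are companion pairs, and $2$-morphisms are horizontal globular squares. Condition (b1) is then precisely (g1), since a gregarious equivalence in $\bB$ is exactly an equivalence in $\bfR\bB$, and (b3) is (g4) restricted to horizontal globular squares. For (b2), given a companion pair $(g, v, \chi, \omega)\colon FA \to FC$ in $\bB$, the plan is to first lift $g$ via (g2) to a horizontal morphism $f\colon A \to C$ in $\bA$ with an invertible horizontal globular square $\gamma\colon Ff \cong g$, and similarly lift $v$ to a vertical morphism $u\colon A\arrowdot C$ in $\bA$ via (g3) with an invertible vertical globular square $\delta\colon Fu \cong v$. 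Then I will paste $\chi$ and $\omega$ with $\gamma^{\pm 1}$ and $\delta^{\pm 1}$ to produce squares in $\bB$ whose boundaries lie entirely in the image of $F$, and pull these back to squares $\varphi, \psi$ in $\bA$ using the fully faithfulness in (g4). Faithfulness on squares will force the companion equations $\psi \mid \varphi = e_f$ and $\frac{\psi}{\varphi} = \id_u$ to lift from $\bB$ to $\bA$, and the resulting companion pair $(f, u, \varphi, \psi)$ in $\bA$ will map to a pair $2$-isomorphic to $(g, v, \chi, \omega)$ via the horizontal globular $2$-cell $\gamma$ in $\bfR\bB$.

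The main obstacle will be $\bfH\llbracket\bV\mathbbm{2}, F\rrbracket$. Objects of $\bfH\llbracket\bV\mathbbm{2}, \bA\rrbracket$ are vertical morphisms in $\bA$, morphisms between $u$ and $u'$ are squares with left vertical $u$ and right vertical $u'$, and $2$-morphisms are pairs of horizontal globular squares satisfying a modification equation. Conditions (b2) and (b3) will follow by lifting horizontal boundaries via (g2) and pulling squares back via (g4), in direct parallel with the argument for $\bfR F$. For (b1), given $v\colon B_0 \arrowdot B_1$ in $\bB$, the plan is to apply (g1) to obtain gregarious equivalences $(f_i, u_i, \varphi_i, \psi_i)\colon FA_i \to B_i$ for $i=0,1$, noting that each $u_i$ is a vertical adjoint equivalence with some quasi-inverse $\overline{u_i}$; then form the vertical composite $\overline{u_1} \cdot v \cdot u_0\colon FA_0 \arrowdot FA_1$ in $\bB$ and use (g3) to lift it, up to a vertical $2$-isomorphism, to a vertical morphism $u\colon A_0 \arrowdot A_1$ in $\bA$. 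The desired equivalence between $Fu$ and $v$ in $\bfH\llbracket\bV\mathbbm{2}, \bB\rrbracket$ will be constructed by transporting the resulting vertical $2$-isomorphism $u_1 \cdot Fu \cong v \cdot u_0$ into a square with horizontal boundaries $f_0, f_1$ via a pasting with the companion squares $(\varphi_i, \psi_i)$, and using that $f_0$ and $f_1$ are horizontal equivalences to supply the weak inverse data. The most delicate point will be verifying that this pasted square does indeed admit an inverse up to invertible modification in $\bfH\llbracket\bV\mathbbm{2}, \bB\rrbracket$, but this will ultimately reduce to the triangle identities for the adjoint equivalence data coming from the two gregarious equivalences.
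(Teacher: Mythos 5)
Your proposal is correct in substance, but it takes a partly different route from the paper. The treatment of $\bfR F$ is essentially the paper's argument: (b1) from (g1), (b3) from (g4), and for (b2) a lift of the horizontal morphism via (g2), a lift of the vertical companion via (g3), and transport of the companion-pair equations along the invertible globular squares using fully faithfulness on squares (the paper phrases this via the universal property of companions rather than explicit pasting, but it is the same mechanism). Where you diverge is in the remaining three components: the paper does \emph{not} verify $\bfH F$, $\bfV F$, and $\bfH\llbracket\bV\mathbbm{2},F\rrbracket$ by hand; it observes that a gregarious double equivalence is a double biequivalence in the sense of \cite[Definition 3.6]{MSV}, invokes \cite[Proposition 3.11]{MSV} for $\bfH F$ and $\bfH\llbracket\bV\mathbbm{2},F\rrbracket$, and gets $\bfV F$ by horizontal--vertical symmetry. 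Your direct verifications of $\bfH F$ and $\bfV F$ are fine (though note that the fact that the horizontal component of a gregarious equivalence is a horizontal equivalence follows directly from the description of $\bfR$ in \cref{rightadjofSq}, not from \cref{rmk:companionsforequivs}, which goes in the other direction; the latter is what you need for the \emph{vertical} component). The genuinely new work in your approach is the hands-on proof of (b1) for $\bfH\llbracket\bV\mathbbm{2},F\rrbracket$: lifting $\overline{u_1}\cdot v\cdot u_0$ via (g3) and pasting with the companion squares is exactly the right construction, but to conclude that the resulting square is an equivalence in $\bfH\llbracket\bV\mathbbm{2},\bB\rrbracket$ you need, beyond the triangle identities, the standard closure properties of weakly horizontally invertible squares under horizontal and vertical pasting and the fact that the companion squares of a gregarious equivalence are themselves weakly horizontally invertible (facts of the type used in \cref{lem:fibtrdblgpd} via \cite[Lemma A.2]{lyne}); with those inputs your argument goes through. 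The trade-off is clear: the paper's proof is shorter but leans on external results about double biequivalences, while yours is self-contained and makes explicit the 2-categorical bookkeeping that the citation hides.
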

\begin{proof}
    We first prove that $\bfR F$ is a biequivalence. Condition (b1) in \cref{defn:bieq} is a consequence of (g1) in \cref{def:gregwe} by definition of a gregarious equivalence, and condition (b3) is a direct consequence of (g4) by definition of the $2$-morphisms in $\bfR\bA$ and~$\bfR\bB$.
    
    For (b2), given a morphism $(g,v,\chi,\omega)\colon FA\to FB$ in $\bfR\bB$, we want to find a morphism $(f,u,\varphi,\psi)\colon A\to B$ in $\bfR\bA$ and a $2$-isomorphism $\alpha\colon F(f,u,\varphi,\psi)\cong (g,v,\chi,\omega)$. In other words, we want to find a horizontal morphism $f$ in $\bA$ that has a vertical companion, together with a vertically invertible square in $\bB$
    \begin{tz}
        \node[](1) {$FA$}; 
\node[below of=1](2) {$FA$}; 
\node[right of=1](3) {$FB$}; 
\node[right of=2](4) {$FB$};

\draw[d,pro] (1) to node[left,la]{} (2); 
\draw[->] (1) to node[above,la]{$Ff$} (3); 
\draw[->] (2) to node[below,la]{$g$} (4); 
\draw[d,pro](3) to node[right,la]{} (4); 
 
\node[la] at ($(1)!0.5!(4)+(5pt,0)$) {$\vcong$};
\node[la] at ($(1)!0.5!(4)-(5pt,0)$) {$\beta$};
    \end{tz}
    The existence of the morphism $f$ and invertible square $\alpha$ is guaranteed by (g2), and since $Ff\cong g$, we have that $(Ff,v)$ is also part of the data of a companion pair in $\bB$. Now, by (g3), we have a horizontally invertible square 
    \begin{tz}
        \node[](1) {$FA$}; 
\node[below of=1](2) {$FA'$}; 
\node[right of=1](3) {$FA$}; 
\node[right of=2](4) {$FA'$};

\draw[->,pro] (1) to node[left,la]{$Fu$} (2); 
\draw[d] (1) to node[above,la]{} (3); 
\draw[d] (2) to node[below,la]{} (4); 
\draw[->,pro](3) to node[right,la]{$v$} (4); 
 
\node[la] at ($(1)!0.5!(4)-(5pt,0)$) {$\beta$};
\node[la] at ($(1)!0.5!(4)+(5pt,0)$) {$\cong$};
    \end{tz}
    which implies that $Fu$ is another vertical companion of $Ff$. Since $F$ is fully faithful on squares by (g4), the squares encoding the fact that $(Ff,Fu)$ are a companion pair must come from squares in $\bA$ that express $(f,u)$ as companions, as desired. 

    To check that $\bfH F$ and $\bfH\llbracket\bV\mathbbm{2},F\rrbracket$ are biequivalences, note that $F$ is a double biequivalence in the sense of \cite[Definition 3.6]{MSV}, and so the claim is given by \cite[Proposition 3.11]{MSV} (note that the functor $\bfH\llbracket\bV\mathbbm{2},-\rrbracket$ is denoted by $\cV$ in \cite{MSV}). Finally, since gregarious double equivalences are fully symmetric with respect to the horizontal and vertical directions, the fact that $\bfH F$ is a biequivalence means that so is $\bfV F$. 
\end{proof}

\begin{prop}\label{4tupleconverse}
    If $(\bfR,\bfH,\bfV,\bfH\llbracket \bV\mathbbm{2},-\rrbracket)F$ is a pointwise biequivalence, then $F\colon\bA\to\bB$ is a gregarious double equivalence.
\end{prop}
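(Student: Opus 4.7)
The plan is to verify the four defining conditions (g1)--(g4) of a gregarious double equivalence from the hypothesis that each of the four $2$-functors is a biequivalence, mirroring in reverse the argument of \cref{prop:4tupleweakequiv}. Conditions (g1)--(g3) should drop out essentially immediately by unwinding definitions, so the bulk of the work will be (g4), the \emph{strict} full faithfulness on squares.

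For (g1), I would apply condition (b1) of the biequivalence $\bfR F$: every object $B$ of $\bfR\bB$ (which by \cref{rightadjofSq} is just an object of $\bB$) admits an object $A\in\bA$ together with an equivalence $FA\xrightarrow{\simeq}B$ in $\bfR\bB$. By definition, such an equivalence is exactly a gregarious equivalence in $\bB$. For (g2) and (g3), condition (b2) for the biequivalences $\bfH F$ and $\bfV F$ reproduces the required essential fullness on horizontal (resp.\ vertical) morphisms verbatim.

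The main obstacle is (g4). The key observation is that morphisms $u\to v$ in $\bfH\llbracket\bV\mathbbm{2},\bA\rrbracket$ are precisely squares in $\bA$ with vertical boundary $u$ on the left and $v$ on the right, since objects of $\bfH\llbracket\bV\mathbbm{2},\bA\rrbracket$ are vertical morphisms of $\bA$ (via \cref{internalhom}). To prove fullness on squares, starting from a square $\beta$ in $\bB$ with boundary $(Ff,Fg,Fu,Fv)$, I would apply (b2) to $\bfH\llbracket\bV\mathbbm{2},F\rrbracket$ to produce a square $\alpha$ in $\bA$ with vertical boundary $u,v$ (and some horizontal boundary $f',g'$) together with a $2$-isomorphism $F\alpha\cong\beta$ in $\bfH\llbracket\bV\mathbbm{2},\bB\rrbracket$. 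Unpacking this $2$-isomorphism yields a pair of vertically invertible horizontal-globular squares in $\bB$ comparing $Ff'$ to $Ff$ and $Fg'$ to $Fg$; these can be lifted to $\bA$ using condition (b3) for $\bfH F$, and pasted horizontally onto $\alpha$ to produce a new square in $\bA$ whose image is equal to $\beta$ on the nose. The faithfulness half proceeds in the same spirit: if $F\alpha=F\alpha'$ for two squares with the same boundary in $\bA$, then the identity $2$-iso between them in $\bfH\llbracket\bV\mathbbm{2},\bB\rrbracket$ is lifted via (b3) for $\bfH\llbracket\bV\mathbbm{2},F\rrbracket$ to a $2$-iso $\alpha\cong\alpha'$ in $\bfH\llbracket\bV\mathbbm{2},\bA\rrbracket$; unpacking this $2$-iso in terms of squares with trivial horizontal boundary forces $\alpha=\alpha'$. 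The delicate point I expect to have to check carefully is that the $2$-isomorphism data in the internal-hom $2$-categories decomposes exactly into the horizontal-globular square data that (b3) for $\bfH F$ can handle, so that the rectification really lands on $\beta$ strictly rather than up to further $2$-isomorphism.
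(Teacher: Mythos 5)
Your handling of (g1)--(g3) is exactly the paper's: (b1) for $\bfR F$ gives (g1), and (b2) for $\bfH F$ and $\bfV F$ gives (g2) and (g3). Where you diverge is (g4): the paper does not argue this directly at all, but observes that since $\bfH F$ and $\bfH\llbracket\bV\mathbbm{2},F\rrbracket$ are biequivalences, $F$ is a double biequivalence and cites \cite[Proposition 3.11]{MSV} to conclude full faithfulness on squares. You instead re-derive this, which is a legitimate, self-contained alternative. Your fullness argument is essentially correct: a $2$-isomorphism $F\alpha\cong\beta$ in $\bfH\llbracket\bV\mathbbm{2},\bB\rrbracket$ unpacks as a pair of vertically invertible \emph{horizontally globular} squares $\mu,\nu$ satisfying a modification identity of the form $\frac{\mu}{\beta}=\frac{F\alpha}{\nu}$; their unique lifts $\tilde\mu,\tilde\nu$ through $\bfH F$ (fully faithful on $2$-cells, hence reflecting invertibility) are again vertically invertible, and pasting them onto $\alpha$ \emph{vertically}---not horizontally, as you wrote: being horizontally globular they only compose with $\alpha$ in the vertical direction---produces a square with boundary $(f,g,u,v)$ mapping strictly to $\beta$.

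The faithfulness half, however, has a gap as written. Lifting the identity $2$-cell of $F\alpha=F\alpha'$ through (b3) for $\bfH\llbracket\bV\mathbbm{2},F\rrbracket$ gives a $2$-cell $(\tilde\mu,\tilde\nu)\colon\alpha\Rightarrow\alpha'$ with $F\tilde\mu=e_{Ff}$, $F\tilde\nu=e_{Fg}$ and $\frac{\tilde\mu}{\alpha'}=\frac{\alpha}{\tilde\nu}$; but the mere existence of a $2$-isomorphism $\alpha\cong\alpha'$ does not force $\alpha=\alpha'$ (also note its components are squares with trivial \emph{vertical}, not horizontal, boundary, and they need not be identities a priori). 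To close the argument you must additionally use the faithfulness of $\bfH F$ on $2$-morphisms: $\tilde\mu$ and $e_f$ are both $2$-cells $f\Rightarrow f$ in $\bfH\bA$ with image $e_{Ff}$, so uniqueness in (b3) for $\bfH F$ forces $\tilde\mu=e_f$, and likewise $\tilde\nu=e_g$; only then does the modification identity give $\alpha'=\frac{e_f}{\alpha'}=\frac{\alpha}{e_g}=\alpha$. This missing step uses only hypotheses you already have, so the gap is fixable; with it included, your direct treatment of (g4) amounts to a proof of the portion of \cite[Proposition 3.11]{MSV} that the paper invokes.
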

\begin{proof}
    By condition (b1) for the biequivalence $\bfR F$, we get that $F$ is surjective on objects up to gregarious equivalence; hence $F$ satisfies condition (g1) in \cref{def:gregwe}.

    Condition (g2) asserts that $F$ is full on horizontal morphisms up to $2$-isomorphism in $\bfH \bB$, which is precisely the condition (b2) satisfied by the biequivalence $\bfH F$. Similarly, (g3) is ensured by the condition (b2) of the biequivalence $\bfV F$.

    Finally, since both $\bfH F$ and $\bfH\llbracket\bV\mathbbm{2},F\rrbracket$ are biequivalences, by \cite[Proposition 3.11]{MSV} we have that $F$ is a double biequivalence; in particular, it is fully faithful on squares, which is precisely condition (g4).
\end{proof}

\begin{cor} \label{2of6accessible}
    The class $\overline{\cW}$ of gregarious double equivalences satisfies $2$-out-of-$6$, and is accessible as a full subcategory of $\dblcat^{\mathbbm{2}}$.
\end{cor}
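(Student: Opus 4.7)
The plan is to leverage the characterization of $\overline{\cW}$ given by \cref{prop:4tupleweakequiv,4tupleconverse}, which together show that $\overline{\cW}$ is precisely the preimage of pointwise biequivalences under the functor
\[ (\bfR,\bfH,\bfV,\bfH\llbracket\bV\mathbbm{2},-\rrbracket)\colon \dblcat \longrightarrow \twocat^{\times 4}. \]
Both properties to be verified, $2$-out-of-$6$ and accessibility, behave well under products and under preimages along suitable functors, so the strategy is to reduce both claims to the corresponding facts about biequivalences in $\twocat$.

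For $2$-out-of-$6$, I would first note that biequivalences in $\twocat$ satisfy $2$-out-of-$6$ since they are the weak equivalences of the model structure recalled in \cref{thm:lackMS}, and the $2$-out-of-$6$ property is a standard consequence of being the class of weak equivalences in a model category. This property is trivially inherited by the class of pointwise biequivalences in $\twocat^{\times 4}$, as the conditions are checked coordinatewise. Then $2$-out-of-$6$ transfers to any preimage class along a functor, since if a composable triple $F, G, H$ of double functors has $GF$ and $HG$ gregarious double equivalences, then the images $(\bfR,\bfH,\bfV,\bfH\llbracket\bV\mathbbm{2},-\rrbracket)(GF)$ and $(\bfR,\bfH,\bfV,\bfH\llbracket\bV\mathbbm{2},-\rrbracket)(HG)$ are pointwise biequivalences, so by $2$-out-of-$6$ in $\twocat^{\times 4}$ the image of each of $F, G, H$ is a pointwise biequivalence, and by \cref{4tupleconverse} each of them lies in $\overline{\cW}$.

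For accessibility, the class of biequivalences in $\twocat$ is accessible as a full subcategory of $\twocat^{\mathbbm{2}}$, being the weak equivalences of a combinatorial model structure. Accessibility is preserved under finite products of accessible subcategories, so the pointwise biequivalences form an accessible subcategory of $(\twocat^{\times 4})^{\mathbbm{2}} \cong (\twocat^{\mathbbm{2}})^{\times 4}$. The functor $(\bfR,\bfH,\bfV,\bfH\llbracket\bV\mathbbm{2},-\rrbracket)$ is a right adjoint between locally presentable categories (each component being a right adjoint as recalled in \cref{rightadjofSq,underlyinghorver2cat,internalhom}), hence accessible, and thus induces an accessible functor $\dblcat^{\mathbbm{2}}\to (\twocat^{\mathbbm{2}})^{\times 4}$. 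Since preimages of accessible full subcategories under accessible functors between locally presentable categories remain accessible, we conclude that $\overline{\cW}$, viewed as a full subcategory of $\dblcat^{\mathbbm{2}}$, is accessible.

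No real obstacles are expected; the essential content of the corollary has already been absorbed into \cref{prop:4tupleweakequiv,4tupleconverse}. The only mild care point is to record that the biequivalences of $\twocat$ form an accessible subcategory of the arrow category and that $2$-out-of-$6$ holds for them, both of which are standard consequences of \cref{thm:lackMS} together with general facts about combinatorial model categories.
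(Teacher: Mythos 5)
Your proposal is correct and follows essentially the same route as the paper: both identify $\overline{\cW}$ as the preimage of the pointwise biequivalences under the accessible functor $(\bfR,\bfH,\bfV,\bfH\llbracket\bV\mathbbm{2},-\rrbracket)$ via \cref{prop:4tupleweakequiv,4tupleconverse}, and then transfer $2$-out-of-$6$ and accessibility from the weak equivalences of the combinatorial model structure on $\twocat^{\times 4}$. Your version merely spells out the standard closure properties (coordinatewise checks, right adjoints being accessible, preimages of accessible classes) that the paper leaves implicit.
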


\begin{proof}
    The pointwise biequivalences in $\twocat^{\times 4}$ satisfy $2$-out-of-$6$ and form an accessible class, as they are the weak equivalences of a combinatorial model structure. Hence, the result follows directly from the fact that, by \cref{prop:4tupleweakequiv,4tupleconverse}, the class $\overline{\cW}$ is the preimage under the accessible functor $(\bfR,\bfH,\bfV,\bfH\llbracket\bV\mathbbm{2},-\rrbracket)\colon \dblcat\to \twocat^{\times 4}$ of the pointwise biequivalences.
\end{proof}

We now turn to finding a path object, for which we must establish a series of technical lemmas. The first of these shows that all double categories have the right lifting property with respect to double functors that will eventually be trivial cofibrations in our model structure.

\begin{lem} \label{liftingofcofge2}
    Let $F\colon \bA\to \bB$ be a double functor that is both a canonical cofibration and a gregarious double equivalence. Then, for every double category $\bX$, there is a lift in every diagram of the form
    \begin{tz}
\node[](1) {$\bA$}; 
\node[right of=1](2) {$\bX$}; 
\node[below of=1](1') {$\bB$}; 

\draw[->] (1) to node[above,la]{} (2); 
\draw[->] (1) to node[left,la]{$F$} (1'); 
\draw[->,dashed] (1') to node[right,la,yshift=-5pt]{} (2); 
\end{tz}
\end{lem}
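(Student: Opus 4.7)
The plan is a retract argument using the weak factorization system $(\an_0,\nfib_0)$ generated by $\cJ_0$ (\cref{notn:J0}). Factor $F$ as $F=PJ$ with $J\colon \bA\to \bD$ in $\an_0$ and $P\colon \bD\to \bB$ in $\nfib_0$. The aim is to show that $F$ lies in $\cof(\cJ_0)$ by exhibiting it as a retract of $J$. Once this is done, the lemma follows: for any double category $\bX$, the morphism $\bX\to *$ belongs to $\nfib_0$ since every double category is gregarious fibrant (\cref{lem:descrnfib0}), so $F\in \cof(\cJ_0)$ has the left lifting property against it, producing the required extension of any $H\colon \bA\to \bX$.

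To exhibit $F$ as a retract of $J$, it suffices to lift $F$ against $P$. Since $F\in \cof(\cI)$, this reduces to showing $P\in \inj(\cI)$, i.e., that $P$ is a canonical trivial fibration. As $P\in \nfib_0$, \cref{lem:fibwe0} further reduces this to showing $P$ is a gregarious double equivalence. Using the $2$-out-of-$3$ property (a consequence of $2$-out-of-$6$ in \cref{2of6accessible}) applied to $F=PJ$ with $F$ a gregarious double equivalence by hypothesis, this is equivalent to showing that $J$ is a gregarious double equivalence, hence to the inclusion $\an_0\subseteq \overline{\cW}$.

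The main obstacle is therefore establishing $\an_0\subseteq \overline{\cW}$. The argument splits into two parts. First, a direct verification that each generator in $\cJ_0$ is a gregarious double equivalence: for $\mathbbm{1}\to \Sq\Eadj$, the added object is connected by a gregarious adjoint equivalence to the image, and for $\bH\mathbbm{2}\to \bH\Sigma I$ and $\bV\mathbbm{2}\to \bV\Sigma I$, the added $2$-isomorphism makes every relevant morphism $2$-iso to one in the image; full faithfulness on squares is immediate in all three cases. Second, closure of $\overline{\cW}$ under the operations needed to saturate $\cJ_0$ to $\cof(\cJ_0)$: retracts (from $2$-out-of-$6$ in \cref{2of6accessible}), transfinite compositions, and pushouts along morphisms in $\cJ_0$. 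The last is handled via the characterization of $\overline{\cW}$ as the preimage of pointwise biequivalences under $(\bfR,\bfH,\bfV,\bfH\llbracket\bV\mathbbm{2},-\rrbracket)$ in \cref{prop:4tupleweakequiv,4tupleconverse}, combined with the explicit pushout formula of \cref{charpushout}, which transfers the problem to the corresponding closure properties for biequivalences in $\twocat$ (available thanks to \cref{thm:lackMS}).

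With $\an_0\subseteq \overline{\cW}$ in hand, the retract argument concludes: $P$ is a canonical trivial fibration, the lift of $F$ against $P$ with bottom edge $\id_\bB$ gives $s\colon \bB\to \bD$ with $sF=J$ and $Ps=\id_\bB$, exhibiting $F$ as a retract of $J\in \an_0$. Closure of $\cof(\cJ_0)$ under retracts then gives $F\in \an_0$, and the desired lifting against any $\bX\to *\in \nfib_0$ follows.
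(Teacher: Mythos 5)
Your overall reduction is cleanly structured, and several of its steps are sound: factoring $F=PJ$ with $J\in\an_0$, $P\in\nfib_0$, using \cref{lem:fibwe0} and $2$-out-of-$3$ (from \cref{2of6accessible}) to reduce everything to the inclusion $\an_0\subseteq\overline{\cW}$, and then running the retract argument against $\bX\to *\in\nfib_0$ is all fine \emph{provided} that inclusion is available. But that inclusion is exactly where the proposal has a genuine gap. The class $\an_0=\cof(\cJ_0)$ is generated from $\cJ_0$ by pushouts along \emph{arbitrary} double functors, transfinite composition, and retracts, so you need in particular that the pushout of each map in $\cJ_0$ along an arbitrary double functor is again a gregarious double equivalence. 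This is the technical heart of any Lack-style existence proof (compare the corresponding pushout analysis in $\twocat$), and your justification for it does not work: the detection functors $(\bfR,\bfH,\bfV,\bfH\llbracket\bV\mathbbm{2},-\rrbracket)$ of \cref{prop:4tupleweakequiv,4tupleconverse} are right adjoints and do not preserve pushouts, so the explicit pushout description of \cref{charpushout} cannot simply be ``transferred'' to closure properties of biequivalences in $\twocat$. One would instead have to analyze the quotient-of-free-double-category description of these pushouts directly (as the paper does, at considerable length, even for the easier statement of \cref{pushout_trivfib}), and no such analysis is given. Note also that the inclusion $\an_0\subseteq\overline{\cW}$ is precisely condition \ref{anwe} of \cref{thm:GMSV} for the gregarious case, which the paper only obtains \emph{after} this lemma, via the path object of \cref{lem:pathobject0} (whose proof uses \cref{lem:pathtrivfib}, hence the present lemma); so your route places the hardest part of the whole existence proof inside this lemma without carrying it out.

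By contrast, the paper's proof avoids any saturation or pushout argument: it uses the structural characterization of canonical cofibrations in \cref{charcof} to reduce to the case where $\bB$ is obtained from the image of $F$ by freely adding objects and morphisms, and then explicitly constructs a retraction $G\colon\bB\to\bA$ of $F$ by choosing, via (g1)--(g4) of \cref{def:gregwe}, gregarious equivalences, invertible comparison squares, and preimages of squares, and verifying functoriality by hand. If you want to salvage your approach, you would need to supply the missing pushout (and transfinite composition) closure arguments for $\overline{\cW}$ directly from \cref{charpushout}; also be aware that ``full faithfulness on squares is immediate'' for $\mathbbm{1}\to\Sq\Eadj$ does require knowing the globular $2$-cells $\id\Rightarrow\id$ in $\Eadj$ are trivial, which deserves a word rather than being dismissed.
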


\begin{proof} 
To prove the result, we use the characterization of canonical cofibrations from \cref{charcof}. Note that, without loss of generality, we can assume that $\bB$ is such that its underlying categories $U\bfH \bB$ and $U\bfV\bB$ are obtained from the image of $U\bfH F$ and $U\bfV F$ by freely adding objects and then freely adding morphisms between specified objects, as the class of double functors with the left lifting property against $\bX$ is closed under retracts. Moreover, it is enough to prove the case where $\bX=\bA$, i.e., we need to construct a retraction $G\colon \bB\to \bA$ of $F$.

    We define the double functor $G\colon \bB\to \bA$ as follows.
    \begin{itemize}[leftmargin=0.8cm]
        \item Given an object $b\in \bB$, 
        \begin{itemize}[leftmargin=0.8cm]
            \item if $b=Fa$ for some object $a\in \bA$, we set $Gb\coloneqq a$ and $(f_b,u_b,\varphi_b,\psi_b)$ to be the identity companion pair at $Fa$,
            \item otherwise, by (g1) in \cref{def:gregwe}, there is an object $a\in \bA$ and a gregarious equivalence $(f_b,u_b,\varphi_b,\psi_b)\colon Fa\xrightarrow{\simeq} b$ in $\bB$, and we set $Gb\coloneqq a$. 
        \end{itemize}
        \item Given a horizontal morphism $g\colon b\to d$ in $\bB$, 
        \begin{itemize}[leftmargin=0.8cm]
            \item if $g=Ff$ for some horizontal morphism $f\colon a\to a'$ in $\bA$, we set $Gg\coloneqq f$ and $\alpha_g$ to be the vertical identity square at $Ff$, 
            \item if $g$ is a freely-added horizontal morphism, by (g2), there is a horizontal morphism ${f\colon Gb\to Gd}$ in $\bA$ and a vertically invertible square in $\bB$
            \begin{tz}
                \node[](1) {$FGb$}; 
                \node[below of=1](2) {$FGb$}; 
                \node[right of=2](3) {$b$}; 
                \node[right of=3](4) {$d$}; 
                \node[right of=4](5) {$FGd$}; 
                \node[above of=5](6) {$FGd$}; 

                \draw[d,pro] (1) to (2);
                \draw[d,pro] (6) to (5);

                \draw[->](1) to node[above,la]{$Ff$} (6); 
                \draw[->](2) to node[below,la]{$f_b$} (3);
                \draw[->](3) to node[below,la]{$g$} (4);
                \draw[->](4) to node[below,la]{$f_d^{-1}$} (5);

                \node[la] at ($(1)!0.5!(5)+(5pt,0)$) {$\vcong$}; 
                \node[la] at ($(1)!0.5!(5)-(5pt,0)$) {$\alpha_g$}; 
            \end{tz}
            and we set $Gg\coloneqq f$, 
            \item otherwise, we have that $g$ is a composite $b\xrightarrow{g_1} d_1\xrightarrow{g_2} \ldots \xrightarrow{g_{n-1}} d_{n-1}\xrightarrow{g_n} d$
            of horizontal morphisms in the image of $F$ and of freely-added morphisms;  we set $Gg\coloneqq G(g_n)\circ \ldots \circ G(g_1)$ and we set $\alpha_g$ to be the vertically invertible square from $FGg$ to $f_d^{-1}\circ g\circ f_b$ in $\bB$ obtained by taking the horizontal composite of the squares $\alpha_{g_1},\ldots,\alpha_{g_n}$ and using the counits to cancel out the copies of $f_{d_i}$ and $f_{d_i}^{-1}$. 
        \end{itemize}
    \item The procedure for vertical morphisms is analogous, using property (g3) and defining, for a vertical morphism $v\colon b\arrowdot b'$ in $\bB$, a vertical morphism $Gv$ in $\bA$ and a horizontally invertible square $\beta_v$ from $FGv$ to $u_{b'}^{-1}\circ v\circ u_b$ in $\bB$. 
    \item Given a square $\beta$ in $\bB$ as depicted below left, by (g4), there is a unique square $\alpha$ in~$\bA$ such that $F\alpha$ is equal to the pasting below right, and we set $G\beta\coloneqq \alpha$.
    \begin{tz}

    \node[](1) {$b$}; 
    \node[right of=1](2) {$d$};
    \node[below of=1](1') {$b'$};
    \node[below of=2](2') {$d'$};
    \draw[->] (1) to node[above,la]{$g$} (2);
    \draw[->] (1') to node[below,la]{$g'$} (2');
    \draw[->,pro] (1) to node[left,la]{$v$} (1');
    \draw[->,pro] (2) to node[right,la]{$w$} (2');
    \node[la] at ($(1)!0.5!(2')$) {$\beta$};

        \node[right of=2,xshift=1cm,yshift=3cm](0) {$FGb$}; 
        \node[below of=0](1) {$FGb$}; 
        \node[right of=0](0') {$FGb$}; 
        \node[below of=0'](1') {$FGb$}; 
        \node[below of=1'](2') {$b$};
        \node[below of=2'](3') {$b'$};
        \node[below of=3'](4') {$FGb'$};
        \node[below of=4'](5') {$FGb'$};
        \node[left of=4'](4) {$FGb'$};
        \node[below of=4](5) {$FGb'$};

        \draw[d,pro](0) to (1);
        \draw[->,pro](1) to node[left,la]{$FGv$} (4);
        \draw[d,pro](4) to (5);
        \draw[d,pro](0') to (1');
        \draw[->,pro](1') to node[left,la]{$u_b$} (2');
        \draw[->,pro](2') to node[left,la]{$v$} (3');
        \draw[->,pro](3') to node[left,la]{$u_{b'}^{-1}$} (4');
        \draw[d,pro](4') to (5');
        \draw[d](0) to (0');
        \draw[d](1) to (1');
        \draw[d](4) to (4');
        \draw[d](5) to (5');

        \node[la] at ($(0)!0.5!(1')$) {$=$};
        \node[la] at ($(1)!0.5!(4')-(2pt,0)$) {$\beta_v$};
        \node[la] at ($(4)!0.5!(5')$) {$=$};
 
        \node[right of=1'](1) {$b$}; 
        \node[below of=1](2) {$b$};
        \node[below of=2](3) {$b'$};
        \node[below of=3](4) {$b'$};

        \draw[d,pro](1) to (2);
        \draw[->,pro](2) to node[left,la]{$v$} (3);
        \draw[d,pro](3) to (4);
        \draw[->](1') to node[above,la]{$f_b$} (1); 
        \draw[->](4') to node[below,la]{$f_{b'}$} (4); 
        \draw[d](2') to (2); 
        \draw[d](3') to (3);
        
        \node[la] at ($(1')!0.5!(2)$) {$\varphi_b$};
        \node[la] at ($(2')!0.5!(3)-(2pt,0)$) {$=$};
        \node[la] at ($(3')!0.5!(4)$) {$\varphi_{b'}^{-1,v}$};

        \node[right of=1](1') {$d$}; 
        \node[below of=1'](2') {$d$};
        \node[below of=2'](3') {$d'$};
        \node[below of=3'](4') {$d'$};

        \draw[d,pro](1') to (2');
        \draw[->,pro](2') to node[right,la]{$w$} (3');
        \draw[d,pro](3') to (4');

        \draw[->](1) to node[above,la]{$g$} (1');
         \draw[->](2) to node[above,la]{$g$} (2');
         \draw[->](3) to node[below,la]{$g'$} (3');
         \draw[->](4) to node[below,la]{$g'$} (4');

         \node[la] at ($(1)!0.5!(2')+(0,2pt)$) {\rotatebox{270}{$=$}};
        \node[la] at ($(2)!0.5!(3')$) {$\beta$};
        \node[la] at ($(3)!0.5!(4')-(0,2pt)$) {\rotatebox{270}{$=$}};

    \node[right of=1'](1) {$FGd$}; 
    \node[above of=1](0) {$FGd$};
        \node[below of=1](2) {$d$};
        \node[below of=2](3) {$d'$};
        \node[below of=3](4) {$FGd'$};
        \node[below of=4](5) {$FGd'$};

        \draw[d,pro](0) to (1);
        \draw[->,pro](1) to node[right,la]{$u_d$} (2);
        \draw[->,pro](2) to node[right,la]{$w$} (3);
        \draw[->,pro](3) to node[right,la]{$u_{d'}^{-1}$} (4);
        \draw[d,pro](4) to (5);

        \draw[->](0') to node[above,la]{$FGg$} (0);
        \draw[->](1') to node[above,la]{$f_d^{-1}$} (1);
        \draw[d](2') to (2);
        \draw[d](3') to (3); 
        \draw[->](4') to node[below,la]{$f_{d'}^{-1}$} (4);
        \draw[->](5') to node[below,la]{$FGg'$} (5);

        \node[la] at ($(1')!0.5!(2)$) {$\varphi_d^{-1,h}$};
        \node[la] at ($(2')!0.5!(3)+(2pt,0)$) {$=$};
        \node[la] at ($(3')!0.5!(4)$) {$\varphi_{d'}^{-1,hv}$};
        \node[la] at ($(0')!0.5!(1)+(0,2pt)$) {$\alpha_g$}; 
        \node[la] at ($(5')!0.5!(4)-(0,2pt)$) {$\alpha^{-1}_{g'}$};

        \node[right of=0](0') {$FGd$}; 
        \node[below of=0'](1') {$FGd$};
        \node[right of=4](4') {$FGd'$};
        \node[below of=4'](5') {$FGd'$};

        \draw[d,pro](0') to (1');
        \draw[->,pro](1') to node[right,la]{$FGw$} (4');
        \draw[d,pro](4') to (5');
        \draw[d](0') to (0);
        \draw[d](1') to (1);
        \draw[d](4') to (4);
        \draw[d](5') to (5);

        \node[la] at ($(0')!0.5!(1)$) {$=$};
        \node[la] at ($(1')!0.5!(4)+(2pt,0)$) {$\beta^{-1}_w$};
        \node[la] at ($(4')!0.5!(5)$) {$=$};
    \end{tz}
    \end{itemize}
    Note that $G$ is well-defined since $F$ is injective on objects and faithful on horizontal and vertical morphisms---as it is a canonical cofibration---, and it is fully faithful on squares---since $F$ is a gregarious double equivalence. Moreover, we see that $GF=\id_{\bA}$ as desired, and that $G$ respects composites of horizontal (resp.\ vertical) morphisms by construction. Finally, to see that $G$ preserves compositions of squares, note that composing two of the diagrams as depicted above corresponding to composable squares $\beta$ and $\gamma$ causes several squares to cancel out, and we obtain the diagram corresponding to their composite.
\end{proof}

The next several lemmas prove some useful facts regarding the behavior of the Gray tensor product of double categories with respect to our relevant classes of maps. The motivation behind these lies in the fact that our candidate for path object of a double category $\bA$ will be given by the internal pseudo-hom $\llbracket\Sq\Eadj,\bA\rrbracket_\mathrm{ps}$, which is intimately related to the Gray tensor product as stated in \cref{prop:graypseudo}.

\begin{lem} \label{tensoroftrivfib}
    If $P\colon \bA\to \bB$ is a canonical trivial fibration and $\bX$ is a double category, then the induced double functor $P\otimes \bX\colon \bA\otimes\bX\to \bB\otimes \bX$ is a canonical trivial fibration. 
\end{lem}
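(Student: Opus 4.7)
The plan is to verify directly that $P\otimes\bX$ satisfies the four defining properties of a canonical trivial fibration (\cref{def:trivfibs}): surjectivity on objects, fullness on both horizontal and vertical morphisms, and full faithfulness on squares. For this, we use an explicit presentation of the Gray tensor product in terms of generating cells and relations, following B\"ohm \cite{Bohm}. In this presentation, $\bA\otimes\bX$ has objects $(a,x)$, and its higher cells are generated by two families---cells of the form $(\xi, x)$ for $\xi$ a cell of $\bA$ and $x$ an object of $\bX$, and cells of the form $(a, \zeta)$ for $a$ an object of $\bA$ and $\zeta$ a cell of $\bX$---together with ``Gray interchange'' squares recording the pseudo-commutation of operations across the two coordinates. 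Under this presentation, $P\otimes\bX$ sends $(\xi, x) \mapsto (P\xi, x)$ on cells of the first type, acts on cells of the second type only by relabeling the first coordinate along $P$, and preserves interchangers.

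With this description, the first three conditions reduce directly to the corresponding properties of $P$. Surjectivity on objects is immediate since $P\otimes\bX$ acts as $(a, x)\mapsto (Pa, x)$ on objects. For fullness on horizontal morphisms, any horizontal morphism of $\bB\otimes\bX$ between endpoints in the image of $P\otimes\bX$ decomposes as a composite of generators of the two types above; those of the form $(g, x)$ lift using fullness of $P$ on horizontal morphisms, and those of the form $(b, g')$ lift by first choosing a preimage $a$ of $b$ under $P$ on objects, giving the generator $(a, g')$ that maps to $(b, g')$. The composite of these lifts yields the desired lift. Fullness on vertical morphisms is proven by the same argument, using fullness of $P$ on vertical morphisms.

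The main step is full faithfulness on squares, which is expected to be the hardest part. A square in $\bA\otimes\bX$ decomposes as a composite of basic squares of the forms $(\alpha, x)$, $(a, \beta)$, and Gray interchangers. Given a square $\delta$ in $\bB\otimes\bX$ whose boundary has been lifted to $\bA\otimes\bX$, one decomposes $\delta$ into such basic squares. Full faithfulness of $P$ on squares uniquely lifts each basic square of the form $(P\alpha, x)$ to one of the form $(\alpha, x)$, while basic squares of the forms $(b, \beta)$ and Gray interchangers are determined by their boundary data in the first coordinate and so lift tautologically. The main obstacle is to verify that these piecewise lifts respect the relations defining the Gray tensor product, so that they glue into a well-defined square mapping onto $\delta$, and that this lift is unique; both statements follow from the uniqueness clause in the full faithfulness of $P$ on squares, which ensures that any relation among images in $\bB\otimes\bX$ lifts uniquely to the corresponding relation in $\bA\otimes\bX$.
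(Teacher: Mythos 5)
Your first three verifications (surjectivity on objects, fullness on horizontal and vertical morphisms) are fine and essentially match what the paper does for those conditions, but the crucial fourth step is where your argument has a genuine gap. You reduce full faithfulness on squares of $P\otimes\bX$ to the claim that ``any relation among images in $\bB\otimes\bX$ lifts uniquely to the corresponding relation in $\bA\otimes\bX$,'' justified only by full faithfulness of $P$ on squares. This does not engage with the actual difficulty. A square of $\bA\otimes\bX$ (resp.\ $\bB\otimes\bX$) is an equivalence class of pastings of basic squares modulo the congruence generated by the Gray relations, and an equality $(P\otimes\bX)\sigma=(P\otimes\bX)\sigma'$ in $\bB\otimes\bX$ is witnessed by a zigzag of elementary relation moves whose intermediate pasting diagrams involve arbitrary decompositions of the underlying $1$-cells of $\bB$ and $\bX$. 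Since $P$ is only \emph{full}, not faithful, on horizontal and vertical morphisms, these intermediate diagrams do not have canonical or unique lifts, and lifting the whole chain of moves coherently back to $\bA\otimes\bX$ is exactly the kind of congruence-chasing argument that requires real work (compare the careful treatment of congruences in the paper's proof of \cref{pushout_trivfib}, where invertible squares correcting the non-uniqueness of boundary lifts have to be inserted). Similarly, for fullness on squares your piecewise lifts of the generators $(b,\beta)$ and the interchangers must be made compatible with the \emph{prescribed} boundary lift, which again is not ``tautological'' because $P$ is not injective on objects or faithful on $1$-cells. As written, the hardest part of the lemma is asserted rather than proved.

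For contrast, the paper avoids this combinatorial analysis entirely: it uses that the projections $\bA\otimes\bX\to\bA\times\bX$ and $\bB\otimes\bX\to\bB\times\bX$ are canonical trivial fibrations (a result from earlier work), that $P\times\bX$ is a canonical trivial fibration by pullback stability, and then $2$-out-of-$3$ for gregarious double equivalences (via \cref{lem:trivfibarewe}) to conclude that $P\otimes\bX$ is a gregarious double equivalence, hence fully faithful on squares; only the easy conditions (surjectivity on objects, fullness on $1$-cells) are then checked by hand from the explicit description of $\bA\otimes\bX$. If you want to keep your direct approach, you would need to formulate and prove a lifting statement for the Gray-tensor congruence along $P\otimes\bX$, in the spirit of the congruence argument in \cref{pushout_trivfib}; otherwise the indirect route through the cartesian product is substantially shorter and is the one the paper takes.
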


\begin{proof}
    Consider the commutative diagram in $\dblcat$,
    \begin{tz}
        \node[](1) {$\bA\otimes \bX$}; 
        \node[below of=1](2) {$\bA\times \bX$}; 
        \node[right of=1,xshift=1.4cm](3) {$\bB\otimes \bX$}; 
        \node[below of=3](4) {$\bB\times \bX$}; 

        \draw[->](1) to node[above,la]{$P\otimes \bX$} (3);
        \draw[->](2) to node[below,la]{$P\times \bX$} (4);
        \draw[->](1) to node[left,la]{$\pi_{\bA,\bX}$} (2);
        \draw[->](3) to node[right,la]{$\pi_{\bB,\bX}$} (4);
    \end{tz}
    where the vertical double functors are the canonical projections and are canonical trivial fibrations by \cite[Lemma 7.3]{whi}. Moreover, the product $P\times \bX$ is a canonical trivial fibration as a pullback of the canonical trivial fibration $P$ along the projection $\bB\times \bX\to \bB$. Then, by $2$-out-of-$3$ and \cref{lem:trivfibarewe}, we get that $P\otimes \bX$ is a gregarious double equivalence. In particular, it is fully faithful on squares. 

    To show that $P\otimes \bX$ is a canonical trivial fibration, it remains to show that it is surjective on objects and full on both horizontal and vertical morphisms. A detailed description of the data of the double category $\bA\otimes \bX$ can be found, for instance, in \cite[Description 7.1]{whi}. Using this, we can see that $\bA\otimes\bX$ and $\bA\times\bX$ have the same objects. Moreover, their horizontal  morphisms are generated by pairs of the form  $(f,\id)$ and $(\id,g)$ for horizontal morphisms $f$ in $\bA$ and $g\in \bB$, the only difference being that $\bA\times\bX$ has an additional relation that makes these two types of generators commute; a similar description can be given for the vertical morphisms. Then, the fact that $P\times \bX$ is surjective on objects and full on horizontal and vertical morphisms implies that so is $P\otimes \bX$.
\end{proof}

\begin{notation}
Given two double functors $J\colon \bA\to \bB$ and $I\colon \bX\to \bY$, we write $J\, \square\, I$ for the pushout-product double functor
\[ J\,\square\, I\colon \bA\otimes \bY\amalg_{\bA\otimes \bX}\bB\otimes \bX\to \bB\otimes \bY \]
\end{notation}

\begin{lem} \label{pushprodofcof}
    The pushout-product of two canonical cofibrations is a canonical cofibration.
\end{lem}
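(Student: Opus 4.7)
The strategy is the standard Joyal--Tierney reduction: we use the fact that the pushout-product
\[
\square\colon \dblcat^{\mathbbm{2}}\times \dblcat^{\mathbbm{2}} \to \dblcat^{\mathbbm{2}}
\]
preserves colimits in each variable (since $\otimes$ does, by \cref{prop:graypseudo} being a closed symmetric monoidal structure), together with the fact that canonical cofibrations are closed under pushouts, transfinite compositions, and retracts (as they form the left class of a cofibrantly generated weak factorization system). Concretely, for a fixed $J$, the class of morphisms $I$ such that $J\,\square\, I$ is a canonical cofibration is closed under these three operations, and thus contains $\cof(\cI)$ as soon as it contains $\cI$. Symmetrically for the other variable. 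It therefore suffices to verify the conclusion when both $J$ and $I$ are generators from the set $\cI$ of \cref{gen_cofib}.

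The case in which either $J$ or $I$ is the generator $\emptyset \to \mathbbm{1}$ is immediate: since $\mathbbm{1}$ is the unit of the Gray tensor product and $\emptyset \otimes \bX = \emptyset$ for every $\bX$, the pushout-product $(\emptyset \to \mathbbm{1})\,\square\, I$ reduces to $I$ itself, which is a canonical cofibration by hypothesis. This eliminates one of the five generators from consideration, leaving a finite number of remaining pairs to check.

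For each of the remaining pairs, I would identify the pushout-product explicitly as an inclusion of a double subcategory into a larger one, using the description of the Gray tensor product of small double categories that can be found, e.g., in \cite[Description 7.1]{whi}. Once such a concrete identification is in place, the claim is verified using the characterization of canonical cofibrations in \cref{charcof}: one checks injectivity on objects, faithfulness on both horizontal and vertical morphisms, and produces a retract $\bB\otimes\bY \xrightarrow{I} \bC \xrightarrow{R} \bB\otimes\bY$ exhibiting $U\bfH\bC$ and $U\bfV\bC$ as obtained from the image of $U\bfH(I(J\,\square\, I))$ and $U\bfV(I(J\,\square\, I))$ by first freely adjoining objects and then freely adjoining morphisms between specified objects.

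The main obstacle will be handling the pairs involving the square-level generators (iv) $\partial(\bH\mathbbm{2}\times\bV\mathbbm{2})\to \bH\mathbbm{2}\times \bV\mathbbm{2}$ and (v) $\bH\mathbbm{2}\times\bV\mathbbm{2}\sqcup_{\partial(\bH\mathbbm{2}\times\bV\mathbbm{2})}\bH\mathbbm{2}\times\bV\mathbbm{2}\to \bH\mathbbm{2}\times\bV\mathbbm{2}$, since the Gray tensor product of double categories already containing non-trivial squares produces many new interchange squares, so verifying the freeness condition on underlying horizontal/vertical categories requires some bookkeeping. By the symmetry of the Gray tensor product, however, this reduces the count of essentially distinct cases, and in every case the check is a finite combinatorial computation on the explicit generators-and-relations presentation of the Gray tensor product.
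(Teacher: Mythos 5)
Your reduction to generating cofibrations is sound: since B\"ohm's Gray tensor product is closed symmetric monoidal, the pushout-product interacts well with the saturation operations in each variable, so it suffices to treat pairs of maps from the set $\cI$ of \cref{gen_cofib}; your disposal of the generator $\emptyset\to\mathbbm{1}$ via the monoidal unit is also correct. Note, however, that the paper does not redo any of this: its proof is a one-line citation of \cite[Theorem 7.8]{whi}, which establishes precisely this pushout-product property for the same generating set $\cI$, so your plan essentially amounts to re-deriving that result by hand.

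The genuine gap is that your proposal stops exactly where the mathematical content of the lemma begins. The verification that $J\,\square\,I$ is a canonical cofibration for the remaining generator pairs---in particular the pairs involving the square-level generators (iv) and (v) of \cref{gen_cofib}---is only announced (``I would identify the pushout-product explicitly\ldots''), not carried out. That finite but delicate computation (writing out Gray tensors such as $(\bH\mathbbm{2}\times\bV\mathbbm{2})\otimes(\bH\mathbbm{2}\times\bV\mathbbm{2})$ from the generators-and-relations description, checking injectivity on objects and faithfulness on horizontal and vertical morphisms, and exhibiting the retract and freeness data required by \cref{charcof}) is the entire substance of the statement; asserting that it is ``a finite combinatorial computation'' does not discharge it. As written, your argument is a correct strategy rather than a complete proof: to finish, you must either execute those generator checks or, as the paper does, observe that they are already done in \cite[Theorem 7.8]{whi} and cite that result.
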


\begin{proof}
    This is contained in the proof of \cite[Theorem 7.8]{whi}, since that theorem deals with the same generating set of cofibrations as in \cref{gen_cofib}.
\end{proof}

\begin{lem} \label{pushprodofspecifictrivcof}
    Let $J\colon \bA\to \bB$ be a canonical cofibration such that $\bA$ is canonically cofibrant and $J$ admits a retraction $P\colon \bB\to \bA$ that is a canonical trivial fibration. Then, for any canonical cofibration $I\colon \bX\to \bY$, the pushout-product 
    \[ J\,\square\, I\colon \bA\otimes \bY\amalg_{\bA\otimes \bX}\bB\otimes \bX\to \bB\otimes \bY \]
    is a canonical cofibration and a gregarious double equivalence. 
\end{lem}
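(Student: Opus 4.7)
The canonical cofibration part is immediate from \cref{pushprodofcof}, applied to the canonical cofibrations $J$ and $I$. The work lies in showing that $K \coloneqq J\,\square\,I$ is a gregarious double equivalence.

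My plan is to exploit the factorization $J\otimes\bY = K\circ K_1$, where $K_1\colon\bA\otimes\bY\to\bP$ denotes the pushout inclusion, and apply $2$-out-of-$3$ twice. First, I would check that $J\otimes\bY$ is a gregarious double equivalence: by \cref{tensoroftrivfib} the retraction $P\otimes\bY$ of $J\otimes\bY$ is a canonical trivial fibration, hence a gregarious double equivalence by \cref{lem:trivfibarewe}, so $J\otimes\bY$ itself is one by $2$-out-of-$3$ (available from $2$-out-of-$6$ via \cref{2of6accessible}) applied to the equality $(P\otimes\bY)\circ(J\otimes\bY)=\id_{\bA\otimes\bY}$.

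The crux is to show $K_1$ is a gregarious double equivalence, and for this I would construct a retraction $s\colon\bP\to\bA\otimes\bY$ of $K_1$ using the universal property of the pushout, defined by $\id_{\bA\otimes\bY}$ on the $\bA\otimes\bY$ leg and $(\bA\otimes I)\circ(P\otimes\bX)$ on the $\bB\otimes\bX$ leg; compatibility on $\bA\otimes\bX$ uses $PJ=\id_\bA$. The key observation is that $s$ is itself the pushout of $P\otimes\bX\colon\bB\otimes\bX\to\bA\otimes\bX$ along $q_\bX\colon\bB\otimes\bX\to\bP$ (the other pushout inclusion). This can be verified directly from the universal property: any compatible cocone $(u\colon\bP\to Z,\, v\colon\bA\otimes\bX\to Z)$ with $u\circ q_\bX = v\circ(P\otimes\bX)$ induces the unique factorization $u\circ K_1\colon\bA\otimes\bY\to Z$, and both required triangles then commute by the original pushout relations combined with $PJ=\id_\bA$.

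Granting this identification, $s$ is a canonical trivial fibration by \cref{prop:pushoutoftrivfib}: $P\otimes\bX$ is a canonical trivial fibration by \cref{tensoroftrivfib}, and $q_\bX$ is a canonical cofibration as a pushout of $\bA\otimes I=(\emptyset\to\bA)\,\square\,I$, itself a canonical cofibration by \cref{pushprodofcof} (since $\bA$ being canonically cofibrant makes $\emptyset\to\bA$ a canonical cofibration). Then $s$ is a gregarious double equivalence by \cref{lem:trivfibarewe}, and together with $s\circ K_1=\id_{\bA\otimes\bY}$ this forces $K_1$ to be a gregarious double equivalence by $2$-out-of-$3$. A final application of $2$-out-of-$3$ to $K\circ K_1 = J\otimes\bY$ yields that $K$ is a gregarious double equivalence. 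The subtle step is recognizing the pushout square exhibiting $s$; once identified, the rest is bookkeeping, whereas a direct verification that $s$ is a canonical trivial fibration would require an involved analysis of the squares of $\bP$ via \cref{charpushout}.
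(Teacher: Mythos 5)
Your proof is correct and takes essentially the same route as the paper: both hinge on recognizing the retraction $\bP\to\bA\otimes\bY$ (your $s$, the paper's $Q$) as a pushout of the canonical trivial fibration $P\otimes\bX$ along the canonical cofibration $\bB\otimes\bX\to\bP$, hence itself a canonical trivial fibration, and then combining \cref{pushprodofcof,tensoroftrivfib,prop:pushoutoftrivfib} with $2$-out-of-$3$. The only difference is bookkeeping at the end: the paper applies $2$-out-of-$3$ once to the identity $(P\otimes\bY)\circ(J\,\square\,I)=Q$, whereas you first show the pushout inclusion $K_1\colon\bA\otimes\bY\to\bP$ is a gregarious double equivalence and conclude via $(J\,\square\,I)\circ K_1=J\otimes\bY$; both are valid.
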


\begin{proof}
 Consider the following commutative diagram
    \begin{tz}
\node[] (1)     {$\bA\otimes \bX$};
\node[right of=1,xshift=1cm] (2)   {$\bB\otimes \bX$};
\node[right of=2,xshift=1cm] (3) {$\bA\otimes \bX$};
\node[below of=1] (4) {$\bA\otimes \bY$};
\node[below of=2] (5) {$\bullet$};
\node[below of=3] (6) {$\bA\otimes \bY$};
\node[below of=5,xshift=1cm] (7) {$\bB\otimes \bY$};

\draw[->] (1) to node[left,la]{$\bA\otimes I$} (4);
\draw[->] (2) to node[right,la]{$K$} (5);
\draw[->] (3) to node[right,la]{$\bA\otimes I$} (6);
\draw[->] (1) to node[above,la]{$J\otimes \bX$} (2);

\draw[->] (2) to node[above, la]{$P\otimes \bX$} (3);
\draw[->] (4) to node[below,la]{} (5);
\draw[->,dashed] (5) to node[below, la]{$Q$} (6);
\draw[->,dashed] (5) to node[left, la,yshift=-2pt]{$J\,\square\, I$} (7);

\draw[->,bend right=15] (4) to node[below,la,xshift=-10pt]{$J\otimes \bY$} (7);
\draw[->] (7) to node[right,la,yshift=-2pt]{$P\otimes \bY$} (6);

\pushout{5};
\pushout{6};
\end{tz} 
First, by \cref{pushprodofcof}, we have that $\bA\otimes I$ and $J\,\square\,I$ are canonical cofibrations. Hence $K$ is also a canonical cofibration as a pushout of $\bA\otimes I$. Then, by \cref{tensoroftrivfib}, we have that $P\otimes \bX$ and $P\otimes \bY$ are canonical trivial fibrations. Hence $Q$ is also a canonical trivial fibration by \cref{prop:pushoutoftrivfib} as the pushout of the canonical trivial fibration $P\otimes \bX$ along the canonical cofibration $K$. Hence, by $2$-out-of-$3$, we conclude that the canonical cofibration $J\,\square\,I$ is also a gregarious double equivalence. 
\end{proof}

\begin{rem} \label{J0isspecific}
Note that the projection double functors $\Sq \Eadj\to \mathbbm{1}$, $\bH\Sigma I\to \bH \mathbbm{2}$, and $\bV\Sigma I\to \bV \mathbbm{2}$ are canonical trivial fibrations, and that the double categories $\mathbbm{1}$, $\bH \mathbbm{2}$, and $\bV \mathbbm{2}$ are canonically cofibrant. Hence, the canonical cofibrations of the set $\cJ_0$ all satisfy the condition of~$J$ in \cref{pushprodofspecifictrivcof}.
\end{rem}

\begin{lem} \label{lem:pathtrivfib}
    For every double category $\bA$, the double functor $\llbracket\Sq\Eadj,\bA\rrbracket_\mathrm{ps}\to \bA$ induced by the inclusion $\mathbbm{1}\to \Sq\Eadj$ is a canonical trivial fibration. 
\end{lem}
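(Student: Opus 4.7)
The plan is to exploit the closed structure of the Gray tensor product together with the two lifting lemmas proved just above (\cref{pushprodofspecifictrivcof,liftingofcofge2}), rather than working directly with the double category $\llbracket\Sq\Eadj,\bA\rrbracket_\mathrm{ps}$. Denote by $J\colon \mathbbm{1}\to \Sq\Eadj$ the inclusion. By \cref{prop:trivfibs}, showing that $\llbracket J,\bA\rrbracket_\mathrm{ps}\colon \llbracket\Sq\Eadj,\bA\rrbracket_\mathrm{ps}\to \bA$ is a canonical trivial fibration amounts to showing that it has the right lifting property with respect to every morphism $I\in\cI$.

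Using the two-variable adjunction coming from \cref{prop:graypseudo}, and the fact that $\llbracket \mathbbm{1},\bA\rrbracket_\mathrm{ps}\cong \bA$ so that $\llbracket J,\bA\rrbracket_\mathrm{ps}$ really is the pullback-cotensor of $J$ with $\bA\to \ast$, the lifting problem for a given $I\colon \bX\to \bY$ in $\cI$ transposes to a lifting problem of the form
\begin{tz}
\node[](1) {$\bX\otimes \Sq\Eadj \amalg_{\bX\otimes \mathbbm{1}} \bY\otimes \mathbbm{1}$};
\node[right of=1,xshift=3cm](2) {$\bA$};
\node[below of=1](3) {$\bY\otimes \Sq\Eadj$};
\draw[->] (1) to (2);
\draw[->] (1) to node[left,la]{$I\,\square\, J$} (3);
\draw[->,dashed] (3) to (2);
\end{tz}
So the task becomes: $I\,\square\, J$ has the left lifting property against $\bA \to \ast$ for every $I\in \cI$.

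Here both hypotheses align perfectly. By \cref{J0isspecific}, $J$ satisfies the assumptions placed on the morphism $J$ in \cref{pushprodofspecifictrivcof}: it is a canonical cofibration, $\mathbbm{1}$ is canonically cofibrant, and the projection $\Sq\Eadj\to \mathbbm{1}$ is a canonical trivial fibration that retracts $J$. Since each $I\in \cI$ is a canonical cofibration by construction, \cref{pushprodofspecifictrivcof} gives that $I\,\square\, J$ is itself a canonical cofibration and a gregarious double equivalence. Then \cref{liftingofcofge2} applies: such a morphism admits an extension along any map to any double category $\bX$, and in particular it has the left lifting property against $\bA\to \ast$. This provides the required lift and finishes the argument.

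I expect the only subtle point to be making sure the two-variable adjunction is invoked correctly---i.e., verifying that the map induced by $J$ on $\llbracket -,\bA\rrbracket_\mathrm{ps}$ coincides with the pullback-cotensor of $J$ with $\bA\to \ast$ in the closed symmetric monoidal structure, so that the pushout-product/cotensor adjunction yields exactly the reformulation above. Once this bookkeeping is settled, the proof is a one-line combination of \cref{pushprodofspecifictrivcof,liftingofcofge2}, with no further calculation needed.
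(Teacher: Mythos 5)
Your proposal is correct and follows essentially the same route as the paper: transpose the lifting problem across the Gray tensor/pseudo-hom adjunction, apply \cref{pushprodofspecifictrivcof} (via \cref{J0isspecific}) to see the pushout-product is a canonical cofibration and gregarious double equivalence, and conclude with \cref{liftingofcofge2}. The only cosmetic difference is that you lift against the generating set $\cI$ rather than all canonical cofibrations, which is equivalent by \cref{prop:trivfibs}.
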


\begin{proof}
    The double functor $\llbracket\Sq\Eadj,\bA\rrbracket_\mathrm{ps}\to \bA$ is a canonical trivial fibration if and only if it has the right lifting property with respect to any canonical cofibration $I$. By the adjunction $-\otimes \bB\dashv \llbracket\bB,-\rrbracket_\mathrm{ps}$ which is natural in $\bB$, this is the case if and only if $\bA$ has the right lifting property with respect to the pushout-product $J\,\square\,I$ of $J\colon \mathbbm{1}\to \Sq\Eadj$ and any canonical cofibration $I$. We show the latter. 
    
    Let $I$ be a canonical cofibration and fix $J\colon \mathbbm{1}\to \Sq\Eadj$. By \cref{pushprodofspecifictrivcof,J0isspecific}, the pushout-product $J\,\square\,I$ is both a canonical cofibration and a gregarious double equivalence. Hence, by \cref{liftingofcofge2}, the double category $\bA$ has the right lifting property against $J\,\square\,I$, as desired.
\end{proof}

\begin{lem} \label{lem:pathnaive0}
    For every double category $\bA$, the double functor $\llbracket\Sq\Eadj,\bA\rrbracket_\mathrm{ps}\to \bA\times \bA$ induced by the inclusion $\mathbbm{1}\amalg\mathbbm{1}\to \Sq\Eadj$ is a gregarious fibration. 
\end{lem}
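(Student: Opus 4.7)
The plan is to mimic the proof strategy of \cref{lem:pathtrivfib}. Specifically, we need to show that $\llbracket\Sq\Eadj,\bA\rrbracket_\mathrm{ps}\to \bA\times\bA$ has the right lifting property with respect to every $J\colon \bX\to \bY$ in $\cJ_0$. Note first that $\bA\times \bA \cong \llbracket\mathbbm{1}\sqcup\mathbbm{1},\bA\rrbracket_\mathrm{ps}$ and that the given double functor is obtained by applying $\llbracket -,\bA\rrbracket_\mathrm{ps}$ to the inclusion $\mathbbm{1}\sqcup\mathbbm{1}\to \Sq\Eadj$. Hence the adjunction $-\otimes \bY\dashv \llbracket\bY,-\rrbracket_\mathrm{ps}$, natural in $\bY$, translates each such lifting problem into an extension problem for $\bA$ along the pushout-product $J\,\square\,(\mathbbm{1}\sqcup\mathbbm{1}\to \Sq\Eadj)$. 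By the symmetry of the Gray tensor product, this agrees with $(\mathbbm{1}\sqcup\mathbbm{1}\to \Sq\Eadj)\,\square\, J$.

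The crux is then to observe that the inclusion $\mathbbm{1}\sqcup\mathbbm{1}\to \Sq\Eadj$ is itself a canonical cofibration. Indeed, since $\Eadj$ is the $2$-category freely generated by an adjoint equivalence, whose defining equations take place at the level of $2$-isomorphisms rather than of $1$-morphisms, the underlying horizontal and vertical categories of $\Sq\Eadj$ are each free on two generating morphisms between two objects. Therefore $\Sq\Eadj$ is canonically cofibrant, and the inclusion is clearly injective on objects and faithful on horizontal and vertical morphisms; taking $\bC=\Sq\Eadj$ with $I=R=\id$ in the characterization from \cref{charcof} confirms that the inclusion is a canonical cofibration. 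Combining this with \cref{J0isspecific}, which guarantees that $J$ satisfies the hypotheses of \cref{pushprodofspecifictrivcof}, the cited lemma yields that $(\mathbbm{1}\sqcup\mathbbm{1}\to\Sq\Eadj)\,\square\, J$ is both a canonical cofibration and a gregarious double equivalence.

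The proof then concludes by invoking \cref{liftingofcofge2}: every double category, in particular $\bA$, has the right lifting property with respect to any double functor that is simultaneously a canonical cofibration and a gregarious double equivalence. This produces the desired lifts for each $J \in \cJ_0$, establishing that $\llbracket\Sq\Eadj,\bA\rrbracket_\mathrm{ps}\to \bA\times\bA$ is a gregarious fibration. The only mildly subtle step is the verification that $\mathbbm{1}\sqcup\mathbbm{1}\to\Sq\Eadj$ is a canonical cofibration, which hinges on the observation that the adjoint equivalence data in $\Eadj$ is encoded via $2$-isomorphisms rather than via strict $1$-categorical relations.
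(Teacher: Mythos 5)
Your argument is correct and is essentially the paper's own proof: the paper likewise uses the adjunction $-\otimes\bB\dashv\llbracket\bB,-\rrbracket_\mathrm{ps}$ to reduce the lifting problem against each $J\in\cJ_0$ to an extension problem for $\bA$ along the pushout-product of $J$ with $I\colon\mathbbm{1}\amalg\mathbbm{1}\to\Sq\Eadj$, and then concludes exactly as in \cref{lem:pathtrivfib} via \cref{pushprodofspecifictrivcof}, \cref{J0isspecific}, and \cref{liftingofcofge2}. Your extra verification via \cref{charcof} that $\mathbbm{1}\amalg\mathbbm{1}\to\Sq\Eadj$ is a canonical cofibration (the paper takes this for granted) is accurate, and the appeal to symmetry of the Gray tensor product is harmless but unnecessary, since \cref{pushprodofspecifictrivcof} already applies with $J\in\cJ_0$ in the first slot.
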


\begin{proof}
    The double functor $\llbracket\Sq\Eadj,\bA\rrbracket_\mathrm{ps}\to \bA\times \bA$ is a gregarious fibration if and only if it has the right lifting property with respect to any canonical cofibration $J$ in the set $\cJ_0$. By the adjunction $-\otimes \bB\dashv \llbracket\bB,-\rrbracket_\mathrm{ps}$ which is natural in $\bB$, this is the case if and only if $\bA$ has the right lifting property with respect to the pushout-product $J\,\square\,I$ of any $J$ in~$\cJ_0$ and the canonical cofibration $I\colon \mathbbm{1}\amalg\mathbbm{1}\to \Sq\Eadj$. The proof of this latter fact now proceeds as in the previous lemma.
\end{proof}

\begin{prop} \label{lem:pathobject0}
    For every double category $\bA$, there is a factorization of the diagonal 
    \[ \bA\xrightarrow{W} \llbracket\Sq\Eadj,\bA\rrbracket_\mathrm{ps}\xrightarrow{P} \bA\times \bA \]
    induced by $\mathbbm{1}\amalg\mathbbm{1}\to \Sq\Eadj\to \mathbbm{1}$
    as a gregarious double equivalence $W$ followed by a gregarious fibration $P$.
\end{prop}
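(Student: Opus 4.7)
The plan is to assemble the factorization directly from the two preceding lemmas, then verify the three claims: (a) the composite is indeed the diagonal, (b) $P$ is a gregarious fibration, (c) $W$ is a gregarious double equivalence.

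For (a), functoriality of $\llbracket -,\bA\rrbracket_\mathrm{ps}$ translates the composite $\mathbbm{1}\amalg\mathbbm{1}\to \Sq\Eadj\to \mathbbm{1}$ into the composite $\bA\to \llbracket\Sq\Eadj,\bA\rrbracket_\mathrm{ps}\to \bA\times\bA$, and the fold $\mathbbm{1}\amalg\mathbbm{1}\to\mathbbm{1}$ induces precisely the diagonal $\bA\to\bA\times\bA$ under the identifications $\llbracket\mathbbm{1},\bA\rrbracket_\mathrm{ps}\cong\bA$ and $\llbracket\mathbbm{1}\amalg\mathbbm{1},\bA\rrbracket_\mathrm{ps}\cong\bA\times\bA$. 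For (b), this is exactly the content of \cref{lem:pathnaive0}.

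The key step is (c). I would show that $W$ admits a retraction which is a canonical trivial fibration, and then invoke $2$-out-of-$3$. Explicitly, pick one of the two inclusions $\mathbbm{1}\to\Sq\Eadj$ appearing in $\mathbbm{1}\amalg\mathbbm{1}\to\Sq\Eadj$; by \cref{lem:pathtrivfib}, the induced double functor
\[ R\colon \llbracket\Sq\Eadj,\bA\rrbracket_\mathrm{ps}\to \bA \]
is a canonical trivial fibration. Since the composite $\mathbbm{1}\to\Sq\Eadj\to\mathbbm{1}$ is the identity, applying $\llbracket-,\bA\rrbracket_\mathrm{ps}$ yields $RW=\id_\bA$, so $R$ is a retraction of $W$.

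By \cref{lem:trivfibarewe}, $R$ is a gregarious double equivalence, and $\id_\bA = RW$ is trivially so. The class of gregarious double equivalences satisfies $2$-out-of-$3$ by \cref{2of6accessible} (since $2$-out-of-$6$ implies $2$-out-of-$3$), hence $W$ is a gregarious double equivalence, completing the proof. The main conceptual move is recognizing that one of the two summand inclusions $\mathbbm{1}\hookrightarrow \Sq\Eadj$ gives a retraction of $W$ with good properties; the rest is bookkeeping.
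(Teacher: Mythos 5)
Your proof is correct and matches the paper's argument: the paper likewise cites \cref{lem:pathnaive0} for $P$ and observes that $W$ is a section of the canonical trivial fibration $\llbracket\Sq\Eadj,\bA\rrbracket_\mathrm{ps}\to\bA$ of \cref{lem:pathtrivfib}, concluding by $2$-out-of-$3$. Your retraction $R$ is exactly that trivial fibration, so the two arguments coincide.
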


\begin{proof}
    The fact that $P$ is a gregarious fibration is the content of \cref{lem:pathnaive0}. The fact that $W$ is a gregarious double equivalence follows by $2$-out-of-$3$, as $W$ is a section of the canonical trivial fibration $\llbracket\Sq\Eadj,\bA\rrbracket_\mathrm{ps}\to \bA$ from \cref{lem:pathtrivfib}.  
\end{proof}

We have now provided a different proof of Campbell's theorem from \cite{Camp}. 

\begin{theorem}
    There is a cofibrantly generated model structure on $\dblcat$, called the \emph{gregarious model structure} and denoted by $\dblcat_\mathrm{greg}$, in which the weak equivalences are the gregarious double equivalences and the trivial fibrations are the canonical ones. Furthermore, the fibrations are the gregarious fibrations, and all objects are fibrant.
\end{theorem}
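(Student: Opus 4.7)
The plan is to apply the general construction theorem \cref{thm:GMSV} with the setup already laid out just before the statement: take $\cC=\dblcat$, $\cI$ the generating canonical cofibrations of \cref{gen_cofib}, $(\an_0,\nfib_0)$ the weak factorization system generated by $\cJ_0$ of \cref{notn:J0}, and $\Wf=\overline{\cW}$ the class of gregarious double equivalences. The category $\dblcat$ is locally presentable, and the inclusion $\cJ_0\subseteq\cof(\cI)$ was observed in \cref{notn:J0}, so it remains to verify the five hypotheses of \cref{thm:GMSV}.

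Hypotheses (1), (2), (3), and (5) are essentially already done in the preceding subsection. Specifically, condition (1) is \cref{lem:trivfibarewe} together with the fact that any canonical trivial fibration is in particular between naive fibrant objects (all objects are naive fibrant by \cref{lem:descrnfib0}). The 2-out-of-6 property (2) and the accessibility requirement (3) are both provided by \cref{2of6accessible}, where $\overline{\cW}$ itself plays the role of the required extension. Condition (5), that naive fibrations which are weak equivalences are trivial fibrations, is exactly \cref{lem:fibwe0}.

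The only nontrivial piece is condition (4), that anodyne extensions between naive fibrant objects are weak equivalences. Since all objects are naive fibrant, I would invoke the equivalent reformulation (4') of \cref{rem:pathobject}: it suffices to produce, for every double category $\bA$, a factorization of the diagonal $\bA\to\bA\times\bA$ as a gregarious double equivalence followed by a gregarious fibration. This is precisely the content of \cref{lem:pathobject0}, where the path object $\llbracket\Sq\Eadj,\bA\rrbracket_\mathrm{ps}$ does the job.

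With all hypotheses verified, \cref{thm:GMSV} yields a combinatorial model structure on $\dblcat$ in which the cofibrations are the canonical cofibrations and the fibrant objects are the naive fibrant ones; by \cref{lem:descrnfib0} every double category is naive fibrant, so all objects are fibrant. Consequently, the weak equivalences coincide with $\Wf=\overline{\cW}$, the gregarious double equivalences, and the fibrations between fibrant objects (hence all fibrations) are the gregarious fibrations $\nfib_0$. Finally, since the trivial fibrations equal $\inj(\cI)$, \cref{prop:trivfibs} identifies them with the canonical trivial fibrations, completing the description. The main technical obstacle in this approach is already absorbed in the preliminary lemmas, most notably the verification of (4) via the pseudo-hom path object, which relies on the pushout-product estimates of \cref{pushprodofspecifictrivcof} together with the lifting lemma \cref{liftingofcofge2}.
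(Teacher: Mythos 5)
Your proposal is correct and follows essentially the same route as the paper: both verify the hypotheses of \cref{thm:GMSV} with the same inputs, citing \cref{lem:trivfibarewe} and \cref{lem:descrnfib0} for condition (1), \cref{2of6accessible} for (2) and (3), \cref{lem:pathobject0} via the reformulation in \cref{rem:pathobject} for (4), and \cref{lem:fibwe0} for (5), then read off the description of weak equivalences, fibrations, and trivial fibrations exactly as you do. No gaps.
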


\begin{proof}
    We verify the conditions from \cref{thm:GMSV}. \ref{ax:trivfib} follows from \cref{lem:trivfibarewe} together with the fact that fibrant replacements can be taken to be trivial as all objects are naive fibrant by \cref{lem:descrnfib0}. \ref{2of6Wf} and \ref{accessibility} are the content of \cref{2of6accessible}. \ref{anwe} is proven in \cref{lem:pathobject0} using the equivalent characterization from \cref{rem:pathobject}. Finally, \ref{fibwe} is given by \cref{lem:fibwe0}. 
\end{proof}

\begin{rem} \label{rem:an0istrivcof}
    As all objects are fibrant, we have that $(\an_0,\nfib_0)$ coincides with the weak factorization system of trivial cofibrations and fibrations for the gregarious model structure. 
\end{rem}

    The following result now follows directly from \cref{thm:leftproper}.

\begin{theorem} \label{gregleftproper}
    The gregarious model structure $\dblcat_\mathrm{greg}$ is left proper.
\end{theorem}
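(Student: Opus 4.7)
The plan is to observe that this statement is an immediate consequence of the general left properness result already established in \cref{thm:leftproper}, which asserts that \emph{any} model structure on $\dblcat$ whose trivial fibrations are the canonical ones must be left proper. The only thing to check is therefore that the gregarious model structure fits this hypothesis.

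First, I would recall that the gregarious model structure $\dblcat_\mathrm{greg}$, just constructed via \cref{thm:GMSV}, is built so that its trivial fibrations are precisely the canonical trivial fibrations in the sense of \cref{def:trivfibs}; this is part of the content of the existence theorem immediately preceding this corollary. Hence $\dblcat_\mathrm{greg}$ satisfies the hypothesis of \cref{thm:leftproper}.

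Applying \cref{thm:leftproper} then yields left properness of $\dblcat_\mathrm{greg}$. There is no obstacle here: all the substantive work has already been carried out, namely the careful analysis of pushouts along generating canonical cofibrations in \cref{pushout_trivfib}, the resulting stability of canonical trivial fibrations under pushouts along canonical cofibrations in \cref{prop:pushoutoftrivfib}, and the general argument in \cref{thm:leftproper} that uses a factorization of a weak equivalence through a trivial cofibration and a canonical trivial fibration to deduce that pushouts of weak equivalences along canonical cofibrations are weak equivalences. Thus the proof reduces to a single invocation of the general theorem.
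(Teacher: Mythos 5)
Your proposal is correct and matches the paper exactly: the paper states that \cref{gregleftproper} follows directly from \cref{thm:leftproper}, since the gregarious model structure has the canonical trivial fibrations by construction. Nothing further is needed.
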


\begin{theorem}\label{thm:gregariousmonoidal}
    The gregarious model structure $\dblcat_\mathrm{greg}$ is monoidal with respect to the Gray tensor product.
\end{theorem}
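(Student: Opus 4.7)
The plan is to verify the two defining properties of a monoidal model structure: the pushout-product axiom and the unit axiom. The heavy lifting has already been done in the technical lemmas leading up to \cref{lem:pathobject0}; the proof amounts to assembling these results and then extending them from generating classes to full classes of (trivial) cofibrations by a standard saturation argument.

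For the pushout-product axiom, I need to show that for any two canonical cofibrations $I$ and $J$, the pushout-product $I \,\square\, J$ is a canonical cofibration, which is in addition a gregarious double equivalence whenever $I$ or $J$ is so. The cofibration part is exactly \cref{pushprodofcof}. For the second part, since trivial cofibrations in $\dblcat_\mathrm{greg}$ coincide with $\cof(\cJ_0)$ by \cref{rem:an0istrivcof}, and since for a fixed canonical cofibration $I$ the functor $(-) \,\square\, I$ preserves pushouts, transfinite compositions, and retracts (as $- \otimes -$ is a left adjoint in each variable by \cref{prop:graypseudo}), it suffices to show that $J \,\square\, I$ is a trivial cofibration whenever $J \in \cJ_0$ and $I$ is a canonical cofibration. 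This is precisely the content of \cref{pushprodofspecifictrivcof} combined with \cref{J0isspecific}, which tells us that in this situation $J \,\square\, I$ is both a canonical cofibration and a gregarious double equivalence.

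For the unit axiom, the monoidal unit of the Gray tensor product is the terminal double category $\mathbbm{1}$. Its underlying horizontal and vertical categories are trivial, hence free, so by \cref{charcof} it is canonically cofibrant. Consequently, the unit axiom holds in its strong form and no further argument about cofibrant replacements of the unit is needed.

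There is no real obstacle here, as every ingredient is already in place; the only point that requires a moment of thought is the reduction of the trivial cofibration case of the pushout-product axiom to the generators in $\cJ_0$, which relies on the fact that pushout-product against a fixed map commutes with the colimit-type constructions generating $\cof(\cJ_0)$.
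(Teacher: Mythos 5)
Your proof is correct and follows essentially the same route as the paper, which simply cites \cref{pushprodofcof}, \cref{pushprodofspecifictrivcof}, \cref{J0isspecific}, and \cref{rem:an0istrivcof}; you merely make explicit the saturation argument reducing to the generators in $\cJ_0$ and the (automatic) unit axiom for the cofibrant unit $\mathbbm{1}$, which the paper leaves implicit.
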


\begin{proof}
    This follows from \cref{pushprodofcof,pushprodofspecifictrivcof,J0isspecific,rem:an0istrivcof}. 
\end{proof}

\begin{rem}
    None of the model structures that we consider in this paper---hence, in particular, the gregarious model structure---is compatible with the cartesian product. This is due to the fact that, using this monoidal structure, the pushout-product of two cofibrations is not necessarily a cofibration, as shown in \cite[Remark 7.1]{MSV}.
\end{rem}

\subsection{The canonical double equivalences}\label{subsec:generalcase}

We can now reap the benefits of the homotopical results we have established so far, to answer the following question: \emph{what should it mean for a double functor $F\colon\bA\to\bB$ to be a canonical equivalence of double categories?}

As explained in the introduction, unlike other categorical structures such as categories, 2-categories, or even $n$-categories, it is not clear what the canonical notion of equivalence should be in the setting of double categories. This difficulty is due to the existence of two different types of morphisms, and arises when one tries to determine what type of surjectivity on objects to require of the double functor $F$. While for categories we ask for surjectivity up to isomorphism, and for 2-categories we need surjectivity up to (internal) equivalence, the fact that double categories have both horizontal and vertical equivalences gives rise to a variety of choices and combinations one could ask for, and obscures which is the ``canonical'' one. 

Instead of trying to emulate the ``essentially surjective and fully faithful'' characterization of equivalences, one could alternatively think of equivalences of categories as functors $F\colon\cC\to\cD$ such that there exists another functor $G\colon\cD\to\cC$ together with natural isomorphisms $\id_\cC\cong GF$ and $\id_\cD\cong FG$. Indeed, an analogous description holds for 2-categories: a 2-functor $F\colon\cC\to\cD$ is a biequivalence precisely if there exists a pseudo-functor $G\colon\cD\to\cC$ and pseudo-natural equivalences $\id_\cC\simeq GF$ and $\id_\cD\simeq FG$. However, the same issue arises if we try to translate this description to the setting of double categories, as we now have both \emph{horizontal} and \emph{vertical} (pseudo) natural transformations, and one is faced with the same choices as above. 

\begin{rem}
Before we present our argument, we briefly remind the reader of other equivalences between double categories that have been considered in the literature:
\begin{itemize}[leftmargin=0.8cm]
    \item levelwise equivalences using the fact that double categories are categories internal to $\cat$; this gives a truncation of the equivalences in \cite[Theorem 7.17]{FPP};
    \item equivalences in a $2$-category \cite{LackTriv} using the fact that there is a 2-category of double categories, double functors, and horizontal natural transformations \cite[\S 8.4]{FPP}; this yields a stricter version of the ``horizontal equivalences'' of Grandis--Par\'e \cite[\S 3.5.5]{Grandis};
    \item equivalences of algebras \cite{LackTriv} using the fact that double categories form the algebras for a certain 2-monad over $\cat(\graph)$ \cite[\S 9]{FPP}.
\end{itemize}
While all of these notions are natural and highlight relevant features and perspectives on $\dblcat$, none of them produce a truly canonical notion of double categorical equivalence. Indeed, the surjectivity on objects we obtain from translating these notions is inherently 1-dimensional, as it relies on \emph{isomorphisms}, as opposed to \emph{equivalences}. Our knowledge from the setting of 2-category theory already tells us that this is not what we need.  
\end{rem}

We argue that the canonical equivalences of double categories should be the gregarious double equivalences, which were introduced by Campbell in \cite{Camp} to serve this purpose. To support this claim, we remind the reader of two uncontroversial assumptions that we are making from the start:
\begin{enumerate}[leftmargin=0.8cm]
    \item Any candidate notion of ``canonical double equivalence'' must form the class of weak equivalences in some model structure on $\dblcat$ (as is the case for the equivalences of categories \cite{rezk} and the biequivalences of 2-categories \cite{Lack2Cat,LackBicat}).
    \item The trivial fibrations in such a model structure must be the canonical trivial fibrations of \cref{def:trivfibs} (which are defined in direct analogy to the trivial fibrations in $\cat$ and $\twocat$).
\end{enumerate}

With this in mind, the following result, showing how any model structure with the canonical trivial fibrations is related to the gregarious model structure, gives evidence to our claim. 

\begin{theorem} \label{thm:localization}
    Every model structure on $\dblcat$ whose trivial fibrations are the canonical ones is a localization of the gregarious model structure $\dblcat_\mathrm{greg}$. Moreover, if the model structure is cofibrantly generated, it is further a left Bousfield localization of $\dblcat_\mathrm{greg}$, and so the weak equivalences (resp.~fibrations) between fibrant objects are precisely the gregarious double equivalences (resp.~gregarious fibrations).
\end{theorem}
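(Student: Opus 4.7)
The plan is to show that the identity adjunction $\id \colon \dblcat_\mathrm{greg} \rightleftarrows \cM \colon \id$ is a Quillen pair with derived counit a weak equivalence in $\cM$, thereby exhibiting $\cM$ as a localization of $\dblcat_\mathrm{greg}$. Since $\cM$ and $\dblcat_\mathrm{greg}$ share the same class of trivial fibrations by hypothesis, they automatically share the same class of cofibrations, namely the canonical cofibrations of \cref{defn:cofibs}. Thus the identity from $\dblcat_\mathrm{greg}$ to $\cM$ will be left Quillen as soon as its right adjoint preserves fibrations, i.e., as soon as every $\cM$-fibration is a gregarious fibration. By the definition of gregarious fibrations as the double functors with the right lifting property against the set $\cJ_0$ of \cref{notn:J0}, this reduces to showing that each $J \in \cJ_0$ is a trivial cofibration in $\cM$.

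The key observation is that each $J\colon \bA\to \bB$ in $\cJ_0$ admits a retraction $R\colon \bB \to \bA$ given by one of the projection double functors $\Sq\Eadj \to \mathbbm{1}$, $\bH\Sigma I \to \bH\mathbbm{2}$, and $\bV\Sigma I \to \bV\mathbbm{2}$, all of which are canonical trivial fibrations by \cref{J0isspecific}. Each such retraction $R$ is therefore a trivial fibration in $\cM$, hence in particular an $\cM$-weak equivalence; since the composite $R \circ J = \id_\bA$ is also an $\cM$-weak equivalence, the 2-out-of-3 property forces $J$ to be an $\cM$-weak equivalence. Combined with the fact that $J$ is a canonical cofibration, hence a cofibration in $\cM$, this shows that $J$ is a trivial cofibration in $\cM$, completing the reduction.

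With the identity functor established as left Quillen, the derived counit at an object $X \in \cM$ is represented by a cofibrant replacement $QX \to X$ in $\dblcat_\mathrm{greg}$, which is itself a canonical trivial fibration and hence a weak equivalence in $\cM$; this finishes the proof that $\cM$ is a localization of $\dblcat_\mathrm{greg}$. For the second statement, I would apply \cref{prop:locvsBousfieldloc}---which is available because $\dblcat_\mathrm{greg}$ is combinatorial and, by \cref{gregleftproper}, left proper---to conclude that any cofibrantly generated such $\cM$ is a left Bousfield localization of $\dblcat_\mathrm{greg}$. The characterization of weak equivalences and fibrations between fibrant objects then follows directly from \cref{thm:existleftBousfield}, identifying them with the gregarious double equivalences and gregarious fibrations, respectively. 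I do not expect a serious obstacle in this argument: the entire proof pivots on the retraction observation for $\cJ_0$, and the coherence content required (that the three projection double functors are canonical trivial fibrations) has already been carried out in \cref{J0isspecific}.
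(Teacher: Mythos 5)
Your proposal is correct and follows essentially the same route as the paper: both proofs hinge on showing that the maps in $\cJ_0$ are trivial cofibrations in $\cM$ via the canonical-trivial-fibration retractions and 2-out-of-3, then identify the derived counit with a canonical cofibrant replacement (a canonical trivial fibration), and conclude the Bousfield statement from \cref{prop:locvsBousfieldloc} and \cref{thm:existleftBousfield}. The only cosmetic difference is that you verify the Quillen pair by checking the right adjoint preserves fibrations, whereas the paper checks that the left adjoint preserves trivial cofibrations; these are equivalent formulations of the same argument.
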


\begin{proof}
    Suppose that $\dblcat$ is endowed with a model structure whose trivial fibrations are the canonical ones. We show that the identity adjunction
\begin{tz}
\node[](1) {$\dblcat_\mathrm{greg}$}; 
\node[right of=1,xshift=2cm](2) {$\dblcat$};

\draw[->,bend left=20] ($(1.east)+(0,4pt)$) to node[above,la]{$\id$} ($(2.west)+(0,4pt)$);
\draw[->,bend left=20] ($(2.west)-(0,4pt)$) to node[below,la]{$\id$} ($(1.east)-(0,4pt)$);

\node[la] at ($(1.east)!0.5!(2.west)$) {$\bot$}; 
\end{tz}
    is a Quillen pair. First note that $\id\colon \dblcat_\mathrm{greg}\to \dblcat$ preserves cofibrations, as both model structures have  as cofibrations the canonical ones. Moreover, as the double functors $\Sq \Eadj\to \mathbbm{1}$, $\bH\Sigma I\to \bH \mathbbm{2}$, and $\bV\Sigma I\to \bV \mathbbm{2}$ are canonical trivial fibrations, by $2$-out-of-$3$, the double functors in $\cJ_0$ will also be trivial cofibrations in $\dblcat$. Hence $\id\colon \dblcat_\mathrm{greg}\to \dblcat$ preserves trivial cofibrations, and thus it is left Quillen. In addition, the derived counit at a fibrant object $\bA\in \dblcat$ is given by a canonical cofibrant replacement of $\bA$ in $\dblcat_\mathrm{greg}$ and can be chosen to be a canonical trivial fibration. Hence it is also a weak equivalence in $\dblcat$. This shows that $\dblcat$ is a localization of $\dblcat_\mathrm{greg}$

    If the model structure on $\dblcat$ is cofibrantly generated, then by \cref{prop:locvsBousfieldloc}, it coincides with a left Bousfield localization of $\dblcat_\mathrm{greg}$ and so the weak equivalences and fibrations between fibrant objects admit the desired description by \cref{thm:existleftBousfield}.
\end{proof}

In particular, this shows that the gregarious double equivalences give the smallest class of weak equivalences on $\dblcat$ compatible with the canonical trivial fibrations, and hence they can be considered to be the class of ``canonical equivalences'' of double categories.

\begin{rem}
    While \cref{def:gregwe} provides an explicit characterization of the gregarious double equivalences in the spirit of the familiar ``essentially surjective and fully faithful'' description, a categorically-minded reader might wonder whether they could also be described in terms of the existence of a certain (weak) inverse. This will be addressed in future work by the second author.
\end{rem}


\begin{rem}
In the slides where the notion of gregarious double equivalence first appeared \cite{Camp}---albeit under a different name---, Campbell suggests another justification for why these equivalences are canonical: the gregarious model structure is the model structure on $\dblcat$ with the canonical trivial fibrations in which all objects are fibrant, in parallel with the cases of $\cat$ and $\twocat$.
\end{rem}

\begin{rem}
Inspired by a talk on this paper, recent work of Tom Leinster \cite{leinsterspans} gives an independent, additional justification that further supports the conclusion that the gregarious double equivalences are the canonical notion of equivalence for double categories. He shows that in the settings of categories, monoidal categories, and bicategories, two objects $A$ and $B$ are equivalent precisely if there exists a span $A\leftarrow C\rightarrow B$ consisting of trivial fibrations---in his work, these are called ``surjective equivalences''. Then, \cite[Theorem 5.8]{leinsterspans} studies the case of double categories, showing that this happens if and only if the double categories $A$ and $B$ are gregarious double equivalent.
\end{rem}

\section{Constructing model structures on double categories}\label{section:generalMS}

Having settled the question of the canonical double categorical equivalences in the previous section, we now turn our attention to all possible \emph{cofibrantly generated} model structures on double categories whose trivial fibrations are the canonical ones, and aim to give a detailed and streamlined set of guidelines for constructing them in \cref{subsec:recipe}. 
Our recipe has the additional advantage of yielding an explicit description of the fibrant objects in terms of lifting conditions---rather than a local condition, as obtained from its construction as a Bousfield localization. Lastly, in \cref{subsec:gray} we give sufficient conditions for these model structures to be monoidal with respect to the Gray tensor product. 

\subsection{A recipe for cofibrantly generated model structures}\label{subsec:recipe}

We  restrict our attention to cofibrantly generated model structures. Our goal is to provide a user-friendly recipe to construct these model structures, which will rely on \cref{thm:GMSV}. Looking at the statement of this theorem, we find that in our case most of the data is already determined by our motivation:
\begin{itemize}[leftmargin=0.8cm]
    \item The category $\cC$ must be the category $\dblcat$ of double categories and double functors, where we want to construct our model structure.
    \item Since we wish to consider the canonical trivial fibrations, we can let $\cI$ be the set of generating canonical cofibrations of \cref{gen_cofib}.
    \item By \cref{thm:localization}, the weak equivalences between fibrant objects in all these model structures are the gregarious double equivalences; hence $\Wf$ will consist exactly of these double functors.
\end{itemize}

Therefore, the only data left to determine is the weak factorization system $(\an,\nfib)$, which must be cofibrantly generated by a set $\cJ$ of double functors.  At this point, the reader could skip to \cref{thm:genmodel} and read the requirements on a set $\cJ$ that will ensure the existence of a cofibrantly generated model structure on $\dblcat$ whose trivial fibrations are the canonical ones. However, a more likely scenario is that the user has some additional features in mind for the desired model structure. We will now give some guidelines on how to use those features to select a set $\cJ$.

\noindent\textbf{\underline{Step 1: Choosing a candidate for $\cJ$}.}

\begin{itemize}[leftmargin=0.8cm]
\item[(a)] \textit{If the user has a desired class of fibrant double categories,} they should find a set $\cJ$ with the property that a double category is fibrant if and only if it has the right lifting property with respect to all double functors in $\cJ$.
\item[(b)] \textit{If the user has a desired set $S$ of double functors that they wish to localize at}, they should pick $\cJ=S$.
\end{itemize}

\noindent\textbf{\underline{Step 2: Making adjustments to $\cJ$}.}
The double functors in $\cJ$ need to be modified to ensure that they are canonical cofibrations with canonically cofibrant domain. This is not hard to do in practice when the double functors are determined from a lifting property as in item (a) above; in fact, these double functors will be trivial cofibrations if the desired model structure indeed exists. However, it may be less evident that a localizing class $\cJ$ obtained as in item (b) can be similarly modified without altering its intended purpose. This is justified by the following remark.

\begin{rem}\label{rem:cofdomain}
Given a double functor $J\colon \bA\to \bB$ in $\cJ$, consider a commutative square of the form 
\begin{tz}
\node[](1) {$\bA^\mathrm{cof}$}; 
\node[right of=1,xshift=.3cm](2) {$\bB^\mathrm{cof}$}; 
\node[below of=1](1') {$\bA$}; 
\node[below of=2](2') {$\bB$}; 

\draw[->] (1) to node[above,la]{$J^\mathrm{cof}$} (2); 
\draw[->] (1) to node[left,la]{$\mathrm{inj}(\cI)\ni$} (1'); 
\draw[->] (2) to node[right,la]{$\in \mathrm{inj}(\cI)$} (2'); 
\draw[->] (1') to node[below,la]{$J$} (2');(2); 
\end{tz}
with $J^\mathrm{cof}\in \mathrm{cof}(\cI)$. Such a square can be obtained by first choosing a canonical cofibrant replacement $\bA^\mathrm{cof}\to \bA$ in $\mathrm{inj}(\cI)$ and then factoring the resulting composite using the weak factorization system $(\mathrm{cof}
(\cI),\mathrm{inj}(\cI))$; hence $J^\mathrm{cof}$ is a canonical cofibration with canonically cofibrant domain. Moreover, since maps in $\mathrm{inj}(\cJ)$ are trivial fibrations, we have that $J$ is a weak equivalence in the resulting model structure if and only if $J^\mathrm{cof}$ is so, and thus localizing by $J$ or by $J^\mathrm{cof}$ produces the same result.
\end{rem}

\begin{rem} \label{rem:J0vsJ}
The double functors in $\cJ_0$ will always be trivial cofibrations in this new model structure we want to build, as it will be a localization of the gregarious model structure by \cref{thm:localization}. Hence it is not restrictive to assume that $\cJ_0\subseteq \an$. It follows that we have inclusions 
\[ \an_0\subseteq \an \quad \text{and} \quad \nfib\subseteq \nfib_0. \]
In particular, one could always ask that $\cJ_0\subseteq \cJ$ in order to ensure the inclusion $\cJ_0\subseteq \an$.
\end{rem}

We now prove that, in this setting, conditions \ref{ax:trivfib}-\ref{accessibility} and \ref{fibwe} of \cref{thm:GMSV} hold automatically. We start with the following lemma: a general, model categorical argument expressing a relation between the two factorization systems we consider.

\begin{lem}\label{lem:pushfibobj}
If $P\colon \bA\to \bB$ is a canonical trivial fibration and $\bA$ is naive fibrant, then $\bB$ is naive fibrant.
\end{lem}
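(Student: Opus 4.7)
The plan is a direct diagram chase using the two factorization systems. To show that $\bB$ is naive fibrant, I need to verify that $\bB \to *$ has the right lifting property against every $J\colon \bX \to \bY$ in $\cJ$. So fix such a $J$ and a double functor $f\colon \bX \to \bB$; the goal is to extend $f$ along $J$.

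The first step exploits the standing hypothesis that every morphism in $\cJ$ has canonically cofibrant domain. This means $\emptyset \to \bX$ is a canonical cofibration, so by lifting against the canonical trivial fibration $P$, I obtain $\tilde{f}\colon \bX \to \bA$ with $P\tilde{f} = f$. The second step uses the hypothesis that $\bA$ is naive fibrant: applying the right lifting property of $\bA \to *$ against $J\in \cJ$ to the map $\tilde{f}$ produces $g\colon \bY \to \bA$ with $gJ = \tilde{f}$. Then $Pg\colon \bY \to \bB$ is the required extension, since $(Pg)J = P\tilde{f} = f$.

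There is essentially no obstacle here; the proof is a two-step diagram chase, summarized by the commutative diagram
\begin{tz}
\node (1) {$\bX$};
\node[right of=1,xshift=1cm] (2) {$\bA$};
\node[below of=1] (1') {$\bY$};
\node[below of=2] (2') {$\bB$};
\draw[->] (1) to node[above,la]{$\tilde f$} (2);
\draw[->] (1') to node[below,la]{$Pg$} (2');
\draw[->] (1) to node[left,la]{$J$} (1');
\draw[->] (2) to node[right,la]{$P$} (2');
\draw[->,dashed] (1') to node[above,la,yshift=-3pt]{$g$} (2);
\end{tz}
whose two halves are produced by the lifting properties of $P$ (against $\emptyset \to \bX$) and of $\bA \to *$ (against $J$), respectively. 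The only conceptual observation worth highlighting is that the standing assumption ``$\cJ$ consists of canonical cofibrations with canonically cofibrant domain,'' which was imposed in the recipe for other reasons, is precisely what makes this argument go through.
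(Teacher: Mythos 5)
Your proof is correct and is essentially identical to the paper's: both first lift $f\colon\bX\to\bB$ through the canonical trivial fibration $P$ using that the domain $\bX$ of $J\in\cJ$ is canonically cofibrant, then extend the resulting map $\bX\to\bA$ along $J$ using naive fibrancy of $\bA$, and finally post-compose with $P$.
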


\begin{proof}
    We need to show that for every double functor $J\colon \bX\to \bY$ in the generating set~$\cJ$ of the weak factorization system $(\an,\nfib)$, there is a lift $\widehat{F}$ in the diagram below left.
\begin{tz}
\node[](1) {$\bX$}; 
\node[right of=1](2) {$\bB$}; 
\node[below of=1](1') {$\bY$}; 

\draw[->] (1) to node[above,la]{$F$} (2); 
\draw[->] (1) to node[left,la]{$J$} (1'); 
\draw[->,dashed] (1') to node[right,la,yshift=-5pt]{$\widehat{F}$} (2); 

\node[right of=2,xshift=1cm](1) {$\emptyset$}; 
\node[right of=1](2) {$\bA$}; 
\node[below of=1](1') {$\bX$}; 
\node[below of=2](2') {$\bB$}; 

\draw[->] (1) to (2); 
\draw[->] (1) to (1'); 
\draw[->] (2) to node[right,la]{$P$} (2'); 
\draw[->] (1') to node[below,la]{$F$} (2');
\draw[->,dashed] (1') to node[left,la,yshift=5pt]{$G$} (2); 

\node[right of=2,xshift=1cm](1) {$\bX$}; 
\node[right of=1](2) {$\bA$}; 
\node[below of=1](1') {$\bY$}; 

\draw[->] (1) to node[above,la]{$G$} (2); 
\draw[->] (1) to node[left,la]{$J$} (1'); 
\draw[->,dashed] (1') to node[right,la,yshift=-5pt]{$\widehat{G}$} (2); 
\end{tz}
However, since $\bX$ is canonically cofibrant and $P$ is a canonical trivial fibration, we get a lift $G$ in the commutative diagram in the center. Then, as $\bA$ is naive fibrant, we further get a lift in the diagram above right. Setting $\widehat{F}\coloneqq P \widehat{G}$ we get the desired lift, since
\[ \widehat{F} J=P\widehat{G} J=P G=F. \qedhere \]
\end{proof}

We can use this to show that condition \ref{ax:trivfib} of \cref{thm:GMSV} is satisfied. Adapting \cref{def:we} to our setting, the class $\cW$ of weak equivalences is given by those double functors which admit a naive fibrant replacement that is a gregarious double equivalence. 

\begin{prop} \label{trivfibincW}
    Every canonical trivial fibration is in the class of weak equivalences~$\cW$.
\end{prop}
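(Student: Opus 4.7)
The plan is to produce, for a given canonical trivial fibration $P\colon \bA\to \bB$, an explicit naive fibrant replacement of $P$ in which the replacement map is itself a canonical trivial fibration; then \cref{lem:trivfibarewe} immediately gives that it is a gregarious double equivalence, placing it in $\Wf$ and therefore $P$ in $\cW$.

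To construct the replacement, I would first pick any naive fibrant replacement $\iota_\bA\colon \bA\to \bA'$ of the source, available by \cref{rem:naivefibrep}, and then form the pushout
\[
\begin{tikzcd}
\bA \ar[r,"P"] \ar[d,"\iota_\bA"'] & \bB \ar[d,"\iota_\bB"] \\
\bA' \ar[r,"P'"'] & \bB'.
\end{tikzcd}
\]
Three properties of this square need to be verified. First, $\iota_\bB$ lies in $\an$, because anodyne extensions $\an=\cof(\cJ)$ are stable under pushout. Second, $P'$ is a canonical trivial fibration: since $\an\subseteq \cof(\cI)$, the map $\iota_\bA$ is a canonical cofibration, and so \cref{prop:pushoutoftrivfib} applies to the pushout of the canonical trivial fibration $P$ along $\iota_\bA$. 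Third, $\bB'$ is naive fibrant: $\bA'$ is naive fibrant by construction, and $P'\colon \bA'\to \bB'$ is a canonical trivial fibration, so \cref{lem:pushfibobj} yields the naive fibrancy of $\bB'$.

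Having verified these, the square above is a naive fibrant replacement of $P$ in the sense of the definition preceding \cref{def:we}, with bottom map $P'$ a canonical trivial fibration. By \cref{lem:trivfibarewe} every canonical trivial fibration is a gregarious double equivalence, hence $P'\in \Wf$, and by \cref{def:we} this exhibits $P$ as an element of $\cW$.

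I do not expect any real obstacle: the argument is a direct assembly of \cref{rem:naivefibrep}, stability of $\an$ under pushouts, \cref{prop:pushoutoftrivfib}, \cref{lem:pushfibobj}, and \cref{lem:trivfibarewe}. The only mildly delicate point is making sure that one uses $\an\subseteq \cof(\cI)$ to invoke \cref{prop:pushoutoftrivfib}, which is guaranteed since the set $\cJ$ is assumed to consist of canonical cofibrations.
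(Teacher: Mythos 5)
Your proposal is correct and coincides with the paper's own argument: the same pushout of $P$ along a naive fibrant replacement $\iota_\bA$, with $\iota_\bB$ anodyne by saturation, $P'$ a canonical trivial fibration via $\an\subseteq\cof(\cI)$ and \cref{prop:pushoutoftrivfib}, naive fibrancy of $\bB'$ from \cref{lem:pushfibobj}, and the conclusion via \cref{lem:trivfibarewe}. Nothing to add.
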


\begin{proof}
    Let $P\colon \bA\to \bB$ be a canonical trivial fibration in $\dblcat$, and let $\iota_\bA\colon \bA\to \bA'$ be a naive fibrant replacement with respect to the weak factorization system $(\an,\nfib)$. We consider the following pushout in $\dblcat$ and show that it provides a naive fibrant replacement of $P$ such that $P'$ is a gregarious double equivalence. 
    \begin{tz}
\node[](1) {$\bA$}; 
\node[right of=1](2) {$\bB$}; 
\node[below of=1](1') {$\bA'$}; 
\node[below of=2](2') {$\bB'$}; 

\pushout{2'};

\draw[->] (1) to node[above,la]{$P$} (2); 
\draw[->] (1) to node[left,la]{$\iota_\bA$} (1'); 
\draw[->] (2) to node[right,la]{$\iota_\bB$} (2'); 
\draw[->] (1') to node[below,la]{$P'$} (2');
\end{tz}
Since $\an$ is saturated, the double functor $\iota_\bB\colon \bB\to \bB'$ is also an anodyne extension. Moreover, since $\an\subseteq \cof(\cI)$, the pushout $P'$ of the canonical trivial fibration $P$ is also a canonical trivial fibration by \cref{prop:pushoutoftrivfib} and so also a gregarious double equivalence by \cref{lem:trivfibarewe}. Finally, as $P'$ is a canonical trivial fibration and $\bA'$ is naive fibrant, then $\bB'$ is also naive fibrant by \cref{lem:pushfibobj}. 
\end{proof}

While conditions (2), (3), and (5)  of \cref{thm:GMSV} will always be satisfied as well (see the proof of \cref{thm:genmodel}), the same cannot be said of condition \ref{anwe}, and this is the place where our choice of $\cJ$ requires some care. To better understand this condition, we present equivalent formulations in the next result. 

\begin{prop} \label{equivalentcond4}
    The following conditions are equivalent:
    \begin{rome}
        \item Every double functor $J\in\an$ between naive fibrant double categories is a gregarious double equivalence.
        \item Every gregarious fibration between naive fibrant double categories is in $\nfib$.
        \item For every naive fibrant double category $\bA$, the double functor \[ \llbracket\Sq\Eadj,\bA\rrbracket_\mathrm{ps}\to \bA\times \bA \]
        induced by the canonical inclusion $\mathbbm{1}\amalg \mathbbm{1}\to \Sq \Eadj$ is in $\nfib$.
    \end{rome}
\end{prop}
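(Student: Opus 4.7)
The plan is to establish the equivalences via the cyclic chain (i) $\Rightarrow$ (ii) $\Rightarrow$ (iii) $\Rightarrow$ (i), with the last being the substantial implication.

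For (i) $\Rightarrow$ (ii), I would take a gregarious fibration $F\colon \bA \to \bB$ between naive fibrant objects and factor $F = PJ$ with $J \in \an$ and $P \in \nfib$; the intermediate object is naive fibrant by stability of $\nfib$ under composition with $\bB \to * \in \nfib$, so (i) makes $J$ a gregarious double equivalence, and since $\an \subseteq \cof(\cI)$ it is also a canonical cofibration and hence a trivial cofibration in $\dblcat_\mathrm{greg}$. The gregarious lifting property of $F$ against $J$ then produces a retraction $r$ of $J$ with $Fr = P$, exhibiting $F$ as a retract of $P$ and so placing it in $\nfib$.

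For (ii) $\Rightarrow$ (iii), the map $\llbracket\Sq\Eadj,\bA\rrbracket_\mathrm{ps} \to \bA \times \bA$ is already a gregarious fibration by \cref{lem:pathnaive0}, its target is naive fibrant since $\nfib$ is closed under pullback, and its source is naive fibrant because canonical trivial fibrations lie in $\nfib$ (as $\cJ \subseteq \cof(\cI)$ forces $\inj(\cI) \subseteq \inj(\cJ) = \nfib$), so \cref{lem:pathtrivfib} composed with $\bA \to * \in \nfib$ gives the claim. Condition (ii) then applies directly.

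For (iii) $\Rightarrow$ (i), given $J\colon \bA \to \bB$ in $\an$ between naive fibrant objects, I would first lift $\id_\bA$ through $J \in \an$ against $\bA \to * \in \nfib$ to obtain a retraction $r\colon \bB \to \bA$ with $rJ = \id_\bA$. Writing $\bB \xrightarrow{W} \llbracket\Sq\Eadj,\bB\rrbracket_\mathrm{ps} \xrightarrow{P} \bB \times \bB$ for the path object from \cref{lem:pathobject0}, the square with top $WJ$ and bottom $(Jr,\id_\bB)$ commutes by $rJ = \id_\bA$, so using $J \in \an$ and (iii) (which places $P$ in $\nfib$) I would obtain a lift $H\colon \bB \to \llbracket\Sq\Eadj,\bB\rrbracket_\mathrm{ps}$ satisfying $HJ = WJ$ and $PH = (Jr,\id_\bB)$. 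Morally, $H$ plays the role of a ``homotopy'' from $Jr$ to $\id_\bB$ in the path object.

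The hard part will be to unpack $H$ and verify the four conditions of \cref{def:gregwe} for $J$. Transposing $H$ along $-\otimes \bB \dashv \llbracket\bB,-\rrbracket_\mathrm{ps}$ yields a double functor $\bB \otimes \Sq\Eadj \to \bB$ that supplies, for each object $b \in \bB$, a gregarious adjoint equivalence from $Jr(b)$ to $b$ and, for each horizontal or vertical morphism in $\bB$, a compatible invertible naturality square. The identity $HJ = WJ$ forces all this data to be trivial on the image of $J$. Condition (g1) is then immediate by setting $a = r(b)$. Conditions (g2) and (g3) follow by taking $f = r(g)$ for a horizontal (resp.\ vertical) morphism $g\colon Ja \to Ja'$ and reading off the required invertible globular $2$-cell from the naturality square at $g$, which degenerates to a globular square because its corner components are identities on the image of $J$. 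Condition (g4) will be the most delicate: injectivity on squares is immediate from $rJ = \id_\bA$, but for fullness I will use the square-level compatibility encoded by $H$, which by triviality on the image of $J$ should reduce to the desired equation $\alpha = J(r\alpha)$ for any square $\alpha$ in $\bB$ with boundary in the image of $J$.
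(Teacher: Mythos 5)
Your treatment of (i)$\Rightarrow$(ii) and (ii)$\Rightarrow$(iii) is essentially identical to the paper's: factor the gregarious fibration, note the middle object is naive fibrant, and use the retract argument; then combine \cref{lem:pathtrivfib}, \cref{lem:pathnaive0} and the inclusion $\inj(\cI)\subseteq\nfib$ to apply (ii) to the path object. Where you genuinely diverge is (iii)$\Rightarrow$(i): the paper disposes of it in two lines by observing that (i) is condition (4) of \cref{thm:GMSV} and invoking the cited equivalence with the path-object condition (4') of \cref{rem:pathobject}, which is supplied by \cref{lem:pathobject0} together with (iii). You instead re-prove that direction by hand in this concrete setting: build a retraction $r$ of $J$, lift against $P\colon\llbracket\Sq\Eadj,\bB\rrbracket_\mathrm{ps}\to\bB\times\bB$ to get a homotopy $H$ with $HJ=WJ$ and $PH=(Jr,\id_\bB)$ (both lifting squares are set up correctly), and then verify (g1)--(g4) of \cref{def:gregwe} by unpacking $H$. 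Your route is self-contained and explains concretely why such a homotopy forces gregariousness; the paper's route is much shorter and outsources all coherence bookkeeping to the cited criterion.

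Two spots in your unpacking need more than ``reading off,'' though both can be completed. For (g3), the component of the vertical pseudo-natural transformation $H(w)$ at the equivalence morphism $e$ of $\Sq\Eadj$ is indeed a globular square from $Jr(w)$ to $w$ (its horizontal boundaries are identities because $H(Ja)$ and $H(Ja')$ are constant), but components of a vertical pseudo-transformation at \emph{horizontal} morphisms are not invertible by definition---only the constraints at vertical morphisms are. Horizontal invertibility must be deduced from the compatibility of $H(w)$ with the squares of $\Sq\Eadj$ witnessing $\eta$ and $\varepsilon$: since the endpoint functors are constant, these compatibilities give $H(w)_e\,\vert\,H(w)_{e'}=\id$ and $H(w)_{e'}\,\vert\,H(w)_e=\id$, so $H(w)_{e'}$ is the required horizontal inverse. (In the horizontal case (g2) no such argument is needed, since the pseudo-naturality constraint at $e$ is vertically invertible by definition.) Similarly, the equation $Jr\beta=\beta$ you want for fullness in (g4) is precisely the modification axiom for $H(\beta)$ at $e$ (or at its companion vertical morphism), using that all four boundary transformations of $H(\beta)$ are constant because the boundary of $\beta$ lies in the image of $J$; it does hold, but it is a coherence computation with B\"ohm's modifications rather than a formal consequence of constancy alone. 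With these two computations written out, your argument is correct.
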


\begin{proof}
    To show that (i) implies (ii), let $P\colon \bA\to \bB$ be a gregarious fibration such that $\bA$ and $\bB$ are naive fibrant. Factor $P$ as 
    \[ \bA\xrightarrow{J}\bC\xrightarrow{Q} \bB \]
    with $J\in \an$ and $Q\in \nfib$. By (i), the double functor $J$ is both a canonical cofibration and a gregarious double equivalence, and so it is a trivial cofibration in the gregarious model structure. As $P$ is a gregarious fibration, it has the right lifting property with respect to $J$, and so by the retract argument it is in $\nfib$. 

   Next we show that (ii) implies (iii). Let $\bA$ be a naive fibrant double category. By \cref{lem:pathtrivfib}, the double functor $\llbracket\Sq\Eadj,\bA\rrbracket_\mathrm{ps}\to \bA$ is a canonical trivial fibration, thus also a naive fibration. Hence it follows that $\llbracket\Sq\Eadj,\bA\rrbracket_\mathrm{ps}$ is also naive fibrant. This fact, together with \cref{lem:pathnaive0}, imply that  $\llbracket\Sq\Eadj,\bA\rrbracket_\mathrm{ps}\to \bA\times \bA$ is a gregarious fibration between naive fibrant double categories, and so by (ii), it is also in $\nfib$.

    Finally, we prove that (iii) implies (i). Note that (i) corresponds to  condition \ref{anwe} of \cref{thm:GMSV} in this context, so we can instead show the equivalent condition (4') from \cref{rem:pathobject}. Let $\bA$ be a naive fibrant double category. Then the following is a factorization of the diagonal 
    \[ \bA\xrightarrow{W} \llbracket\Sq\Eadj,\bA\rrbracket_\mathrm{ps}\xrightarrow{P} \bA\times\bA \]
    where $W$ is a gregarious double equivalence by \cref{lem:pathobject0} and $P$ is in $\nfib$ by (iii), as desired.
\end{proof}

We now show that this condition does not hold for all choices of $\cJ$. 

\begin{ex}
    Consider the set of double functors
    \[ \cJ\coloneqq \cJ_0\amalg \{\mathbbm{1}\to \bH\mathbbm{2} \} \]
    inducing a weak factorization system $(\an,\nfib)$ on $\dblcat$. Then, we have strict inclusions $\an_0\subset \an$ and $\nfib\subset \nfib_0$. Moreover, note that every double category is naive fibrant as, for every double category $\bA$, there is a lift in every diagram of the form
    \begin{tz}
\node[](1) {$\mathbbm{1}$}; 
\node[right of=1](2) {$\bA$}; 
\node[below of=1](1') {$\bH\mathbbm{2}$}; 

\draw[->] (1) to node[above,la]{$A$} (2); 
\draw[->] (1) to node[left,la]{} (1'); 
\draw[->,dashed] (1') to node[right,la,yshift=-5pt]{$\id_A$} (2); 
\end{tz}
    Hence the strict inclusion $\nfib\subset \nfib_0$ shows that there are double functors in $\nfib_0$ (between naive fibrant double categories) that are not in $\nfib$, and so $\cJ$ does not satisfy the equivalent properties of \cref{equivalentcond4}.

    However, to produce the same resulting model structure where the double functor $\mathbbm{1}\to \bH\mathbbm{2}$ is made into a weak equivalence, one can consider instead the set
    \[ \cJ\coloneqq \cJ_0\cup \{\bH\mathbbm{2}\to\Sq\Eadj \}. \]
   Indeed, since $\mathbbm{1}\to \Sq\Eadj$ is in $\cJ_0$ and hence is always a weak equivalence, by $2$-out-of-$3$ the double functor $\mathbbm{1}\to \bH\mathbbm{2}$ is a weak equivalence if and only if $\bH\mathbbm{2}\to \Sq\Eadj$ is so. Moreover, this new set $\cJ$ satisfies the equivalent properties of \cref{equivalentcond4}, which can be deduced from \cref{prop:compsetJ,prop:gpdsetJ,rmk:companionsforequivs} using that $\bH\mathbbm{2}\to\Sq\Eadj$ factors as $\bH\mathbbm{2}\to \Sq\mathbbm{2}\to \Sq\Eadj$. 
\end{ex}

Under the equivalent conditions of \cref{equivalentcond4}, we have proven the existence of the desired model structure, as we now summarize.

\begin{theorem} \label{thm:genmodel}
   Let $(\an,\nfib)$ be a weak factorization system in $\dblcat$ generated by a set $\cJ$ of double functors with canonically cofibrant domain such that $\cJ_0\subseteq\cJ\subseteq\cof(\cI)$ and the equivalent conditions of \cref{equivalentcond4} are satisfied. Then, there is a cofibrantly generated model structure on $\dblcat$ whose trivial fibrations are the canonical ones, and whose fibrant objects are the naive fibrant double categories. Furthermore, the weak equivalences (resp.~fibrations) between fibrant objects are precisely the gregarious double equivalences (resp.~gregarious fibrations). 
\end{theorem}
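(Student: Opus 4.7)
The plan is to apply the general existence criterion \cref{thm:GMSV} with input data $\cC = \dblcat$, $\cI$ the generating canonical cofibrations from \cref{gen_cofib}, the weak factorization system $(\an,\nfib)$ cofibrantly generated by $\cJ$, and $\Wf$ the class of gregarious double equivalences between naive fibrant double categories, with ambient class $\overline{\cW}$ given by all gregarious double equivalences. The hypothesis $\an \subseteq \cof(\cI)$ needed by \cref{thm:GMSV} follows from $\cJ \subseteq \cof(\cI)$ by saturation. It then remains to verify conditions \ref{ax:trivfib}--\ref{fibwe}.

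First I would dispense with the conditions that hold for general reasons. Condition \ref{ax:trivfib}, i.e., $\inj(\cI) \subseteq \cW$, is exactly the content of \cref{trivfibincW}, whose proof uses only $\an \subseteq \cof(\cI)$ and \cref{lem:pushfibobj}, both of which are in force. Conditions \ref{2of6Wf} and \ref{accessibility} concern the class $\Wf$ of gregarious double equivalences (between naive fibrant objects), and are immediate from \cref{2of6accessible}: the full class $\overline{\cW}$ of gregarious double equivalences satisfies 2-out-of-6 and is accessible as a subcategory of $\dblcat^{\mathbbm{2}}$, and $\Wf$ is precisely the restriction of $\overline{\cW}$ to morphisms between naive fibrant objects.

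Next, for condition \ref{anwe}, we use our standing assumption on $\cJ$: by \cref{equivalentcond4}, it holds if and only if every double functor in $\an$ between naive fibrant objects is a gregarious double equivalence, which is exactly condition (i) of that proposition and is part of the hypothesis. For condition \ref{fibwe}, let $F \in \nfib \cap \Wf$. Since $\cJ_0 \subseteq \cJ$ implies $\nfib \subseteq \nfib_0$ by \cref{rem:J0vsJ}, the double functor $F$ is a gregarious fibration. It is also a gregarious double equivalence by definition of $\Wf$, so \cref{lem:fibwe0} gives that $F$ is a canonical trivial fibration, i.e., $F \in \inj(\cI)$.

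With all five conditions verified, \cref{thm:GMSV} produces a combinatorial model structure on $\dblcat$ whose cofibrations are the canonical cofibrations (so whose trivial fibrations are $\inj(\cI)$, the canonical trivial fibrations by \cref{prop:trivfibs}), whose fibrant objects are the naive fibrant double categories, whose weak equivalences between fibrant objects are the gregarious double equivalences, and whose fibrations between fibrant objects are the morphisms in $\nfib$—the latter being gregarious fibrations by $\nfib \subseteq \nfib_0$, and conversely every gregarious fibration between naive fibrant objects lies in $\nfib$ by \cref{equivalentcond4}(ii). The only real work is the bookkeeping to match the hypotheses of \cref{thm:GMSV} with our hypotheses on $\cJ$; since everything reduces to already-established lemmas, I do not anticipate any genuine obstacle.
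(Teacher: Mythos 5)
Your proposal is correct and follows essentially the same route as the paper: verify the five conditions of \cref{thm:GMSV} using \cref{trivfibincW}, \cref{2of6accessible}, the hypothesis via \cref{equivalentcond4}, and \cref{rem:J0vsJ} together with \cref{lem:fibwe0}. Your extra remark identifying the fibrations between fibrant objects with the gregarious fibrations via \cref{equivalentcond4}(ii) is a welcome bit of bookkeeping that the paper leaves implicit, but it is not a different argument.
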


\begin{proof}
    We check the conditions from \cref{thm:GMSV}. Condition \ref{ax:trivfib} is \cref{trivfibincW}; conditions \ref{2of6Wf} and  \ref{accessibility} are ensured by \cref{2of6accessible}. Condition \ref{anwe} holds by assumption as one of the equivalent conditions in \cref{equivalentcond4}. Finally, as every naive fibration is in particular a gregarious fibration by \cref{rem:J0vsJ}, condition \ref{fibwe} follows from \cref{lem:fibwe0}. 
\end{proof}

As a direct consequence of \cref{thm:leftproper}, we get the following.

\begin{theorem}\label{thm:leftproper2}
    The model structure on $\dblcat$ from \cref{thm:genmodel} is left proper.
\end{theorem}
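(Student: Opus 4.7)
The plan is simply to invoke \cref{thm:leftproper} from \cref{section:proper}. That theorem asserts that \emph{every} model structure on $\dblcat$ whose class of trivial fibrations consists of the canonical trivial fibrations (in the sense of \cref{def:trivfibs}) is left proper, with no further hypotheses on the weak equivalences, cofibrant generation, or fibrant objects.

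By the conclusion of \cref{thm:genmodel}, the model structure produced there has, by construction, the canonical trivial fibrations as its class of trivial fibrations. Thus the hypothesis of \cref{thm:leftproper} is satisfied, and the conclusion follows immediately.

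Concretely, one would write: \emph{``The model structure of \cref{thm:genmodel} has the canonical trivial fibrations as its class of trivial fibrations. Hence by \cref{thm:leftproper}, it is left proper.''} There is no nontrivial obstacle here; all the work has already been carried out in \cref{pushout_trivfib,prop:pushoutoftrivfib,thm:leftproper}, where the key input---that pushouts of canonical trivial fibrations along canonical cofibrations remain canonical trivial fibrations---was verified generator-by-generator against the set $\cI$ of \cref{gen_cofib}. Since that verification depends only on the trivial fibrations and the cofibrations being the canonical ones (both of which are shared by every model structure produced by \cref{thm:genmodel}), no additional argument specific to the choice of $\cJ$ is required.
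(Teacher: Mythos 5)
Your proposal is exactly the paper's argument: the paper states \cref{thm:leftproper2} ``as a direct consequence of \cref{thm:leftproper}'', since the model structure of \cref{thm:genmodel} has the canonical trivial fibrations by construction. Correct, and identical in approach.
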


We are also able to refine our result from \cref{thm:localization}.

\begin{theorem} \label{thm:MSislocwrtJ}
    The model structure on $\dblcat$ from \cref{thm:genmodel} is the left Bousfield localization of the gregarious model structure at the set $\cJ\setminus \cJ_0$. 
\end{theorem}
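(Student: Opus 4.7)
The plan is to apply \cref{prop:locvsBousfieldloc} to identify the model structure from \cref{thm:genmodel} as a left Bousfield localization of $\dblcat_\mathrm{greg}$, and then adjust the localizing set from $\cJ$ to $\cJ\setminus\cJ_0$. First, $\dblcat_\mathrm{greg}$ is left proper (by \cref{gregleftproper}) and combinatorial, and by \cref{thm:localization} the model structure from \cref{thm:genmodel} is a cofibrantly generated localization of it. Hence \cref{prop:locvsBousfieldloc} applies, and it suffices to exhibit a set of generating anodyne extensions (in the sense of \cref{def:anodyne}).

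I claim that $\cJ$ itself is such a set. The first condition of \cref{def:anodyne} is immediate, since by \cref{thm:genmodel} the fibrant objects of the new model structure are precisely the naive fibrant double categories, namely those with the right lifting property against $\cJ$. For the second condition, \cref{thm:genmodel} shows that a morphism between fibrant objects is a fibration in the new model structure if and only if it is a gregarious fibration between naive fibrant objects. By the implication (i)$\Rightarrow$(ii) of \cref{equivalentcond4}, any such gregarious fibration lies in $\nfib$; conversely, any morphism in $\nfib$ is a gregarious fibration thanks to the inclusion $\nfib\subseteq\nfib_0$ noted in \cref{rem:J0vsJ}. Hence fibrations between fibrant objects are exactly the double functors with the right lifting property against $\cJ$.

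Finally, to replace $\cJ$ by $\cJ\setminus\cJ_0$, I will argue that the two corresponding left Bousfield localizations coincide. By \cref{rem:an0istrivcof} every double functor in $\cJ_0$ is a trivial cofibration of $\dblcat_\mathrm{greg}$, hence in particular a gregarious double equivalence; consequently, each such map is automatically an $S$-local equivalence for every set $S$ of morphisms in $\dblcat_\mathrm{greg}$. Adjoining $\cJ_0$ to $\cJ\setminus\cJ_0$ therefore does not enlarge the class of local equivalences, so the Bousfield localizations at $\cJ\setminus\cJ_0$ and at $\cJ$ produce the same model structure, completing the identification.

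The main obstacle is essentially bookkeeping, namely verifying both conditions of \cref{def:anodyne} for $\cJ$; this is where the equivalence of the three formulations in \cref{equivalentcond4} does the work, translating our control over fibrations into control over naive fibrations.
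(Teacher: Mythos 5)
Your proof is correct and follows essentially the same route as the paper: invoke \cref{thm:localization} and \cref{prop:locvsBousfieldloc} to identify the model structure as the left Bousfield localization of $\dblcat_\mathrm{greg}$ at $\cJ$, then discard $\cJ_0$ because its members are already gregarious weak equivalences. Your explicit verification that $\cJ$ satisfies \cref{def:anodyne} (via \cref{equivalentcond4} and the inclusion $\nfib\subseteq\nfib_0$) merely spells out a step the paper leaves implicit.
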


\begin{proof}
    By \cref{thm:localization}, the model structure on $\dblcat$ from \cref{thm:genmodel} is a localization of the gregarious model structure. Hence, by \cref{prop:locvsBousfieldloc}, it coincides with the left Bousfield localization of the gregarious model structure at the set $\cJ$ of generating anodyne extensions. Since the double functors in $\cJ_0$ are already weak equivalences in the gregarious model structure, we do not need to localize with respect to those. 
\end{proof}

Finally, we show that every cofibrantly generated model structure on $\dblcat$ with the desired trivial fibrations can be obtained using our recipe. 

\begin{theorem}\label{thm:cleverJ}
    Every cofibrantly generated model structure on $\dblcat$ whose trivial fibrations are the canonical ones can be obtained through the method of \cref{thm:genmodel} by finding a convenient set $\cJ$.
\end{theorem}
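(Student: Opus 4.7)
The plan is to realize $\cM$ as a left Bousfield localization of $\dblcat_\mathrm{greg}$ and then extract a suitable set $\cJ$ from a generating set of trivial cofibrations of $\cM$, so that the recipe of \cref{thm:genmodel} applied to $\cJ$ recovers $\cM$. By \cref{thm:localization} together with \cref{prop:locvsBousfieldloc}, $\cM$ is the left Bousfield localization of $\dblcat_\mathrm{greg}$ at any set of generating anodyne extensions of $\cM$ in the sense of \cref{def:anodyne}. Starting from a set $\cJ_{\cM}$ of generating trivial cofibrations of $\cM$, I will apply \cref{rem:cofdomain} to each $J \in \cJ_{\cM}$ to obtain $J^\mathrm{cof}$ with canonically cofibrant domain, and set $\cJ = \cJ_0 \cup \{J^\mathrm{cof} : J \in \cJ_{\cM}\}$. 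The bookkeeping conditions of \cref{thm:genmodel}---canonically cofibrant domains, and $\cJ_0 \subseteq \cJ \subseteq \cof(\cI)$---follow by construction, using \cref{charcof} to see that the domains of $\cJ_0$ are canonically cofibrant. Applying $2$-out-of-$3$ to the replacement square of \cref{rem:cofdomain}, each $J^\mathrm{cof}$ is a trivial cofibration in $\cM$, and hence $\cof(\cJ)$ is contained in the trivial cofibrations of $\cM$.

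To check condition (i) of \cref{equivalentcond4}, I will argue that any $J' \in \cof(\cJ)$ between naive fibrant objects is a trivial cofibration in $\cM$; once we establish that the naive fibrant objects with respect to $\cJ$ coincide with the $\cM$-fibrant ones, \cref{thm:existleftBousfield} will force $J'$ to be a weak equivalence in $\dblcat_\mathrm{greg}$, i.e., a gregarious double equivalence. With the recipe then applicable, \cref{thm:MSislocwrtJ} and \cref{rem:cofdomain} together identify the resulting model structure with the left Bousfield localization of $\dblcat_\mathrm{greg}$ at $\cJ_{\cM}$, which is $\cM$.

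The main obstacle is the coincidence of naive fibrancy with respect to $\cJ$ and $\cM$-fibrancy. The direction $\cM$-fibrant $\Rightarrow$ naive fibrant is automatic, since $\cJ$ is contained in the trivial cofibrations of $\cM$; but the converse is more delicate, as naive right lifting against $\{J^\mathrm{cof}\}$ is a priori weaker than lifting against $\cJ_{\cM}$---the canonical trivial fibrations $p_{\bB}\colon \bB^\mathrm{cof}\to \bB$ produced by \cref{rem:cofdomain} need not admit sections. I expect to close this gap either by enlarging $\cJ$ to contain ``horn''-type pushout-products of the form $((\mathbbm{1}\amalg\mathbbm{1})\to \Sq\Eadj)\,\square\, J^\mathrm{cof}$ built via the Gray tensor product---in the spirit of Hirschhorn's construction of generating trivial cofibrations for left Bousfield localizations, leveraging \cref{thm:gregariousmonoidal}---or by verifying condition (iii) of \cref{equivalentcond4} directly through a path object factorization argument, using that the map $\llbracket\Sq\Eadj,\bA\rrbracket_\mathrm{ps}\to \bA\times\bA$ of \cref{lem:pathnaive0} is already gregariously fibrant.
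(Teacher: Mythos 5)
Your overall strategy is the paper's: realize the given model structure $\cM$ as a left Bousfield localization of $\dblcat_\mathrm{greg}$ via \cref{thm:localization} and \cref{prop:locvsBousfieldloc}, extract a set $\cJ\supseteq\cJ_0$ of canonical cofibrations with canonically cofibrant domains using \cref{rem:cofdomain}, check \cref{equivalentcond4}, and identify the output with $\cM$ via \cref{thm:MSislocwrtJ}. However, the step you yourself flag as ``the main obstacle'' is the actual content of the proof, and you leave it open. The idea you are missing is that $\cJ$ should be taken to be a set of \emph{generating anodyne extensions} of $\cM$ in the sense of \cref{def:anodyne} (of which a set of generating trivial cofibrations is an example), and that one should then verify condition (ii) of \cref{equivalentcond4}, not (i) or (iii). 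With such a $\cJ$ (adjusted as in \cref{rem:cofdomain} and with $\cJ_0$ added), the naive fibrant objects are by the first clause of \cref{def:anodyne} exactly the $\cM$-fibrant ones; a gregarious fibration between them is a fibration of $\cM$, because fibrations between fibrant objects of the localization are precisely the gregarious fibrations (\cref{thm:localization}, \cref{thm:existleftBousfield}); and such a fibration has the right lifting property against $\cJ$ by the second clause of \cref{def:anodyne}, i.e.\ lies in $\nfib$. That is the whole argument: no separate comparison of fibrancy classes and no locality computation is required once both lifting clauses of \cref{def:anodyne} are in play---clauses your write-up never invokes.

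Your two fallback plans do not close the gap. Adding Gray-tensor ``horns'' $((\mathbbm{1}\amalg\mathbbm{1})\to\Sq\Eadj)\,\square\,J^\mathrm{cof}$ to $\cJ$ is only legitimate if these maps are trivial cofibrations in $\cM$, which is a (partial) monoidality statement about $\cM$ that fails in the generality of the theorem: several of the model structures it covers, such as $\dblcat_\mathrm{tr}$ and $\dblcat_\mathrm{h,eqp}$, are not monoidal for the Gray tensor product (\cref{prop:counterex}), and \cref{pushprodofspecifictrivcof} applies only to maps of the special retract form enjoyed by $\cJ_0$, not to arbitrary $J^\mathrm{cof}$. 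Similarly, upgrading the gregarious fibration $\llbracket\Sq\Eadj,\bA\rrbracket_\mathrm{ps}\to\bA\times\bA$ of \cref{lem:pathnaive0} to a naive fibration, i.e.\ condition (iii), is by the adjunction of \cref{prop:graypseudo} exactly the same lifting statement against those pushout-products, so as stated it is circular. In short, the proposal is a correct skeleton with an acknowledged but unresolved core step, and the suggested repairs point away from the argument that actually works.
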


\begin{proof}
    By \cref{thm:localization}, such a model structure is a localization of the gregarious model structure, and thus, as it is combinatorial, it is also a left Bousfield localization of $\dblcat_\mathrm{greg}$ at a set $\cJ$ of anodyne extensions in $\dblcat$ by \cref{prop:locvsBousfieldloc}. Without loss of generality, as every $\cJ_0$-cofibration is a trivial cofibration in $\dblcat$, we can assume that $\cJ_0\subseteq \cJ$. Moreover, by \cref{rem:cofdomain}, we can further assume that the double functors in $\cJ$ have canonically cofibrant domain. Then the weak factorization system  $(\an,\nfib)$ generated by $\cJ$ satisfies condition (ii) of \cref{equivalentcond4}, as the fibrations between fibrant objects in the left Bousfield localization agree with the original fibrations in $\dblcat_\mathrm{greg}$.
\end{proof}

\subsection{Compatibility with Gray tensor product}\label{subsec:gray}

We conclude this section with a study of when the model structure on $\dblcat$ from \cref{thm:genmodel} is monoidal for the Gray tensor product. Since \cref{pushprodofcof} already established that the pushout-product of two canonical cofibrations is again a canonical cofibration, our goal is to find a minimal list of conditions that ensures the pushout-product of a canonical cofibration with a trivial cofibration yields a trivial cofibration.

\begin{defn}
    A class $\cA$ of canonically cofibrant double categories is \emph{saturated by canonical cofibrations} if it satisfies the following conditions:
    \begin{rome}
         \item for every pushout diagram in $\dblcat$
         \begin{tz}
\node[](1) {$\bA$}; 
\node[right of=1](2) {$\bB$}; 
\node[below of=1](1') {$\bC$}; 
\node[below of=2](2') {$\bD$}; 

\pushout{2'};

\draw[->] (1) to node[above,la]{} (2); 
\draw[->] (1) to node[left,la]{$I$} (1'); 
\draw[->] (2) to node[right,la]{} (2'); 
\draw[->] (1') to node[below,la]{} (2');
\end{tz}
        where $I$ is a canonical cofibration and $\bA$, $\bB$, and $\bC$ are in $\cA$, then $\bD$ is in $\cA$, 
        \item for every regular cardinal $\lambda$ and every diagram $\bA_{(-)}\colon \lambda\to \dblcat$ such that for every $\kappa\leq \mu<\lambda$ the map $\bA_\kappa\to \bA_\mu$ is a canonical cofibration, and that for every $\kappa<\lambda$, $\bA_\kappa$ is in $\cA$, then $\colim_{\kappa<\lambda}\bA_\kappa$ is also in $\cA$, 
        \item for every retract diagram in $\dblcat$
        \[ \bA\xrightarrow{I} \bB\to \bA \]
        where $I$ is a canonical cofibration and $\bB$ is in $\cA$, then $\bA$ is also in $\cA$.
    \end{rome}
\end{defn}

\begin{lem} \label{prop:Cisinskiargument}
    Let $\cA$ be a class of canonically cofibrant double categories that is saturated by canonical cofibrations. If the double categories $\emptyset$, $\mathbbm{1}$, $\bH\mathbbm{2}$, $\bV\mathbbm{2}$, and $\bH\mathbbm{2}\times \bV\mathbbm{2}$ are in $\cA$, then $\cA$ is the class of all canonically cofibrant double categories.
\end{lem}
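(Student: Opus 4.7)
The plan is to leverage the small object argument: since $(\cof(\cI),\inj(\cI))$ is cofibrantly generated by $\cI$ in the locally presentable category $\dblcat$, every canonical cofibration---and in particular every map $\emptyset\to\bD$ with $\bD$ canonically cofibrant---is a retract of a transfinite composition of pushouts of maps in $\cI$. Hence, by the three saturation properties (i), (ii), (iii), it will suffice to show that for every generating canonical cofibration $i\colon \bX\to \bY$ in $\cI$, both $\bX$ and $\bY$ lie in $\cA$.

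First I would verify this for the five generators listed in \cref{gen_cofib}. The domains $\emptyset$, $\mathbbm{1}\sqcup\mathbbm{1}$, and the codomains $\mathbbm{1}$, $\bH\mathbbm{2}$, $\bV\mathbbm{2}$ of the first three generators are handled immediately: $\emptyset,\mathbbm{1},\bH\mathbbm{2},\bV\mathbbm{2}$ are in $\cA$ by hypothesis, and $\mathbbm{1}\sqcup\mathbbm{1}$ is the pushout of two copies of $\emptyset\to\mathbbm{1}$ (a map in $\cI$), so it lies in $\cA$ by property (i). The codomain $\bH\mathbbm{2}\times\bV\mathbbm{2}$ of the last two generators is in $\cA$ by hypothesis. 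For the domain $\partial(\bH\mathbbm{2}\times\bV\mathbbm{2})$, I would present it as a finite iterated pushout of the generators $\emptyset\to\mathbbm{1}$, $\mathbbm{1}\sqcup\mathbbm{1}\to\bH\mathbbm{2}$, and $\mathbbm{1}\sqcup\mathbbm{1}\to\bV\mathbbm{2}$ starting from $\emptyset\in \cA$: concretely, assemble four objects by coproducts (using $\emptyset\to \mathbbm{1}$), then freely attach two horizontal edges and two vertical edges between the appropriate corners using the inclusions $\mathbbm{1}\sqcup\mathbbm{1}\to\bH\mathbbm{2}$ and $\mathbbm{1}\sqcup\mathbbm{1}\to\bV\mathbbm{2}$. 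Each intermediate object belongs to $\cA$ by property (i), so $\partial(\bH\mathbbm{2}\times\bV\mathbbm{2})\in\cA$. The remaining domain $\bH\mathbbm{2}\times\bV\mathbbm{2}\sqcup_{\partial(\bH\mathbbm{2}\times\bV\mathbbm{2})}\bH\mathbbm{2}\times\bV\mathbbm{2}$ is then the pushout of the canonical cofibration $\partial(\bH\mathbbm{2}\times\bV\mathbbm{2})\to\bH\mathbbm{2}\times\bV\mathbbm{2}$ along itself, and both the apex $\partial(\bH\mathbbm{2}\times\bV\mathbbm{2})$ and both copies of $\bH\mathbbm{2}\times\bV\mathbbm{2}$ lie in $\cA$, so property (i) again applies.

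Now let $\bD$ be an arbitrary canonically cofibrant double category, i.e., $\emptyset\to \bD$ is in $\cof(\cI)$. By the small object argument applied to $\cI$, there exists a transfinite sequence $(\bA_\kappa)_{\kappa<\lambda}$ with $\bA_0=\emptyset$ such that each successor step $\bA_\kappa\to\bA_{\kappa+1}$ is a pushout of a map from $\cI$, each limit step is the colimit of the preceding subsequence, and $\emptyset\to\bD$ is a retract of $\emptyset\to\colim_{\kappa<\lambda}\bA_\kappa$. I would prove by transfinite induction on $\kappa$ that $\bA_\kappa\in\cA$: the base case $\bA_0=\emptyset\in\cA$ is hypothesis; the successor case uses property (i) together with the fact already established that the domains and codomains of all maps in $\cI$ lie in $\cA$; the limit case uses property (ii), since all transition maps $\bA_\kappa\to\bA_\mu$ are canonical cofibrations as transfinite compositions of pushouts of maps in $\cI$. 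Thus $\colim_{\kappa<\lambda}\bA_\kappa\in\cA$, and finally property (iii) applied to the retract $\bD\to \colim_{\kappa<\lambda}\bA_\kappa\to\bD$ (whose first map is a canonical cofibration as a retract of $\emptyset\to\colim_{\kappa<\lambda}\bA_\kappa$ composed with $\emptyset\to\bD$---more cleanly, write $\bD$ directly as a retract along a canonical cofibration) gives $\bD\in\cA$, completing the proof.

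The only step that requires any real care is the explicit cellular construction of $\partial(\bH\mathbbm{2}\times\bV\mathbbm{2})$ from the three simpler generators; the rest is a standard saturation/small-object-argument bookkeeping.
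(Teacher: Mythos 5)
Most of your argument is sound and matches the paper's (one-line) proof, which indeed just invokes the small object argument with the generating set of \cref{gen_cofib}: your verification that the domains and codomains of all five generators lie in $\cA$, including the cellular assembly of $\partial(\bH\mathbbm{2}\times\bV\mathbbm{2})$ and of the domain of generator (v), and your transfinite induction through a relative $\cI$-cell complex, are all correct uses of saturation properties (i) and (ii).

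The gap is in the final retract step. Property (iii) only allows retracts $\bD\xrightarrow{I}\bB\to\bD$ in which the section $I$ is a \emph{canonical cofibration}, and your justification that the section $s\colon \bD\to\colim_{\kappa<\lambda}\bA_\kappa$ is one (``as a retract of $\emptyset\to\colim\bA_\kappa$ composed with $\emptyset\to\bD$'') does not establish this: it only re-proves that $\emptyset\to\bD$ is a canonical cofibration, which was the hypothesis. Moreover, the underlying claim is false in general: a section of a canonical trivial fibration, even between canonically cofibrant double categories, need not be a canonical cofibration. For instance, let $\cA$ be the free category on one arrow $g$, let $\cB$ be free on composable arrows $f_1,f_2$, let $p\colon\cB\to\cA$ send $f_1\mapsto\id$, $f_2\mapsto g$, and $s\colon\cA\to\cB$ send $g\mapsto f_2f_1$; applying $\bH$ gives a section of a canonical trivial fibration in $\dblcat$ that sends a generator to a composite of generators, and such a double functor fails the relative-freeness condition of \cref{charcof} (one can exhibit an explicit lifting problem against a canonical trivial fibration with no solution). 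So you cannot conclude $\bD\in\cA$ from (iii) without further work. The cleanest repair stays entirely within your framework but removes the retract step: by \cref{charcof}, $\bD$ is canonically cofibrant exactly when $U\bfH\bD$ and $U\bfV\bD$ are free, and then $\bD$ is an honest $\cI$-cell complex --- attach its objects via $\emptyset\to\mathbbm{1}$, its generating horizontal and vertical morphisms via $\mathbbm{1}\sqcup\mathbbm{1}\to\bH\mathbbm{2}$ and $\mathbbm{1}\sqcup\mathbbm{1}\to\bV\mathbbm{2}$, a square cell via generator (iv) for each square of $\bD$, and finally identify, via generator (v), every pair of parallel cells having the same image in $\bD$. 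Your transfinite induction with properties (i) and (ii) then yields $\bD\in\cA$ directly, with no appeal to (iii).
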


\begin{proof}
    This follows from the small object argument using the generating set of canonical cofibrations from \cref{gen_cofib}. 
\end{proof}

\begin{lem} \label{prop:satbycof}
    Let $\cJ$ be a set as in \cref{thm:genmodel}, and let $\cA$ be the class of all canonically cofibrant double categories $\bX$ such that $J\otimes \bX$ is in $\cW$, for all $J\in \cJ$. Then $\cA$ is saturated by canonical cofibrations.
\end{lem}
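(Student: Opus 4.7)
The plan is to verify the three defining conditions of being saturated by canonical cofibrations, leveraging the fact that the model structure from \cref{thm:genmodel} is left proper by \cref{thm:leftproper2} together with the closure of the Gray tensor product granted by \cref{prop:graypseudo}. In each condition, canonical cofibrancy of the resulting double category is automatic, since canonical cofibrations are closed under pushouts, transfinite compositions, and retracts.

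Two preliminary observations on a given $J\colon \bA'\to \bB'$ in $\cJ$. Since $\bA'$ is canonically cofibrant by hypothesis and $J$ itself is a canonical cofibration, $\bB'$ is canonically cofibrant too. Hence \cref{pushprodofcof}, applied to the cofibrations $\emptyset\to \bA'$ and $\emptyset\to \bB'$, shows that $\bA'\otimes I$ and $\bB'\otimes I$ are canonical cofibrations for every canonical cofibration $I$. Moreover, both $\bA'\otimes (-)$ and $\bB'\otimes(-)$ are left adjoints by \cref{prop:graypseudo}, and hence preserve colimits; in particular, $J\otimes(-)$ preserves colimits viewed as a functor into the arrow category of $\dblcat$.

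For condition (i), given a pushout of $I\colon \bA\to \bC$ along $\bA\to \bB$ with $\bA,\bB,\bC\in \cA$, I apply $\bA'\otimes(-)$ and $\bB'\otimes(-)$ to obtain two pushout squares joined by the weak equivalences $J\otimes\bA$, $J\otimes\bB$, $J\otimes\bC$. The spans involved each have one leg given by $\bA'\otimes I$, respectively $\bB'\otimes I$, which are cofibrations in the model structure. The gluing (cube) lemma, valid in any left proper model category, then implies that the induced map $J\otimes\bD$ between the pushouts is a weak equivalence, so $\bD\in \cA$.

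For condition (ii), colimit preservation yields that $J\otimes\colim_{\kappa<\lambda}\bA_\kappa$ coincides with $\colim_{\kappa<\lambda}(J\otimes\bA_\kappa)$, a filtered colimit of weak equivalences in the arrow category. The transition maps of the chains $\bA'\otimes\bA_{(-)}$ and $\bB'\otimes\bA_{(-)}$ are canonical cofibrations, by the preliminary observation. In a left proper combinatorial model category, a filtered colimit of weak equivalences between two such chains of cofibrations is again a weak equivalence, as one sees by transfinite induction combining the gluing lemma at successor stages with the accessibility of the class of weak equivalences (\ref{accessibility} of \cref{thm:GMSV}) at limit stages. For condition (iii), applying $J\otimes(-)$ to a retract diagram $\bA\xrightarrow{I}\bB\to \bA$ exhibits $J\otimes\bA$ as a retract of $J\otimes\bB$ in the arrow category; since $J\otimes\bB$ is a weak equivalence and weak equivalences are closed under retracts, $J\otimes\bA$ is a weak equivalence, so $\bA\in \cA$.

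The main technical subtlety lies in the limit-ordinal step of condition (ii): arbitrary filtered colimits of weak equivalences need not be weak equivalences in a general model category, so the argument must genuinely leverage both the left properness and the diagrammatic hypothesis that the transition maps are cofibrations, rather than relying solely on accessibility.
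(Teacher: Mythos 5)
Your overall strategy is the right one, and it is essentially an expansion of the paper's own (two-line) proof, which simply records that tensoring with a canonically cofibrant double category preserves canonical cofibrations (\cref{pushprodofcof}) and colimits, leaving the homotopical bookkeeping implicit. Your preliminary observations are correct, your treatment of condition (i) via the gluing lemma in the left proper model structure of \cref{thm:leftproper2} is fine (one leg of each span is a cofibration, and in fact all the objects $\bA'\otimes-$, $\bB'\otimes-$ of cofibrant inputs are themselves cofibrant), and condition (iii) via retract-closure of weak equivalences is immediate, as you say.

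The step that does not hold up as written is the limit-ordinal part of condition (ii). Condition (3) of \cref{thm:GMSV} only says that $\overline{\cW}$ is an accessible full subcategory of the arrow category; this yields closure under $\kappa$-filtered colimits for some sufficiently large regular cardinal $\kappa$, and says nothing about colimits of chains of length $\omega$ (or of any cofinality below $\kappa$), which is exactly what a transfinite induction must handle at its limit stages. (Moreover, with an induction on partial colimits the successor step is vacuous, so the gluing lemma is not doing the work you assign to it.) The statement you need is nonetheless true here and standard, and can be repaired with tools already on the table. One fix: note, again by \cref{pushprodofcof} applied to $\emptyset\to\bA'$ and $\emptyset\to\bA_\kappa$, that every $\bA'\otimes\bA_\kappa$ and $\bB'\otimes\bA_\kappa$ is canonically cofibrant, so both chains are Reedy cofibrant sequences of cofibrations; the colimit functor is left Quillen for the Reedy structure on such sequences, and Ken Brown's lemma then shows that a pointwise weak equivalence between them induces a weak equivalence on colimits (no properness needed). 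Alternatively, stay with your left-properness toolkit and show by transfinite induction that the pushout-product comparison maps $J\,\square\,(\bA_0\to\bA_\mu)$ are trivial cofibrations: at successor stages this follows from left properness and $2$-out-of-$3$ exactly as in your argument for condition (i), while at limit stages it follows because trivial cofibrations, being the left class of a weak factorization system, are closed under transfinite composition---this, not accessibility, is the correct ingredient. One then concludes that $J\otimes\colim_{\kappa<\lambda}\bA_\kappa$ is a weak equivalence by composing the pushout of $J\otimes\bA_0$ along the cofibration $\bA'\otimes(\bA_0\to\colim_{\kappa<\lambda}\bA_\kappa)$ (a weak equivalence by left properness) with $J\,\square\,(\bA_0\to\colim_{\kappa<\lambda}\bA_\kappa)$. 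With either repair your proof is complete and agrees in substance with the paper's argument.
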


\begin{proof}
    This follows from the fact that, for any canonically cofibrant double category $\bX$, the functor $-\otimes\bX\colon \dblcat\to \dblcat$ preserves canonical cofibrations by \cref{pushprodofcof}, and preserves colimits as it is a left adjoint. 
\end{proof}

The conditions we seek are identified in the following lemma.

\begin{prop} \label{lemma:monoidalconditions} 
    For a set $\cJ$ as in \cref{thm:genmodel}, the following conditions are equivalent: 
    \begin{rome}
        \item For all $J\in \cJ$ and $\bX\in \{ \bH\mathbbm{2}, \bV\mathbbm{2}, \bH\mathbbm{2}\times \bV\mathbbm{2}\}$, the double functor $J\times \bX$ is in $\cW$.
        \item For all $J\in \cJ$ and $\bX\in \{ \bH\mathbbm{2}, \bV\mathbbm{2}, \bH\mathbbm{2}\times \bV\mathbbm{2}\}$, the double functor $J\otimes \bX$ is in $\cW$.
        \item For all $J\in \an$ and $\bX\in \dblcat$ canonically cofibrant, the double functor $J\otimes \bX$ is in $\cW$. 
        \item For all $J\in \cof(\cI)\cap \cW$ and $\bX\in \dblcat$ canonically cofibrant, the double functor $J\otimes \bX$ is in~$\cW$.
        \item For all $\bX\in \dblcat$  canonically cofibrant and every gregarious fibration $P\colon \bA\to \bB$ between (naive) fibrant double categories, the induced double functor \[ P_*\colon \llbracket \bX,\bA\rrbracket_\mathrm{ps}\to \llbracket \bX,\bA\rrbracket_\mathrm{ps} \] is a gregarious fibration between (naive) fibrant double categories.
        \item For all $\bX\in \dblcat$ canonically cofibrant and $\bA\in \dblcat$ (naive) fibrant, the pseudo-hom double category $\llbracket \bX,\bA\rrbracket_\mathrm{ps}$ is (naive) fibrant.  
    \end{rome}
\end{prop}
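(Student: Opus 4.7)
The plan is to establish all six equivalences through the chain (i)$\,\Leftrightarrow\,$(ii)$\,\Rightarrow\,$(iii)$\,\Leftrightarrow\,$(iv)$\,\Rightarrow\,$(v)$\,\Leftrightarrow\,$(vi)$\,\Rightarrow\,$(iv), built on four key tools: (a) the projection $\pi_{\bA,\bX}\colon\bA\otimes\bX\to\bA\times\bX$ is always a canonical trivial fibration, by \cite[Lemma~7.3]{whi} (as used in \cref{tensoroftrivfib}); (b) the saturation machinery \cref{prop:satbycof,prop:Cisinskiargument}; (c) the closed structure adjunction $-\otimes\bX\dashv\llbracket\bX,-\rrbracket_\mathrm{ps}$; and (d) the identification of our model structure as a left Bousfield localization of the Gray-monoidal $\dblcat_\mathrm{greg}$ via \cref{thm:MSislocwrtJ,thm:gregariousmonoidal}. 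The main obstacle will be the closing implication (vi)$\,\Rightarrow\,$(iv), which requires passing through derived mapping spaces.

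For (i)$\,\Leftrightarrow\,$(ii), I fit $J\otimes\bX$ and $J\times\bX$ into the naturality square with vertical arrows $\pi_{\bA,\bX}$ and $\pi_{\bB,\bX}$; since these are canonical trivial fibrations, hence in $\cW$ by \cref{lem:trivfibarewe}, $2$-out-of-$3$ yields the equivalence. For (ii)$\,\Rightarrow\,$(iii), I first extend from $\{\bH\mathbbm{2},\bV\mathbbm{2},\bH\mathbbm{2}\times\bV\mathbbm{2}\}$ to all canonically cofibrant $\bX$ via \cref{prop:satbycof,prop:Cisinskiargument} (the cases $\emptyset$ and $\mathbbm{1}$ being immediate as $J\otimes\emptyset\cong\emptyset$ and $J\otimes\mathbbm{1}\cong J$); then, fixing such $\bX$, the class $\{J\mid J\otimes\bX\in\cof(\cI)\cap\cW\}$ is closed under pushouts, transfinite composites, and retracts because $-\otimes\bX$ preserves cofibrations (\cref{pushprodofcof}) and colimits, while the class of trivial cofibrations in $\dblcat_\cJ$ is already saturated; containing $\cJ$, it therefore contains $\cof(\cJ)=\an$. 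The equivalence (iii)$\,\Leftrightarrow\,$(iv) reduces to its trivial converse (iv)$\,\Rightarrow\,$(iii), since $\an\subseteq\cof(\cI)\cap\cW$, while the forward direction will be obtained indirectly through (vi).

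For the right-adjoint conditions, (iii)$\,\Rightarrow\,$(vi) is direct: for $J\in\cJ\subseteq\an$, the morphism $J\otimes\bX$ is a cofibration by \cref{pushprodofcof} and a weak equivalence by (iii), hence a trivial cofibration in $\dblcat_\cJ$; by tool~(c), $\llbracket\bX,\bA\rrbracket_\mathrm{ps}$ has the right lifting property against $J\in\cJ$ iff $\bA$ does against $J\otimes\bX$, which holds automatically as $\bA$ is fibrant. For (v)$\,\Leftrightarrow\,$(vi), (v)$\,\Rightarrow\,$(vi) follows by taking $\bB=\mathbbm{1}$ (using that every double category is gregarious fibrant by \cref{lem:descrnfib0} and $\mathbbm{1}$ is fibrant), while (vi)$\,\Rightarrow\,$(v) combines (vi) with the fact that $\llbracket\bX,-\rrbracket_\mathrm{ps}$ preserves gregarious fibrations since $\dblcat_\mathrm{greg}$ is Gray-monoidal (\cref{thm:gregariousmonoidal}). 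Finally, (iv)$\,\Rightarrow\,$(v) is the observation that (iv) and \cref{pushprodofcof} together make $-\otimes\bX$ left Quillen on $\dblcat_\cJ$, so its right adjoint is right Quillen and preserves fibrant objects and fibrations between fibrant objects, which by \cref{thm:genmodel} are precisely the gregarious fibrations between fibrant objects.

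The main obstacle is closing the cycle with (vi)$\,\Rightarrow\,$(iv), for which I will appeal to tool~(d). Given a trivial cofibration $K\colon\bA\to\bB$ in $\dblcat_\cJ$ and cofibrant $\bX$, by \cref{thm:MSislocwrtJ} it suffices to show that $K\otimes\bX$ is a $\cJ$-local equivalence, i.e., that for every fibrant $\bY$ in $\dblcat_\cJ$, the induced map $\bR\Hom_{\dblcat_\mathrm{greg}}(\bB\otimes\bX,\bY)\to\bR\Hom_{\dblcat_\mathrm{greg}}(\bA\otimes\bX,\bY)$ is a weak homotopy equivalence. Since $\dblcat_\mathrm{greg}$ is Gray-monoidal by \cref{thm:gregariousmonoidal}, the adjunction of tool~(c) is a Quillen pair on $\dblcat_\mathrm{greg}$ and yields a natural equivalence of derived homs $\bR\Hom_{\dblcat_\mathrm{greg}}(-\otimes\bX,\bY)\simeq\bR\Hom_{\dblcat_\mathrm{greg}}(-,\llbracket\bX,\bY\rrbracket_\mathrm{ps})$; by (vi), $\llbracket\bX,\bY\rrbracket_\mathrm{ps}$ is fibrant in $\dblcat_\cJ$ and hence $\cJ$-local in $\dblcat_\mathrm{greg}$, so applying this derived hom to the $\cJ$-local equivalence $K$ yields the desired weak equivalence, concluding the proof.
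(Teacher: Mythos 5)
Your proposal is correct in outline but closes the hard implication by a genuinely different route than the paper. The shared ingredients are the same: (i)$\Leftrightarrow$(ii) via the projections $\bA\otimes\bX\to\bA\times\bX$ being canonical trivial fibrations, and the saturation machinery \cref{prop:satbycof,prop:Cisinskiargument} in the $\bX$-variable. The paper then passes from (ii) to (iv) in one step, citing \cite[Proposition E.2.14]{JoyalVolumeII} (the criterion that a cocontinuous functor preserving cofibrations and sending a pseudo-generating set of anodyne extensions to trivial cofibrations is left Quillen), and handles (iii)$\Leftrightarrow$(v) via \cite[Lemma E.2.13]{JoyalVolumeII}. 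You instead only get $\an$ from the saturation argument, prove (iii)$\Rightarrow$(vi), (iv)$\Rightarrow$(v), (v)$\Leftrightarrow$(vi) by elementary adjunction transpositions together with \cref{thm:gregariousmonoidal}, and close the cycle with (vi)$\Rightarrow$(iv) using \cref{thm:MSislocwrtJ} and a derived-mapping-space argument in the Bousfield localization. In effect you re-prove the relevant instance of Joyal's criterion rather than citing it; this buys a self-contained argument at the cost of working with derived homs.

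There is one step that needs more care, in (vi)$\Rightarrow$(iv). You apply the natural equivalence $\bR\Hom_{\dblcat_\mathrm{greg}}(-\otimes\bX,\bY)\simeq\bR\Hom_{\dblcat_\mathrm{greg}}(-,\llbracket\bX,\bY\rrbracket_\mathrm{ps})$ to an arbitrary trivial cofibration $K\colon\bA\to\bB$, but this equivalence is only automatic on cofibrant inputs: for non-cofibrant $\bA$, the object $\bA\otimes\bX$ need not compute the left derived value of $-\otimes\bX$ at $\bA$, so $\bR\Hom(\bA\otimes\bX,\bY)$ need not agree with $\bR\Hom(\bA,\llbracket\bX,\bY\rrbracket_\mathrm{ps})$ without an extra flatness input. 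Since (iv) quantifies over all of $\cof(\cI)\cap\cW$ (and this generality is what \cref{thm:gray} uses), you cannot simply restrict to cofibrant domains. The gap is fixable with tools already in the paper: choose a canonical trivial fibration $\bA^\mathrm{cof}\to\bA$ with $\bA^\mathrm{cof}$ canonically cofibrant, factor $\bA^\mathrm{cof}\to\bB$ as a canonical cofibration $\bA^\mathrm{cof}\to\bB'$ followed by a canonical trivial fibration $\bB'\to\bB$, so that $\bA^\mathrm{cof}\to\bB'$ is a trivial cofibration between canonically cofibrant objects; by \cref{tensoroftrivfib} the comparison maps $\bA^\mathrm{cof}\otimes\bX\to\bA\otimes\bX$ and $\bB'\otimes\bX\to\bB\otimes\bX$ remain canonical trivial fibrations, hence weak equivalences, and $2$-out-of-$3$ reduces the claim to the cofibrant case, where your derived-hom argument is valid. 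With that insertion the proof goes through.
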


\begin{proof}
    The fact that (i) and (ii) are equivalent follows from the $2$-out-of-$3$ property of weak equivalences and the fact that the projection double functors $\bA\otimes\bX\to \bA\times \bX$ are canonical trivial fibrations for all double categories $\bA$ and $\bX$ by \cite[Lemma 7.3]{whi}. Moreover, it is clear that  (iv) implies (iii) and that (iii) implies (ii).   
    
    To show that (ii) implies (iv), let $\cA$ be the class of all canonically cofibrant double categories $\bX$ such that $J\otimes \bX$ is in $\cW$; then, by \cref{prop:satbycof}, we have that $\cA$ is saturated by canonical cofibrations. By assumption, we also know that the double categories $\emptyset$, $\mathbbm{1}$, $\bH\mathbbm{2}$, $\bV\mathbbm{2}$, $\bH\mathbbm{2}\times \bV\mathbbm{2}$ are all in $\cA$. Hence $\cA$ is the class of all canonically cofibrant double categories by \cref{prop:Cisinskiargument}.

    Then, for any canonically cofibrant double category $\bX$, the left adjoint functor 
    \[ -\otimes \bX\colon \dblcat\to \dblcat \] preserves all canonical cofibrations by \cref{pushprodofcof} and all double functors in $\cJ$ by the above argument. Hence, by \cite[Proposition E.2.14]{JoyalVolumeII}, it preserves all trivial cofibrations, as desired. With this, we have an equivalence between statements (i)-(iv). 

The fact that the double functor $J\otimes \bX$ is a canonical cofibration for all $J\in\an$ also allows us to show the equivalence between (iii) and (v),  using \cite[Lemma E.2.13]{JoyalVolumeII} and the description of the fibrations between fibrant objects from \cref{thm:genmodel}. As (v) clearly implies (vi), it only remains to show that (vi) implies (v). For this, let $X$ be a canonically cofibrant double category and $P\colon \bA\to \bB$ be a gregarious fibration between (naive) fibrant double categories. Then $\llbracket \bX,\bA\rrbracket_\mathrm{ps}$ and $\llbracket \bX,\bB\rrbracket_\mathrm{ps}$ are (naive) fibrant by (vi), and \cref{thm:gregariousmonoidal} ensures that $P_*$ is a gregarious fibration, as desired.
\end{proof}

\begin{theorem}\label{thm:gray}
Suppose we have a model structure on $\dblcat$ built from a set $\cJ$ as in \cref{thm:genmodel}. If $\cJ$ satisfies any of the equivalent conditions of \cref{lemma:monoidalconditions}, then the model structure is monoidal with respect to the Gray tensor product.
\end{theorem}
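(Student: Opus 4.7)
The plan is to verify the two axioms of a monoidal model category with respect to the Gray tensor product $\otimes$: the unit axiom and the pushout-product axiom (PPA). The unit axiom follows immediately, since the monoidal unit $\mathbbm{1}$ is canonically cofibrant by \cref{charcof}---its underlying horizontal and vertical categories are the terminal category, which is free on the empty graph. For the PPA, the case of two cofibrations is handled by \cref{pushprodofcof}, and by symmetry of $\otimes$ it remains to show that $J \square I$ is a trivial cofibration whenever $J$ is a trivial cofibration and $I$ is a cofibration.

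For fixed trivial cofibration $J\colon \bA \to \bB$, a standard saturation argument reduces the task to the case $I \in \cI$: the class of cofibrations $I$ with $J \square I$ a trivial cofibration is closed under pushouts, transfinite compositions, and retracts, since trivial cofibrations are, and $J \square -$ preserves these colimits. A direct inspection of $\cI$ via \cref{charcof} shows that all domains and codomains appearing in \cref{gen_cofib} are canonically cofibrant---their underlying horizontal and vertical categories are disjoint unions of copies of the terminal category and the walking arrow, which are free. For such $I\colon \bX \to \bY$, condition (iv) of \cref{lemma:monoidalconditions} gives that $J \otimes \bX$ and $J \otimes \bY$ are weak equivalences; they are also canonical cofibrations by \cref{pushprodofcof} (applied to $J$ together with the cofibrations $\emptyset \to \bX$ and $\emptyset \to \bY$), and hence they are trivial cofibrations.

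To conclude, consider the pushout $P = \bA \otimes \bY \coprod_{\bA \otimes \bX} \bB \otimes \bX$. The induced map $\alpha\colon \bA \otimes \bY \to P$ is a pushout of the trivial cofibration $J \otimes \bX$, hence itself a trivial cofibration. Since the composite $(J \square I) \circ \alpha$ equals the trivial cofibration $J \otimes \bY$, 2-out-of-3 forces $J \square I$ to be a weak equivalence; combined with $J \square I$ being a canonical cofibration by \cref{pushprodofcof}, we conclude that $J \square I$ is a trivial cofibration. The main care goes into verifying that the generators in $\cI$ have canonically cofibrant endpoints---this is what allows us to apply condition (iv) on both $\bX$ and $\bY$ and is the key homotopical input that makes the pushout manipulation produce a trivial cofibration.
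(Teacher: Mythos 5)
Your proof is correct and follows essentially the same route as the paper: reduce the pushout-product axiom to the generating cofibrations of \cref{gen_cofib}, observe their (co)domains are canonically cofibrant, deduce from \cref{lemma:monoidalconditions} that $J\otimes\bX$ and $J\otimes\bY$ are trivial cofibrations, and conclude by pushing out $J\otimes\bX$ and applying $2$-out-of-$3$ together with \cref{pushprodofcof}. The only differences are cosmetic: you spell out the saturation argument for the reduction to $\cI$ and the unit axiom, which the paper leaves implicit.
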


\begin{proof}
    The fact that the pushout-product of two canonical cofibrations is a cofibration is given by \cref{pushprodofcof}. Hence, it remains to show that if $J\colon\bA\to\bB$ is a trivial cofibration and $I\colon \bX\to \bY$ is a canonical cofibration in the generating set $\cI$ of \cref{gen_cofib}, then their pushout-product $J\, \square\, I$ is a weak equivalence. Note that $I$ is of the form $I\colon \bX\to \bY$ with $\bX$ canonically cofibrant, and consider the pushout diagram below.
\begin{tz}
\node[](1) {$\bA\otimes\bX$};
\node[right of=1,xshift=1.5cm](2) {$\bA\otimes\bY$};
\node[below of=1](3) {$\bB\otimes\bX$};
\node[below of=2](4) {$\bP$};
\node[below right of=4,xshift=1cm](5) {$\bB\otimes\bY$};
\draw[->] (1) to node[above,la]{$\bA\otimes I$} (2);
\draw[->] (1) to node[left,la]{$J\otimes \bX$} (3);
\draw[->] (2) to (4);
\draw[->] (3) to node[above,la]{$K$} (4);
\draw[->,dashed] (4) to node[right,la,yshift=12pt,xshift=-15pt]{$J\, \square\, I$} (5);
\draw[->,bend right] (3) to node[below,la,xshift=-10pt] {$\bB\otimes I$} (5);
\draw[->,bend left] (2) to node[right,la,xshift=2pt] {$J\otimes \bY$} (5);
  \node at ($(4)+(-.3cm,.25cm)$) {$\ulcorner$};
\end{tz}
Since $\bX$ is canonically cofibrant and $J$ is a canonical cofibration, we know that $J\, \square\, (\emptyset\to \bX)=J\otimes \bX$ is also a canonical cofibration by \cref{pushprodofcof}. As $J$ is a trivial cofibration by assumption, and $\bX$ is canonically cofibrant, the double functor $J\otimes \bX$ is a weak equivalence by \cref{lemma:monoidalconditions}. Then $J\otimes \bX$ is a trivial cofibration, and therefore so is $K$ since these are stable under pushouts. The conditions in \cref{lemma:monoidalconditions} also guarantee that $J\otimes \bY$ is a weak equivalence, and then so is $J\, \square\, I$ by $2$-out-of-$3$.
\end{proof}

\section{Application: Model structures from companions and conjoints}\label{section:ex1}

In this section we illustrate how our recipe can be used, by constructing several model structures whose homotopy theories capture a variety of 2-dimensional structures. Before we describe them, we start by recalling the dual notion to the companion pairs of \cref{def:companion}.

\begin{notation}
    Given a double category $\bA$, we write $\bA^\mathrm{v,op}$ for its \emph{vertical opposite}, i.e., the double category obtained from $\bA$ by reversing the vertical morphisms, and reversing the squares vertically; that is, their source horizontal morphism becomes their target, and conversely.
\end{notation}

\begin{defn}
    A \emph{conjoint pair} in a double category $\bA$ is the data of a double functor ${\Sq\mathbbm{2}^{\mathrm{v,op}}\to \bA}$. It consists of a tuple $(f,u,\varphi,\psi)$ of a horizontal morphism $f\colon A\to B$, a vertical morphism $u\colon B\arrowdot A$, and two squares $\varphi$ and $\psi$ in $\bA$ of the form
\begin{tz}
\node[](1) {$B$}; 
\node[below of=1](2) {$A$}; 
\node[right of=1](3) {$B$}; 
\node[right of=2](4) {$B$};

\draw[->,pro] (1) to node[left,la]{$u$} (2); 
\draw[d] (1) to (3); 
\draw[->] (2) to node[below,la]{$f$} (4); 
\draw[d,pro](3) to (4); 
 
\node[la] at ($(1)!0.5!(4)$) {$\varphi$};

\node[right of=3,xshift=2cm](1) {$A$}; 
\node[below of=1](2) {$A$}; 
\node[right of=1](3) {$B$}; 
\node[right of=2](4) {$A$};

\draw[->,pro] (3) to node[right,la]{$u$} (4); 
\draw[d] (2) to (4); 
\draw[->] (1) to node[above,la]{$f$} (3); 
\draw[d,pro](1) to (2); 
 
\node[la] at ($(1)!0.5!(4)$) {$\psi$};
\end{tz}
    such that $\psi \vert \varphi=e_f$ and $\frac{\varphi}{\psi}=\id_u$.
\end{defn}

\begin{rem} \label{conjvscomp}
    A conjoint pair in $\bA$ is a companion pair in $\bA^\mathrm{v,op}$.
\end{rem}

To describe our model structures, we introduce the following convenient notation.

\begin{notation} \label{notn:properties1}
    Given a double category $\bA$, we consider the following classes of morphisms: 
    \begin{itemize}[leftmargin=0.8cm]
        \item the class $\mathrm{hMor}$ of horizontal morphisms,
        \item the class $\mathrm{vMor}$ of vertical morphisms,
        \item the class $\mathrm{hEq}$ of horizontal equivalences, i.e., equivalences in the underlying horizontal $2$-category of $\bA$.
    \end{itemize}
    Given a class $S$ of horizontal or vertical morphisms in $\bA$, we consider the following properties: 
    \begin{itemize}[leftmargin=0.8cm]
        \item the property $\mathrm{comp}(S)$ that every morphism in $S$ has a companion, 
        \item the property $\mathrm{equip}(S)$ that every morphism in $S$ has both a companion and a conjoint.\footnote{The notation ``equip'' comes from the term \emph{equipments}, which are double categories such that all horizontal (or vertical) morphisms have both a companion and a conjoint.}
    \end{itemize}
    For example, the property $\mathrm{comp}(\mathrm{hMor})$ means that every horizontal morphism has a companion. 
\end{notation}

We aim to show that the following model structures on $\dblcat$ exist, all of which are endowed with the canonical trivial fibrations. The second and third column of the table below specify the fibrant objects of each model structure and describe the properties that these satisfy, which can all be expressed in terms of the existence of companions and conjoints for certain classes of morphisms. 

\begin{table}[h!]
    \centering
    \renewcommand{\arraystretch}{1.3}
    \begin{tabular}{l|C{2.5cm}|C{2.5cm}|C{3cm}|C{3cm}}
    Model & Fibrant double categories & Properties of fibrant objects & Homotopy \ \ theory of ... & Additional properties \\
    \hline
    \hline
       $\dblcat_\mathrm{greg}$ & All &  & Double \newline categories & Initial case \\
       \hline
        $\dblcat_\mathrm{whi}$ & Weakly \newline horizontally invariant & $\mathrm{comp}(\mathrm{hEq})$  & Reedy fibrant double categories & Double $(\infty,1)$-categorical nerve  \\
        \hline
        $\dblcat_\mathrm{h,eqp}$ & Horizontal equipments & $\mathrm{equip}(\mathrm{hMor})$  &  Horizontal equipments & Formal category theory \\
        \hline
     $\dblcat_\mathrm{tr}$  & Transposable & $\mathrm{comp}(\mathrm{hMor})\cap \mathrm{comp}(\mathrm{vMor})$ & $2$-categories &   \\
     \hline
        $\dblcat_\mathrm{tr,ladj}$ &  & $\mathrm{equip}(\mathrm{hMor})\cap \mathrm{comp}(\mathrm{vMor})$ &  $2$-categories with left adjoints &\\
        \hline
        $\dblcat_\mathrm{tr,adj}$ & Transposable equipments & $\mathrm{equip}(\mathrm{hMor})\cap \mathrm{equip}(\mathrm{vMor})$ &  $2$-categories with all adjoints & \\
    \hline
    \end{tabular}
    \vspace{.2cm}
    \caption{Model structures from companions and conjoints --- summary}\label{table:MScompconjpurposes}
\end{table}
Of course, each of these model structures also admits a transposed version where the roles of horizontal and vertical morphisms are reversed. We will not address these in any detail, but a full diagram of model structures and their interactions is included later in \cref{RQdiagram}.

Before we move on to the construction of these model structures, let us briefly discuss the advantages of each model, and the homotopy theory they represent, which are summarized in the last two columns of the table above:

  \begin{itemize}[leftmargin=0.8cm]
  \item We have shown in \cref{thm:localization} that the gregarious model structure $\dblcat_\mathrm{greg}$ is the initial case in the sense that every other model structure with trivial fibrations given by the canonical ones is a localization of it. 
\item The model structure $\dblcat_\mathrm{whi}$ for weakly horizontally invariant double categories was originally constructed in \cite{whi} when looking for a model structure on $\dblcat$ compatible with the horizontal inclusion $\bH\colon \twocat\to \dblcat$ of \cref{def:bbH}. Notably, this is the model structure used in \cite{lyne} to build a nerve functor $\bN$ from double categories to double $(\infty,1)$-categories. To understand the link with ``Reedy fibrant double categories'', note that a double category $\bA$ is weakly horizontally invariant (i.e., every horizontal equivalence has a companion) if and only if the induced $2$-functor $\bfH\llbracket\bV\mathbbm{2},\bA\rrbracket\to \bfH\bA\times \bfH\bA$ is an equifibration by \cite[Proposition 5.6]{fibrantly_induced}. This is a sort of Reedy fibrancy condition. Further evidence for this link is given by the fact that a double category $\bA$ is weakly horizontally invariant if and only if its nerve $\bN\bA$ is Reedy fibrant as a bisimplicial space; see \cite[Theorem 5.30]{lyne}.
\item For the model structure $\dblcat_{\mathrm{h,eqp}}$, the fibrant double categories are those such that every horizontal morphism has both a companion and a conjoint. This is the notion of \emph{equipments}, introduced by Wood in the setting of bicategories \cite{WoodI,WoodII} and later studied in the double categorical setting e.g.\ in \cite{Verity,Shulman_Cruttwell2010}.\footnote{Note that in those references the convention interchanges the role of horizontal and vertical morphisms.} 
Building on the fact that equipments provide a natural framework for formal category theory, the transposed version of this model structure is used in \cite{MSequipments} to prove a result on equivalence invariance of formal category theory. 
\item The model structure $\dblcat_\mathrm{tr}$ for transposable double categories (that is, the ones where all horizontal and vertical morphisms have a companion) models the homotopy theory of $2$-categories, in the sense that the square functor $\Sq\colon \twocat\to \dblcat_\mathrm{tr}$ of \cref{def:square} is a Quillen equivalence; we prove this claim in \cref{rem:2catbothleftandrightind}. 
\item The model structure $\dblcat_\mathrm{tr,ladj}$ (respectively,  $\dblcat_\mathrm{tr,adj}$) models the homotopy theory of $2$-categories such that every morphism has a left adjoint (respectively, both a left and a right adjoint). This is made precise by showing that the square functor is a Quillen equivalence from model structures on $\twocat$ whose fibrant objects are the $2$-categories having the properties just described, to the model structures $\dblcat_\mathrm{tr,ladj}$ and $\dblcat_\mathrm{tr,adj}$; see \cref{prop:SqQEadj}.
\end{itemize}

This section is devoted to the study of these model structures: we establish their existence in \cref{subsec:existenceTable1}, and prove our remaining claims regarding their homotopy theories and describe their interactions through Quillen adjunctions in \cref{subsec:propertiesTable1}.

\subsection{Existence of the model structures}\label{subsec:existenceTable1}

To construct each of the model structures $\dblcat_\bigstar$ listed in the table above,  we must find sets $\cJ_\bigstar$ of generating anodyne extensions with the following properties:
\begin{itemize}[leftmargin=0.8cm]
    \item we have inclusions $\cJ_0\subseteq \cJ_\bigstar\subseteq \cof(\cI)$,
    \item all double functors in $\cJ_\bigstar$ have canonically cofibrant domains,
    \item a double category $\bA$ satisfies the desired properties of the fibrant objects if and only if it has the right lifting property with respect to the double functors in $\cJ_\bigstar$,
    \item the conditions of \cref{equivalentcond4} are satisfied; i.e., every gregarious fibration  between naive fibrant double categories has the right lifting property with respect to the double functors in $\cJ_\bigstar$.
\end{itemize}

\begin{defn} \label{def:Jxxx}
    We define the sets $\cJ_\bigstar$ as follows:
    \begin{itemize}[leftmargin=0.8cm]
        \item $\cJ_\mathrm{whi}\coloneqq \cJ_0\cup\{\bH\Eadj\to\Sq\Eadj\}$,
        \item $\cJ_\mathrm{h,eqp}\coloneqq \cJ_0\cup\{\bH\mathbbm{2}\to\Sq\mathbbm{2}, \,\bH\mathbbm{2}\to\Sq\mathbbm{2}^\mathrm{v,op}\}$,
        \item $\cJ_\mathrm{tr}\coloneqq \cJ_0\cup\{\bH\mathbbm{2}\to\Sq\mathbbm{2}, \,\bV\mathbbm{2}\to\Sq\mathbbm{2}\}$,
        \item $\cJ_\mathrm{tr,ladj}\coloneqq \cJ_\mathrm{tr}\cup\{\bH\mathbbm{2}\to\Sq\mathbbm{2}^\mathrm{v,op}\}$,
        \item $\cJ_\mathrm{tr,adj}\coloneqq \cJ_\mathrm{tr,ladj}\cup\{\bV\mathbbm{2}\to\Sq\mathbbm{2}^\mathrm{v,op}\}$.
    \end{itemize}
\end{defn}

\begin{prop} \label{prop:compsetJ}
Let $\cJ'$ be a set of morphisms in $\dblcat$ which satisfies the hypotheses of \cref{equivalentcond4}. Then, a set $\cJ$ obtained by adding any of the following double functors to $\cJ'$
    \[ \bH\Eadj\to \Sq\Eadj, \;\bH\mathbbm{2}\to \Sq\mathbbm{2}, \;\bV\mathbbm{2}\to \Sq\mathbbm{2}, \; \bH\mathbbm{2}\to \Sq\mathbbm{2}^\mathrm{v,op},\; \bV\mathbbm{2}\to \Sq\mathbbm{2}^\mathrm{v,op},   \]
     also satisfies the hypotheses of \cref{equivalentcond4}.
\end{prop}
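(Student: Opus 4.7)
The plan is to verify condition (ii) of \cref{equivalentcond4} for $\cJ = \cJ' \cup \{j\}$, where $j$ is one of the five listed maps. Let $P\colon \bA \to \bB$ be a gregarious fibration between naive $\cJ$-fibrant double categories. Since $\cJ' \subseteq \cJ$, both $\bA$ and $\bB$ are a fortiori naive $\cJ'$-fibrant, so $P$ already has the right lifting property against every morphism in $\cJ'$ by hypothesis; it thus suffices to verify the lifting property against the single new generator $j$.

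In each of the first four cases, a lifting problem against $j$ amounts to specifying a horizontal or vertical morphism $f$ in $\bA$ together with a companion or conjoint pair of $Pf$ in $\bB$, and we must produce a companion or conjoint pair of $f$ in $\bA$ whose image under $P$ recovers the given data. The crucial observation is that naive $\cJ$-fibrancy of $\bA$ guarantees that $f$ admits \emph{some} companion or conjoint pair $(u_0, \varphi_0, \psi_0)$ in $\bA$. Applying $P$ yields a companion or conjoint pair of $Pf$ in $\bB$, and by the universal property of companions and conjoints---asserting their uniqueness up to unique compatible $2$-iso---we obtain a canonical $2$-iso $\beta$ in $\bfV\bB$ or $\bfH\bB$ comparing this image to the prescribed data.

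We then lift $\beta$ along $P$ using the appropriate local isofibration property from \cref{lem:descrnfib0}: either condition (f3) when $\beta$ lives in $\bfV\bB$, or (f2) when it lives in $\bfH\bB$. The resulting $2$-iso $\tilde\beta$ in $\bA$ induces, via the universal property of companions or conjoints, a new companion or conjoint pair $(f, u, \varphi, \psi)$ in $\bA$ whose comparison iso back to $(f, u_0, \varphi_0, \psi_0)$ is $\tilde\beta$; applying $P$ and invoking uniqueness once more in $\bB$ identifies this transported pair with the prescribed data, completing the lift. The remaining case $j = \bH\Eadj \to \Sq\Eadj$ is handled in the same spirit: naive $\cJ$-fibrancy of $\bA$ directly supplies some extension $\Sq\Eadj \to \bA$ of the given horizontal adjoint equivalence, whose companions of $f$ and $g$ we then adjust via lifted isos to match those prescribed in $\bB$; the assembly $2$-isos witnessing the vertical adjoint equivalence in $\bA$ automatically match those prescribed in $\bB$ by the universal-property construction underlying \cref{rmk:companionsforequivs}.

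The main technical obstacle is ensuring that the companion or conjoint transport exists, is unique, and is natural with respect to double functors; this reduces to the universal property of companions and conjoints in double categories (see \textsection 4.1.1 of \cite{Grandis}), which is by construction preserved by $P$.
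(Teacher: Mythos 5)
Your proposal is correct and follows essentially the same route as the paper: reduce to lifting against the single new generator, use naive fibrancy of the source to produce some companion/conjoint, compare with the prescribed data via the universal property of companions (Grandis \S 4.1.1), lift the comparison iso using the local isofibration conditions (f2)/(f3) of \cref{lem:descrnfib0}, and transport. The only cosmetic differences are that you treat the conjoint cases directly (the paper passes to the vertical opposite via \cref{conjvscomp}) and your appeal to \cref{rmk:companionsforequivs} in the $\bH\Eadj\to\Sq\Eadj$ case is unnecessary---the paper simply repeats the companion-transport argument while tracking the extra equivalence data.
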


\begin{proof}
    We prove the result for $\cJ=\cJ'\cup \{\bH\mathbbm{2}\to \Sq\mathbbm{2}\}$. Suppose that $\bA$ and $\bB$ are double categories which have the right lifting property with respect to every double functor in~$\cJ$, and $F\colon\bA\to\bB$ is a gregarious fibration. By assumption, the double functor $F$ has the right lifting property with respect to every double functor in $\cJ'$. Thus, it remains to show that there is a lift in every commutative diagram as below left.
    \begin{tz}
\node[](1) {$\bH\mathbbm{2}$}; 
\node[right of=1,xshift=1cm](2) {$\bA$}; 
\node[below of=1](1') {$\Sq\mathbbm{2}$}; 
\node[below of=2](2') {$\bB$}; 

\draw[->] (1) to node[above,la]{$f$} (2); 
\draw[->] (1) to (1'); 
\draw[->] (2) to node[right,la]{$F$} (2'); 
\draw[->] (1') to node[below,la]{$(Ff,v,\chi,\omega)$} (2');
\draw[->,dashed] (1') to (2); 

\node[right of=2,xshift=2cm](1) {$\bH\mathbbm{2}$}; 
\node[right of=1,xshift=.5cm](2) {$\bA$}; 
\node[below of=1](1') {$\Sq\mathbbm{2}$}; 

\draw[->] (1) to node[above,la]{$f$} (2); 
\draw[->] (1) to (1');  
\draw[->,dashed] (1') to node[below,la,xshift=20pt]{$(f,u,\varphi,\psi)$} (2); 
\end{tz}

    By our assumption on $\bA$, we know there is a lift in the diagram above right. Hence, the tuples $(Ff,v,\chi,\omega)$ and $(Ff,Fu,F\varphi,F\psi)$ are two companion pairs for the same horizontal morphism $Ff$ in~$\bB$. The universal property of companions from \cite[\textsection 4.1.1]{Grandis} then guarantees that there is a unique invertible vertical globular square $\beta\colon Fu\cong v$ in $\bB$ satisfying the following pasting equality. 
    \begin{tz}
    \node[](1) {$FA$}; 
\node[below of=1](2) {$FA$}; 
\node[right of=1](3) {$FA$}; 
\node[right of=2](4) {$FC$};

\draw[d,pro] (1) to node[left,la]{} (2); 
\draw[d] (1) to node[above,la]{} (3); 
\draw[->] (2) to node[below,la]{$Ff$} (4); 
\draw[->,pro](3) to node[right,la]{$v$} (4); 
 
\node[la] at ($(1)!0.5!(4)$) {$\omega$};

        \node[right of=3,xshift=.5cm](1) {$FA$}; 
        \node at ($(4)!0.5!(1)$) {$=$};
\node[below of=1](2) {$FA$}; 
\node[right of=1](3) {$FA$}; 
\node[right of=2](4) {$FC$};
\node[right of=3](3') {$FA$}; 
\node[right of=4](4') {$FC$};

\draw[d,pro] (1) to node[left,la]{} (2); 
\draw[d] (1) to node[above,la]{} (3); 
\draw[->] (2) to node[below,la]{$Ff$} (4); 
\draw[->,pro](3) to node[left,la]{$Fu$} (4); 
\node[la] at ($(1)!0.5!(4)-(5pt,0)$) {$F\psi$};

\draw[d] (3) to node[above,la]{} (3'); 
\draw[d] (4) to node[below,la]{} (4'); 
\draw[->,pro](3') to node[right,la]{$v$} (4'); 
 
\node[la] at ($(3)!0.5!(4')-(5pt,0)$) {$\beta$};
\node[la] at ($(3)!0.5!(4')+(5pt,0)$) {$\cong$};
    \end{tz}
  Using the same universal property, we have that $\chi=\beta^{-1}\vert F\varphi$. Since $F$ is a gregarious fibration, by (f3) of \cref{lem:descrnfib0} applied to the vertical globular square $\beta$, there is a horizontally invertible square in $\bA$
    \begin{tz}
        \node[](1) {$A$}; 
\node[below of=1](2) {$C$}; 
\node[right of=1](3) {$A$}; 
\node[right of=2](4) {$C$};

\draw[->,pro] (1) to node[left,la]{$u$} (2); 
\draw[d] (1) to node[above,la]{} (3); 
\draw[d] (2) to node[below,la]{} (4); 
\draw[->,pro](3) to node[right,la]{$w$} (4); 
 
\node[la] at ($(1)!0.5!(4)-(5pt,0)$) {$\alpha$};
\node[la] at ($(1)!0.5!(4)+(5pt,0)$) {$\cong$};
    \end{tz}  
    such that $F\alpha=\beta$. Hence $(f,w,\varphi',\psi')$ with $\varphi'\coloneqq \alpha^{-1}\vert \varphi$ and $\psi'\coloneqq \psi\vert \alpha$ is a companion pair in $\bA$ such that $F(f,w,\varphi',\psi')=(Ff,v,\chi,\omega)$, providing the desired lift. 

     The result for $\cJ=\cJ'\cup \{\bH\Eadj\to \Sq\Eadj\}$ proceeds in the exact same way, keeping track of the additional data of the equivalences. The result for $\cJ=\cJ'\cup \{\bV\mathbbm{2}\to \Sq\mathbbm{2}\}$ is obtained by reversing the role of the horizontal and vertical morphisms. Lastly, the result for $\cJ=\cJ'\cup \{\bH\mathbbm{2}\to \Sq\mathbbm{2}^{\mathrm{v,op}}\}$ proceeds by reasoning in the vertical opposite $\bA^{\mathrm{v,op}}$ using \cref{conjvscomp}, and the one for $\cJ=\cJ'\cup \{\bV\mathbbm{2}\to \Sq\mathbbm{2}^{\mathrm{v,op}}\}$ then follows by reversing the role of the horizontal and vertical morphisms.
\end{proof}

With the above result in hand, we can now prove that all of the described model structures exist.

\begin{theorem}
    The model structures from \cref{table:MScompconjpurposes} exist. Moreover, they are all left proper and cofibrantly generated. 
\end{theorem}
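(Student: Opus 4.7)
The plan is to verify, for each of the sets $\cJ_\bigstar$ listed in \cref{def:Jxxx}, the hypotheses of \cref{thm:genmodel}, after which existence (and cofibrant generation) of the corresponding model structure follows immediately, and left properness is then granted by \cref{thm:leftproper2}. The fibrancy characterizations recorded in the third column of \cref{table:MScompconjpurposes} will be obtained by inspecting the right lifting conditions against the generators in $\cJ_\bigstar\setminus \cJ_0$.

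First, I would verify that each $\cJ_\bigstar$ lies in $\cof(\cI)$ with canonically cofibrant domain. The five ``new'' double functors
\[ \bH\Eadj\to \Sq\Eadj,\quad \bH\mathbbm{2}\to \Sq\mathbbm{2},\quad \bV\mathbbm{2}\to \Sq\mathbbm{2},\quad \bH\mathbbm{2}\to \Sq\mathbbm{2}^{\mathrm{v,op}},\quad \bV\mathbbm{2}\to \Sq\mathbbm{2}^{\mathrm{v,op}} \]
all have as domain a double category of the form $\bH\cC$ or $\bV\cC$ with $\cC$ a free $2$-category, so their underlying horizontal and vertical categories are free; thus the domains are canonically cofibrant by \cref{charcof}. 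Each of the codomains $\Sq\mathbbm{2}$, $\Sq\mathbbm{2}^{\mathrm{v,op}}$, and $\Sq\Eadj$ also has free underlying horizontal and vertical categories, and each new map is injective on objects and faithful on horizontal and vertical morphisms; the retraction/free-extension description in \cref{charcof}(ii) is immediate in each case, so each is a canonical cofibration. Together with $\cJ_0\subseteq \cof(\cI)$ (see \cref{notn:J0}), this gives $\cJ_0\subseteq \cJ_\bigstar\subseteq \cof(\cI)$ with canonically cofibrant domains.

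Next, I would show that the hypotheses of \cref{equivalentcond4} hold for each $\cJ_\bigstar$. The gregarious model structure of \cref{subsec:gregarious} shows that $\cJ_0$ satisfies these hypotheses. Since each $\cJ_\bigstar$ is obtained from $\cJ_0$ by iteratively adjoining maps of the five types appearing in \cref{prop:compsetJ}, applying that proposition finitely many times yields condition (ii) of \cref{equivalentcond4} for the weak factorization system generated by $\cJ_\bigstar$. Thus \cref{thm:genmodel} applies, giving a cofibrantly generated model structure on $\dblcat$ whose trivial fibrations are the canonical ones and whose fibrant objects are exactly the $\cJ_\bigstar$-injective double categories. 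Left properness then follows from \cref{thm:leftproper2}.

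Finally, I would match the naive fibrant objects with the properties listed in \cref{table:MScompconjpurposes} by a routine unpacking of lifting conditions. A double category $\bA$ has the right lifting property with respect to $\bH\mathbbm{2}\to \Sq\mathbbm{2}$ (respectively $\bV\mathbbm{2}\to \Sq\mathbbm{2}$) if and only if every horizontal (resp.\ vertical) morphism admits a companion; dually, RLP against $\bH\mathbbm{2}\to \Sq\mathbbm{2}^{\mathrm{v,op}}$ (resp.\ $\bV\mathbbm{2}\to \Sq\mathbbm{2}^{\mathrm{v,op}}$) encodes the existence of a conjoint for each horizontal (resp.\ vertical) morphism, using \cref{conjvscomp}. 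For $\dblcat_\mathrm{whi}$, RLP against $\bH\Eadj\to \Sq\Eadj$ says that every horizontal adjoint equivalence admits a companion; since every horizontal equivalence can be promoted to an adjoint horizontal equivalence, and by \cref{rmk:companionsforequivs} a companion for one member of an adjoint equivalence data forces the other, this is equivalent to weak horizontal invariance. Putting these identifications together for each $\cJ_\bigstar$ matches the fibrancy column of the table. The main subtlety in this plan is ensuring \cref{prop:compsetJ} really suffices as the iterative step---but the proof of that proposition is symmetric across the five generators, so the iteration causes no additional difficulty.
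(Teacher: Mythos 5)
Your proposal is correct and follows essentially the same route as the paper: check that each $\cJ_\bigstar$ consists of canonical cofibrations with canonically cofibrant domains via \cref{charcof}, obtain the hypotheses of \cref{equivalentcond4} by (iterated) application of \cref{prop:compsetJ}, apply \cref{thm:genmodel} for existence and cofibrant generation, invoke \cref{thm:leftproper2} for left properness, and identify the fibrant objects by inspecting the lifting conditions against the added generators.
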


\begin{proof}
    By definition, the sets $\cJ_\bigstar$ from \cref{def:Jxxx} are such that $\cJ_0\subseteq\cJ_\bigstar$, and it is immediate to see that all the double functors in $\cJ_\bigstar$ are canonical cofibrations with canonically cofibrant domain using the descriptions in \cref{charcof}. Moreover, each double functor added to $\cJ_\bigstar$ is designed to capture the new requirements on fibrant objects, as revealed by a straightforward inspection of the right lifting properties. The fact that each of these sets satisfies the hypotheses of \cref{equivalentcond4} is the content of \cref{prop:compsetJ}. Hence, by \cref{thm:genmodel}, the desired model structures exist.

    The fact that all of these model structures are cofibrantly generated is also guaranteed by \cref{thm:genmodel}, and they are left proper by \cref{thm:leftproper2}.
\end{proof}

\subsection{The models: purpose and interactions}\label{subsec:propertiesTable1}

We start by proving that the square functor is left Quillen when the target is the gregarious model structure.

\begin{prop} \label{prop:SqLQ}
    The adjunction
       \begin{tz}
\node[](1) {$\twocat$}; 
\node[right of=1,xshift=1.8cm](2) {$\dblcat_\mathrm{greg}$};

\draw[->,bend left=20] ($(1.east)+(0,4pt)$) to node[above,la]{$\Sq$} ($(2.west)+(0,4pt)$);
\draw[->,bend left=20] ($(2.west)-(0,4pt)$) to node[below,la]{$\bfR$} ($(1.east)-(0,4pt)$);

\node[la] at ($(1.east)!0.5!(2.west)$) {$\bot$};
\end{tz}
is a Quillen pair whose derived unit is a biequivalence.
\end{prop}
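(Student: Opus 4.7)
The plan is to verify that $\bfR$ is right Quillen and then to show that the unit $\eta_\cA\colon \cA \to \bfR \Sq \cA$ is a biequivalence for every 2-category $\cA$; since all objects in $\dblcat_\mathrm{greg}$ are fibrant, this suffices to conclude that the derived unit is a biequivalence.

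First I would show that $\bfR$ preserves trivial fibrations, i.e., sends canonical trivial fibrations to the trivial fibrations of \cref{thm:lackMS}. Given a canonical trivial fibration $F\colon \bA \to \bB$, the 2-functor $\bfR F$ is surjective on objects, full on morphisms (a companion pair $(g,v,\chi,\omega)$ from $FA$ to $FA'$ in $\bB$ lifts by using fullness of $F$ on horizontal and vertical morphisms to lift $g$ and $v$, then fully faithfulness on squares to lift $\chi,\omega$; the companion equations descend via faithfulness), and fully faithful on 2-morphisms (since these are horizontal globular squares, to which fully faithfulness of $F$ on squares applies). Similarly, $\bfR$ preserves fibrations: comparing the characterizations in \cref{lem:descrnfib0} and \cref{Lackfib}, condition (f1) of an equifibration---lifting equivalences in $\bfR \bB$---is exactly (f1) of a gregarious fibration, since equivalences in $\bfR \bB$ coincide with gregarious equivalences in $\bB$ by definition; while (f2) of an equifibration---local isofibration---translates directly to (f2) of a gregarious fibration, since 2-morphisms in $\bfR \bA$ are horizontal globular squares.

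Finally I would analyze the unit $\eta_\cA\colon \cA \to \bfR \Sq \cA$, which is the identity on objects and sends each morphism $f\colon A \to B$ of $\cA$ to the canonical companion pair $(f,f,\id_f,\id_f)$ of \cref{rem:Sqnaivefib}. For fully faithfulness on 2-morphisms, observe that by \cref{def:square} a horizontal globular square in $\Sq \cA$ between companion pairs lying over horizontal morphisms $f$ and $g$ is precisely a 2-cell $f \Rightarrow g$ in $\cA$. For essential fullness on morphisms, given an arbitrary companion pair $(f,u,\varphi,\psi)$ in $\Sq \cA$, the identity 2-cell $\id_f$ in $\cA$ defines a 2-isomorphism in $\bfR \Sq \cA$ between $(f,u,\varphi,\psi)$ and $\eta_\cA(f)=(f,f,\id_f,\id_f)$. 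Thus $\eta_\cA$ satisfies the three conditions of a biequivalence in \cref{defn:bieq}.
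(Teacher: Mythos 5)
Your proposal is correct and takes essentially the same route as the paper: one shows $\bfR$ is right Quillen by comparing the explicit descriptions of the relevant fibration classes, and then, since every object of $\dblcat_\mathrm{greg}$ is fibrant, identifies the derived unit with the unit $\cA\to\bfR\Sq\cA$, which is a biequivalence exactly as you argue (identity on objects, essentially full via the $2$-isomorphism $(f,f,\id_f,\id_f)\cong(f,u,\varphi,\psi)$ induced by $\id_f$, and fully faithful on $2$-morphisms by the definition of horizontal globular squares in $\Sq\cA$). The only cosmetic difference is that the paper deduces right-Quillenness from preservation of fibrations together with preservation of all weak equivalences (\cref{prop:4tupleweakequiv}), whereas you check preservation of fibrations and of canonical trivial fibrations directly; both are standard criteria, and your trivial-fibration verification is correct.
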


\begin{proof}
    Since the right adjoint $\bfR$ preserves all weak equivalences by \cref{prop:4tupleweakequiv}, to show that it is right Quillen it suffices to prove that it preserves fibrations. This is straightforward: given a fibration $F\colon\bA\to\bB$ in $\dblcat_\mathrm{greg}$, we see that conditions (f1) and (f2) in \cref{Lackfib} for $\bfR F$ correspond to conditions (f1) and (f2) in \cref{lem:descrnfib0} satisfied by $F$.

    For the claim regarding the derived unit, first note that it can be chosen to agree with the unit of the adjunction, as every double category in $\dblcat_\mathrm{greg}$ is fibrant. Finally, the unit $\cA\to \bfR\Sq\cA$ is a biequivalence. Indeed, it is the identity on objects; essentially full on morphisms since for any companion pair $(f,u,\varphi,\psi)$ in $\Sq\cA$, the identity at $f$ gives a $2$-isomorphism $(f,f,\id_f,\id_f)\cong (f,u,\varphi,\psi)$ in $\bfR\Sq\cA$; and fully faithful on $2$-morphisms by definition of the horizontal globular squares in $\Sq\cA$.
\end{proof}

We summarize in the following diagram the left Quillen functors relating the canonical model structure on $\twocat$ with the different model structures on $\dblcat$. 

\begin{prop} \label{LQdiagram}
    We have the following (non-commutative) diagram of left Quillen functors 
    \begin{tz}
    \node[](1) {$\dblcat_\mathrm{greg}$}; 
    \node[below of=1,xshift=-3cm](2) {$\dblcat_\mathrm{whi}$}; 
    \node[below of=1,xshift=3cm](2') {$\dblcat_\mathrm{wvi}$}; 
    \node[below of=1](0) {$\twocat$}; 
    \node[below of=0](0') {$\dblcat_\mathrm{tr}$}; 
    \node[below of=2,xshift=-3cm](4) {$\dblcat_\mathrm{h,eqp}$}; 
    \node[below of=2',xshift=3cm](4') {$\dblcat_\mathrm{v,eqp}$}; 

\draw[->](0) to node[left,la]{$\Sq$} (1);

\draw[->](0) to node[above,la]{$\bH$} (2);    
\draw[->](0) to node[above,la]{$\bV$} (2');

\draw[->](1) to node[above,la]{$\id$} (2); 
\draw[->](1) to node[above,la]{$\id$} (2'); 
\draw[->](2) to node[above,la]{$\id$} (4);
\draw[->](2') to node[above,la]{$\id$} (4');
\draw[->](2) to node[below,la]{$\id$} (0'); 
\draw[->](2') to node[below,la]{$\id$} (0'); 
\end{tz} 
    where the functors from $\twocat$ are left Quillen embeddings.
\end{prop}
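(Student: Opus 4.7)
My plan is to handle the identity arrows and the functors out of $\twocat$ separately. For the identity arrows, I will use that every model structure in the diagram is a cofibrantly generated left Bousfield localization of $\dblcat_\mathrm{greg}$ by \cref{thm:MSislocwrtJ}, so all share the canonical cofibrations and canonical trivial fibrations. By a standard argument for Bousfield localizations over a common base, an identity functor $\id\colon\cM_1\to\cM_2$ between two such localizations is left Quillen precisely when every fibrant object of $\cM_2$ is fibrant in $\cM_1$: in that case $\cM_2$ is also a Bousfield localization of $\cM_1$ by \cref{prop:locvsBousfieldloc,thm:existleftBousfield}, and the identity realizes this localization. Inspecting \cref{table:MScompconjpurposes} and \cref{notn:properties1}, this containment is immediate in every case: a horizontal equipment satisfies $\mathrm{comp}(\mathrm{hMor})$ and hence $\mathrm{comp}(\mathrm{hEq})$, so is fibrant in $\dblcat_\mathrm{whi}$; a transposable double category satisfies $\mathrm{comp}(\mathrm{hMor})$ and $\mathrm{comp}(\mathrm{vMor})$, hence also $\mathrm{comp}(\mathrm{hEq})$ and $\mathrm{comp}(\mathrm{vEq})$, so is fibrant in both $\dblcat_\mathrm{whi}$ and $\dblcat_\mathrm{wvi}$; and every double category is fibrant in $\dblcat_\mathrm{greg}$.

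For the functors out of $\twocat$, I will invoke \cref{prop:SqLQ} for $\Sq\colon\twocat\to\dblcat_\mathrm{greg}$, which directly provides both the left Quillen property and the embedding (via the derived unit being a biequivalence). For $\bH\colon\twocat\to\dblcat_\mathrm{whi}$ and its transpose $\bV\colon\twocat\to\dblcat_\mathrm{wvi}$, both properties are established in \cite{whi}: the left Quillen property follows from $\bH$ preserving cofibrations (using the descriptions in \cite{Lack2Cat} and \cref{charcof}) and preserving biequivalences; the embedding property follows because $\bH\dashv\bfH$ is a fully faithful adjunction, so the unit is an isomorphism, and $\bfH$ sends the weak equivalences between fibrant objects of $\dblcat_\mathrm{whi}$---which are gregarious double equivalences by \cref{thm:genmodel}---to biequivalences.

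The main obstacle will be justifying the standard argument invoked in the first paragraph. The key point is that the containment of fibrant objects implies every object local for the $\cM_2$-localizing set is also local for the $\cM_1$-localizing set, so the generating anodyne extensions of $\cM_1$ are $\cM_2$-local equivalences; thus $\cM_2$ can be realized as the Bousfield localization of $\cM_1$ at any set of its generating anodyne extensions. Applying \cref{prop:locvsBousfieldloc} then identifies this Bousfield localization with $\cM_2$, and the identity functor becomes the canonical left Quillen localization map.
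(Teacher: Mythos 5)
Your proposal is correct in substance, but it takes a genuinely different route from the paper for the identity functors. The paper argues on the trivial-cofibration side: since all the model structures share the canonical cofibrations, it suffices to show $\cJ_0\subseteq\cJ_\mathrm{whi}\subseteq\cof(\cJ_\mathrm{h,eqp}),\cof(\cJ_\mathrm{tr})$, and the only non-formal point is that $\bH\Eadj\to\Sq\Eadj$ is anodyne for any set containing $\bH\mathbbm{2}\to\Sq\mathbbm{2}$, which is exactly \cref{rmk:companionsforequivs}. You instead argue on the fibrant-object side: all the double-categorical model structures are cofibrantly generated Bousfield localizations of $\dblcat_\mathrm{greg}$ by \cref{thm:MSislocwrtJ}, and nesting of fibrant (local) objects forces nesting of local equivalences, hence of trivial cofibrations, so the identities are left Quillen; the containments of fibrant objects are then read off from \cref{table:MScompconjpurposes}. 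This is a valid general principle (the cleanest phrasing is simply: fibrant$(\cM_2)\subseteq$ fibrant$(\cM_1)$ implies every $S_1$-local equivalence is an $S_2$-local equivalence, so with equal cofibrations the identity preserves trivial cofibrations; your intermediate claim that $\cM_2$ ``is a Bousfield localization of $\cM_1$'' via \cref{prop:locvsBousfieldloc} is slightly circular as stated, since that presupposes the Quillen pair, but your final paragraph's local-equivalence argument repairs this). Note, though, that your route does not avoid the paper's key lemma: the identification of the fibrant objects of $\dblcat_\mathrm{whi}$ with double categories satisfying $\mathrm{comp}(\mathrm{hEq})$ (so that, e.g., equipments and transposable double categories are whi-fibrant) itself rests on \cref{rmk:companionsforequivs}, so the mathematical content is the same, just packaged through the localization formalism rather than an explicit inclusion of anodyne sets. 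For the functors out of $\twocat$ both proofs coincide: \cref{prop:SqLQ} for $\Sq$ and \cite[Theorem 6.3]{whi} for $\bH$ (with $\bV$ by transposition). What your approach buys is a uniform criterion applicable to any pair of localizations in the poset; what the paper's buys is independence from the explicit description of fibrant objects and a single, checkable lifting fact.
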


\begin{proof}
To show that the identity functors are left Quillen, since the model structures involved all have the same cofibrations, it suffices to prove that we have inclusions 
\[\cJ_0\subseteq\cJ_\mathrm{whi}\subseteq \mathrm{cof}(\cJ_\mathrm{h,eqp}), \mathrm{cof}(\cJ_\mathrm{tr});\] the vertical counterparts will follow analogously.
 The fact that $\cJ_0\subseteq \cJ_\mathrm{whi}$ is immediate by definition of these sets. For the other two inclusions, we claim that the double functor $\bH\Eadj\to \Sq\Eadj$ in~$\cJ_\mathrm{whi}$ is an anodyne extension for every set $\cJ_\bigstar$ that contains the double functor $\bH\mathbbm{2}\to \Sq\mathbbm{2}$. Indeed, this follows from the fact that the companion of a horizontal adjoint equivalence can always be constructed as a vertical adjoint equivalence, by \cref{rmk:companionsforequivs}.

    For the functors involving $\twocat$, we have that  $\Sq\colon \twocat\to \dblcat_\mathrm{greg}$ is a left Quillen embedding by \cref{prop:SqLQ}, and  that $\bH\colon \twocat\to \dblcat_\mathrm{whi}$ is a left Quillen embedding by \cite[Theorem 6.3]{whi}; the case of $\bV$ is analogous.
\end{proof}

We now turn to proving that the square functor is both a left and a right Quillen equivalence when the target is the model structure for transposable double categories. In particular, this justifies our claim that $\dblcat_\mathrm{tr}$ models the homotopy theory of 2-categories.

\begin{prop} \label{rem:2catbothleftandrightind}
    The adjunctions
    \begin{tz}
\node[](1) {$\twocat$}; 
\node[right of=1,xshift=1.6cm](2) {$\dblcat_\mathrm{tr}$};

\draw[->,bend right=35] ($(2.west)+(0,8pt)$) to node[above,la]{$L_{Sq}$} ($(1.east)+(0,8pt)$);
\draw[->] ($(1.east)$) to node[over,la]{$\Sq$} ($(2.west)$);
\draw[->,bend left=35] ($(2.west)-(0,8pt)$) to node[below,la]{$\bfR$} ($(1.east)-(0,8pt)$);

\node[la] at ($(1.east)!0.5!(2.west)+(0,10pt)$) {$\bot$};
\node[la] at ($(1.east)!0.5!(2.west)-(0,10pt)$) {$\bot$};
\end{tz}
    are Quillen equivalences. Moreover, the model structure on $\twocat$ is both left- and right-transferred from the model structure on $\dblcat_\mathrm{tr}$ along $\Sq$.
\end{prop}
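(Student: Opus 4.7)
The plan is to apply \cref{rem:inductionQE} with $U=\Sq\colon\twocat\to\dblcat_\mathrm{tr}$, for which I must produce a strict retraction of $\Sq$ and verify that $\Sq$ is both a left and a right Quillen equivalence.

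For the retraction, I would take $\bfH\colon\dblcat\to\twocat$ from \cref{underlyinghorver2cat}. A direct computation gives $\bfH\Sq\cA=\cA$ since $\Sq\cA$ has the same objects and horizontal morphisms as $\cA$, and its horizontal globular squares coincide with the $2$-cells of $\cA$.

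Next, I would verify that $\Sq$ is a left Quillen equivalence in the adjunction $\Sq\dashv\bfR$. It is left Quillen by composing $\Sq\colon\twocat\to\dblcat_\mathrm{greg}$ from \cref{prop:SqLQ} with the left Quillen identity $\dblcat_\mathrm{greg}\to\dblcat_\mathrm{tr}$ from \cref{LQdiagram}. Since $\Sq\cA$ is always transposable by \cref{rem:Sqnaivefib}, the derived unit at a cofibrant $\cA$ agrees with the ordinary unit $\cA\to\bfR\Sq\cA$, which is a biequivalence by \cref{prop:SqLQ} (noting that $\bfR$ still sends gregarious double equivalences to biequivalences by \cref{prop:4tupleweakequiv}). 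For the derived counit at a transposable $\bA$, I would verify directly that the counit $\Sq\bfR\bA\to\bA$ is a gregarious double equivalence in the sense of \cref{def:gregwe}: it is the identity on objects; surjective on horizontal and vertical morphisms since every such morphism of $\bA$ lies in some companion pair, hence is hit by the corresponding morphism of $\Sq\bfR\bA$; and fully faithful on squares, as the universal property of companions bijects arbitrary squares of $\bA$ with the expected boundary against horizontal globular squares in $\bA$, which are exactly the $2$-morphisms of $\bfR\bA$ and hence the squares of $\Sq\bfR\bA$.

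For $\Sq$ to be a right Quillen equivalence in $L_{Sq}\dashv\Sq$, I would first check that $\Sq$ is right Quillen: preservation of canonical trivial fibrations follows directly from \cref{prop:trivfibs}, and preservation of fibrations follows since equifibrations of $\twocat$ yield gregarious fibrations between transposable double categories (via the identifications $\bfH\Sq\cA=\bfV\Sq\cA=\cA$ applied to \cref{lem:descrnfib0}), which are fibrations in $\dblcat_\mathrm{tr}$ by \cref{thm:genmodel}. For the Quillen equivalence itself, the key observation is that $\Sq$ preserves all weak equivalences: an analogous direct check shows $\Sq F$ is a gregarious double equivalence whenever $F$ is a biequivalence. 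Consequently $\mathbb{L}\Sq$ and $\mathbb{R}\Sq$ both agree with the induced functor $[\Sq]\colon\mathrm{Ho}(\twocat)\to\mathrm{Ho}(\dblcat_\mathrm{tr})$, which is an equivalence with pseudo-inverse $\mathbb{R}\bfR$ by the previous step; uniqueness of adjoints then forces $\mathbb{L}L_{Sq}\cong\mathbb{R}\bfR$, making $\mathbb{L}L_{Sq}$ a pseudo-inverse as well, so that $L_{Sq}\dashv\Sq$ is a Quillen equivalence. Applying \cref{rem:inductionQE} then yields the ``moreover'' statement. I expect the main obstacle to be the detailed verification of fully faithfulness on squares for the counit $\Sq\bfR\bA\to\bA$, since one must carefully trace through how companion data translates arbitrary squares of $\bA$ into horizontal globular squares; the remaining steps rest on formal categorical arguments.
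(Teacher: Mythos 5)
Your proposal is correct, and its scaffolding matches the paper's: both arguments obtain left Quillen-ness of $\Sq$ from \cref{prop:SqLQ,LQdiagram}, both check right Quillen-ness by hand on (trivial) fibrations using the explicit descriptions, and both deduce the transfer statement from \cref{rem:inductionQE} via the retraction $\bfH$ of $\Sq$. Where you diverge is in the heart of the Quillen-equivalence verification. The paper never touches the counit of $\Sq\dashv\bfR$: it notes that $\Sq$ preserves all weak equivalences for free (it is right Quillen and every object of $\twocat$ is fibrant, so Ken Brown applies) and reflects them because $\bfH\Sq F=F$ together with \cref{prop:4tupleweakequiv}, and then invokes an external lemma saying that a Quillen pair whose left adjoint preserves and reflects weak equivalences and has pointwise biequivalent unit is a Quillen equivalence. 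You instead verify directly that the counit $\Sq\bfR\bA\to\bA$ at a transposable $\bA$ is a gregarious double equivalence via the universal property of companions (it is in fact even a canonical trivial fibration), and then get $L_{Sq}\dashv\Sq$ by the homotopy-category argument that $\mathbb{L}\Sq\cong\mathbb{R}\Sq$ is an equivalence, so its left adjoint $\mathbb{L}L_{Sq}$ is forced to be one too. Your route is self-contained and makes the counit explicit, at the cost of the companion-pasting bookkeeping you anticipate and of proving preservation of weak equivalences by a direct check, which the paper gets more cheaply from fibrancy of all $2$-categories; conversely, the paper's reflection-of-equivalences trick ($\bfH\Sq F=F$) is the slicker way to see the ``only if'' direction that your homotopy-category step replaces. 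Do make explicit, as you implicitly use, that the derived counit agrees up to weak equivalence with the strict counit because $\Sq$ sends the cofibrant replacement map of $\bfR\bA$ (a trivial fibration in $\twocat$) to a weak equivalence; with that remark included, your argument is complete.
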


\begin{proof}
    The fact that $\Sq\colon \twocat\to \dblcat_\mathrm{tr}$ is left Quillen follows from \cref{LQdiagram}. To see that $\Sq\colon \twocat\to \dblcat_\mathrm{tr}$ is right Quillen, first note that by \cref{rem:Sqnaivefib} the functor $\Sq$ takes every $2$-category to a transposable double category. Then, it is straightforward to verify that $\Sq$ sends trivial fibrations and equifibrations in $\twocat$ to canonical trivial fibrations and gregarious fibrations in $\dblcat_\mathrm{tr}$ using the description of these classes of morphisms from \cref{Lackfib,thm:lackMS,def:trivfibs,lem:descrnfib0}. 

    To see that $\Sq$ is a Quillen equivalence, we prove that the adjunction $\Sq\dashv \bfR$ is a Quillen equivalence. As the proof of \cref{prop:SqLQ} above already shows that the unit of this adjunction is pointwise a biequivalence, by \cite[Lemma 3.3]{mehmet} it suffices to show that $\Sq\colon \twocat\to \dblcat_\mathrm{tr}$ both preserves and reflects weak equivalences. Since $\Sq$ is right Quillen and all objects in $\twocat$ are fibrant, it preserves all weak equivalences. For the converse, if $F\colon \cA\to \cB$ is a $2$-functor such that $\Sq F\colon \Sq\cA\to \Sq\cB$ is a weak equivalence in $\dblcat_\mathrm{tr}$, then $\Sq F$ is a gregarious biequivalence, as $\Sq\cA$ and $\Sq\cB$ are transposable. By \cref{prop:4tupleweakequiv}, this implies that $\mathbf{H}\Sq F=F$ is a biequivalence.
    
    Using that the functor $\bfH\colon \dblcat\to \twocat$ is a retraction of the square functor $\Sq$, we deduce the ``moreover'' part from \cref{rem:inductionQE}.
\end{proof}

Next, we deal with the cases of $\dblcat_\mathrm{tr,ladj}$ and $\dblcat_\mathrm{tr,adj}$. First let us note the following. 

\begin{rem}\label{rem:tr_localizations}
By \cref{thm:MSislocwrtJ}, the model structures $\dblcat_\mathrm{tr,ladj}$ and $\dblcat_\mathrm{tr,adj}$ are the localization of the model structure $\dblcat_\mathrm{tr}$ at the sets 
\[ \mathbb{S}_\mathrm{ladj}\coloneqq\{\bH\mathbbm{2}\to\Sq\mathbbm{2}^\mathrm{v,op}\} \quad \text{and}\quad  \mathbb{S}_\mathrm{adj}\coloneq \mathbb{S}_\mathrm{ladj}\cup \{\bV\mathbbm{2}\to\Sq\mathbbm{2}^\mathrm{v,op}\}. \]
\end{rem}

\begin{prop}\label{prop:right_transferred_alladjoints}
    The right-transferred model structures on $\twocat$ along the right adjoint
    \[ \Sq\colon \twocat\to \dblcat_\mathrm{tr,ladj} \quad \text{and} \quad \Sq\colon \twocat\to \dblcat_\mathrm{tr,adj} \]
    exist. We denote them by $\twocat_\mathrm{ladj}$ and $\twocat_\mathrm{adj}$, respectively.
\end{prop}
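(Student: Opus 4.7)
The plan is to deduce the existence of these right-transferred model structures as a direct application of \cref{rem:fibindloc} to the Quillen equivalence $L_{Sq}\dashv\Sq$ of \cref{rem:2catbothleftandrightind}, combined with the fact that $\dblcat_{\mathrm{tr,ladj}}$ and $\dblcat_{\mathrm{tr,adj}}$ are obtained as left Bousfield localizations of $\dblcat_{\mathrm{tr}}$ (as recorded in \cref{rem:tr_localizations}). The point is that, under a Quillen equivalence, right-transfer is compatible with localization at cofibrant morphisms, so we get the desired model structures on $\twocat$ for free, with no new verifications beyond a cofibrancy check.

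More concretely, first I would recall from \cref{rem:2catbothleftandrightind} that the model structure on $\twocat$ is right-transferred from $\dblcat_{\mathrm{tr}}$ along the adjunction $L_{Sq}\dashv\Sq$, and that this adjunction is a Quillen equivalence. Both model structures are left proper---by \cref{thm:lackMS} for $\twocat$ and \cref{thm:leftproper2} for $\dblcat_{\mathrm{tr}}$---and combinatorial, so the hypotheses of \cref{rem:fibindloc} are satisfied.

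Next, I would verify that the double functors in $\mathbb{S}_{\mathrm{ladj}}$ and $\mathbb{S}_{\mathrm{adj}}$ are maps between cofibrant objects in $\dblcat_{\mathrm{tr}}$, i.e., between canonically cofibrant double categories. The domains $\bH\mathbbm{2}$ and $\bV\mathbbm{2}$ and the common codomain $\Sq\mathbbm{2}^\mathrm{v,op}$ are all canonically cofibrant by \cref{charcof}, since their underlying horizontal and vertical categories are either discrete or isomorphic to $\mathbbm{2}$, which are all free.

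With these observations in place, \cref{rem:fibindloc} applied to $S=\mathbb{S}_{\mathrm{ladj}}$ and $S=\mathbb{S}_{\mathrm{adj}}$ immediately yields that the left Bousfield localizations $\twocat_{L_{Sq}(\mathbb{S}_{\mathrm{ladj}})}$ and $\twocat_{L_{Sq}(\mathbb{S}_{\mathrm{adj}})}$ exist and coincide with the right-transferred model structures from $\dblcat_{\mathrm{tr,ladj}}$ and $\dblcat_{\mathrm{tr,adj}}$ along $\Sq$. These are the model structures $\twocat_{\mathrm{ladj}}$ and $\twocat_{\mathrm{adj}}$. I do not anticipate any substantial obstacle: the argument is essentially a citation of \cref{rem:fibindloc,rem:2catbothleftandrightind,rem:tr_localizations}, and the only genuine verification is the routine cofibrancy check for the few explicit double categories appearing in the localizing sets. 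A more interesting (but separate) question---which presumably justifies the choice of notation $\twocat_{\mathrm{ladj}}$ and $\twocat_{\mathrm{adj}}$---would be to identify the fibrant objects of these new model structures with the $2$-categories whose morphisms admit left (resp.\ both left and right) adjoints, but this is a downstream identification and not needed for the present existence result.
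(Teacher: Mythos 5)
Your proposal is correct and follows essentially the same route as the paper: both arguments deduce the existence of the transferred structures by applying \cref{rem:fibindloc} to the Quillen equivalence $L_{Sq}\dashv \Sq$ of \cref{rem:2catbothleftandrightind}, using \cref{rem:tr_localizations} to present $\dblcat_\mathrm{tr,ladj}$ and $\dblcat_\mathrm{tr,adj}$ as localizations of $\dblcat_\mathrm{tr}$ at sets of maps between canonically cofibrant objects. The only cosmetic difference is that the paper additionally identifies the images $L_{Sq}(\bH\mathbbm{2}\to\Sq\mathbbm{2}^\mathrm{v,op})$ and $L_{Sq}(\bV\mathbbm{2}\to\Sq\mathbbm{2}^\mathrm{v,op})$ explicitly as $r,\ell\colon\mathbbm{2}\to\mathrm{Adj}$ (used later for \cref{prop:SqQEadj}), whereas you make explicit the cofibrancy check on the domains and codomains that the paper leaves implicit.
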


\begin{proof}
    Since the model structure on $\twocat$ is left proper and combinatorial, we can consider its left Bousfield localization with respect to the sets 
    \[ S_\mathrm{ladj}\coloneqq \{ r\colon \mathbbm{2}\to \mathrm{Adj} \} \quad \text{and} \quad S_\mathrm{adj}\coloneqq S_\mathrm{ladj}\cup \{ \ell\colon \mathbbm{2}\to \mathrm{Adj} \}, \]
    where $\mathrm{Adj}$ denotes the free $2$-category on an adjunction $\ell\dashv r$. Note that the image under $L_\Sq$ of the double functor $\bH\mathbbm{2}\to \Sq\mathbbm{2}^{\mathrm{v,op}}$ (resp.\ $\bV\mathbbm{2}\to \Sq\mathbbm{2}^{\mathrm{v,op}}$) is precisely the 2-functor $r\colon \mathbbm{2}\to \mathrm{Adj}$ (resp.\ $\ell\colon \mathbbm{2}\to \mathrm{Adj}$). Hence, by \cref{rem:fibindloc,rem:2catbothleftandrightind,rem:tr_localizations}, we get that the localizations of $\twocat$ at $S_\mathrm{ladj}$ and $S_\mathrm{adj}$ coincide with the right-transferred model structures along the right adjoint $\Sq$ from the model structures $\dblcat_\mathrm{tr,ladj}$ and $\dblcat_\mathrm{tr,adj}$, respectively. 
\end{proof}

\begin{rem}\label{fibrantladj}
    Given a $2$-category $\cA$, we have that $\Sq\cA$ is fibrant in $\dblcat_\mathrm{tr,ladj}$ if and only if every morphism in $\cA$ has a left adjoint. Indeed, let $f\colon A\to B$ be a morphism in $\cA$; then $(f,f,\id_f,\id_f)$ is a companion pair, as mentioned in \cref{rem:Sqnaivefib}. If the horizontal morphism $f$ further has a conjoint with data $(f,u,\varphi,\psi)$, we can then consider the squares
\begin{tz}
    \node[](1) {$B$}; 
\node[right of=1](2) {$B$}; 
\node[below of=1](3) {$A$}; 
\node[right of=3](4) {$B$}; 
\node[below of=3](5) {$B$};
\node[right of=5](6) {$B$};
\node[left of=3, xshift=.8cm](7) {$\eta\coloneqq$}; 

\draw[d,pro] (2) to node[left,la]{} (4); 
\draw[d,pro] (4) to node[left,la]{} (6); 
\draw[d] (1) to node[above,la]{} (2); 
\draw[d] (5) to node[above,la]{} (6);
\draw[->] (3) to node[above,la]{$f$} (4); 
\draw[->,pro](1) to node[left,la]{$u$} (3); 
\draw[->,pro](3) to node[left,la]{$f$} (5); 
 
 \node[la] at ($(1)!0.5!(4)$) {$\varphi$};
  \node[la] at ($(3)!0.5!(6)$) {$\id_f$};

\node[right of=1,xshift=4cm](1) {$A$}; 
\node[right of=1](2) {$A$}; 
\node[below of=1](3) {$A$}; 
\node[right of=3](4) {$B$}; 
\node[below of=3](5) {$A$};
\node[right of=5](6) {$A$};
\node[left of=3, xshift=.8cm](7) {$\epsilon\coloneqq$}; 

\draw[d,pro] (1) to node[left,la]{} (3); 
\draw[d,pro] (3) to node[left,la]{} (5); 
\draw[d] (1) to node[above,la]{} (2); 
\draw[d] (5) to node[above,la]{} (6);
\draw[->] (3) to node[above,la]{$f$} (4); 
\draw[->,pro](2) to node[right,la]{$f$} (4); 
\draw[->,pro](4) to node[right,la]{$u$} (6); 
 
 \node[la] at ($(1)!0.5!(4)$) {$\id_f$};
  \node[la] at ($(3)!0.5!(6)$) {$\psi$};
    \end{tz}
    in $\Sq\cA$, which must correspond to a unit $\eta\colon\id\Rightarrow fu$ and counit $\epsilon\colon uf\Rightarrow \id$ in $\cA$ exhibiting $u$ as a left adjoint of $f$. Similarly, starting from the data of a left adjoint in $\cA$ it is possible to construct the data of a conjoint in $\Sq\cA$. A similar analysis is carried out in \cite[Proposition 4.1.4]{Grandis}.

    The same idea can be used to show that a $2$-category $\cA$ is such that $\Sq\cA$ is fibrant in $\dblcat_\mathrm{tr,adj}$ if and only if every morphism in $\cA$ has both a left and a right adjoint.
\end{rem}

The above remark, together with the following result, justify our claim that $\dblcat_\mathrm{tr,ladj}$ (respectively,  $\dblcat_\mathrm{tr,adj}$) models the homotopy theory of $2$-categories such that every morphism has a left adjoint (respectively, both a left and a right adjoint).

\begin{prop} \label{prop:SqQEadj}
    The adjunctions
    \begin{tz}
\node[](1) {$\twocat_\mathrm{ladj}$}; 
\node[right of=1,xshift=2.2cm](2) {$\dblcat_\mathrm{tr,ladj}$};

\draw[->,bend right=35] ($(2.west)+(0,8pt)$) to node[above,la]{$L_{Sq}$} ($(1.east)+(0,8pt)$);
\draw[->] ($(1.east)$) to node[over,la]{$\Sq$} ($(2.west)$);
\draw[->,bend left=35] ($(2.west)-(0,8pt)$) to node[below,la]{$\bfR$} ($(1.east)-(0,8pt)$);

\node[la] at ($(1.east)!0.5!(2.west)+(0,10pt)$) {$\bot$};
\node[la] at ($(1.east)!0.5!(2.west)-(0,10pt)$) {$\bot$};

\node[right of=2,xshift=1.5cm](1) {$\twocat_\mathrm{adj}$}; 
\node[right of=1,xshift=2.1cm](2) {$\dblcat_\mathrm{tr,adj}$};

\draw[->,bend right=35] ($(2.west)+(0,8pt)$) to node[above,la]{$L_{Sq}$} ($(1.east)+(0,8pt)$);
\draw[->] ($(1.east)$) to node[over,la]{$\Sq$} ($(2.west)$);
\draw[->,bend left=35] ($(2.west)-(0,8pt)$) to node[below,la]{$\bfR$} ($(1.east)-(0,8pt)$);

\node[la] at ($(1.east)!0.5!(2.west)+(0,10pt)$) {$\bot$};
\node[la] at ($(1.east)!0.5!(2.west)-(0,10pt)$) {$\bot$};
\end{tz}
    are Quillen equivalences.
\end{prop}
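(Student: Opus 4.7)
The plan is to reduce both statements to \cref{rem:fibindloc} together with a short manipulation on homotopy categories; I will treat the $\mathrm{ladj}$ case in detail, the $\mathrm{adj}$ case being strictly analogous with $\mathbb{S}_\mathrm{adj}$ and $S_\mathrm{adj}$ in place of $\mathbb{S}_\mathrm{ladj}$ and $S_\mathrm{ladj}$.

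First I will apply \cref{rem:fibindloc} to the Quillen equivalence $L_\Sq\dashv\Sq$ between $\dblcat_\mathrm{tr}$ and $\twocat$ from \cref{rem:2catbothleftandrightind}, taking $S=\mathbb{S}_\mathrm{ladj}$. The hypotheses are satisfied: $\twocat$ is right-transferred from $\dblcat_\mathrm{tr}$ along $L_\Sq\dashv\Sq$ by \cref{rem:2catbothleftandrightind}, the morphism in $\mathbb{S}_\mathrm{ladj}$ has canonically cofibrant source and target, and the computation in the proof of \cref{prop:right_transferred_alladjoints} shows $L_\Sq(\mathbb{S}_\mathrm{ladj})=S_\mathrm{ladj}$. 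Combined with \cref{rem:tr_localizations}, this yields that $L_\Sq\dashv\Sq$ is a Quillen equivalence between $\dblcat_\mathrm{tr,ladj}$ and $\twocat_\mathrm{ladj}$, with $\twocat_\mathrm{ladj}$ right-transferred from $\dblcat_\mathrm{tr,ladj}$ along this adjunction.

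From the right-transfer characterization I read off that a $2$-functor $F$ is a weak equivalence in $\twocat_\mathrm{ladj}$ if and only if $\Sq F$ is a weak equivalence in $\dblcat_\mathrm{tr,ladj}$, so $\Sq\colon\twocat_\mathrm{ladj}\to\dblcat_\mathrm{tr,ladj}$ preserves and reflects all weak equivalences. Since the cofibrations on both sides are unchanged under the localizations and $\Sq\colon\twocat\to\dblcat_\mathrm{tr}$ preserves cofibrations by \cref{LQdiagram}, the functor $\Sq$ is also left Quillen into $\dblcat_\mathrm{tr,ladj}$, so $\Sq\dashv\bfR$ is a Quillen pair between the localized model structures.

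Finally, I pass to homotopy categories to upgrade this Quillen pair to a Quillen equivalence. The Quillen equivalence $L_\Sq\dashv\Sq$ gives that the derived functor $\mathbb{R}\Sq\colon\mathrm{Ho}(\twocat_\mathrm{ladj})\to\mathrm{Ho}(\dblcat_\mathrm{tr,ladj})$ is an equivalence of categories; but since $\Sq$ preserves all weak equivalences, $\mathbb{L}\Sq$ and $\mathbb{R}\Sq$ both agree with the induced functor $[\Sq]$, which is therefore an equivalence. In the derived adjunction $[\Sq]=\mathbb{L}\Sq\dashv\mathbb{R}\bfR$, the left adjoint $[\Sq]$ is an equivalence of categories, so by uniqueness of adjoints its right adjoint $\mathbb{R}\bfR$ is a quasi-inverse, making the derived adjunction an adjoint equivalence of homotopy categories. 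Hence $\Sq\dashv\bfR$ is a Quillen equivalence. The main obstacle is locating the correct input for \cref{rem:fibindloc}; once one observes that $\Sq$ preserves all weak equivalences between the localized structures, the Quillen equivalence status of $\Sq\dashv\bfR$ becomes a purely formal consequence of that of $L_\Sq\dashv\Sq$.
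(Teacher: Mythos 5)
Your argument has a genuine gap at its load-bearing step. From \cref{rem:fibindloc} (with $\cM=\dblcat_\mathrm{tr}$, $\cN=\twocat$, $F=L_\Sq$, $U=\Sq$, $S=\mathbb{S}_\mathrm{ladj}$) together with \cref{rem:tr_localizations} you may conclude that $\twocat_\mathrm{ladj}$ is \emph{right-transferred} from $\dblcat_\mathrm{tr,ladj}$ along $L_\Sq\dashv\Sq$ --- but that is all those results say; neither of them asserts that $L_\Sq\dashv\Sq$ is a Quillen \emph{equivalence} between the localized structures, and right-transfer alone does not imply this. Since your entire treatment of the second adjunction (the passage to homotopy categories, where you need $\bR\Sq$ to be an equivalence) is bootstrapped from this asserted Quillen equivalence, the proof as written assumes, at the localized level, exactly the nontrivial statement it is supposed to establish for the first adjunction. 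This is precisely the input the paper supplies by citing Hirschhorn's Theorem 3.3.20(1)(b): a Quillen equivalence between left proper cellular/combinatorial model categories induces a Quillen equivalence between the left Bousfield localizations at $S$ and at (the derived image of) $S$; applied to $L_\Sq\dashv\Sq$ with $S=\mathbb{S}_\mathrm{ladj}$ and $L_\Sq(\mathbb{S}_\mathrm{ladj})=S_\mathrm{ladj}$ this gives the first Quillen equivalence directly.

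The gap is fillable without Hirschhorn, but you must say how: once $\Sq$ creates the weak equivalences of $\twocat_\mathrm{ladj}$ (your right-transfer), the localized adjunction $L_\Sq\dashv\Sq$ is a Quillen equivalence as soon as its derived unit at cofibrant objects is a weak equivalence; and since every object of $\twocat$ is fibrant, the underived unit $\bA\to\Sq L_\Sq\bA$ at a canonically cofibrant $\bA$ is already a weak equivalence in $\dblcat_\mathrm{tr}$ (by \cref{rem:2catbothleftandrightind}), hence remains one in the localization $\dblcat_\mathrm{tr,ladj}$, and composing with $\Sq$ applied to a fibrant replacement of $L_\Sq\bA$ in $\twocat_\mathrm{ladj}$ gives the claim. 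With that repaired, the rest of your argument is correct, and your deduction of the second Quillen equivalence $\Sq\dashv\bfR$ from the first via the identification $\bL\Sq=\bR\Sq=[\Sq]$ on homotopy categories is a legitimate alternative to the paper's route, which instead obtains both equivalences from the localization theorem together with the Bousfield-localization descriptions in \cref{prop:right_transferred_alladjoints}; your route has the mild advantage of avoiding the identification of the localization of $\dblcat_\mathrm{tr}$ at $\Sq(S_\mathrm{ladj})$ with $\dblcat_\mathrm{tr,ladj}$.
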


\begin{proof}
    This follows directly from \cref{rem:2catbothleftandrightind} and \cite[Theorem 3.3.20(1)(b)]{Hirsch}, using the descriptions of the model structures $\twocat_\mathrm{ladj}$ and $\twocat_\mathrm{adj}$ as Bousfield localizations of $\twocat$ from the proof of \cref{prop:right_transferred_alladjoints}. 
\end{proof}

\begin{rem} In \cite{adjoining_adjoints} the authors construct a $2$-category by freely adjoining right adjoints for each morphism. Taking fibrant replacements in the (analogously defined) model category $\twocat_\mathrm{radj}$ provides a way to do this, as well as a generalization to allow $2$-categories with non-trivial $2$-morphisms as the starting point.
\end{rem}

We summarize the results from this subsection in the following diagram. 

\begin{prop} \label{RQdiagram}
    We have the following (non-commutative) diagram of right Quillen embeddings.
    \begin{tz}
    \node[](1) {$\dblcat_\mathrm{greg}$}; \node[below of=1,xshift=-3cm](2) {$\dblcat_\mathrm{whi}$}; 
    \node[below of=1,xshift=3cm](2') {$\dblcat_\mathrm{wvi}$};
    \node[below of=1](0) {$\twocat$}; 
    \node[below of=0](0') {$\dblcat_\mathrm{tr}$}; 
    \node[below of=0',yshift=-1.5cm](0'') {$\dblcat_\mathrm{tr,adj}$}; 
    \node[above of=0''](0''') {$\twocat_\mathrm{adj}$}; 
    \node[below of=2,xshift=-3cm](4) {$\dblcat_\mathrm{h,eqp}$}; 
    \node[below of=2',xshift=3cm](4') {$\dblcat_\mathrm{v,eqp}$}; 
     \node[below of=4,xshift=3cm](5) {$\dblcat_\mathrm{tr,ladj}$};
    \node[above of=5](05) {$\twocat_\mathrm{ladj}$};  
     \node[below of=4',xshift=-3cm](5') {$\dblcat_\mathrm{tr,radj}$};
    \node[above of=5'](05') {$\twocat_\mathrm{radj}$}; 

\draw[->](0) to node[left,la]{$\Sq$} node[right,la]{$\simeq$} (0');
\draw[->](05) to node[left,la]{$\Sq$} node[right,la]{$\simeq$} (5);
\draw[->](05') to node[left,la]{$\Sq$} node[right,la]{$\simeq$} (5');
\draw[->](0''') to node[left,la]{$\Sq$} node[right,la]{$\simeq$} (0'');

\draw[->](0) to node[above,la]{$\bH^{\simeq}$} (2);    
\draw[->](0) to node[above,la]{$\bV^\simeq$} (2');

\draw[->](2) to node[above,la]{$\id$} (1); 
\draw[->](2') to node[above,la]{$\id$} (1); 
\draw[->](4) to node[above,la]{$\id$} (2);
\draw[->](4') to node[above,la]{$\id$} (2');
\draw[->](0') to node[below,la]{$\id$} (2); 
\draw[->](0') to node[below,la]{$\id$} (2'); 
\draw[->](5) to node[below,la]{$\id$} (0');
\draw[->](5') to node[below,la]{$\id$} (0');
\draw[->](0'') to node[below,la]{$\id$} (5); 
\draw[->](0'') to node[below,la]{$\id$} (5');
\draw[->](5) to node[below,la]{$\id$} (4); 
\draw[->](5') to node[below,la]{$\id$} (4');  
\end{tz}
    where all instances of $\Sq$ are both left and right Quillen equivalences.
\end{prop}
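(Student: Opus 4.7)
The plan is to partition the arrows of the diagram into three types and handle each separately: (a) the identity functors between various model structures on $\dblcat$, (b) the homotopical embeddings $\bH^\simeq$ and $\bV^\simeq$ from $\twocat$, and (c) the four square functors. For (a), I would start from \cref{LQdiagram}, whose dual gives that each of the identity functors in the reverse direction is left Quillen. Since all of the model structures involved share the canonical trivial fibrations by hypothesis, and since any cofibrantly generated model structure with these trivial fibrations is a localization of $\dblcat_\mathrm{greg}$ by \cref{thm:MSislocwrtJ}, the more localized side has a larger class of trivial cofibrations, so its fibrations are contained in the fibrations of the less localized side. Hence each identity in our diagram is right Quillen. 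To upgrade to ``embedding'', I would invoke \cref{thm:MSislocwrtJ} once more: each such $\cM'$ is a left Bousfield localization of $\cM$, and the right adjoint identity $\id\colon \cM'\to \cM$ realizes $\mathrm{Ho}(\cM')$ as the full reflective subcategory of local objects inside $\mathrm{Ho}(\cM)$, which is precisely what is required of a right Quillen embedding.

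For (b), I would cite the result from \cite{whi} that $L_H^\simeq \dashv \bH^\simeq\colon \twocat \to \dblcat_\mathrm{whi}$ is a right Quillen embedding; the case of $\bV^\simeq$ follows by the transposition symmetry between horizontal and vertical morphisms. For (c), the case $\Sq\colon \twocat \to \dblcat_\mathrm{tr}$ is handled by \cref{rem:2catbothleftandrightind}, which states that both $L_\Sq \dashv \Sq$ and $\Sq \dashv \bfR$ are Quillen equivalences, so $\Sq$ is simultaneously a left and a right Quillen equivalence. For the remaining three square functors, corresponding to $\twocat_\mathrm{ladj}$, $\twocat_\mathrm{radj}$, and $\twocat_\mathrm{adj}$, I would invoke \cref{prop:SqQEadj} together with its transposed analogue for $\twocat_\mathrm{radj}$.

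The main technical hurdle is the ``embedding'' strengthening in (a), since being right Quillen alone is easier; but this reduces cleanly to the general Bousfield-localization formalism once \cref{thm:MSislocwrtJ} is applied. No new model-categorical computations are required beyond those already carried out in \cref{LQdiagram,thm:MSislocwrtJ,rem:2catbothleftandrightind,prop:SqQEadj}; the content of the proposition is essentially organizing and naming these adjunctions in the correct (non-commutative) diagram.
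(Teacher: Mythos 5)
Your overall route matches the paper's: the identities are handled by dualizing the left Quillen statements, the homotopical embedding $\bH^\simeq$ (and $\bV^\simeq$ by transposition) is quoted from \cite{whi}, and the four square functors are covered by \cref{rem:2catbothleftandrightind} and \cref{prop:SqQEadj}. However, there is a genuine gap in part (a). \cref{LQdiagram} only concerns the model structures $\dblcat_\mathrm{greg}$, $\dblcat_\mathrm{whi}$, $\dblcat_\mathrm{wvi}$, $\dblcat_\mathrm{tr}$, $\dblcat_\mathrm{h,eqp}$, $\dblcat_\mathrm{v,eqp}$, so its ``dual'' says nothing about the identity arrows in the bottom half of the diagram (those into and out of $\dblcat_\mathrm{tr,ladj}$, $\dblcat_\mathrm{tr,radj}$, $\dblcat_\mathrm{tr,adj}$). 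Your substitute argument --- ``all these model structures are localizations of $\dblcat_\mathrm{greg}$ by \cref{thm:MSislocwrtJ}, so the more localized side has more trivial cofibrations, hence each identity is right Quillen'' --- does not close this, because knowing that two model structures are both localizations of $\dblcat_\mathrm{greg}$ does not tell you which one (if either) is a localization of the other; the pairwise comparison is exactly what must be verified for each arrow. Concretely, for $\id\colon \dblcat_\mathrm{tr,ladj}\to \dblcat_\mathrm{h,eqp}$ to be right Quillen you need every trivial cofibration of $\dblcat_\mathrm{h,eqp}$ to be one of $\dblcat_\mathrm{tr,ladj}$, and similarly for the other new arrows. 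The paper supplies this by the (easy, but necessary) check from \cref{def:Jxxx} that $\cJ_\mathrm{h,eqp},\,\cJ_\mathrm{tr}\subseteq \cJ_\mathrm{tr,ladj}\subseteq \cJ_\mathrm{tr,adj}$, which, since all the structures share the canonical cofibrations, yields the required containments of trivial cofibrations exactly as in the proof of \cref{LQdiagram}. Your proposal never performs this check, so the step ``hence each identity in our diagram is right Quillen'' is unsupported for the bottom half; the same unverified pairwise localization relationship is also what your Bousfield-localization argument for the ``embedding'' upgrade silently presupposes.

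Once those set inclusions are in place, the rest of your argument is fine, though the embedding step can be done more cheaply than via the local-objects formalism: since all the $\dblcat$ model structures share the canonical trivial fibrations, the derived counit of each identity Quillen pair is computed by a cofibrant replacement that may be chosen to be a canonical trivial fibration, hence is a weak equivalence in the localized structure --- this is the one-line argument the paper uses. Finally, note that \cref{prop:SqQEadj} as stated covers only the $\mathrm{ladj}$ and $\mathrm{adj}$ cases, so you are right that the $\mathrm{radj}$ instance needs the transposed analogue; the paper leaves this implicit as well, so that part is not a defect relative to the paper's own proof.
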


\begin{proof}
From \cref{LQdiagram} we know that the (opposite) identity functors involving $\dblcat_\mathrm{greg}$, $\dblcat_\mathrm{whi}$, $\dblcat_\mathrm{tr}$ and $\dblcat_\mathrm{h,eqp}$ are left  Quillen; hence the corresponding identities displayed in the above diagram are right Quillen. The proof for the remaining identity functors in the bottom half of the diagram follows the same strategy, as one can verify that we have inclusions 
\[ \cJ_\mathrm{h,eqp}, \cJ_\mathrm{tr} \subseteq \cJ_\mathrm{tr,ladj} \subseteq \cJ_\mathrm{tr,adj}. \]
    Moreover, the derived counits of all these identity Quillen pairs are weak equivalences since all the model structures on $\dblcat$ have the same trivial fibrations. 
    
    The fact that $\bH^\simeq\colon \twocat\to \dblcat_\mathrm{whi}$ is a right Quillen embedding is proven in \cite[Theorem 6.6]{whi}, and $\Sq\colon \twocat\to \dblcat_\mathrm{tr}$ is a Quillen equivalence by \cref{rem:2catbothleftandrightind}. Finally, the fact that $\Sq\colon \twocat_\mathrm{tr,ladj}\to \dblcat_\mathrm{tr,ladj}$ and $\Sq\colon \twocat_\mathrm{tr,adj}\to \dblcat_\mathrm{tr,adj}$ are Quillen equivalences is proved in \cref{prop:SqQEadj}.
\end{proof}

\subsection{Monoidality} 
We end the section with a discussion of whether the model structures on $\dblcat$ of \cref{table:MScompconjpurposes} are monoidal with respect to B\"ohm's Gray tensor product.

We have already shown that the gregarious model structure is monoidal in \cref{thm:gregariousmonoidal}; and we also know that the model structure $\dblcat_\mathrm{whi}$ is monoidal by \cite[Theorem 7.8]{whi}. We will show that the remaining model structures,
$$\dblcat_\mathrm{h,eqp},\ \dblcat_\mathrm{tr},\  \dblcat_\mathrm{tr,ladj},\  \dblcat_\mathrm{tr, adj},$$ (and their transposed versions) are \emph{not} monoidal. To prove this claim, recall from \cref{thm:gray} that a model structure $\dblcat_\bigstar$ is monoidal with respect to the Gray tensor product if and only if, for every canonically cofibrant double category $\bX$ and every fibrant double category $\bA$, the pseudo-hom double category $\llbracket\bX,\bA\rrbracket_\mathrm{ps}$ is fibrant, which is condition (vi) of \cref{lemma:monoidalconditions}.

Since the fibrant objects in many of our model structures can be characterized by the existence of certain companions and/or conjoints, it will be useful to recognize these in a pseudo-hom double category $\llbracket\bX,\bA\rrbracket_\mathrm{ps}$. For this, we recall and adapt to the strict case the following results \cite[Definition 5.1 and Theorem 5.4]{Ruit_comp&conj} of Ruit.

\begin{defn} \label{companionable}
A square $\alpha$ in a double category $\bA$ is \emph{companionable} if its horizontal boundary $f$ and $f'$ are part of companion data $(f,w,\varphi,\psi)$ and $(f',w',\varphi',\psi')$ and the square given by the pasting
    \begin{diagram} \label{pasting}
        \node[](1) {$A$};
        \node[right of=1](2) {$A$};
    
        \node[below of=1](3) {$A$}; 
        \node[below of=3](4) {$A'$}; 
        \node[right of=3](5) {$B$}; 
        \node[below of=5](6) {$B'$}; 

        \node[below of=4](7) {$B'$};
        \node[right of=7](8) {$B'$};
        
        \draw[->](4) to node[below,la]{$f'$} (6); 
        \draw[->](3) to node[above,la]{$f$} (5);
        \draw[->,pro](3) to node[left,la]{$u$} (4);
        \draw[->,pro](5) to node[right,la]{$v$} (6); 
        
        \draw[->,pro](2) to node[right,la]{$w$} (5);
        \draw[d] (1) to (2);
        \draw[d,pro] (1) to (3);
        \draw[->,pro] (4) to node[left,la]{$w'$} (7);
        \draw[d] (7) to (8);
        \draw[d,pro] (6) to (8);

        \node[la] at ($(1)!0.5!(5)$) {$\psi$}; 
        \node[la] at ($(3)!0.5!(6)$) {$\alpha$}; 
        \node[la] at ($(4)!0.5!(8)$) {$\varphi'$}; 
    \end{diagram}
is a $2$-isomorphism in $\bfV\bA$. 
\end{defn}

\begin{theorem} \label{thm:companion_pshom}
Let $F,G\colon \bA\to \bB$ be double functors and $\tau\colon F\Rightarrow G$ be a horizontal pseudo natural transformation. Then the following are equivalent:
\begin{rome}
    \item the horizontal morphism $\tau$ is a companion in $\llbracket \bA,\bB\rrbracket_\mathrm{ps}$;
    \item for every vertical morphism $u\colon A\arrowdot A'$ in $\bA$, the square $\tau_u$ is companionable in $\bB$. 
    
        \begin{tz}
        \node[](3) {$FA$}; 
        \node[below of=3](4) {$FA'$}; 
        \node[right of=3](5) {$GA$}; 
        \node[below of=5](6) {$GA'$}; 

        \draw[->](4) to node[below,la]{$\tau_{A'}$} (6); 
        \draw[->](3) to node[above,la]{$\tau_A$} (5);
        \draw[->,pro](3) to node[left,la]{$Fu$} (4);
        \draw[->,pro](5) to node[right,la]{$Gu$} (6); 

        \node[la] at ($(3)!0.5!(6)$) {$\tau_u$}; 
    \end{tz}

\end{rome}
\end{theorem}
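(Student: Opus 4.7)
The proof follows Ruit's argument \cite{Ruit_comp&conj} but simplifies considerably in the strict setting, so I will sketch both directions and then highlight the coherence verifications.

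For (i) $\Rightarrow$ (ii): Suppose $\tau$ has a companion in $\llbracket \bA, \bB\rrbracket_\mathrm{ps}$, given by a vertical pseudo natural transformation $\sigma\colon F \Rightarrow G$ together with binding modifications $\Phi$ and $\Psi$ satisfying $\Psi \vert \Phi = e_\tau$ and $\frac{\Psi}{\Phi}=\id_\sigma$. Evaluating at an object $A\in\bA$, the squares $\Phi_A$ and $\Psi_A$ in $\bB$ are exactly such that $(\tau_A,\sigma_A,\Phi_A,\Psi_A)$ is a companion pair; in particular every $\tau_A$ has a companion. Now fix a vertical morphism $u\colon A\arrowdot A'$. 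The modification equations for $\Phi$ and $\Psi$ at $u$ precisely express that the pasting \eqref{pasting} built from $\Psi_A$, $\tau_u$, and $\Phi_{A'}$ agrees with the component $\sigma_u$ of the vertical pseudo natural transformation $\sigma$, viewed as a horizontally globular square in $\bB$. Since $\sigma_u$ is invertible in $\bfV\bB$ by definition, $\tau_u$ is companionable.

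For (ii) $\Rightarrow$ (i): Since every square $\tau_u$ is companionable, every horizontal morphism $\tau_A$ admits a companion; choose companion data $(\tau_A,\sigma_A,\varphi_A,\psi_A)$ in $\bB$ for each $A$. I build a vertical pseudo natural transformation $\sigma\colon F\Rightarrow G$ with these components $\sigma_A$. For a vertical $u\colon A\arrowdot A'$, let $\sigma_u$ be the invertible horizontally globular square produced by companionability of $\tau_u$. For a horizontal $f\colon A\to B$, define the square $\sigma_f$ with boundary $(Ff,Gf,\sigma_A,\sigma_B)$ by pasting the vertically globular invertible square $\tau_f$ (from the pseudo-naturality of $\tau$) with $\psi_A$ on the left and $\varphi_B$ on the right. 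Define the modifications $\Phi$ and $\Psi$ by $\Phi_A \coloneqq \varphi_A$ and $\Psi_A \coloneqq \psi_A$. The companion identities $\Psi\vert\Phi = e_\tau$ and $\frac{\Psi}{\Phi}=\id_\sigma$ then hold levelwise, directly from the identities defining each chosen $(\tau_A,\sigma_A,\varphi_A,\psi_A)$.

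The main obstacle is showing that $\sigma$ genuinely defines a vertical pseudo natural transformation and that $\Phi,\Psi$ are genuine modifications, i.e.\ verifying all the coherence axioms (functoriality in horizontal and vertical composition, unit axioms, and the interchange between $\sigma_f$ and $\sigma_u$). Each such axiom reduces to an equation between two pastings in $\bB$ whose common boundary contains the companions $\sigma_A$ and $\sigma_B$; using the universal property of companions from \cite[\textsection 4.1.1]{Grandis}, it suffices to verify the equation after precomposing with $\psi_A$ and postcomposing with $\varphi_B$, which converts the equation into one involving only the data of $\tau$ and the squares $\tau_f, \tau_u$. These reduced equations then follow immediately from the coherences already satisfied by the pseudo natural transformation $\tau$. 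The same strategy, applied to the modification squares $\Phi=\varphi$ and $\Psi=\psi$, yields their naturality in both directions. This completes the construction of the companion and establishes the equivalence.
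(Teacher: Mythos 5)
The paper does not actually prove this statement: it is imported from Ruit \cite[Definition 5.1 and Theorem 5.4]{Ruit_comp&conj} and merely ``adapted to the strict case'', so there is no in-paper argument to compare against. Your sketch reconstructs essentially Ruit's proof, and the strategy is sound: for (i)$\Rightarrow$(ii) evaluate the companion $(\sigma,\Phi,\Psi)$ at objects to get componentwise companion data, and use the modification axioms at a vertical morphism $u$ (together with the companion identities) to identify the pasting \eqref{pasting} with the constraint $\sigma_u$, which is invertible; for (ii)$\Rightarrow$(i) assemble $\sigma$ from chosen companions of the $\tau_A$, take $\sigma_u$ to be the companionability pasting, fold $\tau_f$ through the companion data to get $\sigma_f$, and verify all coherence and modification axioms by the uniqueness part of the universal property of companions, reducing them to the coherences of $\tau$. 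Two small slips to fix. First, your globular terminology is the reverse of the paper's conventions: the pasting \eqref{pasting} and the constraints $\sigma_u$ have identity \emph{horizontal} boundaries, hence are vertically globular ($2$-cells of $\bfV\bB$), whereas $\tau_f$ has identity vertical boundaries and is horizontally globular (a $2$-cell of $\bfH\bB$). Second, the construction of $\sigma_f$ ``by pasting $\tau_f$ with $\psi_A$ on the left and $\varphi_B$ on the right'' does not typecheck: with the conventions of \cref{def:companion}, $\psi_A$ has $\sigma_A$ as its \emph{right} edge and $\varphi_B$ has $\sigma_B$ as its \emph{left} edge, so that pasting would have identity outer vertical boundaries rather than $(\sigma_A,\sigma_B)$. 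The correct folding uses $\varphi_A$ (left edge $\sigma_A$) and $\psi_B$ (right edge $\sigma_B$), padded with identity squares, stacked with $\tau_f^{\pm 1}$ in the middle; with that correction your companion-uniqueness reduction goes through and the componentwise companion identities give $\Psi\vert\Phi=e_\tau$ and $\tfrac{\Psi}{\Phi}=\id_\sigma$ as claimed.
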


\begin{prop}\label{prop:counterex}
The model structures 
$$\dblcat_\mathrm{h,eqp},\ \dblcat_\mathrm{tr},\  \dblcat_\mathrm{tr,ladj},\  \dblcat_\mathrm{tr, adj},$$ 
are not monoidal with respect to B\"ohm's Gray tensor product.
\end{prop}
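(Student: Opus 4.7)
The plan is to use condition (vi) of \cref{lemma:monoidalconditions}, exhibiting for each of the four model structures a canonically cofibrant double category $\bX$ and a fibrant double category $\bA$ for which $\llbracket\bX,\bA\rrbracket_\mathrm{ps}$ is not fibrant. Monoidality with respect to the Gray tensor product would force condition (iv) of \cref{lemma:monoidalconditions} to hold---since the pushout-product of the cofibration $\emptyset\to\bX$ with any trivial cofibration $j$ is precisely $j\otimes\bX$---and (iv) is equivalent to (vi); hence failure of the latter rules out monoidality.

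A single counter-example will handle all four cases. Let $\cA$ be the one-object strict $2$-category whose only $1$-morphism is $\id$ and whose monoid of $2$-endomorphisms of $\id$ is $(\bN,+)$, and set $\bX=\bV\mathbbm{2}$ and $\bA=\Sq\cA$. Then $\bX$ is canonically cofibrant by \cref{charcof} (its underlying horizontal category is discrete on two objects and its underlying vertical category is free on one arrow), and $\bA$ is fibrant in each of the four model structures: its horizontal and vertical morphisms are all identities and hence trivially admit companions and conjoints, settling $\dblcat_\mathrm{tr}$ and $\dblcat_\mathrm{h,eqp}$, while for $\dblcat_\mathrm{tr,ladj}$ and $\dblcat_\mathrm{tr,adj}$, \cref{fibrantladj} (and its horizontal analog) reduces the remaining conditions to the existence of left (and right) adjoints for all morphisms of $\cA$, which is automatic.

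The main step is to produce a horizontal morphism in $\llbracket\bV\mathbbm{2},\Sq\cA\rrbracket_\mathrm{ps}$ without a companion. Consider the horizontal pseudo-natural transformation $\tau$ between the two (necessarily equal) double functors $\bV\mathbbm{2}\to\Sq\cA$ whose only non-trivial datum is the naturality square $\tau_u$ at the unique non-identity vertical morphism $u$ of $\bV\mathbbm{2}$, taken to be the square of $\Sq\cA$ corresponding to $1\in\bN$. By \cref{thm:companion_pshom}, $\tau$ admits a companion iff $\tau_u$ is companionable in $\Sq\cA$. Both horizontal boundaries of $\tau_u$ are identities, whose canonical companion data in $\Sq\cA$ consist of identity squares; feeding these into the pasting of \cref{companionable} collapses it to the square $\tau_u$ itself, a vertical globular $2$-cell in $\Sq\cA$ corresponding to $1\in\bN$, which is not a $2$-isomorphism in $\bfV\Sq\cA$ since $1$ is not invertible in $\bN$. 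Hence $\tau_u$ is not companionable, $\tau$ has no companion, and $\llbracket\bV\mathbbm{2},\Sq\cA\rrbracket_\mathrm{ps}$ fails the fibrancy requirement $\mathrm{comp}(\mathrm{hMor})$ common to all four model structures. The main obstacle is correctly unwinding the companionability pasting in $\Sq\cA$ and identifying its image in $\bfV\Sq\cA$; once this is set up, everything else is a direct verification.
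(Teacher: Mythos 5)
Your proposal is correct and takes essentially the same route as the paper's proof: both reduce monoidality to condition (vi) of \cref{lemma:monoidalconditions} (via \cref{thm:gray}), take the canonically cofibrant double category $\bV\mathbbm{2}$ together with a fibrant double category of the form $\Sq\cA$, and apply Ruit's criterion (\cref{companionable,thm:companion_pshom}) to a horizontal pseudo natural transformation whose component at the nontrivial vertical morphism is not companionable. The only difference is the witness---the paper uses $\Sq\mathbbm{2}^\mathrm{dual}$ with the non-invertible unit of an adjunction as $\tau_v$, whereas you use $\Sq$ of the delooping of $(\bN,+)$ with $\tau_u=1$---and your fibrancy and companionability checks for that example are sound.
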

\begin{proof}
Note that for all the model structures we are considering, the double category $\bV \mathbbm{2}$ is canonically cofibrant by \cref{charcof}, and the fibrant objects are such that all horizontal morphisms have a companion. Thus, in view of the results above, we seek for each model structure a fibrant object $\bB$ such that when we consider the pseudo-hom double category $\llbracket \bV\mathbbm{2}, \bB\rrbracket_\mathrm{ps}$, the condition (vi) of \cref{thm:companion_pshom} does not hold.

Consider $\bB=\Sq\mathbbm{2}^\mathrm{dual}$, where $\mathbbm{2}^\mathrm{dual}$ is the free $2$-category on a morphism $f\colon A\to B$ with the property that every morphism has both left and right adjoints; see \cite[Section 4]{Mimram_compact2cat} for an explicit description. Note that $\Sq\mathbbm{2}^\mathrm{dual}$ is fibrant in all the relevant model structures.

Next, let $F,G\colon \bV\mathbbm{2}\to \Sq\mathbbm{2}^\mathrm{dual}$ be the double functors defined on the unique vertical morphism $v\colon 0\arrowdot 1$ of $\bV\mathbbm{2}$ as $F(v)=f$ and $G(v)=e_A$.
We define a horizontal pseudo natural transformation $\tau\colon F\Rightarrow G$ whose component horizontal morphisms are given by $\tau_0=\id_A$ and $\tau_{1}=u^L$, where $u^L\colon A'\to A$ is a left adjoint of $u$ with unit and counit given by $\eta$ and $\varepsilon$. Then $(u^L,u,\eta,\varepsilon)$ is a conjoint data in $\Sq\mathbbm{2}^\mathrm{dual}$ by \cref{fibrantladj}. Finally, we define $\tau_v=\eta$.  Note that $\tau_v$ is not companionable, as the pasting 
 \begin{tz} 
        \node[](3) {$A$}; 
        \node[below of=3](4) {$A'$}; 
        \node[right of=3](5) {$A$}; 
        \node[below of=5](6) {$A$}; 

        \node[below of=4](7) {$A$};
        \node[right of=7](8) {$A$};
        
        \draw[->](4) to node[below,la]{$u^L$} (6); 
        \draw[d](3) to (5);
        \draw[->,pro](3) to node[left,la]{$u$} (4);
        \draw[d,pro](5) to  (6); 
        
        \draw[->,pro] (4) to node[left,la]{$u^L$} (7);
        \draw[d] (7) to (8);
        \draw[d,pro] (6) to (8);

        \node[la] at ($(3)!0.5!(6)$) {$\eta$}; 
        \node[la] at ($(4)!0.5!(8)$) {$\id_{u^L}$}; 
    \end{tz}
is not a $2$-isomorphism in $\bfV\bB$, and so $\tau$ does not have a companion by \cref{thm:companion_pshom}. 
\end{proof}

\section{Application: Groupoid-like model structures}\label{section:ex2}

In this section we provide additional examples of our main result, this time constructing model structures on $\dblcat$ with a groupoidal flavor. We start by recalling the following notions.

\begin{defn}
    A square in a double category $\bA$ is \emph{weakly horizontally invertible} if it is an equivalence in the $2$-category $\bfH\llbracket\bV\mathbbm{2},\bA\rrbracket$. See \cite[Definition 2.5]{whi} for more details. 

    Similarly, a \emph{weakly vertically invertible} square can be defined by reversing the role of the horizontal and vertical morphisms. 
\end{defn}

\begin{defn}
    A \emph{double groupoid} is a double category in which all horizontal morphisms are horizontal equivalences, all vertical morphisms are vertical equivalences, and all squares are both weakly horizontally invertible and weakly vertically invertible.
\end{defn}

\begin{ex}
    The double category $\Sq\Eadj$ is a double groupoid. 
\end{ex}

We aim to show that the model structures on $\dblcat$ from \cref{table:MSgpd} exist, all of which are endowed with the canonical trivial fibrations. As before, the second and third column of the table describe the fibrant objects of these model structures, which are double groupoids satisfying the outlined properties, in the language of \cref{notn:properties1}.

\begin{table}[h!]
    \centering
    \renewcommand{\arraystretch}{1.3}
    \begin{tabular}{l|C{2.5cm}|C{3.5cm}|C{2.5cm}|C{2.5cm}}
    Model & Fibrant double groupoids & Properties of fibrant objects & Homotopy \ \ theory of ... & Additional properties \\
    \hline
    \hline
       $\dblcat_\mathrm{tr,gpd}$ & Transposable & $\mathrm{comp}(\mathrm{hMor})\cap \mathrm{comp}(\mathrm{vMor})$ &  $2$-groupoids &  \\
       \hline
         $\dblcat_{\emptyset\mathrm{/ctr}}$ & Empty or contractible & $\bA=\emptyset$ or $\bA\simeq\mathbbm{1}$ in $\dblcat_\mathrm{greg}$  & homotopy $(-1)$-types &  \\
        \hline
    $\dblcat_\mathrm{ctr}$ &  Contractible & $\bA\simeq\mathbbm{1}$ in $\dblcat_\mathrm{greg}$  &  homotopy $(-2)$-types & Terminal case \\
        \hline
    \end{tabular}
    \vspace{.2cm}
    \caption{Groupoid-like model structures --- summary}\label{table:MSgpd}
\end{table}

Before we move on to the construction of these model structures, let us briefly dis-
cuss the advantages of each model, and the homotopy theory they represent, which are
summarized in the last two columns of the table:

\begin{itemize}[leftmargin=0.8cm]
\item The model structure $\dblcat_\mathrm{tr,gpd}$ for transposable double groupoids (that is, the double groupoids where all horizontal and vertical morphisms have a companion) models the homotopy theory of 2-groupoids, in the sense that the square functor $\Sq\colon\twogpd\to\dblcat_\mathrm{tr,gpd}$ is a Quillen equivalence; we prove this claim in \cref{rem:2catbothleftandrightindgpd}.
\item The model structure $\dblcat_{\emptyset\mathrm{/ctr}}$ models homotopy (-1)-types; that is, truth values. We make this claim explicit in \cref{rem:homotopy-1types}.
\item The model structure $\dblcat_\mathrm{ctr}$ models homotopy (-2)-types. Indeed, all double categories here are weakly equivalent, and hence its homotopy category is equivalent to the terminal category, as we prove in  \cref{rem:2catbothleftandrightindctr}. This sits at the opposite end to the gregarious model structure in the poset of model structures: as we prove in \cref{prop:ctrterminal}, $\dblcat_\mathrm{ctr}$ is the terminal case, in the sense that it is a localization of every model structure on $\dblcat$ with trivial fibrations given by the canonical ones.
\end{itemize}

We establish the existence of these model structures in \cref{subsec:existenceTable2}, and prove our claims regarding their homotopy theories in \cref{subsec:purposesTable2}.

\subsection{Existence of the model structures}\label{subsec:existenceTable2}

Once again, we follow our general recipe to construct each of these model structures $\dblcat_\bigstar$ by finding convenient sets $\cJ_\bigstar$ of generating anodyne extensions.

\begin{defn} \label{def:Jxxxgpd}
    We define the sets $\cJ_\bigstar$ as follows:
    \begin{itemize}[leftmargin=0.8cm]
        \item $\cJ_{\mathrm{tr,gpd}}\coloneqq \cJ_
        \mathrm{tr}\cup\{\Sq\mathbbm{2}\to\Sq\Eadj,\,\bH\Sigma\mathbbm{2}\to\bH\Sigma I,\,\bV\Sigma\mathbbm{2}\to\bV\Sigma I\}$, 
        \item $\cJ_{\emptyset\mathrm{/ctr}}\coloneqq \cJ_0\cup (\cI\setminus\{\emptyset\to \mathbbm{1}\})$,
        \item $\cJ_{\mathrm{ctr}}\coloneqq \cJ_0\cup \cI$.
    \end{itemize}
\end{defn}

We first show that the set $\cJ_\mathrm{tr,gpd}$ gives us the desired description of the fibrant objects.

\begin{lem} \label{lem:fibtrdblgpd}
    A double category $\bA$ has the right lifting property with respect to every double functor in $\cJ_{\mathrm{tr,gpd}}$ if and only if it is a transposable double groupoid.
\end{lem}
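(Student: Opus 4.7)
The plan is to analyze, for each generator of $\cJ_{\mathrm{tr,gpd}}$, the condition on a double category $\bA$ imposed by the right lifting property of $\bA\to *$ against that map, and to verify that the conjunction of these conditions precisely characterizes transposable double groupoids. The maps in $\cJ_0$ impose no condition since every double category is naive fibrant by \cref{lem:descrnfib0}. The maps $\bH\mathbbm{2}\to\Sq\mathbbm{2}$ and $\bV\mathbbm{2}\to\Sq\mathbbm{2}$ encode transposability, as a lift against $\bH\mathbbm{2}\to\Sq\mathbbm{2}$ for a given horizontal morphism $f$ is by definition a companion for $f$, and symmetrically in the vertical case.

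For the remaining three maps, the RLP against $\Sq\mathbbm{2}\to\Sq\Eadj$ says that every companion pair in $\bA$ extends to a gregarious adjoint equivalence. Under transposability, this forces every horizontal morphism to lie in a horizontal adjoint equivalence in $\bfH\bA$; tracing through the proof of \cref{rmk:companionsforequivs} then shows that the vertical companion data assembles into a vertical adjoint equivalence, so every vertical morphism (being a companion by transposability) is a vertical equivalence as well. The RLPs against $\bH\Sigma\mathbbm{2}\to\bH\Sigma I$ and $\bV\Sigma\mathbbm{2}\to\bV\Sigma I$ directly assert that every horizontal (resp.\ vertical) globular square in $\bA$ is a $2$-isomorphism in $\bfH\bA$ (resp.\ $\bfV\bA$).

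It then remains to match these accumulated conditions with the clause in the definition of double groupoid that all squares in $\bA$ be weakly horizontally and weakly vertically invertible. One direction is comparatively easy: in a transposable double groupoid, any horizontal globular square is in particular weakly vertically invertible, which for globular squares reduces to being a $2$-isomorphism in $\bfH\bA$, so the RLPs against $\bH\Sigma\mathbbm{2}\to\bH\Sigma I$ and $\bV\Sigma\mathbbm{2}\to\bV\Sigma I$ hold, while the RLP against $\Sq\mathbbm{2}\to\Sq\Eadj$ follows from \cref{rmk:companionsforequivs} applied to an adjoint equivalence extending the horizontal component of the given companion pair. For the forward direction, given a square $\alpha\colon\sqboundary{f}{g}{u}{v}$, one would construct a candidate pseudo-inverse square $\beta\colon\sqboundary{f^{-1}}{g^{-1}}{v}{u}$ by pasting $\alpha$ with adjoint equivalence data for $f$ and $g$ together with companions of their pseudo-inverses supplied by transposability; the composites $\alpha\beta$ and $\beta\alpha$ would then differ from identity squares by horizontal globular squares, which are invertible by hypothesis, furnishing the required $2$-cells witnessing the equivalence in $\bfH\llbracket\bV\mathbbm{2},\bA\rrbracket$. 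The symmetric reasoning in the vertical direction yields weak vertical invertibility.

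The main obstacle lies in this last construction: packaging the companions, adjoint equivalence witnesses, and invertibility of globular squares into a coherent pseudo-inverse square together with its accompanying invertible cells requires care, and is where the bulk of the technical work of the proof will be concentrated.
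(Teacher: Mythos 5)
Your generator-by-generator analysis of the lifting conditions, the use of transposability, the argument that lifts against $\Sq\mathbbm{2}\to\Sq\Eadj$ make every horizontal (resp.\ vertical) morphism a horizontal (resp.\ vertical) equivalence, the reading of $\bH\Sigma\mathbbm{2}\to\bH\Sigma I$ and $\bV\Sigma\mathbbm{2}\to\bV\Sigma I$ as strict invertibility of globular squares, and the backward direction all agree with the paper's proof (which treats the backward implication as immediate). The genuine gap is exactly the step you defer at the end: showing that an arbitrary square $\alpha\colon\sqboundary{f}{g}{u}{v}$ is weakly horizontally (and vertically) invertible. Beyond being left unproven, the construction you sketch cannot work as stated. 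A weak horizontal inverse of $\alpha$ must be a morphism $v\to u$ in $\bfH\llbracket\bV\mathbbm{2},\bA\rrbracket$, i.e.\ a square whose left vertical boundary is \emph{exactly} $v$ and whose right vertical boundary is \emph{exactly} $u$. But all the auxiliary squares you allow yourself---the unit/counit globular squares of adjoint equivalence data for $f$ and $g$, and the binding cells of companions of their pseudo-inverses---have vertical boundaries that are identities, those companions, or (in the case of $\alpha$ itself) $u$ on the left and $v$ on the right. No horizontal/vertical pasting of such squares can have $v$ as its left edge and $u$ as its right edge on the nose, so the candidate $\beta\colon\sqboundary{f^{-1}}{g^{-1}}{v}{u}$ cannot even be written down from that data. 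The data you need is attached to the \emph{vertical} boundary morphisms $u$ and $v$, not to $f$ and $g$.

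This is what the paper does: by transposability $u$ and $v$ have companions, and the lift against $\Sq\mathbbm{2}\to\Sq\Eadj$ upgrades these to gregarious (adjoint) equivalence data, whose squares are weakly horizontally invertible and have $u$, resp.\ $v$, as a vertical boundary. Horizontally composing $\alpha$ on both sides with such squares produces a horizontal globular square; this composite is vertically invertible by the lifting property against $\bH\Sigma\mathbbm{2}\to\bH\Sigma I$, hence weakly horizontally invertible by \cite[Lemma A.2]{lyne} (its horizontal boundaries being equivalences), and two-out-of-three for equivalences in $\bfH\llbracket\bV\mathbbm{2},\bA\rrbracket$ then gives the statement for $\alpha$; the vertical case is symmetric. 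Your outline can be repaired by replacing ``companions of the pseudo-inverses of $f$ and $g$'' with this equivalence data for $u$ and $v$ and carrying out the reduction to the globular case, but as written the central construction both fails on boundaries and is explicitly left to future work, so the proposal does not yet constitute a proof.
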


\begin{proof}
    If $\bA$ is a transposable double groupoid, then it clearly has the desired lifting property. Conversely, if $\bA$ has the right lifting property with respect to every double functor in $\cJ_{\mathrm{tr,gpd}}$, then it is transposable since $\cJ_\mathrm{tr}\subseteq \cJ_\mathrm{tr,gpd}$; it remains to show that it is a double groupoid. Since every horizontal morphism in $\bA$ is part of a companion pair $\Sq\mathbbm{2}\to \bA$, and $\bA$ has the right lifting property against $\Sq\mathbbm{2}\to \Sq\Eadj$, then every horizontal morphism is part of a gregarious equivalence $\Sq\Eadj\to \bA$. In particular, it is a horizontal equivalence. Similarly, every vertical morphism is a vertical equivalence. 

    Finally, to see that every square $\alpha$ in $\bA$ is weakly horizontally invertible, consider the following pasting,
    \begin{tz}
        \node[](1) {$A$}; 
        \node[below of=1](2) {$A$}; 
        \node[right of=1](3) {$A$}; 
        \node[below of=3](4) {$A'$}; 
        \node[right of=3](5) {$B$}; 
        \node[below of=5](6) {$B'$}; 
        \node[right of=5](7) {$B'$}; 
        \node[below of=7](8) {$B'$}; 

        \draw[d](1) to (3); 
        \draw[d](6) to (8); 
        \draw[d,pro](1) to (2); 
        \draw[d,pro](7) to (8); 

        \draw[->](2) to node[below,la]{$\simeq$} (4); 
        \draw[->](4) to node[above,la]{$\simeq$} node[below,la]{$f'$} (6); 
        \draw[->](3) to node[below,la]{$\simeq$} node[above,la]{$f$} (5);
        \draw[->,pro](3) to node[left,la]{$u$} (4);
        \draw[->,pro](5) to node[right,la]{$v$} (6); 
        \draw[->](5) to node[above,la]{$\simeq$} (7); 

        \node[la] at ($(1)!0.5!(4)$) {$\simeq$}; 
        \node[la] at ($(3)!0.5!(6)$) {$\alpha$}; 
        \node[la] at ($(5)!0.5!(8)$) {$\simeq$}; 
    \end{tz}
    where the left-hand and right-hand squares are the weakly horizontally invariant squares of some gregarious equivalence data $\Sq\Eadj\to \bA$ associated with the vertical morphisms $u$ and $v$. Since $\bA$ has the right lifting property with respect to $\bH\Sigma\mathbbm{2}\to \bH\Sigma I$, the horizontal composite of the squares depicted above is vertically invertible. By \cite[Lemma A.2]{lyne}, this composite is  weakly horizontally invertible; hence this must also be the case for $\alpha$.  One can similarly show that $\alpha$ is weakly vertically invertible. 
\end{proof}

\begin{prop} \label{prop:gpdsetJ}
Let $\cJ'$ be a set of morphisms in $\dblcat$ which satisfies the hypotheses of \cref{equivalentcond4}. Then, a set $\cJ$ obtained by adding any of the following double functors to $\cJ'$
    \[ \Sq\mathbbm{2}\to\Sq\Eadj,\;\bH\Sigma\mathbbm{2}\to\bH\Sigma I,\;\bV\Sigma\mathbbm{2}\to\bV\Sigma I \]
    also satisfies the hypotheses of \cref{equivalentcond4}.
\end{prop}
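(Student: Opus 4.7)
The plan is to mirror the proof strategy of \cref{prop:compsetJ}, handling each of the three new generators separately. In each case, assume that $F\colon\bA\to\bB$ is a gregarious fibration between double categories having the right lifting property with respect to every map in $\cJ$, and show that $F$ lifts against the newly added map; the lifting against $\cJ'$ is given by hypothesis. This shows that $F$ has the right lifting property with respect to $\cJ$, and so satisfies condition (ii) of \cref{equivalentcond4}.

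For $\bH\Sigma\mathbbm{2}\to\bH\Sigma I$ and $\bV\Sigma\mathbbm{2}\to\bV\Sigma I$ the argument is straightforward. A lifting problem consists of a horizontally (resp.\ vertically) globular square $\alpha$ in $\bA$ together with a chosen inverse $\gamma$ of $F\alpha$ in $\bB$. Since the generator lies in $\cJ$, the naive fibrancy of $\bA$ produces an inverse $\beta$ of $\alpha$ in $\bA$. Then $F\beta$ is another inverse of $F\alpha$ in the $1$-category of horizontally (resp.\ vertically) globular squares of $\bB$ with the fixed boundary, and so uniqueness of inverses in a $1$-category forces $F\beta=\gamma$, giving the desired lift.

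For $\Sq\mathbbm{2}\to\Sq\Eadj$, a lifting problem consists of a companion pair $(f,u,\varphi,\psi)\colon A\to C$ in $\bA$ together with a gregarious adjoint equivalence $E\colon\Sq\Eadj\to\bB$ whose forward companion is the image $F(f,u,\varphi,\psi)$. First, I use the naive fibrancy of $\bA$ against $\Sq\mathbbm{2}\to\Sq\Eadj\in\cJ$ to extend $(f,u,\varphi,\psi)$ to a gregarious adjoint equivalence $E'\colon\Sq\Eadj\to\bA$. Then $FE'$ and $E$ are two gregarious adjoint equivalences in $\bB$ sharing the same forward companion pair, so by the uniqueness of adjoints in the $2$-category $\bfR\bB$ together with the universal property of companions from \cite[\textsection 4.1.1]{Grandis}, they are related by canonical horizontally and vertically globular invertible squares in $\bB$ comparing the backward companion pairs and the unit/counit data. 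Applying conditions (f2) and (f3) from \cref{lem:descrnfib0} (local isofibrancy of $\bfH F$ and $\bfV F$), these comparison isomorphisms lift to invertible squares in $\bA$, which I use to transport the backward-direction data of $E'$ to obtain a modified double functor $E''\colon\Sq\Eadj\to\bA$ that still extends $(f,u,\varphi,\psi)$ and now satisfies $FE''=E$.

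The main obstacle is the case of $\Sq\mathbbm{2}\to\Sq\Eadj$: the bookkeeping required to coordinate the adjustments of the backward companion pair, unit, counit, and the companion-compatibility squares of $E'$ is delicate, and one must check that the resulting $E''$ still assembles into a well-defined double functor out of $\Sq\Eadj$ (in particular that it preserves the triangle identities). The two invertibility cases, by contrast, reduce to uniqueness of inverses in a $1$-category and require no use of the lifting properties of $F$ beyond the naive fibrancy of $\bA$.
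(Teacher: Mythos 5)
Your proposal is correct and follows essentially the same route as the paper: the two $\Sigma$-generators are handled by uniqueness of inverses of globular squares (so the lift supplied by naive fibrancy of $\bA$ automatically maps to the prescribed inverse), and the generator $\Sq\mathbbm{2}\to\Sq\Eadj$ is handled by first extending the companion pair to a gregarious adjoint equivalence in $\bA$, comparing its image with the given data in $\bB$ via invertible globular squares, and rectifying using (f2) and (f3) of \cref{lem:descrnfib0}, exactly as in the paper's adaptation of the argument of \cref{prop:compsetJ}. The rectification step you flag as delicate is left at the same level of detail in the paper's own proof, so there is no gap relative to it.
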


\begin{proof}
    We prove the result for $\cJ=\cJ'\cup \{\Sq\mathbbm{2}\to\Sq\Eadj\}$. Suppose that $\bA$ and $\bB$ are double categories which have the right lifting property with respect to every double functor in $\cJ$, and $F\colon \bA\to \bB$ is a gregarious fibration. By assumption, the double functor~$F$ has the right lifting property with respect to every double functor in $\cJ'$. It remains to show that there is a lift $L$ in every commutative diagram as below left. 
    \begin{tz}
\node[](1) {$\Sq\mathbbm{2}$}; 
\node[right of=1,xshift=1cm](2) {$\bA$}; 
\node[below of=1](1') {$\Sq\Eadj$}; 
\node[below of=2](2') {$\bB$}; 

\draw[->] (1) to node[above,la]{$(f,u,\varphi,\psi)$} (2); 
\draw[->] (1) to (1'); 
\draw[->] (2) to node[right,la]{$F$} (2'); 
\draw[->] (1') to node[below,la]{$G$} (2');
\draw[->,dashed] (1') to node[below,la,xshift=3pt]{$L$} (2); 

\node[right of=2,xshift=2cm](1) {$\Sq\mathbbm{2}$}; 
\node[right of=1,xshift=1cm](2) {$\bA$}; 
\node[below of=1](1') {$\Sq\Eadj$}; 

\draw[->] (1) to node[above,la]{$(f,u,\varphi,\psi)$} (2); 
\draw[->] (1) to (1');  
\draw[->,dashed] (1') to node[below,la,xshift=3pt]{$H$} (2); 
\end{tz}
    By assumption, there is a lift in the diagram above right. Similarly to the proof of \cref{prop:compsetJ}, the double functors $G,F\circ H\colon \Sq\Eadj\to \bB$ give us two different data witnessing the fact that $(Ff,Fu)$ is a gregarious equivalence. We can relate these data by invertible squares, which we can then lift using (f2) and (f3) of \cref{lem:descrnfib0} for the gregarious fibration $F$ to rectify the data of $H$ into a data $L\colon \Sq\Eadj\to \bA$ such that $F\circ L=G$, providing the desired lift.

    For the case of $\cJ=\cJ'\cup \{\bH\Sigma\mathbbm{2}\to \bH\Sigma I\}$, note that every double functor whose domain has the right lifting property with respect to $\bH\Sigma\mathbbm{2}\to \bH\Sigma I$ must itself have the right lifting property with respect to $\bH\Sigma\mathbbm{2}\to\bH\Sigma I$, as inverses for horizontal globular squares are unique when they exist. The case of $\cJ=\cJ'\cup \{\bV\Sigma\mathbbm{2}\to\bV\Sigma I\}$ proceeds analogously. 
\end{proof}

We can now use the above result to prove the existence of the model
structures of \cref{table:MSgpd}.

\begin{theorem}
   The model structures from \cref{table:MSgpd} exist. Moreover, they are all left proper and cofibrantly generated.
\end{theorem}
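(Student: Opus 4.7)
The plan is to verify the hypotheses of \cref{thm:genmodel} for each of the three sets $\cJ_\mathrm{tr,gpd}$, $\cJ_{\emptyset/\mathrm{ctr}}$, and $\cJ_\mathrm{ctr}$ from \cref{def:Jxxxgpd}. For each set $\cJ_\bigstar$, we need to establish the inclusions $\cJ_0 \subseteq \cJ_\bigstar \subseteq \cof(\cI)$, that every generator has canonically cofibrant domain, and one of the equivalent conditions of \cref{equivalentcond4}. Once these are in place, left properness and cofibrant generation follow immediately from \cref{thm:leftproper2} and \cref{thm:genmodel}.

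For $\cJ_\mathrm{tr,gpd}$, the first two items are routine: the inclusion chain is immediate from $\cJ_0 \subseteq \cJ_\mathrm{tr}$ together with the construction, and \cref{charcof} shows that the three new domains $\Sq\mathbbm{2}$, $\bH\Sigma\mathbbm{2}$, $\bV\Sigma\mathbbm{2}$ are canonically cofibrant and that the three new maps are canonical cofibrations. The main step is then condition (i) of \cref{equivalentcond4}, which we obtain by three successive applications of \cref{prop:gpdsetJ}, starting from the fact established in the previous section that $\cJ_\mathrm{tr}$ already satisfies this condition.

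For $\cJ_\mathrm{ctr} = \cJ_0 \cup \cI$, the set-theoretic and cofibrancy checks reduce to the observation that $\cI \subseteq \cof(\cI)$ tautologically and that every domain appearing in $\cI$ is canonically cofibrant (a short case check via \cref{charcof}). The heart of the argument is condition (i) of \cref{equivalentcond4}, which we would unpack as follows: a naive fibrant object for $\cJ_\mathrm{ctr}$ is precisely a double category $\bA$ such that $\bA \to *$ has the right lifting property against all of $\cI$, that is, a canonical trivial fibration by \cref{prop:trivfibs}. Hence for any morphism $F \colon \bA \to \bB$ between two such objects, both $\bA \to *$ and $\bB \to *$ are canonical trivial fibrations and therefore gregarious double equivalences by \cref{lem:trivfibarewe}, so the $2$-out-of-$3$ property from \cref{2of6accessible} forces $F$ itself to be a gregarious double equivalence.

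The case $\cJ_{\emptyset/\mathrm{ctr}} = \cJ_0 \cup (\cI \setminus \{\emptyset \to \mathbbm{1}\})$ runs along the same lines, with one small extra step that we expect to be the main subtlety of the argument. A naive fibrant object is now either empty or ``contractible'' in the sense above. Since every generator in $\cJ_{\emptyset/\mathrm{ctr}}$ has non-empty domain, the small object argument shows that any anodyne extension out of $\emptyset$ must be $\id_\emptyset$. Hence any anodyne extension between naive fibrant objects is either $\id_\emptyset$ or a double functor between two contractible objects, and in both cases it is a gregarious double equivalence by the argument in the previous paragraph. This yields condition (i) of \cref{equivalentcond4} and completes the application of \cref{thm:genmodel}.
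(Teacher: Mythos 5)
Your proposal follows the paper's recipe: the same three sets $\cJ_\bigstar$ from \cref{def:Jxxxgpd}, fed into \cref{thm:genmodel}, with left properness from \cref{thm:leftproper2}. For $\dblcat_{\emptyset\mathrm{/ctr}}$ and $\dblcat_\mathrm{ctr}$ you take a genuinely different (and valid) route through \cref{equivalentcond4}: you verify condition (i) directly, observing that naive fibrant objects are $\emptyset$ or double categories with $\bA\to\mathbbm{1}$ a canonical trivial fibration, so any map between contractible ones is a gregarious double equivalence by \cref{lem:trivfibarewe} and $2$-out-of-$3$ (\cref{2of6accessible}), and that anodyne extensions out of $\emptyset$ are identities because every generator has non-empty domain and $\emptyset$ is strictly initial in $\dblcat$. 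The paper instead verifies condition (ii), running a three-case analysis on gregarious fibrations between naive fibrant objects and invoking \cref{lem:fibwe0}; your version is arguably shorter, at the cost of the small-object/retract observation about $\emptyset$ (which is correct as you state it; note only that the $\id_\emptyset$ case is a weak equivalence for the trivial reason that identities are, not by the $2$-out-of-$3$ argument, since $\emptyset\to\mathbbm{1}$ is not a canonical trivial fibration). You also assert, rather than prove, that RLP against $\cI\setminus\{\emptyset\to\mathbbm{1}\}$ for a non-empty $\bA$ already gives the lift against $\emptyset\to\mathbbm{1}$; this is a one-line remark in the paper and is minor.

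The one genuine gap concerns $\dblcat_\mathrm{tr,gpd}$. The statement asserts the existence of the model structures \emph{from \cref{table:MSgpd}}, i.e.\ with the fibrant objects listed there, so for the first model you must show that the double categories with the right lifting property against $\cJ_\mathrm{tr,gpd}$ are exactly the transposable double groupoids. Your proposal only checks the formal hypotheses of \cref{thm:genmodel} (inclusions, cofibrant domains, and \cref{equivalentcond4} via \cref{prop:compsetJ,prop:gpdsetJ}), which produces \emph{a} cofibrantly generated model structure but does not identify its fibrant objects with the class in the table. This identification is the content of \cref{lem:fibtrdblgpd} and is not automatic: the non-trivial direction is that lifting against $\Sq\mathbbm{2}\to\Sq\Eadj$, $\bH\Sigma\mathbbm{2}\to\bH\Sigma I$, and $\bV\Sigma\mathbbm{2}\to\bV\Sigma I$ forces every horizontal and vertical morphism to be an equivalence and, via a pasting with the squares of gregarious equivalence data associated to the vertical boundary, forces every square to be weakly horizontally (and vertically) invertible. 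That argument needs to be supplied for your proof of the theorem as stated to be complete.
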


\begin{proof}
    By definition, the sets $\cJ_\bigstar$ from \cref{def:Jxxxgpd} are such that $\cJ_0\subseteq\cJ_\bigstar$, and it is immediate to see that all the double functors in $\cJ_\bigstar$ are canonical cofibrations with canonically cofibrant domain using the descriptions in \cref{charcof}. 
    
    We first show the existence of the model structure $\dblcat_\mathrm{tr,gpd}$. By \cref{lem:fibtrdblgpd}, the set $\cJ_\mathrm{tr,gpd}$ captures the desired fibrant objects. Furthermore, this set satisfies the hypotheses of \cref{equivalentcond4} by \cref{prop:compsetJ,prop:gpdsetJ}. Hence, by \cref{thm:genmodel}, the desired model structure exists.

 Next we show the existence of the model structure $\dblcat_{\emptyset/\mathrm{ctr}}$. For this note that, as $\cJ_0\subseteq \mathrm{cof}(\cI\setminus\{\emptyset\to \mathbbm{1}\})$, a double category $\bA$ has the right lifting property with respect to the set $\cJ_{\emptyset/\mathrm{ctr}}=\cJ_0\cup (\cI\setminus \{\emptyset\to \mathbbm{1}\})$ if and only if $\bA$ has the right lifting property with respect to the set $\cI\setminus\{\emptyset\to \mathbbm{1}\}$. This is the case when $\bA=\emptyset$ or when $\bA\to \mathbbm{1}$ is a canonical trivial fibration, as the latter always lifts against $\emptyset\to \mathbbm{1}$ when $\bA\neq\emptyset$. Since the double functor $\bA\to \mathbbm{1}$ is always a gregarious fibration, this holds if and only if $\bA=\emptyset$ or $\bA\to \mathbbm{1}$ is a gregarious biequivalence, as desired. 

 To see that the set $\cJ_{\emptyset/\mathrm{ctr}}$ satisfies the hypotheses of \cref{equivalentcond4}, consider a gregarious fibration $F\colon \bA\to \bB$ between naive fibrant objects. Then there are three possible cases: either $\bA=\bB=\emptyset$; or $\bA=\emptyset$ and $\bB$ is contractible; or both $\bA$ and $\bB$ are contractible. In the first case we have that $F$ is the identity at $\emptyset$ and hence a fibration. In the second case, we can trivially verify that $F$ is a fibration using the right lifting property, as there are no commutative diagrams expressing lifts of $F$ with respect to the maps in $\cJ_{\emptyset/\mathrm{ctr}} $ to even consider. Finally, in the third case, we have a commutative diagram 
    \begin{tz}
        \node[](1) {$\bA$}; 
        \node[below right of=1](2) {$\mathbbm{1}$}; 
        \node[above right of=2](3) {$\bB$};

        \draw[->](1) to node[above,la]{$F$} (3); 
        \draw[->](1) to node[left,la,yshift=-2pt]{$\simeq$} (2);
        \draw[->](3) to node[right,la,yshift=-2pt]{$\simeq$} (2);
    \end{tz}
    where the unique double functors are gregarious biequivalences, and so by $2$-out-of-$3$, the double functor $F$ is also a gregarious biequivalence. Hence $F$ is a canonical trivial fibration by \cref{lem:fibwe0}. Then, by \cref{thm:genmodel}, the desired model structure exists.

    The proof of the existence of the model structure $\dblcat_\mathrm{ctr}$ is similar, noticing that a double category $\bA$ has the right lifting property with respect to the set $\cJ_\mathrm{ctr}=\cJ_0\cup \cI$ if and only if the unique double functor $\bA\to \mathbbm{1}$ is a canonical trivial fibration. 

    The fact that all of these model structures are cofibrantly generated is guaranteed by \cref{thm:genmodel}, and they are left proper by \cref{thm:leftproper2}.
\end{proof}

\subsection{The models: purpose and interactions}\label{subsec:purposesTable2}

 To prove that $\dblcat_\mathrm{tr,gpd}$ is a model for the homotopy theory of $2$-groupoids, we consider the composite of the adjunctions in \cref{MS2gpdvs2cat,adj:sq} and prove that they yield Quillen equivalences. 

\begin{theorem} \label{rem:2catbothleftandrightindgpd}
    The adjunctions
    \begin{tz}
\node[](1) {$\twogpd$}; 
\node[right of=1,xshift=2cm](2) {$\dblcat_\mathrm{tr,gpd}$};

\draw[->,bend right=35] ($(2.west)+(0,8pt)$) to node[above,la]{$\mathrm{loc}\circ L_{Sq}$} ($(1.east)+(0,8pt)$);
\draw[->] ($(1.east)$) to node[over,la]{$\Sq$} ($(2.west)$);
\draw[->,bend left=35] ($(2.west)-(0,8pt)$) to node[below,la]{$\mathrm{core}\circ \bfR$} ($(1.east)-(0,8pt)$);

\node[la] at ($(1.east)!0.5!(2.west)+(0,10pt)$) {$\bot$};
\node[la] at ($(1.east)!0.5!(2.west)-(0,10pt)$) {$\bot$};
\end{tz}
    are Quillen equivalences. 
\end{theorem}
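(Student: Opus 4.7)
The argument mirrors the strategy of \cref{rem:2catbothleftandrightind}, combined with the reflective and coreflective adjunctions $\mathrm{loc}\dashv\iota\dashv\mathrm{core}$ from \cref{MS2gpdvs2cat}. First I would verify that $\Sq$ restricts to a functor $\twogpd\to\dblcat_\mathrm{tr,gpd}$ landing in fibrant objects: if $\cA$ is a $2$-groupoid, then $\Sq\cA$ is transposable by \cref{rem:Sqnaivefib}, every horizontal and vertical morphism of $\Sq\cA$ arises from an equivalence of $\cA$ (and is thus an equivalence in the appropriate direction), and every square arises from a $2$-isomorphism (and is thus weakly horizontally and vertically invertible); so by \cref{lem:fibtrdblgpd}, $\Sq\cA$ is fibrant in $\dblcat_\mathrm{tr,gpd}$.

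Next, both adjunctions should be verified to be Quillen pairs. The functor $\Sq\colon\twogpd\to\dblcat_\mathrm{tr,gpd}$ is left Quillen as the composition $\twogpd\xrightarrow{\iota}\twocat\xrightarrow{\Sq}\dblcat_\mathrm{tr}\xrightarrow{\id}\dblcat_\mathrm{tr,gpd}$ of left Quillen functors (using \cref{MS2gpdvs2cat}, \cref{rem:2catbothleftandrightind}, and the fact that $\dblcat_\mathrm{tr,gpd}$ is a localization of $\dblcat_\mathrm{tr}$). It is right Quillen by mimicking the proof of \cref{rem:2catbothleftandrightind}: $\Sq$ preserves canonical trivial fibrations; and since all $2$-groupoids are fibrant while their images under $\Sq$ are fibrant in $\dblcat_\mathrm{tr,gpd}$ by the previous paragraph, equifibrations are sent to gregarious fibrations between fibrant objects, which are precisely the fibrations of $\dblcat_\mathrm{tr,gpd}$ between such objects by \cref{thm:genmodel}.

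To show $\Sq\dashv\mathrm{core}\circ\bfR$ is a Quillen equivalence, I would apply \cite[Lemma 3.3]{mehmet} as in \cref{rem:2catbothleftandrightind}, verifying that $\Sq$ preserves and reflects weak equivalences and that the unit at every $2$-groupoid is a biequivalence. Preservation follows from \cref{rem:2catbothleftandrightind} together with the identity being left Quillen into the localization. For reflection: if $\Sq F$ is a weak equivalence in $\dblcat_\mathrm{tr,gpd}$ then its source and target are fibrant, so by \cref{thm:existleftBousfield} $\Sq F$ is already a weak equivalence in $\dblcat_\mathrm{tr}$, whence $\bfH\Sq F=F$ is a biequivalence by \cref{prop:4tupleweakequiv}. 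For the unit, the map $\cA\to\bfR\Sq\cA$ is a biequivalence by the argument in the proof of \cref{prop:SqLQ}, and when $\cA$ is a $2$-groupoid every companion pair $(f,u,\varphi,\psi)$ in $\Sq\cA$ is $2$-isomorphic in $\bfR\Sq\cA$ to $(f,f,\id_f,\id_f)$ via $\id_f$ with $f$ an equivalence of $\cA$, hence represents an equivalence in $\bfR\Sq\cA$. Thus $\mathrm{core}(\bfR\Sq\cA)=\bfR\Sq\cA$, and the unit $\cA\to\mathrm{core}(\bfR\Sq\cA)$ remains a biequivalence.

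Finally, for $\mathrm{loc}\circ L_\Sq\dashv\Sq$, it remains only to verify the derived counit is a biequivalence. Fix $\cA\in\twogpd$ and a cofibrant replacement $\cA^{\mathrm{cof}}\to\cA$ in $\twogpd$, which is also cofibrant in $\twocat$ since $\iota$ is left Quillen. Since $\Sq$ is both left and right Quillen, $\Sq\cA^{\mathrm{cof}}$ is cofibrant in $\dblcat_\mathrm{tr,gpd}$ and the map $\Sq\cA^{\mathrm{cof}}\to\Sq\cA$ is a trivial fibration, providing a cofibrant replacement; hence the derived counit can be identified with the composite $\mathrm{loc}(L_\Sq(\Sq\cA^{\mathrm{cof}}))\to\mathrm{loc}(\cA^{\mathrm{cof}})\to\cA$, where the first arrow is $\mathrm{loc}$ applied to the counit of $L_\Sq\dashv\Sq$ in $\twocat$. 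This counit is a biequivalence between cofibrant objects by \cref{rem:2catbothleftandrightind}, so $\mathrm{loc}$ preserves it by Ken Brown's lemma; combined with $\mathrm{loc}\circ\iota=\id_{\twogpd}$ and the trivial fibration $\cA^{\mathrm{cof}}\to\cA$, this yields the desired biequivalence. The main obstacle is orchestrating the reflection of weak equivalences across the Bousfield localization, which crucially relies on the retraction $\bfH\circ\Sq=\id_{\twocat}$ and \cref{thm:existleftBousfield}.
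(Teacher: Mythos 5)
Most of your argument is correct and follows the same route as the paper: the left Quillen property via the composite $\twogpd\xrightarrow{\iota}\twocat\xrightarrow{\Sq}\dblcat_\mathrm{tr}\xrightarrow{\id}\dblcat_\mathrm{tr,gpd}$, the observation that $\Sq$ of a $2$-groupoid is a transposable double groupoid (hence fibrant), the right Quillen property (your direct check that equifibrations go to gregarious fibrations between fibrant objects, which are fibrations in the localization by \cref{thm:genmodel}, is a fine substitute for the paper's ``it suffices to preserve fibrant objects'' step), and the Quillen equivalence for $\Sq\dashv\mathrm{core}\circ\bfR$ via \cite[Lemma 3.3]{mehmet}, where your explicit verification that $\bfR\Sq\cA$ is again a $2$-groupoid (so that $\mathrm{core}$ does nothing) is a detail the paper leaves implicit and is welcome.

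The gap is in your last paragraph. For the adjunction $\mathrm{loc}\circ L_{Sq}\dashv\Sq$ it is \emph{not} true that ``it remains only to verify the derived counit is a biequivalence'': a Quillen pair with invertible derived counit need not be a Quillen equivalence. The standard criteria pair the derived counit with reflection of weak equivalences by the \emph{left} adjoint between cofibrant objects (or, dually, the derived unit with reflection by the right adjoint); you have verified reflection for $\Sq$, which is the right adjoint here, so your two checks do not assemble into either criterion, and you never address the derived unit at cofibrant objects of $\dblcat_\mathrm{tr,gpd}$. The repair is exactly the paper's one-line reduction, and you already have all its ingredients: since $\Sq\colon\twogpd\to\dblcat_\mathrm{tr,gpd}$ is right Quillen and every object of $\twogpd$ is fibrant, $\Sq$ preserves all weak equivalences, so $\bL\Sq\cong\bR\Sq$ is the functor induced on homotopy categories; the Quillen equivalence $\Sq\dashv\mathrm{core}\circ\bfR$ you proved shows this functor is an equivalence, and a Quillen pair whose derived right adjoint is an equivalence of homotopy categories is automatically a Quillen equivalence. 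With that replacement your counit computation becomes unnecessary (though it is essentially correct as far as it goes).
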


\begin{proof}
    First note that $\Sq\colon \twogpd\to \dblcat_\mathrm{tr,gpd}$ is left Quillen, as it is the composite of the left Quillen functors 
    \[ \twogpd\xrightarrow{\iota}\twocat\xrightarrow{\Sq} \dblcat_\mathrm{tr}\xrightarrow{\id}\dblcat_\mathrm{tr,gpd} \]
    from \cref{MS2gpdvs2cat,rem:2catbothleftandrightind}, and using that $\dblcat_\mathrm{tr,gpd}$ is a localization of $\dblcat_\mathrm{tr}$ as $\cJ_\mathrm{tr}\subseteq \cJ_\mathrm{tr,gpd}$. To see that $\Sq\colon \twogpd\to \dblcat_\mathrm{tr,gpd}$ is right Quillen, recall that the composite 
    \[ \twogpd\xrightarrow{\iota}\twocat\xrightarrow{\Sq} \dblcat_\mathrm{tr} \]
    is right Quillen by \cref{MS2gpdvs2cat,rem:2catbothleftandrightind}, and so it suffices to show that the functor $\Sq\colon \twogpd\to \dblcat_\mathrm{tr,gpd}$ preserves fibrant objects. But if $\cG$ is a $2$-groupoid, then $\Sq\cG$ is transposable by \cref{rem:Sqnaivefib} and is a double groupoid by direct inspection.

To see that $\Sq$ is a Quillen equivalence, it suffices to show that the adjunction $\Sq\dashv \mathrm{core}\circ \bfR$ is a Quillen equivalence; this proceeds just like the proof of \cref{rem:2catbothleftandrightind}.
\end{proof}

Next, let $\{\bot\to \top\}$ denote the category free on a morphism. There is a canonical functor $P\colon \dblcat\to \{\bot\to \top\}$ sending the initial double category $\emptyset$ to $\bot$ and any other double category to $\top$. This admits a right adjoint $\{\bot\to \top\}\to \dblcat$ given by picking the unique double functor $\emptyset\to \mathbbm{1}$. We show that this forms a Quillen equivalence, proving that $\dblcat_{\emptyset\mathrm{/ctr}}$ models homotopy $(-1)$-types.

\begin{theorem} \label{rem:homotopy-1types}
    The adjunction
     \begin{tz}
\node[](1) {$\dblcat_{\emptyset\mathrm{/ctr}}$}; 
\node[right of=1,xshift=2.3cm](2) {$\{\bot\to \top\}$};

\draw[->,bend left=20] ($(1.east)+(0,4pt)$) to node[above,la]{$P$} ($(2.west)+(0,4pt)$);
\draw[->,bend left=20] ($(2.west)-(0,4pt)$) to node[below,la]{$\emptyset\to \mathbbm{1}$} ($(1.east)-(0,4pt)$);

\node[la] at ($(1.east)!0.5!(2.west)$) {$\bot$};
\end{tz}
    is a Quillen equivalence, where $\{\bot\to \top\}$ is endowed with the trivial model structure. 
\end{theorem}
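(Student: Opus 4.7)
The plan is to directly characterize the weak equivalences of $\dblcat_{\emptyset\mathrm{/ctr}}$ in a way that makes the Quillen equivalence immediate. First, I would verify the adjunction by a direct hom-set calculation: for any double category $\bA$ and any object $X$ of $\{\bot\to\top\}$, the hom-set $\Hom_{\{\bot\to\top\}}(P\bA,X)$ is a singleton when $X=\top$, and is a singleton exactly when $\bA=\emptyset$ when $X=\bot$; since $\emptyset$ is initial in $\dblcat$, this matches $\Hom_{\dblcat}(\bA,RX)$ where $R$ denotes the functor sending $\bot\mapsto\emptyset$ and $\top\mapsto\mathbbm{1}$.

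The key technical step is to show that $F\colon\bA\to\bB$ is a weak equivalence in $\dblcat_{\emptyset\mathrm{/ctr}}$ if and only if $PF$ is an isomorphism, equivalently, if and only if $\bA=\emptyset\Leftrightarrow\bB=\emptyset$. For this, I would observe that every generator in $\cJ_{\emptyset\mathrm{/ctr}}$ has nonempty source and nonempty target, and that the class of double functors $G\colon\bX\to\bY$ with $\bX=\emptyset\Leftrightarrow\bY=\emptyset$ is closed under the pushouts, transfinite compositions, and retracts used to generate $\an$. Hence every anodyne extension has this property; in particular $\emptyset$ is naive fibrant (the right lifting property against the generators is vacuous, since there is no double functor from a nonempty source into $\emptyset$), and any naive fibrant replacement $\bA\to\bA'$ satisfies $\bA=\emptyset\Leftrightarrow\bA'=\emptyset$. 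Combining this with the fact, established during the existence proof, that every nonempty naive fibrant double category is contractible, I would conclude: if $\bA,\bB$ are both empty, any map is trivially a gregarious double equivalence; if both are nonempty, their naive fibrant replacements $\bA',\bB'$ are contractible and any double functor between them is a gregarious double equivalence by $2$-out-of-$3$ applied to $\bA'\to\bB'\to\ast$ in $\dblcat_\mathrm{greg}$; and if exactly one is empty, no double functor exists. Conversely, a gregarious double equivalence is surjective on objects, so it cannot have empty source and nonempty target (or vice versa), which combined with the emptiness-preservation of anodyne extensions yields the forward implication.

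With this characterization in hand, $P$ preserves and reflects weak equivalences. It is then immediate that $P$ is left Quillen: every morphism in $\{\bot\to\top\}$ is a cofibration in the trivial model structure, and any trivial cofibration in $\dblcat_{\emptyset\mathrm{/ctr}}$ is in particular a weak equivalence, hence sent by $P$ to an isomorphism. For the Quillen equivalence, note that both $\emptyset$ and $\mathbbm{1}$ are fibrant in $\dblcat_{\emptyset\mathrm{/ctr}}$ and both objects of $\{\bot\to\top\}$ are fibrant, so derived (co)units coincide with ordinary (co)units. The counit $PRY\to Y$ is an identity since $PR=\id_{\{\bot\to\top\}}$, and for the unit $\bX\to RP\bX$, applying $P$ and invoking the triangle identity yields the identity on $P\bX$, an isomorphism; since $P$ reflects weak equivalences, the unit is a weak equivalence. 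The main obstacle is the saturation argument establishing that anodyne extensions preserve emptiness; once that is in place, every other step is formal.
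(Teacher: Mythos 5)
Your proposal is correct and takes essentially the same route as the paper: the paper's proof likewise rests on characterizing the weak equivalences of $\dblcat_{\emptyset\mathrm{/ctr}}$ as exactly the double functors inverted by $P$ (its ``direct verification''), using that all nonempty double categories become weakly equivalent, that the model structure on $\{\bot\to\top\}$ is trivial, and that $PR=\id$; your saturation argument that anodyne extensions preserve (non)emptiness simply makes explicit what the paper leaves as straightforward. Two harmless slips worth noting: when exactly one of $\bA,\bB$ is empty a double functor $\emptyset\to\bB$ does exist (that case is in fact disposed of by your converse argument via (g1)), and for the derived counit the relevant hypothesis is cofibrancy, not fibrancy, of $\emptyset$ and $\mathbbm{1}$ --- which holds, since $\emptyset\to\mathbbm{1}$ is a generating cofibration.
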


\begin{proof}
    First note that all double categories except for the initial double category $\emptyset$ are weakly equivalent in $\dblcat_{\emptyset\mathrm{/ctr}}$, since every double functor between contractible double groupoids is a gregarious biequivalence by $2$-out-of-$3$. Then, it is straightforward to see that the functor $P\colon \dblcat_{\emptyset\mathrm{/ctr}}\to \{\bot\to \top\}$ is both left and right Quillen, as it preserves weak equivalences and the model structure on $\{\bot\to \top\}$ is the trivial one. 
    
    Finally, for $X\in\{\bot,\top\}$ and any double category $\bA$, a direct verification reveals that a morphism $P\bA\to X$ is an isomorphism if and only if its adjunct is a weak equivalence in $\dblcat_{\emptyset\mathrm{/ctr}}$; we then conclude that this Quillen pair is a Quillen equivalence.
\end{proof}

We now prove that $\dblcat_\mathrm{ctr}$ models homotopy $(-2)$-types.

\begin{theorem} \label{rem:2catbothleftandrightindctr}
    The adjunctions
    \begin{tz}
\node[](1) {$\dblcat_\mathrm{ctr}$}; 
\node[right of=1,xshift=1.7cm](2) {$\{\top\}$};

\draw[->,bend right=35] ($(2.west)+(0,8pt)$) to node[above,la]{$\emptyset$} ($(1.east)+(0,8pt)$);
\draw[->] ($(1.east)$) to node[over,la]{$!$} ($(2.west)$);
\draw[->,bend left=35] ($(2.west)-(0,8pt)$) to node[below,la]{$\mathbbm{1}$} ($(1.east)-(0,8pt)$);

\node[la] at ($(1.east)!0.5!(2.west)+(0,10pt)$) {$\bot$};
\node[la] at ($(1.east)!0.5!(2.west)-(0,10pt)$) {$\bot$};
\end{tz}
    are Quillen equivalences, where $\{\top\}$ is the terminal category endowed with the trivial model structure. 
\end{theorem}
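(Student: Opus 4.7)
The plan is to reduce both Quillen equivalences to a single key fact: in $\dblcat_\mathrm{ctr}$, every double functor is a weak equivalence. Once this is established, the Quillen pair and Quillen equivalence conditions will follow almost formally because the target category $\{\top\}$ is the terminal category with the trivial model structure.

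The first step, which I view as the main content of the proof, is to show that every morphism is a weak equivalence in $\dblcat_\mathrm{ctr}$. Given $F\colon \bA\to \bB$, I would build a naive fibrant replacement square for $F$ as in \eqref{naiverepl}, using the weak factorization system generated by $\cJ_\mathrm{ctr}$, to obtain fibrant objects $\bA',\bB'$ with an induced double functor $F'\colon \bA'\to \bB'$. By the characterization of the fibrant objects of $\dblcat_\mathrm{ctr}$ established in the existence proof, $\bA'$ and $\bB'$ are contractible, meaning that the canonical projections $\bA'\to \mathbbm{1}$ and $\bB'\to \mathbbm{1}$ are canonical trivial fibrations, and hence gregarious double equivalences by \cref{lem:trivfibarewe}. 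The $2$-out-of-$3$ property of the class $\overline{\cW}$ of gregarious double equivalences, proven in \cref{2of6accessible}, then forces $F'$ to be a gregarious double equivalence, and so $F$ is a weak equivalence in $\dblcat_\mathrm{ctr}$ by \cref{def:we}.

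Next, I would verify that both adjunctions are Quillen pairs. Since the only morphism in $\{\top\}$ is its identity, any functor into $\{\top\}$ sends every map to an identity, which belongs to every class of maps in the trivial model structure; similarly any functor out of $\{\top\}$ need only respect the identity morphism, which it does automatically. Hence both $\emptyset$ and $\mathbbm{1}$ are left, respectively right, Quillen without further checking.

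Finally, to conclude that both adjunctions are Quillen equivalences, I would use the standard criterion that a Quillen pair $F\dashv U$ is a Quillen equivalence if and only if, for every cofibrant $X$ and fibrant $Y$, a map $FX\to Y$ is a weak equivalence exactly when its adjunct $X\to UY$ is. For $\emptyset\dashv\,!$, the map $\emptyset \to \bA$ from $\emptyset$ to a fibrant (contractible) $\bA$ is a weak equivalence by Step~1, and its adjunct is the identity of $\top$, which is a weak equivalence in $\{\top\}$. For $!\dashv \mathbbm{1}$, given a cofibrant $\bA\in \dblcat_\mathrm{ctr}$ and the unique fibrant $\top\in \{\top\}$, the map $!\bA=\top\to \top$ is the identity, hence a weak equivalence, and its adjunct $\bA\to \mathbbm{1}$ is a weak equivalence in $\dblcat_\mathrm{ctr}$ by Step~1. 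The expected obstacle is really only in Step~1; the rest of the argument is essentially formal and exploits the triviality of $\{\top\}$.
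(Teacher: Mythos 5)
Your proposal is correct and follows essentially the same route as the paper: the substance in both cases is that every double functor between (contractible) fibrant objects is a gregarious double equivalence by $2$-out-of-$3$ with the canonical trivial fibrations to $\mathbbm{1}$, so all maps in $\dblcat_\mathrm{ctr}$ are weak equivalences, after which the Quillen pair and Quillen equivalence checks against the trivial model structure on $\{\top\}$ are formal. The only cosmetic difference is that you verify the equivalence via the cofibrant/fibrant adjunct criterion while the paper phrases it through derived units and counits.
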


\begin{proof}
    It is straightforward to check that the unique functor $\dblcat_{\mathrm{ctr}}\to \{\top\}$ is both left and right Quillen. Moreover, all double categories are weakly equivalent in $\dblcat_{\mathrm{ctr}}$, since every double functor between contractible double groupoids is a gregarious biequivalence by $2$-out-of-$3$. Hence the derived units and counits are all weak equivalences.  
\end{proof}

We conclude this section by proving that the model structure for contractible double groupoids is terminal among model structures on $\dblcat$ endowed with the canonical trivial fibrations.

\begin{prop}\label{prop:ctrterminal}
    The model category $\dblcat_\mathrm{ctr}$ is a localization of every model structure on $\dblcat$ whose trivial fibrations are the canonical ones.
\end{prop}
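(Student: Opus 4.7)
The plan is to verify directly, from the definition of a localization, that the identity adjunction
\[ \id \colon \cM \rightleftarrows \dblcat_\mathrm{ctr} \colon \id \]
is a Quillen pair whose derived counit is a weak equivalence in $\dblcat_\mathrm{ctr}$, for every model structure $\cM$ on $\dblcat$ whose trivial fibrations are the canonical ones.

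First I would observe that $\cM$ and $\dblcat_\mathrm{ctr}$ have the same cofibrations. Indeed, both model structures share the class of canonical trivial fibrations, so their cofibrations are determined as the left class of the weak factorization system whose right class consists of the canonical trivial fibrations, namely the canonical cofibrations of \cref{defn:cofibs}. In particular, $\id \colon \cM \to \dblcat_\mathrm{ctr}$ preserves cofibrations.

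Next, I would show that every morphism of double categories is a weak equivalence in $\dblcat_\mathrm{ctr}$. This is immediate from the proof of \cref{rem:2catbothleftandrightindctr}, where it is observed that all double categories are weakly equivalent in $\dblcat_\mathrm{ctr}$ since any double functor between contractible double groupoids is a gregarious biequivalence by $2$-out-of-$3$; equivalently, the Quillen equivalence $\dblcat_\mathrm{ctr} \to \{\top\}$ shows that $\Ho(\dblcat_\mathrm{ctr})$ is the terminal category, so every morphism is an isomorphism in $\Ho(\dblcat_\mathrm{ctr})$ and hence a weak equivalence in $\dblcat_\mathrm{ctr}$. In particular, every trivial cofibration of $\cM$ is a canonical cofibration and a weak equivalence in $\dblcat_\mathrm{ctr}$, so $\id \colon \cM \to \dblcat_\mathrm{ctr}$ preserves trivial cofibrations and is left Quillen.

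Finally, for the derived counit condition, given any fibrant object $\bA$ in $\dblcat_\mathrm{ctr}$, the derived counit at $\bA$ is represented by a cofibrant replacement $\bA^\mathrm{cof} \to \bA$ in $\cM$; this is a morphism in $\dblcat$ and is therefore a weak equivalence in $\dblcat_\mathrm{ctr}$ by the previous step. This completes the verification that $\dblcat_\mathrm{ctr}$ is a localization of $\cM$. Since every step reduces to invoking the two simple facts identified above, I do not anticipate any obstacle beyond carefully citing the relevant statements already established in the excerpt.
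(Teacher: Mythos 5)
Your proposal is correct and follows essentially the same route as the paper: show the identity adjunction is a Quillen pair (same cofibrations; trivial cofibrations of $\cM$ are sent to trivial cofibrations) and then observe that the derived counit, a cofibrant replacement of a fibrant object, is a weak equivalence in $\dblcat_\mathrm{ctr}$. The only difference is the justification of the middle step: the paper notes that $\cI\subseteq\cJ_\mathrm{ctr}$, so every canonical cofibration is an anodyne extension and hence a trivial cofibration in $\dblcat_\mathrm{ctr}$, whereas you argue that \emph{every} double functor is a weak equivalence in $\dblcat_\mathrm{ctr}$, either directly (fibrant replacements are contractible, so the replaced map is a gregarious equivalence by $2$-out-of-$3$) or via the Quillen equivalence with the point together with the standard saturation fact that a map inverted in the homotopy category is a weak equivalence; both justifications are valid, and yours has the small extra convenience of disposing of the derived counit step for free.
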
 

\begin{proof}
Suppose that $\dblcat$ is endowed with a model structure whose trivial fibrations are the canonical ones; we will first show that $\id\colon\dblcat\to\dblcat_\mathrm{ctr}$ is left Quillen. The fact that it preserves cofibrations is immediate, as both model structures have as cofibrations the canonical ones. To see that it preserves trivial cofibrations, recall from \cref{def:Jxxxgpd} that $\dblcat_\mathrm{ctr}$ was constructed from the generating set of anodyne extensions $\cJ=\cJ_0\cup\cI$. As anodyne extensions are trivial cofibrations, this implies that all canonical cofibrations are weak equivalences in $\dblcat_\mathrm{ctr}$; in particular, this applies to the trivial cofibrations in $\dblcat$. 

Lastly, the derived counit at a fibrant object $\bA\in \dblcat_\mathrm{ctr}$ is given by a canonical cofibrant replacement of $\bA$ in $\dblcat$ and can be chosen to be a canonical trivial fibration. Hence it is also a weak equivalence in $\dblcat_\mathrm{ctr}$.
\end{proof}

\subsection{Monoidality} 
We end the section by showing that the model structures on $\dblcat$ of \cref{table:MSgpd} are monoidal with respect to Böhm's Gray tensor product. 

\begin{prop}
    The model structures $\dblcat_{\emptyset\mathrm{/ctr}}$ and $\dblcat_\mathrm{ctr}$ are monoidal with respect to the Gray tensor product.    
\end{prop}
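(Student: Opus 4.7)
The plan is to verify condition (vi) of \cref{lemma:monoidalconditions}, which by \cref{thm:gray} suffices for monoidality: for every canonically cofibrant $\bX$ and every fibrant $\bA$, we must show that $\llbracket \bX, \bA\rrbracket_\mathrm{ps}$ is fibrant.

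The crucial tool is \cref{thm:gregariousmonoidal}, which already establishes that the Gray tensor product makes $\dblcat_\mathrm{greg}$ a monoidal model structure. Since $\bX$ is canonically cofibrant, the functor $\bX \otimes (-)\colon \dblcat_\mathrm{greg} \to \dblcat_\mathrm{greg}$ is left Quillen, and therefore its right adjoint $\llbracket \bX, -\rrbracket_\mathrm{ps}$ is right Quillen. Since all objects in $\dblcat_\mathrm{greg}$ are fibrant, Ken Brown's lemma implies that $\llbracket \bX, -\rrbracket_\mathrm{ps}$ preserves all gregarious double equivalences.

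For $\dblcat_\mathrm{ctr}$, the fibrant objects are those $\bA$ for which $\bA \to \mathbbm{1}$ is a canonical trivial fibration, equivalently a gregarious double equivalence (using \cref{lem:fibwe0} and the fact that $\bA \to \mathbbm{1}$ is automatically a gregarious fibration). Applying the right Quillen functor $\llbracket \bX, -\rrbracket_\mathrm{ps}$ yields a gregarious double equivalence $\llbracket \bX, \bA\rrbracket_\mathrm{ps} \to \llbracket \bX, \mathbbm{1}\rrbracket_\mathrm{ps}$. A direct unwinding of \cref{internalpseudohom} shows that $\llbracket \bX, \mathbbm{1}\rrbracket_\mathrm{ps} \cong \mathbbm{1}$, since $\mathbbm{1}$ admits a unique double functor from any source and the only (pseudo)natural transformations and modifications are trivial. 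Hence $\llbracket \bX, \bA\rrbracket_\mathrm{ps}$ is contractible, i.e.\ fibrant in $\dblcat_\mathrm{ctr}$.

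For $\dblcat_{\emptyset\mathrm{/ctr}}$, the fibrant objects are either empty or contractible. If $\bA$ is contractible, the argument above applies verbatim. If $\bA=\emptyset$, we split into cases: when $\bX\neq\emptyset$ there are no double functors $\bX\to\emptyset$, so $\llbracket \bX,\emptyset\rrbracket_\mathrm{ps}=\emptyset$ is fibrant; when $\bX=\emptyset$, the pseudo-hom equals $\mathbbm{1}$, which is contractible, hence also fibrant. The only mildly nontrivial input is the identification $\llbracket \bX,\mathbbm{1}\rrbracket_\mathrm{ps}\cong\mathbbm{1}$, which is a direct computation from the description of the internal pseudo-hom; no delicate homotopical work is required beyond the monoidality of $\dblcat_\mathrm{greg}$.
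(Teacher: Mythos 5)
Your proof is correct, but it goes through a different one of the equivalent conditions than the paper does. The paper's proof is a one-liner verifying condition (ii) of \cref{lemma:monoidalconditions}: for $J\in\cJ_\mathrm{ctr}$ (resp.\ $\cJ_{\emptyset\mathrm{/ctr}}$) and $\bX\in\{\bH\mathbbm{2},\bV\mathbbm{2},\bH\mathbbm{2}\times\bV\mathbbm{2}\}$, the map $J\otimes\bX$ is a canonical cofibration by \cref{pushprodofcof}, and such maps are automatically weak equivalences in these localizations because $\cI$ (resp.\ $\cI\setminus\{\emptyset\to\mathbbm{1}\}$) sits inside the generating anodyne extensions, so essentially everything in sight is a trivial cofibration. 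You instead verify condition (vi), computing the fibrant objects of the pseudo-hom directly: you use the monoidality of $\dblcat_\mathrm{greg}$ (\cref{thm:gregariousmonoidal}) to make $\llbracket\bX,-\rrbracket_\mathrm{ps}$ right Quillen for cofibrant $\bX$, Ken Brown's lemma (all objects being gregariously fibrant) to transport the gregarious equivalence $\bA\to\mathbbm{1}$, the identification $\llbracket\bX,\mathbbm{1}\rrbracket_\mathrm{ps}\cong\mathbbm{1}$, and the case analysis for $\bA=\emptyset$; combined with \cref{lem:fibwe0} and the automatic gregarious fibrancy of $\llbracket\bX,\bA\rrbracket_\mathrm{ps}\to\mathbbm{1}$ this gives fibrancy in $\dblcat_\mathrm{ctr}$, resp.\ $\dblcat_{\emptyset\mathrm{/ctr}}$. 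The paper's route is shorter and purely formal, exploiting that these are the most drastic localizations; yours is more explicit about what the fibrant objects look like after taking pseudo-homs, and that style of argument is the one that generalizes to localizations where condition (ii) is not nearly automatic (compare the transposable double groupoid case, \cref{transposablehom}). A small streamlining of your argument: since $\bX\otimes(-)$ preserves canonical cofibrations by \cref{pushprodofcof}, its right adjoint preserves canonical trivial fibrations by adjunction, so $\llbracket\bX,\bA\rrbracket_\mathrm{ps}\to\llbracket\bX,\mathbbm{1}\rrbracket_\mathrm{ps}\cong\mathbbm{1}$ is already a canonical trivial fibration whenever $\bA\to\mathbbm{1}$ is, with no need for \cref{thm:gregariousmonoidal} or Ken Brown.
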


\begin{proof}
   This is guaranteed by \cref{thm:gray}, as condition (ii) of \cref{lemma:monoidalconditions} for the sets $\cJ_\mathrm{ctr}=\cI$ and $\cJ_{\emptyset\mathrm{/ctr}}=\cI\setminus \{ \emptyset\to\mathbbm{1}\}$ follows directly from \cref{pushprodofcof}.
\end{proof}

To show that $\dblcat_{\mathrm{tr,gpd}}$ is also monoidal with respect to the Gray tensor product, we use the following lemmas. 
\begin{lem} \label{inviscompanionable}
    Consider a weakly horizontally invertible square $\alpha$ in a double category $\bA$
    \begin{tz}
        \node[](3) {$A$}; 
        \node[below of=3](4) {$A'$}; 
        \node[right of=3](5) {$B$}; 
        \node[below of=5](6) {$B'$}; 

        \draw[->](4) to node[below,la]{$f'$} (6); 
        \draw[->](3) to node[above,la]{$f$} (5);
        \draw[->,pro](3) to node[left,la]{$u$} (4);
        \draw[->,pro](5) to node[right,la]{$v$} (6); 

        \node[la] at ($(3)!0.5!(6)$) {$\alpha$}; 
    \end{tz}
    such that $f$ and $f'$ are part of companion data $(f,w,\varphi,\psi)$ and $(f',w',\varphi',\psi')$. Then $\alpha$ is companionable (see \cref{companionable}). 
\end{lem}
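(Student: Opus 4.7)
The aim is to show that the pasting (\ref{pasting}) — which can be written compactly as $P(\alpha)=\frac{\frac{\psi}{\alpha}}{\varphi'}\colon w'u\Rightarrow vw$ — is a $2$-isomorphism in $\bfV\bA$. The first step is to unfold weak horizontal invertibility: since $\alpha$ is an equivalence in $\bfH\llbracket\bV\mathbbm{2},\bA\rrbracket$, there is a pseudo-inverse square $\bar\alpha$ with top $g\colon B\to A$, bottom $g'\colon B'\to A'$, left $v$, right $u$, together with vertically invertible witness $2$-cells. Since the $2$-cells of $\bfH\llbracket\bV\mathbbm{2},\bA\rrbracket$ are bipartite, unpacking these delivers horizontal globular $2$-isomorphisms $\id_A\cong gf$, $\id_{A'}\cong g'f'$, $fg\cong \id_B$, $f'g'\cong \id_{B'}$ subject to the usual compatibilities with $u$ and $v$.

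Next, I would invoke the universal property of companions from \cite[\S 4.1.1]{Grandis}: applied once for $f$ and once for $f'$, it shows that the assignment $\alpha\mapsto P(\alpha)$ is a bijection from squares with boundary $\sqboundary{f}{f'}{u}{v}$ onto vertical globular squares $w'u\Rightarrow vw$, with inverse given by pasting an arbitrary vertical globular square with $\varphi$ above and $\psi'$ below (the companion identities $\psi|\varphi=e_f$, $\frac{\psi}{\varphi}=\id_w$ and their primed counterparts ensure that this is well-defined and bijective). The plan is then to construct an explicit vertical globular square $\gamma\colon vw\Rightarrow w'u$ that will serve as a two-sided inverse of $P(\alpha)$ in $\bfV\bA$. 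The construction mimics the pasting $P(\bar\alpha)$ one would write down if $g$ and $g'$ had companions; since they need not, the missing companion squares are replaced by mediating bands built from the horizontal globular $2$-isomorphisms $fg\cong\id_B$ and $\id_{A'}\cong g'f'$ (together with $\varphi$ and $\psi'$), so that the left and right boundaries of $\gamma$ come out as $vw$ and $w'u$ respectively.

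The main obstacle is the verification that this candidate $\gamma$ is indeed a two-sided inverse of $P(\alpha)$ in $\bfV\bA$, i.e., that the horizontal composites $P(\alpha)\,|\,\gamma$ and $\gamma\,|\,P(\alpha)$ equal $\id_{w'u}$ and $\id_{vw}$ respectively. This reduces to a long pasting-diagram chase combining the companion axioms for both $(f,w,\varphi,\psi)$ and $(f',w',\varphi',\psi')$, the weak invertibility relations $\alpha|\bar\alpha\cong \id_u$ and $\bar\alpha|\alpha\cong \id_v$, and the coherence satisfied by the horizontal globular witnesses extracted in the first step. I expect this calculational step to be technically dense but conceptually routine, following the same philosophy as the proof of \cref{rmk:companionsforequivs}, where the universal property of companions is used to transport invertible data between horizontal and vertical directions — here that transport is simply iterated one dimension up, for a square rather than for a horizontal morphism.
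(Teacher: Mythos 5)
Your overall strategy (exhibit an explicit two-sided inverse of the pasting $P(\alpha)=\frac{\frac{\psi}{\alpha}}{\varphi'}$ in $\bfV\bA$) could in principle work, but as written there is a genuine gap: the proof is not actually carried out at its only nontrivial point. The candidate inverse $\gamma$ is never constructed --- ``mediating bands built from the horizontal globular $2$-isomorphisms'' is a placeholder, and this is exactly where the difficulty sits, since $g$ and $g'$ need not have companions and it is not clear what pasting you intend, nor that its boundary is $vw\Rightarrow w'u$. Likewise the two identities $P(\alpha)\,|\,\gamma=\id_{w'u}$ and $\gamma\,|\,P(\alpha)=\id_{vw}$, which constitute the entire content of the lemma under your approach, are deferred as a ``conceptually routine'' chase; in fact such a chase would additionally require normalizing the weak-invertibility witnesses of $\alpha$ (the pseudo-inverse data in $\bfH\llbracket\bV\mathbbm{2},\bA\rrbracket$ is arbitrary, and cancellations of the type you need generally force you to first correct one witness in terms of the other or pass to adjoint-equivalence data --- note that \cref{rmk:companionsforequivs} assumes an \emph{adjoint} equivalence for precisely this kind of reason). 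A smaller but telling slip: your description of the inverse of the companion bijection, ``pasting $\varphi$ above and $\psi'$ below,'' is not even a well-formed pasting ($\varphi$ has bottom boundary $\id_B$ while a vertical globular square $w'u\Rightarrow vw$ has top boundary $\id_A$); the correct retrieval pastes $\varphi$ and $\psi'$ \emph{horizontally}, whiskered by identity squares of $v$ and $u$.

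For comparison, the paper's proof avoids any construction: since $\alpha$ is weakly horizontally invertible, $f$ and $f'$ are horizontal equivalences, hence the companion squares $\varphi,\psi,\varphi',\psi'$ are themselves weakly horizontally invertible; the pasting \eqref{pasting} is then a composite of equivalences in $\bfH\llbracket\bV\mathbbm{2},\bA\rrbracket$, so it is weakly horizontally invertible, and a weakly horizontally invertible square with identity horizontal boundaries is a genuine $2$-isomorphism in $\bfV\bA$. If you want to salvage your explicit route, you would need to (i) write down $\gamma$ concretely, (ii) fix compatible (e.g.\ adjoint-equivalence) witnesses for $\bar\alpha$, and (iii) actually perform the pasting verification; alternatively, adopt the paper's argument, which reduces everything to the two cited facts about weakly horizontally invertible squares.
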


\begin{proof}
    Since $\alpha$ is weakly horizontally invertible, the horizontal morphisms $f$ and $f'$ are horizontal equivalences and so the squares of the companion data of $(f,w,\varphi,\psi)$ and $(f',w',\varphi',\psi')$ are weakly horizontally invertible. Hence the pasting \eqref{pasting} is also weakly horizontally invertible. As it has trivial horizontal boundaries, it is in fact a $2$-isomorphism in $\bfV \bA$, as desired. 
\end{proof}

\begin{lem} \label{transposablehom}
    For every canonically cofibrant double category $\bX$ and every transposable double groupoid $\bA$, the pseudo-hom double category $\llbracket \bX,\bA\rrbracket_\mathrm{ps}$ is a transposable double groupoid. 
\end{lem}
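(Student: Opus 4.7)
The plan is to verify each defining property of a transposable double groupoid for $\llbracket\bX,\bA\rrbracket_\mathrm{ps}$ by leveraging the corresponding property of $\bA$, applied componentwise. I note first that the hypothesis that $\bX$ is canonically cofibrant is not actually used in the argument: what matters is only that $\bA$ is a transposable double groupoid.

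For transposability, let $\tau\colon F\Rightarrow G$ be a horizontal pseudo-natural transformation between double functors $F,G\colon\bX\to\bA$. By \cref{thm:companion_pshom}, it suffices to show that for every vertical morphism $u\colon A\arrowdot A'$ of $\bX$ the square $\tau_u$ is companionable in $\bA$. Its horizontal boundary morphisms $\tau_A$ and $\tau_{A'}$ lie in $\bA$, which is transposable, and so they admit companions; moreover $\tau_u$ is a square in the double groupoid $\bA$, hence is weakly horizontally invertible. \cref{inviscompanionable} then gives that $\tau_u$ is companionable, and therefore $\tau$ has a companion in $\llbracket\bX,\bA\rrbracket_\mathrm{ps}$. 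The case of vertical pseudo-natural transformations is handled by the same argument applied to $\bA^{\mathrm{v,op}}$, which is itself a transposable double groupoid.

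To show that $\llbracket\bX,\bA\rrbracket_\mathrm{ps}$ is a double groupoid, I would proceed componentwise. Since $\bA$ is a double groupoid, each component horizontal morphism $\tau_A$ of a horizontal pseudo-natural transformation $\tau\colon F\Rightarrow G$ is a horizontal equivalence in $\bA$, and each component square $\tau_u$ is a weakly horizontally (and vertically) invertible square. By choosing adjoint equivalence data for each $\tau_A$ in $\bfH\bA$ and invoking weak invertibility of the squares $\tau_u$, one assembles a horizontal pseudo-natural transformation $\tau^{-1}\colon G\Rightarrow F$ together with invertible modifications exhibiting $\tau^{-1}$ as a quasi-inverse of $\tau$ in $\bfH\llbracket\bX,\bA\rrbracket_\mathrm{ps}$. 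The same recipe (applied to $\bA^{\mathrm{v,op}}$) treats vertical pseudo-natural transformations. Similarly, any modification in $\llbracket\bX,\bA\rrbracket_\mathrm{ps}$ has components which are weakly horizontally and weakly vertically invertible squares in $\bA$, and these componentwise inverses assemble into an inverse modification witnessing weak invertibility of the original.

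The main obstacle is verifying that these componentwise inverses satisfy the pseudo-naturality/modification coherence axioms globally. Since $\bA$ is a transposable double groupoid, both $\bfH\bA$ and $\bfV\bA$ are $2$-groupoids, and the squares of $\bA$ form a groupoidal structure under both compositions (so every weakly invertible square has a canonical chosen inverse). This ensures that the coherence conditions reduce, after pasting with the chosen inverses, to invertibility statements that hold automatically in a $2$-groupoid. The bookkeeping is routine but somewhat intricate, as one must track how the invertible structure squares of $\tau$ interact with those of $\tau^{-1}$; an efficient way to organize it is to first establish, as an auxiliary principle, that a pseudo-natural transformation into a transposable double groupoid is a horizontal equivalence as soon as every component horizontal morphism is, and that a modification into a transposable double groupoid is weakly invertible as soon as every component square is. This auxiliary principle is the essential content, and once it is established the conclusion follows immediately.
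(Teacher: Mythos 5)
Your argument for the horizontal direction is exactly the paper's: reduce via \cref{thm:companion_pshom} to companionability of each naturality square $\tau_u$, and get that from \cref{inviscompanionable} using that $\bA$ is transposable (boundaries have companions) and a double groupoid (squares are weakly horizontally invertible). The paper also treats the double-groupoid property of $\llbracket\bX,\bA\rrbracket_\mathrm{ps}$ as clear, so leaving that part at the level of a componentwise sketch is not the issue. (Your aside that canonical cofibrancy of $\bX$ is not used is consistent with the paper's proof, which does not use it either.)

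The genuine problem is your treatment of vertical pseudo-natural transformations. You propose to run ``the same argument applied to $\bA^{\mathrm{v,op}}$,'' but the vertical opposite is the wrong duality here: $(-)^{\mathrm{v,op}}$ only reverses the direction of vertical morphisms, so it sends vertical transformations to vertical transformations (and horizontal ones to horizontal ones); it does not interchange the two classes. Moreover, companion pairs in $\bA^{\mathrm{v,op}}$ are conjoint pairs in $\bA$ (\cref{conjvscomp}), so what this dualization could at best deliver is that horizontal transformations into $\bA$ admit \emph{conjoints}---not that vertical transformations admit \emph{companions}, which is what transposability of $\llbracket\bX,\bA\rrbracket_\mathrm{ps}$ requires. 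In addition, your premise that $\bA^{\mathrm{v,op}}$ is itself a transposable double groupoid is not automatic: it amounts to every morphism of $\bA$ having a conjoint, which does not follow formally from transposability (though it is plausibly true for double groupoids, it would need an argument). The correct symmetry, and the one the paper invokes, is the transpose: interchange the horizontal and vertical directions throughout, so that vertical transformations become horizontal ones, and apply the transposed versions of \cref{thm:companion_pshom} and \cref{inviscompanionable}; these apply because both the transposability of $\bA$ and the double-groupoid condition are invariant under transposition. With that substitution your proof matches the paper's.
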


\begin{proof}
    It is clear that $\llbracket \bX,\bA\rrbracket_\mathrm{ps}$ is a double groupoid; we show that it is transposable.
    Since all squares in $\bA$ are weakly horizontally invertible and all horizontal morphisms in $\bA$ are part of a companion data, it follows from \cref{thm:companion_pshom,inviscompanionable} that every horizontal pseudo natural transformation in $\llbracket \bX,\bA\rrbracket_\mathrm{ps}$ is part of a companion data. By interchanging the horizontal and vertical directions, we obtain that every vertical morphism in $\llbracket \bX,\bA\rrbracket_\mathrm{ps}$ is part of a companion data, as well. 
\end{proof}

We can now prove the desired result. 

\begin{theorem}
    The model structure $\dblcat_\mathrm{tr,gpd}$ is monoidal with respect to the Gray tensor product. 
\end{theorem}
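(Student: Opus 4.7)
The plan is to apply \cref{thm:gray}, which reduces monoidality to verifying one of the equivalent conditions of \cref{lemma:monoidalconditions}. The most convenient one in this setting is condition (vi): for every canonically cofibrant double category $\bX$ and every fibrant double category $\bA$ in $\dblcat_\mathrm{tr,gpd}$, the pseudo-hom double category $\llbracket \bX,\bA\rrbracket_\mathrm{ps}$ must again be fibrant in $\dblcat_\mathrm{tr,gpd}$, i.e., a transposable double groupoid.

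The key observation is that this is essentially the content of \cref{transposablehom}: for any canonically cofibrant $\bX$ and any transposable double groupoid $\bA$, the pseudo-hom $\llbracket \bX,\bA\rrbracket_\mathrm{ps}$ is itself a transposable double groupoid. Since the fibrant objects of $\dblcat_\mathrm{tr,gpd}$ are precisely the transposable double groupoids (by \cref{lem:fibtrdblgpd}), this immediately yields condition (vi).

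Thus the proof will be a short invocation of these two results. The main work has already been done in \cref{transposablehom}, whose proof leveraged \cref{thm:companion_pshom} and \cref{inviscompanionable} to produce companions from the weak horizontal invertibility of squares and the existence of companions for horizontal morphisms; by symmetry one obtains companions for vertical pseudo natural transformations as well. There is no real obstacle here beyond citing the correct previous results.

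\begin{proof}
    By \cref{thm:gray}, it suffices to verify condition (vi) of \cref{lemma:monoidalconditions} for the set $\cJ_\mathrm{tr,gpd}$, namely that for every canonically cofibrant double category $\bX$ and every fibrant double category $\bA$ in $\dblcat_\mathrm{tr,gpd}$, the pseudo-hom $\llbracket \bX,\bA\rrbracket_\mathrm{ps}$ is fibrant in $\dblcat_\mathrm{tr,gpd}$. By \cref{lem:fibtrdblgpd}, the fibrant objects of $\dblcat_\mathrm{tr,gpd}$ are the transposable double groupoids. The desired statement is then precisely \cref{transposablehom}.
\end{proof}
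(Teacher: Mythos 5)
Your proposal is correct and follows exactly the paper's own argument: invoke \cref{thm:gray}, reduce to condition (vi) of \cref{lemma:monoidalconditions}, and note that this condition is precisely \cref{transposablehom} (with \cref{lem:fibtrdblgpd} identifying the fibrant objects as transposable double groupoids). There is nothing to add or correct.
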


\begin{proof}
    This follows from \cref{thm:gray} using that condition (vi) of \cref{lemma:monoidalconditions} is satisfied by \cref{transposablehom}. 
\end{proof}

\bibliographystyle{alpha}
\bibliography{references}

\end{document}